\setlist{noitemsep}
\numberwithin{figure}{section}
\numberwithin{equation}{section}
\newtheorem{theorem}[equation]{Theorem}
\newcommand{\uu}{\mathrm{U}}
\newtheorem{lemma}[equation]{Lemma}
\newtheorem{corollary}[equation]{Corollary}
\newtheorem{proposition}[equation]{Proposition}
\theoremstyle{definition}
\newtheorem{definition}[equation]{Definition}
\newtheorem{notation}[equation]{Notation}
\theoremstyle{definition}
\newtheorem{remark}[equation]{Remark}
\theoremstyle{definition}
\newtheorem{example}[equation]{Example}
\theoremstyle{definition}
\newtheorem{construction}[equation]{Construction}
\newtheorem{maintheorem}{Theorem}
\theoremstyle{definition}
\newtheorem{question}[equation]{Question}
\DeclareSymbolFontAlphabet{\mathbb}{AMSb} 
\DeclareSymbolFontAlphabet{\mathbbl}{bbold}
\newcommand{\Prism}{{\mathlarger{\mathbbl{\Delta}}}}
\mathchardef\mhyphen="2D
\DeclareMathOperator{\Fun}{Fun}
\DeclareMathOperator{\Hom}{Hom}
\DeclareMathOperator{\Map}{Map}
\newcommand{\op}{\mathrm{op}}
\newcommand{\U}{\mathrm{U}}
\DeclareMathOperator{\Spec}{Spec}
\DeclareSymbolFontAlphabet{\mathbb}{AMSb} 
\DeclareSymbolFontAlphabet{\mathbbl}{bbold}
\newcommand{\cC}{\mathcal{C}}
\newcommand{\cO}{\mathscr{O}}
\newcommand{\cH}{\mathcal{H}}
\newcommand{\id}{\mathrm{id}}
\newcommand{\Sp}{\mathrm{Sp}}
\def\on{\operatorname}
\newcommand{\perf}{\mathrm{perf}}
\newcommand{\bZ}{\mathbb{Z}}
\newcommand{\bF}{\mathbb{F}}
\newcommand{\bG}{\mathbb{G}}
\newcommand{\bQ}{\mathbb{Q}}
\newcommand{\bU}{\mathbb{U}}
\newcommand{\colim}{\on{colim}}
\newcommand{\Mod}{\on{Mod}}
\newcommand{\uni}{\mathrm{uni}}
\newcommand{\St}{\mathrm{St}}
\newcommand{\Alg}{\mathrm{Alg}}
\newcommand{\Aff}{\mathrm{Aff}}
\newcommand{\ft}{\mathrm{qft}}
\newcommand\restr[2]{{\left.\kern-\nulldelimiterspace#1\vphantom{\big|}\right|_{#2}}}
\newcommand{\suchthat}{\;\ifnum\currentgrouptype=16 \middle\fi\vert\;}
\newcolumntype{C}[1]{>{\centering\arraybackslash}p{#1}}
\numberwithin{figure}{subsection}
\numberwithin{equation}{subsection}
\title{Artin--Mazur formal groups and Milne duality via unipotent spectra}
\author{Shubhodip Mondal}
\author{Tasos Moulinos}
\author{Lucy Yang}
\address[Shubhodip Mondal]{Purdue University}
\email{mondalsh@purdue.edu}
\address[Tasos Moulinos]{CNRS, Université Paris-Saclay}
\email{tasos.moulinos@universite-paris-saclay.fr}
\address[Lucy Yang]{Columbia University}\email{ly2620@columbia.edu}
\begin{document}

\begin{abstract}
We introduce and develop the notion of ``unipotent spectra." This is defined to be the stabilization of To\"en's category of affine stacks, and is related to recent work of Mondal--Reinecke. 
Unipotent spectra give rise to unipotent stable homotopy groups and unipotent homology, which are new invariants for schemes valued in unipotent group schemes. 
As applications, we recover the Artin--Mazur formal groups associated to schemes without any vanishing assumptions. Further, we show that syntomic cohomology admits a natural refinement to a perfect unipotent spectrum. 
Finally, we extend Milne's work on arithmetic duality theorems to the category of perfect unipotent spectra and apply it to refine Poincar\'e duality in syntomic cohomology.
\end{abstract}

\maketitle
\markleft{\tiny{S. MONDAL, T. MOULINOS AND L. YANG}}
\markright{\tiny{ARTIN–MAZUR FORMAL GROUPS \& MILNE DUALITY VIA UNIPOTENT SPECTRA}}
\vspace{-13mm}
\tableofcontents
\newpage
\section{Introduction}
\subsection{Motivation \texorpdfstring{\&}{&} context}
In \cite{MR457458}, Artin and Mazur attached certain formal groups to algebraic varieties. More precisely, for a smooth proper scheme $X$ over a perfect field $k$ of characteristic $p>0$, they attached certain formal groups $\Phi^r(X)$ for $r \in \mathbb{N}$. In \cite[Question~(a)]{MR457458}, they raised the question of constructing an object in some \emph{derived category}, which would be finer than the collection of $\Phi^r(X)$ for $r \in \mathbb{N}$. As they pointed out, the construction of such an object would be quite subtle as one would have to extend (or bypass) the work of Cartier on the theory of formal groups, on which their work was based.

In a different vein, in \cite{Milne111}, Milne extended Poincaré duality from étale to syntomic cohomology of smooth proper schemes over a perfect field $k$ of characteristic $p$. A key insight was that both finite groups and vector spaces over the ground field appear in cohomology, so any such duality should simultaneously incorporate Pontryagin duality for finite groups and linear duality over the base field $k$. By upgrading syntomic cohomology to a functor landing in perfect unipotent group schemes, Milne was able to establish such a setup.

In \cite{soon}, a notion of unipotent homotopy group schemes was used to reconstruct the Artin--Mazur formal groups under certain strong vanishing assumptions. However, the general situation was not addressed in \cite{soon}. The notion of unipotent homotopy type in \cite{soon} of a scheme is based on To\"en's work on affine stacks. In view of the representability results for affine stacks in \cite{MR2244263}, it is also natural to wonder whether syntomic cohomology of smooth proper $k$-schemes can be studied using this formalism. In this paper, both of these questions will be addressed by developing a framework of unipotent \emph{stable} homotopy theory.

With the above motivations in mind, we introduce the stable $\infty$-category of \emph{unipotent spectra}, which we propose as the categorical home for both Artin–Mazur formal groups and Milne’s duality results. Our key definition is the following:

\begin{definition}[Unipotent spectra] Let $\mathrm{AffSt}_{A*}$ denote the $\infty$-category of pointed affine stacks over a commutative ring $A$. Note that $\mathrm{AffSt}_{A*}$ is naturally equipped with an endofunctor $\Omega$ determined by sending $X \mapsto * \times_X *. $ We define $\mathrm{Sp}^{\mathrm{U}}_A$ to be the inverse limit of the tower of $\mathbb{Z}$-indexed $\infty$-categories 
\begin{equation*}
 \ldots \to \mathrm{AffSt}_{A*}\xrightarrow{\Omega} \mathrm{AffSt}_{A*}\to\ldots .
\end{equation*}
We call the stable $\infty$-category $\mathrm{Sp}^{\mathrm{U}}_A$ the category of unipotent spectra over $A$.
\end{definition}

\begin{remark}
Our terminology is motivated by \cite{soon}, where Mondal--Reinecke developed the notion of unipotent homotopy theory: to every scheme $X$ over $k$, the authors attach an affine stack $\mathbb{U}(X)$ which is called the unipotent homotopy type of $X$. When $X$ is pointed and cohomologically connected, this allows one to consider $\pi_n(\mathbb{U}(X))$, which is called the unipotent homotopy group scheme and denoted by $\pi_n^{\mathrm{U}}(X)$. Under our assumptions, $\pi_n^{\mathrm{U}}(X)$ is a unipotent affine group scheme (possibly of infinite type). 
\end{remark}

\begin{remark}[Unipotent stable homotopy type]
    Any stack $ Y $ over $ k $ has a unipotent stable homotopy type $ \Sigma^\infty_+ Y$ (see \cref{defn:unipotent_stable_htpy_type}), which is a unipotent spectrum. For $n \in \mathbb{Z}$, the object $\pi_n(\Sigma^\infty_+ Y)$ is representable by a unipotent group scheme, which we will call the $n$th unipotent stable homotopy group scheme.

\end{remark}
We give some examples of unipotent spectra below.

\begin{example}
 Given any spectrum $E \in \mathrm{Sp}$ in the usual sense, there is a canonical way to attach a unipotent spectrum $E^{\uu} \in \mathrm{Sp}^{\on{U}}_k,$ which can be regarded as a ``unipotent completion" of $E$. See \cref{unicomofspectra}.  
\end{example}

 \begin{example}[{\cref{introex1}}]\label{examintro}
Let $G$ be a commutative unipotent affine group scheme over a field $k$. 
Then there is an Eilenberg–MacLane unipotent spectrum $G \in \mathrm{Sp}^{\U}_{k}$ over $ k $ associated to $ G $; its $ n $th space is the affine stack $ B^nG := K(G,n) $. 
\end{example}

\begin{example}
As a particular example of the above, let $\mathbb{H}$ be the fixed points of the Frobenius endomorphism on the $p$-typical Witt vector ring scheme $W$. This is a unipotent group scheme, which arises as the Cartier dual to the multiplicative formal group  $\widehat{\bG}_m$. The classifying stack $B \mathbb{H}$ arises as the generic fiber of the filtered circle studied in \cite{moulinos2019universal} and is an affine stack. The sequence of affine stacks obtained by taking further deloopings  $\left \{B^n \mathbb{H} \right \}_{n \ge 0}$ defines a  unipotent spectrum over $k$, denoted by $\mathbb{H}$, which plays the role of the unipotent completion of the Eilenberg-MacLane spectrum $ H\mathbb{Z} $. 
\end{example}

From a homotopy theoretic point of view, the role played by unipotent spectra can be summarized by the following table, explaining the analogy with usual spectra in a broader context.
\vspace{2mm}
\begin{center}

\bgroup
\def\arraystretch{1.5}
\begin{tabular}{ | C{15em} | C{15em}| } 
\hline
\textbf{Usual homotopy theory} & \textbf{Unipotent homotopy theory} \\ 

\hline
\text{Spaces} & \text{Affine stacks} \\ 
\hline
\text{(Homotopy) groups} & Unipotent (homotopy) group schemes \\
\hline
\text{Spectra} & \text{Unipotent spectra}\\
\hline
\text{Chain complexes} & $\mathbb{Z}$\text{-modules in unipotent spectra}\\
\hline

\hline
\end{tabular}
\egroup
\end{center}
\vspace{2mm}
The notion of unipotent spectra also has other applications in the context of $p$-adic cohomology theories of varieties over fields of characteristic $p$, as we indicate below.
\begin{remark}[Constructible sheaves and unipotent spectra]
 Let $X = \mathrm{Spec}\, A$ for a regular $\mathbb{F}_p$-algebra $A$. Using a form of the Riemann--Hilbert correspondence, one can show that the derived category of constructible $\mathbb{F}_p$-vector spaces $D^b_{\mathrm{cons}}(X_{\mathrm{\acute{e}t}}, \mathbb{F}_p)$ embeds inside $\mathbb{F}_p$-modules in $\mathrm{Sp}^\U_A$. However, the latter is much larger than $D^b_{\mathrm{cons}}(X_{\mathrm{\acute{e}t}}, \mathbb{F}_p)$. For example, $\mathbb{G}_a$ (or its perfection $\mathbb{G}_a^{\mathrm{perf}}$) viewed as $\mathbb{F}_p$-module in $\mathrm{Sp}^\U_A$ does not lie in the essential image of the embedding of $D^b_{\mathrm{cons}}(X_{\mathrm{\acute{e}t}}, \mathbb{F}_p)$.
\end{remark}{}

Let us return for a moment to our goal of  addressing \cite[Question~(a)]{MR457458} due to Artin--Mazur, and mention some recent developments related to this story for additional context. 
In \cite{BOL}, Bragg--Olsson proved that a suitable sheafification of the Artin--Mazur formal groups are always pro-representable, a result previously obtained by Raynaud using different methods \cite{MR563468}. 
We denote the representing pro-object by $\Phi^n(X)^{\mathrm{fl}}$ and refer to it as the $n$th \emph{flat} Artin--Mazur formal group. In the context of Artin--Mazur formal groups, Mondal--Reinecke prove the following result:

\begin{theorem}[{\cite[Theorem~1.0.9]{soon}}]\label{mr1}Let $n \ge 1$ be an integer.
Let $X$ be a pointed proper scheme over an algebraically closed field $k$ of characteristic $p>0$ satisfying \begin{equation}\label{conditionintro}
H^0 (X, \mathcal O) \simeq k, \quad H^i(X,\mathcal O) = 0 \text{ for all } 0 < i < n, \quad \text{and} \quad H^{n+1}(X, \mathcal O)=0.
\end{equation} Let $\Phi^n(X)$ denote the $n$-th Artin--Mazur formal group defined in this context. 
Then if $n > 1$, $\Phi^n(X)$ is naturally isomorphic to the Cartier dual $\pi^{\mathrm{U}}_n(X)^\vee$ of the $n$-th unipotent homotopy group scheme of $X$. 
If $n=1$, $\Phi^n(X)$ is naturally isomorphic to $(\pi_1^{\mathrm{U}}(X)^{\mathrm{ab}})^{\vee}.$
\end{theorem}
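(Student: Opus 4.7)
The plan is to produce the isomorphism of formal groups by comparing their $R$-points functorially in $R$, where $R$ ranges over Artin local $k$-algebras with residue field $k$ and maximal ideal $\mathfrak{m}_R$. Since both $\Phi^n(X)$ and $\pi_n^{\U}(X)^\vee$ (resp.\ $(\pi_1^{\U}(X)^{\mathrm{ab}})^\vee$) are pro-represented by formal groups, such a natural isomorphism on $R$-points is equivalent to the claimed isomorphism.

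First I would identify $\Phi^n(X)(R)$ with $H^n(X, 1+\mathfrak{m}_R\mathcal{O}_{X_R})$. The splitting $R = k \oplus \mathfrak{m}_R$ of $k$-modules yields a split short exact sequence of fppf sheaves on $X$,
\[1 \to 1+\mathfrak{m}_R\mathcal{O}_{X_R} \to \mathbb{G}_{m,X_R} \to \mathbb{G}_{m,X} \to 1,\]
so the kernel of restriction on $H^n$ is exactly $H^n$ of the left-hand term. Filtering $1+\mathfrak{m}_R$ by the normal subgroups $1+\mathfrak{m}_R^{s}$, whose associated graded pieces are finite sums of copies of $\mathcal{O}_X$ (indexed by a $k$-basis of $\mathfrak{m}_R^{s}/\mathfrak{m}_R^{s+1}$), the hypotheses $H^i(X,\mathcal{O})=0$ for $0<i<n$ and $H^{n+1}(X,\mathcal{O})=0$ transfer into the corresponding vanishing for $1+\mathfrak{m}_R\mathcal{O}_{X_R}$-cohomology by an easy induction on the length of $R$, ensuring that no higher obstructions interfere.

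Next I would interpret this cohomology unipotently. Since $1+\mathfrak{m}_R$ is a commutative unipotent affine group scheme of finite type over $k$, the representing property of $\mathbb{U}(X)$ (computing unipotent-valued cohomology) gives
\[H^n(X, 1+\mathfrak{m}_R\mathcal{O}_{X_R}) \;\cong\; \pi_0\,\mathrm{Map}_{\mathrm{AffSt}_{k*}}(\mathbb{U}(X), B^n(1+\mathfrak{m}_R)).\]
The vanishing hypotheses, combined with $H^i(\mathbb{U}(X),\mathcal{O})=H^i(X,\mathcal{O})$, force $\pi_i^{\U}(X)=0$ for $0<i<n$ via the cohomological Postnikov analysis of affine stacks. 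A Postnikov/obstruction argument in $\mathrm{AffSt}_{k*}$ then collapses the mapping space to $\mathrm{Hom}_{\mathrm{grp}/k}(\pi_n^{\U}(X), 1+\mathfrak{m}_R)$. For $n=1$, the target is abelian, so any such homomorphism factors through $\pi_1^{\U}(X)^{\mathrm{ab}}$, which accounts for the abelianization in the statement.

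Finally, Cartier duality supplies the last step: for a commutative unipotent affine group scheme $G$ over $k$ one has $G^\vee(R) = \mathrm{Hom}_{R\text{-grp}}(G_R, \mathbb{G}_{m,R})$, and since every homomorphism from a unipotent $k$-group to $\mathbb{G}_{m,R}$ over Artin local $R$ lands in $1+\mathfrak{m}_R = \widehat{\mathbb{G}}_m(R)$, this equals $\mathrm{Hom}(G, 1+\mathfrak{m}_R)$. Applying this with $G = \pi_n^{\U}(X)$ (resp.\ $\pi_1^{\U}(X)^{\mathrm{ab}}$) and chaining the identifications produces the required natural isomorphism. I expect the main obstacle to lie in the Hurewicz-type step: rigorously extracting connectivity $\pi_i^{\U}(X) = 0$ for $0<i<n$ from the given cohomological vanishing and running the Postnikov/obstruction-theoretic computation of mapping spaces into $B^n(1+\mathfrak{m}_R)$ inside $\mathrm{AffSt}_{k*}$, where the usual simplicial/topological tools must be replaced by To\"en's affinization formalism.
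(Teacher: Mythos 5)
This statement is not proved in the paper at all: \cref{mr1} is quoted verbatim from \cite[Theorem~1.0.9]{soon}, so there is no in-paper argument to compare yours against. Judged on its own terms, your outline follows the natural route, and (as far as one can tell) the same general strategy as the cited source: identify $\Phi^n(X)(R)$ with $H^n_{\mathrm{\acute{e}t}}(X, 1+\mathfrak{m}_R\mathcal{O}_{X_R})$ via the split sequence of unit sheaves, regard $1+\mathfrak{m}_R$ as a smooth unipotent algebraic group over $k$ so that this becomes fppf cohomology, use the affinization adjunction together with the fact that $K(1+\mathfrak{m}_R,n)$ is an $n$-truncated affine stack and a Hurewicz-type connectivity statement to collapse the mapping space to $\mathrm{Hom}(\pi_n^{\U}(X), 1+\mathfrak{m}_R)$ (respectively $\mathrm{Hom}(\pi_1^{\U}(X)^{\mathrm{ab}}, 1+\mathfrak{m}_R)$ when $n=1$), and conclude by the standard fact that any homomorphism from a unipotent $k$-group to $\mathbb{G}_{m,R}$ over Artin local $R$ is congruent to the trivial character modulo $\mathfrak{m}_R$. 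The connectivity input you single out (that $H^i(X,\mathcal{O})=0$ for $0<i<n$ forces $\pi_i^{\U}(X)=0$ in that range) is genuinely a theorem about affine stacks and should be cited from \cite{soon}/To\"en rather than rederived loosely, but it is available.

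The concrete weakness is that the hypothesis $H^{n+1}(X,\mathcal{O})=0$ (and for $n=1$, the hypothesis $H^2(X,\mathcal{O})=0$) does no visible work in your chain of identifications: once the target is replaced by the $n$-truncated stack $K(1+\mathfrak{m}_R,n)$, nothing in cohomological degrees above $n$ can enter, so your appeal to it (``ensuring that no higher obstructions interfere'') is vacuous as written, and the ``induction on the length of $R$'' it is attached to is never used downstream. Relatedly, your opening sentence takes for granted that $\Phi^n(X)$ is pro-representable; in Artin--Mazur's framework this is precisely what the vanishing hypotheses purchase (vanishing below degree $n$ for pro-representability, $H^{n+1}(X,\mathcal{O})=0$ for formal smoothness), and it is what the phrase ``the Artin--Mazur formal group defined in this context'' refers to. You should either locate where these hypotheses enter your argument or justify explicitly that they are needed only to make the statement well-posed as one about formal groups. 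Note that \cref{isitmainthm} of the present paper shows that once all vanishing hypotheses are dropped one must replace $\Phi^n$ by its fppf sheafification $(\Phi^n)^{\mathrm{fl}}$ and $\pi_n^{\U}(X)$ by a coniveau $E_2$-term; so the hypotheses in \cref{mr1} are not decorative, and a proof sketch in which they play no role should be regarded as incomplete until this point is resolved.
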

 
The work in our paper is partly inspired by the above homotopy theoretic reconstruction of Artin--Mazur formal groups. In view of \cref{mr1}, the authors in \cite{soon} proposed the heuristic that the theory of Artin--Mazur formal groups could be viewed as a notion of \emph{homology} theory for unipotent homotopy theory. In our paper, we will make this precise and work within the framework of unipotent spectra to reconstruct the flat Artin--Mazur formal groups in general without any cohomology vanishing assumptions such as \cref{conditionintro} above. As we will see, this requires a significant amount of additional work and the idea of using the ``coniveau filtration."

\vspace{1mm}
\subsection{Main theorems}
Our first aim is to develop the foundations of unipotent spectra and establish several general results that closely reflect the category of usual spectra. 
Namely, we prove the following results.

\begin{theorem} Let $ k $ be a field.    
\begin{enumerate}
    \item The category of bounded below unipotent spectra over $k$ admits a natural \emph{$t$-structure} whose heart is equivalent to the abelian category of commutative affine unipotent group schemes over $k$ (\cref{prc}). 
    \item The $ \infty $-category of \emph{ind}-unipotent spectra is equipped with a natural symmetric monoidal structure that preserves small colimits separately in each variable (see \cref{ind-unispect}).

     \item The $\infty$-category of bounded below unipotent spectra over $ k $ embeds fully faithfully in the category of modules over a certain $\mathbb{E}_1$-algebra in spectra given by the endomorphism spectrum of $\mathbb{G}_a$ (see \cref{recog}). 
     \item Let $X$ be a pointed stack over $k$. The homotopy group schemes of the unipotent spectrum $\Sigma^\infty X$ recover the unipotent stable homotopy groups that can be defined using the Freudenthal suspension theorem for affine stacks \cite[Proposition~3.4.10]{soon} (see \cref{specfreudenthalcomp}). Namely, we have 
    $$\pi_i (\Sigma^\infty X) \simeq \varinjlim_{k}\pi_{i+k} ((\mathbb{U} \Sigma)^k (X)).$$
\end{enumerate}{}
\end{theorem}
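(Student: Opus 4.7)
My plan is to unwind the universal property of $\Sigma^\infty$ as the left adjoint to $\Omega^\infty\colon \mathrm{Sp}^U_A \to \mathrm{AffSt}_{A*}$ and then match the resulting formula to the Freudenthal colimit coming from \cite{soon}. By construction of $\mathrm{Sp}^U_A$ as the inverse limit along $\Omega$, an object $E$ is a sequence $(E_n)$ of pointed affine stacks together with equivalences $E_n \simeq \Omega E_{n+1}$, so its homotopy group schemes are given by $\pi_i(E) \simeq \pi_{i+n}(E_n)$ whenever $i+n \geq 0$. The first preliminary step is to identify the pointed suspension intrinsic to $\mathrm{AffSt}_{A*}$, i.e.\ the pushout $\ast \cup_X \ast$ computed inside this reflective subcategory, with $\mathbb{U}\Sigma X$; this follows because the reflector $\mathbb{U}$ is a left adjoint and hence preserves pushouts.

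Next I would invoke the standard formula for suspension spectra in a stabilization: since $\Sigma^\infty \dashv \Omega^\infty$, the $n$th space of $\Sigma^\infty X$ is computed as $\colim_k \Omega^k (\mathbb{U}\Sigma)^{k+n}(X)$ in $\mathrm{AffSt}_{A*}$. Choosing $n$ with $i+n \geq 0$, applying $\pi_{i+n}$, and using $\pi_{i+n}\circ \Omega^k = \pi_{i+n+k}$, one obtains
\[ \pi_i(\Sigma^\infty X) = \pi_{i+n}\bigl(\colim_k \Omega^k (\mathbb{U}\Sigma)^{k+n}(X)\bigr) = \varinjlim_{k}\pi_{i+k}\bigl((\mathbb{U}\Sigma)^k(X)\bigr), \]
after reindexing by the cofinal tail of the colimit. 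When $X$ is a (not necessarily affine) pointed stack, the same argument applies after replacing $X$ by $\mathbb{U}(X)$ and using $\mathbb{U}\Sigma\mathbb{U} \simeq \mathbb{U}\Sigma$.

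The hard part is justifying the two moves in the last display: that the stabilization colimit formula holds inside $\mathrm{AffSt}_{A*}$, and that $\pi_{i+n}$ commutes with the resulting sequential colimit. Neither is automatic, since $\mathrm{AffSt}_{A*}$ is a reflective subcategory of pointed stacks in which filtered colimits must be computed after unipotent completion, and $\pi_j$ valued in unipotent group schemes need not preserve them. The key input here is the Freudenthal suspension theorem for affine stacks \cite[Proposition~3.4.10]{soon}, which shows that the transition map $\pi_{i+k}((\mathbb{U}\Sigma)^k X) \to \pi_{i+k+1}((\mathbb{U}\Sigma)^{k+1} X)$ becomes an isomorphism of unipotent group schemes once $k$ is large relative to $i$ and the connectivity of $X$. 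Consequently the tower of homotopy group schemes is eventually constant, the relevant sequential colimits in $\mathrm{AffSt}_{A*}$ exist and are represented by their eventual value, and $\pi_j$ trivially commutes with them; feeding this back into the displayed chain of equalities yields the claim.
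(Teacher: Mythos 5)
Your proposal only addresses item (4) of the statement; items (1)--(3) (the $t$-structure of \cref{prc}, the symmetric monoidal structure of \cref{ind-unispect}, and the recognition theorem of \cref{recog}) are not touched at all, so as a proof of the theorem as stated it is incomplete. I will therefore comment on your argument for (4), i.e.\ \cref{specfreudenthalcomp}.

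There is a genuine gap at the step you yourself flag as "the hard part." The formula $\Omega^{\infty-n}\Sigma^\infty X \simeq \colim_k \Omega^k(\mathbb{U}\Sigma)^{k+n}(X)$ is \emph{not} a standard fact you can invoke in $\mathrm{AffSt}_{A*}$: it holds in an $\infty$-topos because $\Omega$ commutes with filtered colimits there, but $\mathrm{AffSt}_{A*}$ is only a reflective subcategory of $\mathrm{St}_{A*}$, filtered colimits in it are computed by affinizing the colimit of stacks, and $\Omega$ is not known to commute with such colimits. Without that, the prescription $n \mapsto \colim_k \Omega^k(\mathbb{U}\Sigma)^{k+n}(X)$ is only a prespectrum and need not compute the left adjoint $\Sigma^\infty$ (in general one needs a transfinite spectrification). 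Your proposed fix via Freudenthal does not close this: Freudenthal makes the tower of homotopy \emph{group schemes} $\pi_{i+k}((\mathbb{U}\Sigma)^k X)$ eventually constant in each fixed degree, but the towers of \emph{stacks} $\Omega^k(\mathbb{U}\Sigma)^{k+n}X$ are not eventually constant, so the sequential colimits are not "represented by their eventual value," and the commutation of $\pi_j$ and of $\Omega$ with these colimits, as well as the comparison of the colimit in $\mathrm{AffSt}_{A*}$ with the colimit in $\mathrm{St}_{A*}$, all still require proof. Relatedly, since $\pi_i$ here is a sheaf on $\mathrm{Aff}_k$, any identification of it must be checked after base change to arbitrary $\mathrm{Spec}\,A$, and the needed compatibility $\mathbb{U}\Sigma(Y)\times\mathrm{Spec}\,A \simeq \mathbb{U}\Sigma(Y\times\mathrm{Spec}\,A)$ is a nontrivial input (affinization does not commute with products in general, cf.\ \cref{productissue}); in the paper this is exactly what the almost-finitary K\"unneth machinery (\cref{keyob3}) supplies. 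The paper's actual proof avoids your colimit formula altogether: it describes $\mathrm{Sp}^{\U}_A$ as the colimit of the tower of categories along $\mathbb{U}\Sigma$, computes mapping spaces out of the unipotent sphere over varying bases using the base-change square, identifies the resulting sheaf-level direct limit with $\pi_i^{\mathrm{st},\U}$ via Freudenthal, and finally checks that the level-zero insertion functor agrees with $\Sigma^\infty$ by a connectivity argument. If you want to pursue your route, you would need to prove (not assume) that the one-step colimit in $\mathrm{St}_{A*}$ is already an affine stack levelwise (e.g.\ via Freudenthal plus To\"en's representability criterion) and then verify the universal property of the resulting spectrum object; as written, these verifications are missing.
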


With a view towards application to Artin--Mazur formal groups, we study the $\mathbb{Z}$-linearization of unipotent spectra, leading to the notion of unipotent homology.
\begin{definition} [Unipotent homology]
    Let $ k $ be a field and $ X $ a stack over $ k $. 
    We define the \emph{unipotent homology} $H^{\uu}_* (Y):= \Sigma^\infty_+ Y \otimes \mathbb{Z}$ of $ X $ to be the $ \mathbb{Z} $-linearization of $ \Sigma^\infty_+ X $. Let $ Y $ be a finite-dimensional scheme over $ k $ and $y \in Y$ be a point. In \cref{defn:unipotent_local_homology}, we introduce a \emph{local variant of unipotent homology} which we denote by $ H^\uu_{*, y}(Y_y)$. We denote $\pi_i(H^{\uu}_* (Y))$ (resp.~$\pi_i( H^\uu_{*, y}(Y_y)$)) by $H^\uu_*(Y)$ (resp.~$H^\uu_{i,y}(Y_y)$).
\end{definition}
We prove the following profiniteness result for unipotent homology group schemes, which can be viewed as an analogue of \cite[Theorem~1.0.5]{soon}.

\begin{theorem}
    Let $X$ be a stack over a field $k$ of characteristic $p$ such that $H^i (X, \mathcal{O})$ is a torsion $k_\sigma [F]$-module for each $i \ge 0.$ 
    Then $H^\uu_{i}(X)$ is a profinite unipotent commutative group scheme for each $i \ge 0 $ (see \cref{profinitenessthm}).  

\end{theorem}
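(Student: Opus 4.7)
My plan is to deduce the result from the analogous profiniteness statement for unipotent homotopy group schemes \cite[Theorem~1.0.5]{soon} by reducing the homology computation to a statement about stable unipotent homotopy groups. First, using the splitting $\Sigma^\infty_+ X \simeq \Sigma^\infty_+ (\ast) \oplus \Sigma^\infty X$ in the pointed setting, I reduce to computing $\pi_i(\Sigma^\infty X \otimes \mathbb{Z})$ for a pointed stack $X$ satisfying the hypothesis. By the Freudenthal-type comparison in part (4) of the stated theorem,
\[
\pi_i(\Sigma^\infty X) \;\simeq\; \varinjlim_k \pi_{i+k}\bigl((\mathbb{U}\Sigma)^k X\bigr).
\]
The hypothesis that $H^i(X,\mathcal{O})$ is a torsion $k_\sigma[F]$-module is preserved under suspension, as the reduced cohomology of $\Sigma X$ is a shift of that of $X$; hence the hypothesis propagates to $(\mathbb{U}\Sigma)^k X$. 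Applying \cite[Theorem~1.0.5]{soon} to each term shows each $\pi_{i+k}((\mathbb{U}\Sigma)^k X)$ is a profinite commutative unipotent group scheme. The colimit is filtered and stabilizes for $k$ large (by the unipotent Freudenthal range in \cite{soon}), so $\pi_i(\Sigma^\infty X)$ itself is profinite.

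Next, I would compare $\pi_i(\Sigma^\infty X)$ with $H^\uu_i(X) = \pi_i(\Sigma^\infty X \otimes \mathbb{Z})$ via the unit map $\mathbb{S}^\uu \to \mathbb{Z}$ in $\mathrm{Sp}^\uu_k$, whose target is the Eilenberg--MacLane object $\mathbb{H}$. Using the $t$-structure from part (1) of the theorem to form a Postnikov filtration of $\mathbb{H}$, whose graded pieces are shifts of profinite commutative unipotent group schemes (namely $\pi_q \mathbb{H}$, identified with the homotopy of the unipotent completion of $H\mathbb{Z}$), one obtains a convergent spectral sequence whose $E^2$-page is built from $\pi_p(\Sigma^\infty X)$ tensored with $\pi_q \mathbb{H}$ and which abuts to $H^\uu_{p+q}(X)$. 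To conclude, I would check that the subcategory of profinite objects in the heart is closed under extensions, quotients, kernels, and the countable inverse limits arising from the filtration; via the Dieudonné-type equivalence, this translates to a closure property for a Serre-like subcategory of $W(k)[F,V]$-modules, which is standard.

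The main obstacle is the convergence and tracking of the profinite property through the spectral sequence of the second step: one must show that the abutment remains profinite, not merely a colimit or extension of profinite group schemes (a pitfall, since profiniteness is not preserved by arbitrary filtered colimits in the heart). A cleaner alternative, which I would pursue in parallel, is to directly mimic the argument of \cite[Theorem~1.0.5]{soon} for the $\mathbb{Z}$-module unipotent spectrum $\Sigma^\infty_+ X \otimes \mathbb{Z}$: invoking the heuristic from the table that $\mathbb{Z}$-modules in unipotent spectra play the role of chain complexes, one can hope to identify $H^\uu_*(X)$ with the homology of an explicit chain complex whose terms inherit the torsion $k_\sigma[F]$-module property from the hypothesis on $H^*(X,\mathcal{O})$, from which profiniteness of each $H^\uu_i(X)$ follows term-by-term in the Dieudonné picture.
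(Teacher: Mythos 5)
Your first step (profiniteness of the stable homotopy group schemes $\pi_i(\Sigma^\infty X)$ via \cref{specfreudenthalcomp} and the unstable profiniteness theorem of Mondal--Reinecke) is plausible, modulo the fact that the theorem you want to prove concerns an arbitrary stack $X$ with torsion cohomology, which need not be pointed or cohomologically connected, so the splitting $\Sigma^\infty_+X \simeq \Sigma^\infty_+(\ast)\oplus\Sigma^\infty X$ and the appeal to the pointed, connected setting of \cite[Theorem~1.0.5]{soon} are not available as stated. The genuine gap, however, is in your second step: the passage from $\pi_*(\Sigma^\infty X)$ to $H^\uu_*(X)$. The spectral sequence you invoke, with $E^2$-page ``$\pi_p(\Sigma^\infty X)$ tensored with $\pi_q\mathbb{H}$,'' has no construction in this framework. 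The category $\mathrm{Sp}^\U_k$ carries no symmetric monoidal smash product (the paper only produces one on ind-unipotent spectra, \cref{ind-unispect}), $H^\uu_*(X)$ is defined as a left adjoint rather than as a literal smash of $\Sigma^\infty_+X$ against a filtered Eilenberg--MacLane object, the heart (commutative affine unipotent group schemes) has no tensor product for which a K\"unneth/Atiyah--Hirzebruch $E^2$-term would make sense, and the homotopy group schemes of the unipotent completion of $H\mathbb{Z}$ are nowhere computed. Even granting such a spectral sequence, you would still face the convergence and closure issues you yourself flag; your fallback (``mimic the argument of \cite[Theorem~1.0.5]{soon} for the $\mathbb{Z}$-linearization via an explicit chain complex'') is a program, not an argument.

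For comparison, the paper proves the statement without ever passing through stable homotopy group schemes: it uses the homology-filtration spectral sequence \cref{ss1} with $G=\mathbb{G}_a$, namely $E_2^{p,q}=\mathrm{Ext}^p(H^\uu_q(X),\mathbb{G}_a)\Rightarrow H^{p+q}(X,\mathcal{O})$, obtained from the Postnikov filtration of $H^\uu_*(X)$ and the duality \cref{dual}. Profiniteness of $H^\uu_i(X)$ is equivalent (Dieudonn\'e-theoretically) to $\mathrm{Hom}(H^\uu_i(X),\mathbb{G}_a)=E_2^{0,i}$ being a torsion $k_\sigma[F]$-module, and this is extracted from the torsion hypothesis on the abutment by a double induction (ascending in $i$, descending in the page $r$), whose engine is \cref{use1}: for a profinite unipotent $M$, all $\mathrm{Ext}^i(M,\mathbb{G}_a)$ are torsion, proved via the Breen--Deligne resolution. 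This ``run the spectral sequence backwards from the abutment'' step is exactly the content your proposal outsources to an unavailable comparison, so as written the proof does not go through.
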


Next, we equip the unipotent homology $H^{\uu}_* (X)$ of a scheme $X$ with a coniveau filtration--which we denote by $F^*H^\uu_*(X)$--following work of To\"{e}n (see \cref{filtrationdeff}). 
We show in \cref{descriptionofgradedpiece} that the graded pieces of this filtration can be described as 
 \begin{equation}\label{congraded}    \mathrm{gr}^i H^\uu_*(X) \simeq \prod_{x \in X^{(i)}} H^\uu_{*, x}(X_x),  
 \end{equation}
where $X^{(i)}$ denotes the set of points of $X$ of codimension $i$. 
Now the coniveau filtration gives rise to the following ``coniveau spectral sequence" (see \cref{congraded})
$$ E_1^{i,j} = \prod_{x \in X^{(i)}} H_{i+j,x}^\uu (X_x) \implies H_{i+j}^\uu(X) $$ 
whose $ E_1 $-page consists of unipotent group schemes. We prove the following result generalizing the work of To\"en for smooth schemes which relies on certain purity results.  
\begin{theorem} [{\cref{filtrationuse}}]
    Let $ X $ be a finite-dimensional Cohen–Macaulay scheme over $ k $. 
    Then its unipotent homology $H^\uu_*(X)$, equipped with the coniveau filtration lies in the connective part of the Beilinson $t$-structure in the stable $\infty$-category of $\mathbb{Z}$-module objects in unipotent spectra over $ k $. 
\end{theorem}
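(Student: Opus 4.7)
The plan is to verify the connectivity condition via the explicit description of the graded pieces, reducing the theorem to a local statement which is then controlled by the Cohen--Macaulay hypothesis.

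First I would invoke the standard characterization of the Beilinson $t$-structure: a filtered object $F^*M$ in a stable $\infty$-category equipped with a $t$-structure lies in the connective part of the Beilinson $t$-structure if and only if, for each $i \in \mathbb{Z}$, the graded piece $\mathrm{gr}^i F^*M$ lies in homological degrees $\geq i$ with respect to the given $t$-structure. Applied to $\mathbb{Z}$-module objects in $\mathrm{Sp}^\uu_k$ equipped with the $t$-structure of \cref{prc}, this reduces the theorem to showing $\mathrm{gr}^i H^\uu_*(X) \in (\mathrm{Sp}^\uu_k)_{\geq i}$ for every $i$.

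Next I would use the identification $\mathrm{gr}^i H^\uu_*(X) \simeq \prod_{x \in X^{(i)}} H^\uu_{*,x}(X_x)$ from \cref{congraded}. Products preserve the connective part of the relevant $t$-structure, and the finite-dimensionality of $X$ limits the range of codimensions that can appear, so it suffices to show that for each point $x$ of codimension $i$, the local unipotent homology $H^\uu_{*,x}(X_x)$ lies in degrees $\geq i$.

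The heart of the argument is this local statement, and this is where the Cohen--Macaulay hypothesis enters. For a codimension-$i$ point $x$ in a Cohen--Macaulay scheme, the local ring $\mathcal{O}_{X,x}$ has depth equal to $i$, so Grothendieck's classical vanishing theorem for local cohomology supported at the closed point gives $H^j_{\{x\}}(\mathrm{Spec}\,\mathcal{O}_{X,x}, \mathcal{F}) = 0$ for $j < i$ and all reasonable coefficient sheaves $\mathcal{F}$. Unwinding the construction of $H^\uu_{*,x}$ in \cref{defn:unipotent_local_homology}, the local unipotent homology is built from cohomology with support at $x$ of structure-sheaf-type coefficients on the affine stack model of the localization $X_x$; filtering via the Postnikov/Eilenberg--MacLane tower and using the representing stacks $B^n G$ of \cref{examintro}, one then propagates the Cohen--Macaulay vanishing into the connectivity bound $\pi_n H^\uu_{*,x}(X_x) = 0$ for $n < i$.

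The main obstacle is precisely this last translation: one must rigorously relate the local unipotent homology, which is defined via an affine-stack/unipotent-spectrum construction, to honest local cohomology of the structure sheaf in a way that faithfully transfers the Cohen--Macaulay vanishing. This is a genuine \emph{unipotent purity} statement that extends To\"en's smooth-case purity (where the condition is local freeness) to the Cohen--Macaulay setting, replacing smoothness by the numerical equality depth $=$ codimension. Carrying this step out carefully, likely via a spectral sequence whose $E_2$-page is given in terms of local cohomology and whose convergence controls $\pi_*$ of $H^\uu_{*,x}(X_x)$, is where the real work of the proof lies.
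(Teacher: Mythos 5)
Your proposal is correct and follows essentially the same route as the paper: reduce via the Beilinson $t$-structure to $i$-connectivity of the graded pieces, identify these with the products $\prod_{x \in X^{(i)}} H^\uu_{*,x}(X_x)$, and deduce the local purity statement from the Cohen--Macaulay vanishing of local cohomology. The only precision worth adding is that the paper's spectral sequence runs in the opposite direction to the one you sketch --- it has $E_2^{p,q}=\mathrm{Ext}^p\left(H^\uu_{q,x}(X_x),\mathbb{G}_a\right)$ (the unknowns) converging to $H^{p+q}_x(X_x,\mathcal{O})$ (which vanishes in degrees $<i$ by the depth condition), arising from the Postnikov filtration and the duality $R\mathrm{Hom}\left(H^\uu_{*,x}(X_x),G\right)\simeq R\Gamma_x(X_x,G)$ --- and one then concludes $H^\uu_{q,x}(X_x)=0$ for $q<i$ by induction on $q$ together with the fact that a nonzero unipotent group scheme admits a nonzero homomorphism to $\mathbb{G}_a$.
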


\begin{definition}
    One can define $J_*^\uu(X):= \tau_{\ge 0}^\mathrm{B} (F^* H^\uu_* (X))$, which has the natural structure of a chain complex of commutative unipotent group schemes, since it lies in the heart of the Beilinson $t$-structure. See \cref{ntn:filtered_generalized_loc_jacobian}.
\end{definition}
Using $J_*^\uu(X)$, we prove the following result regarding cohomology with coefficients in a commutative unipotent group scheme, which generalizes a result of To\"{e}n in the smooth case \cite[Proposition 3.7]{Toee23}.
\begin{theorem}[{cf.~\cref{animpresult}}]
 Let $X$ be a Cohen--Macaulay scheme over a field $k$. For any commutative unipotent group scheme $G$ over $k$, we have an isomorphism 
$$
R\mathrm{Hom}_{D(\mathrm{Uni})}(J_*^\uu(X), G ) \xrightarrow{\sim} R\Gamma (X, G) .$$Here, $D(\mathrm{Uni})$ denotes the derived category of the abelian category of unipotent commutative group schemes over $k.$
\end{theorem}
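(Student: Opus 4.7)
The plan is to reduce the equivalence to two ingredients: a base adjunction identifying $R\mathrm{Hom}_{D(\mathrm{Uni})}(H^\uu_*(X), G) \simeq R\Gamma(X, G)$, and a comparison between $R\mathrm{Hom}(J^\uu_*(X), G)$ and $R\mathrm{Hom}(H^\uu_*(X), G)$ using the coniveau filtration.

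For the first ingredient, I would use that $G$, viewed as an Eilenberg--MacLane unipotent spectrum, has $n$th space the affine stack $B^n G$, and that maps from $X$ into $B^n G$ compute $H^n_{\mathrm{fppf}}(X, G)$. Combined with the $\Sigma^\infty_+ \dashv \Omega^\infty$ adjunction and the universal property of the $\mathbb{Z}$-linearization $H^\uu_*(X) = \Sigma^\infty_+ X \otimes \mathbb{Z}$, this directly yields the identification $R\mathrm{Hom}_{D(\mathrm{Uni})}(H^\uu_*(X), G) \simeq R\Gamma(X, G)$.

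The second ingredient is where the Cohen--Macaulay hypothesis enters crucially. Applying $R\mathrm{Hom}(-, G)$ to the filtered object $F^* H^\uu_*(X)$ yields a spectral sequence whose $E_1$-page is $\prod_{x \in X^{(i)}} \mathrm{Ext}^{j}_{\mathrm{Uni}}(H^\uu_{*,x}(X_x), G)$. By the previous theorem, the Cohen--Macaulay hypothesis places $F^* H^\uu_*(X)$ in the connective part of the Beilinson $t$-structure, forcing each graded piece $\prod_{x \in X^{(i)}} H^\uu_{*,x}(X_x)$ into sufficiently high degrees (reflecting the vanishing of local cohomology of a CM ring at a codimension-$i$ point below degree $i$). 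Under this connectivity, the Beilinson heart truncation $J^\uu_*(X)$, viewed as a chain complex in $D(\mathrm{Uni})$, retains exactly the data needed to reconstruct $R\mathrm{Hom}(F^* H^\uu_*(X), G)$: the induced spectral sequence for $R\mathrm{Hom}(J^\uu_*(X), G)$ has the same $E_1$-page and converges to the same target.

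The main obstacle will be showing that the fiber of the truncation map $F^* H^\uu_*(X) \to J^\uu_*(X)$ in the Beilinson $t$-structure contributes nothing to $R\mathrm{Hom}$ into $G$. This amounts to the vanishing of $R\mathrm{Hom}$ from the strictly positive part (in the Beilinson $t$-structure) into the heart-concentrated object $G$, which relies on the precise connectivity estimates afforded by the Cohen--Macaulay hypothesis. The generalization from Toën's smooth case to the CM case is the principal novelty: in the smooth setting one has a genuine Gersten resolution, whereas in the CM case one relies on the weaker but sufficient Beilinson-$t$-structure control established in the previous theorem, together with the finite-dimensionality of $X$ to ensure convergence.
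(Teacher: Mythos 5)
Your skeleton (use \cref{dual} for the base identification, then compare the two spectral sequences coming from the coniveau filtration on $H^\uu_*(X)$ and the naive filtration on $J_*^\uu(X)$) is the same as the paper's, but the step you flag as the ``main obstacle'' is precisely where the theorem lives, and the justification you offer — that Beilinson connectivity forces the two $E_1$-pages to agree, equivalently that the fiber of $F^*H^\uu_*(X) \to F^*J_*^\uu(X)$ maps trivially into $G$ — is not correct as a general principle and is not supplied with an argument. Mapping out of the Beilinson-positive part into a heart-type object does not vanish in higher Ext degrees (already for an ordinary $t$-structure, $\mathrm{Ext}^1$ out of a $1$-connective object into the heart is generally nonzero). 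Concretely, the graded comparison one must prove is that
$\mathrm{Ext}^j_{D(\mathrm{Uni})}\bigl(\prod_{x \in X^{(i)}} H^\uu_{i,x}(X_x), G\bigr) \to \bigoplus_{x\in X^{(i)}} H^{i+j}_x(X_x,G)$
is an isomorphism for all $j$. Purity (\cref{connect}) handles only $j \le 0$ (\cref{lw0}). For $j=1$ the identification $H^{i+1}_x(X_x,G)\simeq \mathrm{Ext}^1\bigl(H^\uu_{i,x}(X_x),G\bigr)$ is not formal, because $H^\uu_{*,x}(X_x)$ can have homotopy above degree $i$ which a priori contributes to $\mathrm{Ext}^{i+1}$; the paper's \cref{micro1} needs injectivity of the comparison map, the case $G=\mathbb{G}_a$, and a d\'evissage through the Verschiebung filtration with Milnor-sequence arguments. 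For $j \ge 2$ one needs two independent vanishing results: $H^{i+j}_x(X_x,G)=0$ for an \emph{arbitrary} commutative unipotent $G$ (\cref{lw}; the Cohen--Macaulay vanishing for $\mathcal{O}$-coefficients must be propagated through products of $\mathbb{G}_a$, $V$-d\'evissage and $R^1\varprojlim$), and $\mathrm{Ext}^j_{D(\mathrm{Uni})}(\cdot,G)=0$ (\cref{fckd}, \cref{fckd2}, resting on Demazure--Gabriel). None of this is in your outline; asserting ``the induced spectral sequence has the same $E_1$-page'' restates the theorem rather than proving it.

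A second, distinct gap: you never address the change of ambient category. Your proposed reduction compares $R\mathrm{Hom}$'s taken in unipotent $\mathbb{Z}$-modules (sheaf-level Ext), but the statement's source is $R\mathrm{Hom}_{D(\mathrm{Uni})}(J_*^\uu(X),G)$, i.e.\ Ext in the derived category of the abelian category of unipotent group schemes, and for a complex these need not agree: sheaf-level higher Ext out of a unipotent group scheme is not a priori zero, whereas $\mathrm{Ext}^{\ge 2}_{D(\mathrm{Uni})}$ into $G$ vanishes by Demazure--Gabriel. The paper resolves this by proving the degreewise statements above on both sides of the map \cref{filunder}. Relatedly, your $E_1$-term ``$\prod_{x}\mathrm{Ext}^j_{\mathrm{Uni}}(H^\uu_{*,x}(X_x),G)$'' does not parse ($H^\uu_{*,x}(X_x)$ is not a single group scheme), and moving the infinite products $\prod_{x\in X^{(i)}}$ past $\mathrm{Ext}$, or matching them with the direct sums of local cohomologies, requires the cocompactness statements \cref{useful} and \cref{huseful}, which your proposal does not engage with.
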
{}
Now, let $X$ be a smooth proper scheme over $k.$ 
Let $(\Phi^n_X)^\mathrm{fl}$ denote the sheafification of the functor $\Phi^n_X$ defined by Artin--Mazur (see \cref{artinmazur}) for the fppf topology on $\mathrm{Art}_k^\mathrm{op}.$ Then Bragg--Olsson proved that $(\Phi^n_X)^\mathrm{fl}$ is pro-representable for every $n$. 
The following result generalizes \cref{mr1} without any vanishing assumptions and recover the Artin--Mazur formal groups in general; this addresses \cite[Question~(a)]{MR457458} due to Artin--Mazur. 
\begin{maintheorem}[{\cref{isitmainthm}}]\label{introartinmaz}
  Let $X$ be a smooth proper scheme over a perfect field $k$ of characteristic $p>0$. Then for all $i \ge 0$, the Cartier dual of the flat Artin--Mazur formal group $(\Phi_X^i)^\mathrm{fl}$ is canonically isomorphic to the unipotent group scheme $E_2^{i,0}$, arising in the second page of the coniveau spectral sequence.
\end{maintheorem}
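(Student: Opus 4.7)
The plan is to exploit the comparison theorem $R\mathrm{Hom}_{D(\mathrm{Uni})}(J_*^\uu(X), G) \simeq R\Gamma(X, G)$ applied to suitable unipotent group schemes that, under Cartier duality, capture the multiplicative formal group $\widehat{\bG}_m$. Since $\Phi^i_X(R)$ records the formal part of $H^i(X_R, \bG_m)$, and since $\widehat{\bG}_m$ is Cartier-dual to the unipotent group scheme $\mathbb{H}$ (or more precisely to its Frobenius-finite truncations $\mathbb{H}_n$, dual to $\mu_{p^n}$), the unipotent framework can access the Artin--Mazur formal group indirectly through its Cartier dual.

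First, I would identify the $i$-th cohomology of the chain complex $J_*^\uu(X)$ in the abelian category $\mathrm{Uni}$ with the coniveau term $E_2^{i,0}$. This is an essentially formal consequence of the definition $J_*^\uu(X) = \tau_{\geq 0}^{\mathrm{B}}(F^* H^\uu_*(X))$: the Beilinson heart for a filtered stable $\infty$-category is equivalent to chain complexes in the underlying heart, and under this equivalence the associated chain complex is $(E_1^{*,0}, d_1)$, whose cohomology is by construction $E_2^{*,0}$.

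Second, I would feed the main comparison theorem with $G = \mathbb{H}_n$ and run the hypercohomology spectral sequence
\begin{equation*}
E_2^{p,q} = \mathrm{Ext}^p_{\mathrm{Uni}}(H_{-q}(J_*^\uu(X)), \mathbb{H}_n) \Longrightarrow H^{p+q}(X, \mathbb{H}_n).
\end{equation*}
By the previous step, $H_{-q}(J^\uu_*(X)) = E_2^{-q,0}$, so the edge morphism produces a natural pairing between $E_2^{i,0}$ and $R\Gamma(X, \mathbb{H}_n)$. Cartier-dualizing $\mathbb{H}_n \rightsquigarrow \mu_{p^n}$ and taking the limit in $n$ translates these pairings into the usual multiplicative cohomology $R\Gamma(X, \widehat{\bG}_m)$; evaluating on an Artinian test ring $R$ with maximal ideal $\mathfrak{m}_R$ should identify $\mathrm{Hom}_{\mathrm{Uni}}(E_2^{i,0}, \mathbb{H}_n \otimes R)$ with points of $H^i(X, 1+\mathfrak{m}_R\cO_X)$, i.e.~with $\Phi^i_X(R)$ after taking the relevant cokernel/image in the long exact sequence associated to $1\to 1+\mathfrak{m}_R\cO_X\to \bG_m\otimes R\to \bG_m\to 1$. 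Finally, since $E_2^{i,0}$ is a unipotent group scheme, the functor $R\mapsto \mathrm{Hom}(E_2^{i,0}, \mu_{p^n}\otimes R)$ is automatically an fppf sheaf, so the formal group we obtain on the Cartier-dual side is the sheafified $(\Phi^i_X)^{\mathrm{fl}}$ produced by Bragg--Olsson/Raynaud.

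The main obstacle will be the passage between the finite-type truncations and the full pro/ind objects, together with the careful extraction of the edge term from the hypercohomology spectral sequence. Cartier duality is cleanest at the finite level ($\mathbb{H}_n \leftrightarrow \mu_{p^n}$), but the Artin--Mazur formal group is a priori a filtered colimit phenomenon in $n$; one must verify that the spectral sequence edge maps are compatible with these limits and with the residue sequences used to define the coniveau filtration. A secondary subtlety is matching the cokernel in the long exact sequence coming from $1+\mathfrak{m}_R\cO_X\hookrightarrow \bG_m\otimes R$ with the intrinsic $E_2$-identification: one expects that the higher terms of the hypercohomology spectral sequence exactly account for the difference between $H^i(X,\widehat{\bG}_m\otimes\mathfrak{m}_R)$ and its image in $H^i(X,\bG_m\otimes R)$, which is the precise content of the Bragg--Olsson sheafification.
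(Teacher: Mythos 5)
Your first step (identifying $H_i(J_*^\uu(X))$ with $E_2^{i,0}$ and feeding the comparison theorem \cref{animpresult} into a hypercohomology spectral sequence) is exactly how the paper begins, and is fine. But the route you take from there has a genuine gap, and it is located precisely where the real work lies. Pairing against $\mathbb{H}_n\simeq \underline{\mathbb{Z}/p^n}$ is the wrong duality for this problem: $\mathbb{H}_n$ is \'etale, and so is its Weil restriction along any Artinian $k$-algebra $R$, so every homomorphism from a \emph{connected} unipotent constituent of $E_2^{i,0}$ to $\mathbb{H}_n\otimes R$ vanishes. These connected constituents genuinely occur: for a supersingular K3 surface one has $\Phi^2_X=\widehat{\mathbb{G}}_a$, whose Cartier dual is connected unipotent, so all the Hom-groups your pairing produces are zero while the formal group you are trying to recover is nontrivial. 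To capture the full Cartier dual you must pair either into $W_n$ (Dieudonn\'e theory, as the paper does) or into the unipotent kernels of $\mathrm{Res}_{R/k}\mathbb{G}_m$ (i.e.\ the group schemes $S\mapsto 1+\mathfrak{m}_R\otimes S$); your colimit over $\mu_{p^n}$ does not substitute for this.

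Even granting the right coefficients, the two decisive steps are left at the level of ``one expects''. First, the edge map of your spectral sequence is an isomorphism only after you control the off-edge terms: the paper uses the vanishing $\mathrm{Ext}^q_{D(\mathrm{Uni})}(-,W_n)=0$ for $q\ge 2$ to degenerate the sequence, and then the nontrivial \cref{usethislemma} ($\varinjlim_n\mathrm{Ext}^1(G,W_n)=0$) to kill the remaining $\mathrm{Ext}^1(E_2^{i-1,0},W_n)$ term in the colimit; in your setting the analogous statement (that the $\mathrm{Ext}^1$-contribution with coefficients in $1+\mathfrak{m}_R\mathcal{O}$-type groups dies after fppf sheafification in $R$) is neither formulated nor proved, and it is not obvious. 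Second, the identification with $(\Phi^i_X)^{\mathrm{fl}}$ requires more than the pro-representability of Bragg--Olsson that you invoke: the paper's proof of \cref{isitmainthm} rests on their Theorem~12.1, which computes the Dieudonn\'e module of $((\Phi^i_X)^{\mathrm{fl}})^\vee$ as $\varinjlim_n H^i(X,W_n)$; once both sides are seen to have this same Dieudonn\'e module, Dieudonn\'e theory concludes. Your proposal would instead need to match functors of points on Artinian rings directly against $H^i(X,1+\mathfrak{m}_R\mathcal{O}_X)$, which amounts to reproving (rather than using) that input. As written, the argument does not go through.
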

One may compare \cref{introartinmaz} to Bloch and Ogus's description of the $E_2$-page of the coniveau spectral sequence in certain cohomology theories \cite{bloch-ogus}. 
This also raises the following question which is not pursued in our paper.

\begin{question}
 Is there a classical description of the unipotent group schemes $E^{i,j}_2$ for $j>0 $ arising from the coniveau spectral sequence on unipotent homology of a smooth proper scheme?   
\end{question}

Let us now return to our other primary motivation: providing a natural framework for Milne's duality theorems. In order to do this, in \cref{section5}, we introduce the notion of perfect unipotent spectrum over a perfect field $k$ of characteristic $p>0$; this is defined to be a spectrum object in the category of perfect affine stacks (see \cref{perfectunispec}). In view of the equivalence between affine stacks and coconnective derived rings, the category of perfect affine stacks corresponds to the subcategory of coconnective derived rings on which the Frobenius map is an isomorphism. This implies that for a unipotent spectrum to be perfect is a property, as opposed to any additional structure. 

Now, similarly to \cref{examintro}, any perfect, unipotent, commutative affine group scheme can be viewed as an perfect unipotent spectrum. In \cref{perfunispec}, we isolate a class of perfect unipotent spectra whose homotopy group schemes are perfect, unipotent group schemes of quasi-finite type (see \cref{subsection:perfect_qft_group_schemes}); such objects are called quasi-finite type perfect unipotent spectra. We show that there is a good theory of duality for such objects, which extends Milne's duality \cite{Milne111}.

\begin{maintheorem}\label{introduality}
Let $k$ be a perfect field of characteristic $p$. 

\begin{enumerate}
 \item   ({\cref{duality}}) Let $(\bF_p- \Mod^{\U, \mathrm{perf}, \mathrm{ft}}_{k})^{\on{bd}} $ denote the category of quasi-finite type perfect unipotent $\bF_p$-modules over $k$ which are bounded with respect to the $t$-structure on unipotent spectra. Then the functor 
\[
R\underline{\Hom}( - , \bZ / p): (\bF_p-\Mod(\on{St}_k))^{\op} \to \bF_p-\Mod(\on{St}_k)
\]
restricts to an autoduality of $(\bF_p- \Mod^{\U, \mathrm{perf}, \mathrm{ft}}_{k})^{\on{bd}}$.   
\item  (\cref{duality Q/Z}) Let $(\bZ- \Mod^{\U, \mathrm{perf}, \mathrm{ft}}_{k})^{\on{bd}} $  denote the category of quasi-finite type perfect unipotent $\bZ$-modules over $k$ which are bounded with respect to the $t$-structure on unipotent spectra. Then the functor 
\[
R\underline{\Hom}( - , \bQ_p / \bZ_p): (\bZ-\Mod(\on{St}_k))^{\op} \to \bZ-\Mod(\on{St}_k)
\]
restricts to an autoduality of $(\bZ- \Mod^{\U, \mathrm{perf}, \mathrm{ft}}_{k})^{\on{bd}}$. 
\end{enumerate}
\end{maintheorem}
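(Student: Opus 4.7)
The plan is to establish part (1) by devissage down to the heart of the $t$-structure on bounded unipotent spectra, verify the duality on a generating set of heart objects, and then deduce part (2) from (1) via the fiber sequences linking $\bZ/p^n$ and $\bQ_p/\bZ_p$.

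For part (1), I would first use the $t$-structure on $(\bF_p\text{-}\Mod^{\U,\mathrm{perf},\mathrm{ft}}_{k})^{\on{bd}}$ to reduce biduality for bounded objects to the case of heart objects. Any bounded $M$ admits a finite Postnikov tower whose associated graded pieces lie in the heart, so provided one shows (a) that $R\underline{\Hom}(M,\bZ/p) \in (\bF_p\text{-}\Mod^{\U,\mathrm{perf},\mathrm{ft}}_{k})^{\on{bd}}$ whenever $M$ is in the heart, and (b) that the biduality natural transformation $M \to R\underline{\Hom}(R\underline{\Hom}(M,\bZ/p),\bZ/p)$ is an equivalence on such $M$, a standard five-lemma argument applied to the induced triangles propagates the conclusion to all of $(\bF_p\text{-}\Mod^{\U,\mathrm{perf},\mathrm{ft}}_{k})^{\on{bd}}$.

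Next I would analyze the heart itself, whose objects are perfect quasi-finite type commutative unipotent $\bF_p$-module group schemes. Via the connected--étale decomposition and perfection, these are iterated extensions of the constant group scheme $\bF_p$ and the perfected connected group $\alpha_p^{\mathrm{perf}}$. Since the heart is closed under extensions, it suffices to verify (a) and (b) on these two generators. The computation on $\bF_p$ is essentially tautological. The computation on $\alpha_p^{\mathrm{perf}}$ proceeds through the exact sequence $0 \to \alpha_p \to \bG_a \xrightarrow{F} \bG_a \to 0$ and its perfection, reducing the verification to an explicit analysis of $R\underline{\Hom}(\bG_a^{\mathrm{perf}},\bZ/p)$ in $\bF_p$-modules in perfect unipotent spectra; this should recover the classical self-duality statement of Milne in this setting.

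The main obstacle will be showing that $R\underline{\Hom}(M,\bZ/p)$, which is a priori defined in the large ambient category $\bF_p\text{-}\Mod(\on{St}_k)$, actually lands in the much smaller subcategory of bounded quasi-finite type perfect unipotent spectra. Perfectness should follow formally from the compatibility of the duality functor with the Frobenius twist defining perfect unipotent spectra. Boundedness and quasi-finite type are the genuinely technical inputs, and amount to a spectral reformulation of Milne's original finiteness computations. Once part (1) is established, part (2) follows by using $\bQ_p/\bZ_p \simeq \colim_n \bZ/p^n$ together with the observation that any $N \in (\bZ\text{-}\Mod^{\U,\mathrm{perf},\mathrm{ft}}_{k})^{\on{bd}}$ is built inductively from its mod-$p$ reductions via the short exact sequences $0 \to N/p \to N/p^{n+1} \to N/p^n \to 0$. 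The functor $R\underline{\Hom}(-,\bQ_p/\bZ_p)$ can then be analyzed filtration-by-filtration in terms of $R\underline{\Hom}(-,\bZ/p)$, and biduality for $N$ follows by passing to the appropriate colimit and invoking part (1).
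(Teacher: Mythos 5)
Your overall skeleton (Postnikov devissage to the heart, extension-closure, verification on generators) matches the paper's strategy, but your identification of the generators of the heart is wrong, and the devissage as written cannot reach the basic objects it needs to cover. On the perfect site the Frobenius is invertible, so $\alpha_p^{\mathrm{perf}} \simeq 0$ (\cref{basicex3}); consequently perfect quasi-finite type commutative unipotent group schemes are \emph{not} iterated extensions of $\bF_p$ and $\alpha_p^{\mathrm{perf}}$ — with your generating set, only finite constant groups would be reachable, and the connected building block $\bG_a^{\mathrm{perf}}$ (which is itself an object of the heart) is omitted entirely. The correct structural input, used in the paper, is that over $\overline{k}$ every such group has a finite filtration with graded pieces $\bG_a^{\mathrm{perf}}$ or $\bZ/p$ (\cref{filtrationuni1}, and \cref{filtrationonuni} over general perfect $k$), so the computation you must carry out is $R\underline{\Hom}(\bG_a^{\mathrm{perf}},\bZ/p)$ directly. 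Relatedly, your exact sequence $0\to\alpha_p\to\bG_a\xrightarrow{F}\bG_a\to 0$ is useless after perfection (it just says $F$ is an isomorphism); the sequence that does the work is Artin--Schreier, $0\to\bZ/p\to\bG_a\xrightarrow{F-1}\bG_a\to 0$, combined with Breen's theorem on the vanishing of higher $\mathrm{Ext}$ of $\bG_a$ over the perfect site (\cref{breensthm}), which yields $R\underline{\Hom}(\bG_a^{\mathrm{perf}},\bZ/p)\simeq \bG_a^{\mathrm{perf}}[-1]$ and $R\underline{\Hom}(\bZ/p,\bZ/p)\simeq\bZ/p$ (\cref{extcomp}). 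Without this input, the claim that the dual is again bounded, perfect, unipotent and quasi-finite is not ``formal.''

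Two further gaps: the filtration with graded pieces $\bG_a^{\mathrm{perf}}$ or $\bZ/p$ exists only over an algebraically closed field, so after checking biduality fpqc-locally one must still show that the dual of a quasi-finite type object over $k$ is quasi-finite type \emph{over $k$}; the paper does this by a cocompactness/Galois descent argument (\cref{galdescent}, \cref{galdescent2}), which your sketch does not address. For part (2), your tower $0\to N/p\to N/p^{n+1}\to N/p^n\to 0$ is only a cofiber sequence of derived reductions and does not reconstruct $N$ on its own; the paper instead notes that the graded pieces of the filtration of each homotopy group scheme are killed by $p$ (being subgroups of $\bG_a^{\mathrm{perf}}$), identifies $R\underline{\Hom}_{\bZ}(-,\bQ_p/\bZ_p)\simeq R\underline{\Hom}_{\bZ/p^n}(-,\bZ/p^n)$ on $\bZ/p^n$-module sheaves by adjunction (\cref{prop Q/Z}), and then quotes part (1); your colimit idea is close in spirit, but the dévissage should run through the group-scheme filtration rather than the mod-$p^n$ tower.
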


Finally, we apply these ideas to syntomic cohomology, cf. \cite{BMS2, IR}, of proper varieties over $k$. 
Namely, we show the following.

\begin{maintheorem}[see~\cref{subsection:syntomic_duality}] \label{maintheoremsyntomic}
 Let $X$ be a proper lci scheme of dimension $d$ over a perfect field $k$ of characteristic $p>0$ and $ i \in \mathbb{Z}$. Then the functor determined by
\[
\on{Sch}^{\on{perf}}_{k} \ni S \mapsto R \Gamma_{\on{Syn}}(X \times S, \bZ/p^n(i))  \in D(\mathbb{Z})
\]
 is represented by a perfect unipotent spectrum over $k$, which we denote by $ \bZ/p^n(i)^{\uni}_{X}$. Further, if $X$ is additionally assumed to be smooth, $\bZ/p^n(i)^{\uni}_{X}$ is of quasi-finite type and there is a natural isomorphism 
$$ \mathbb{Z}/p^n(i)^{\mathrm{uni}}_X \simeq ({\mathbb{Z}/p^n(d-i)^{\mathrm{uni}}_X})^\vee [-2d]$$ of perfect unipotent spectra, where the right hand side uses the notion of duality from \cref{introduality}.
\end{maintheorem}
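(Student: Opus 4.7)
The plan proceeds in three steps corresponding to the three claims of the theorem.

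First, to establish representability, I would begin from the description of syntomic cohomology as a fiber sequence
\[
\mathbb{Z}/p^n(i) = \mathrm{fib}\bigl(\varphi_i - \mathrm{can} \colon \mathcal{N}^{\geq i}\Prism/p^n \to \Prism/p^n\bigr)
\]
in (Nygaard-filtered) prismatic cohomology. Applied to $X \times S$ for a perfect $k$-algebra $S$, a K\"unneth-type base change argument---available because $X$ is proper lci over $k$---should express each of $R\Gamma(X \times S, \mathcal{N}^{\geq i}\Prism/p^n)$ and $R\Gamma(X \times S, \Prism/p^n)$ functorially in $S$ as a coconnective derived ring, and hence by To\"en's representability theorem \cite{MR2244263} as a pointed affine stack. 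The fiber of $\varphi_i - \mathrm{can}$ then represents a $\mathbb{Z}$-module in unipotent spectra, which is moreover \emph{perfect}: the underlying derived ring is obtained from a Frobenius-fixed-point construction, forcing Frobenius to act as an isomorphism. This defines $\mathbb{Z}/p^n(i)^{\uni}_X$.

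Second, when $X$ is smooth, the quasi-finite type property follows from Milne's theorem \cite{Milne111}: each homotopy group scheme $\pi_j\mathbb{Z}/p^n(i)^{\uni}_X$, viewed as the functor $S \mapsto H^j_{\mathrm{Syn}}(X \times S, \mathbb{Z}/p^n(i))$, is represented by a perfect unipotent group scheme of quasi-finite type. Combined with cohomological vanishing outside $[0, 2d]$ coming from smoothness and properness, this places $\mathbb{Z}/p^n(i)^{\uni}_X$ inside the category $(\bZ-\Mod^{\U, \mathrm{perf}, \mathrm{ft}}_{k})^{\on{bd}}$ to which \cref{introduality}(2) applies. For the duality isomorphism, the cup product together with the trace map $R\Gamma_{\mathrm{Syn}}(X, \mathbb{Z}/p^n(d)) \to \mathbb{Z}/p^n[-2d]$ assemble, by functoriality of the representing construction, into a pairing
\[
\mathbb{Z}/p^n(i)^{\uni}_X \otimes \mathbb{Z}/p^n(d-i)^{\uni}_X \longrightarrow \mathbb{Z}/p^n[-2d]
\]
of perfect unipotent spectra. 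Via the autoduality from \cref{introduality}(2), this adjoints to the desired comparison map
\[
\mathbb{Z}/p^n(i)^{\uni}_X \longrightarrow (\mathbb{Z}/p^n(d-i)^{\uni}_X)^\vee[-2d],
\]
which I would show is an equivalence by reducing via the $t$-structure on unipotent spectra to the corresponding statement on individual homotopy group schemes, where it becomes precisely Milne's Poincar\'e duality.

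The main obstacle, in my view, is the first step: organizing the K\"unneth / base change for (Nygaard-filtered) prismatic cohomology over arbitrary perfect $S$ in a way that is simultaneously compatible with Frobenius and with To\"en's representability, so that the Frobenius-fixed fiber sequence manifestly cuts out a perfect unipotent spectrum. Once the representing object $\mathbb{Z}/p^n(i)^{\uni}_X$ is constructed, the duality reduces to the spectrum-level upgrade of Milne's classical theorem, with the key remaining check being that the cup-product pairing constructed at the spectrum level recovers Milne's pairing on homotopy group schemes.
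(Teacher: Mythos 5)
Your first step does not work as written. The complexes $R\Gamma\bigl(X\times S,\mathcal{N}^{\geq i}\Prism/p^n\bigr)$ are not coconnective derived rings (for $i\geq 1$ the Nygaard piece is only a module), and To\"en's theorem is not a device that turns ``coconnective cohomology, functorial in $S$'' into an affine stack: what must be shown is that each space $\Omega^{\infty-m}$ of the sheaf $S\mapsto R\Gamma_{\on{Syn}}(X\times S,\bZ/p^n(i))$ on the perfect site is an affine stack, and this requires an actual d\'evissage. The paper (\cref{representability syntomic} and the lci variant) does this by reducing to the Nygaard filtration, whose graded pieces are conjugate-filtered Hodge--Tate cohomology, and then using that $S$ is \emph{perfect}, so $\mathbb{L}_{S/k}\simeq 0$ and the graded pieces become $R\Gamma(X,\wedge^j\mathbb{L}_{X/k})\otimes_k S$, i.e.\ finite products of shifts of $\mathbb{G}_a^{\mathrm{perf}}$; representability then follows from closure of (quasi-finite type) perfect unipotent objects under extensions (\cref{rmk:quasi_ft_perf_gp_sch_are_abelian_cat}) and, in the merely lci case, under limits (\cref{beach1}), since there the conjugate filtration is exhaustive but not finite and one must use Nygaard-completed prismatic cohomology -- this is also exactly why quasi-finiteness is lost outside the smooth case, a point your sketch does not address. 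Note also that the decisive base-change input is perfectness of $S$, not properness or lci-ness of $X$ (those give coherence and boundedness of the building blocks), and your justification of perfectness of the representing object (``a Frobenius-fixed-point construction forces Frobenius to act as an isomorphism'') is incorrect: taking the fiber of $\varphi-\mathrm{can}$ does not invert Frobenius on anything; perfectness comes from the building blocks on the perfect site being perfect group schemes such as $\mathbb{G}_a^{\mathrm{perf}}$.

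Your second step is partly a legitimate alternative: in the smooth case one can indeed cite Milne for representability of the homotopy sheaves by quasi-finite (quasi-algebraic) perfect unipotent groups, and the paper itself remarks that $\bZ/p(i)^{\uni}_X$ agrees with Milne's $\pi_*\nu_n(i)[-i]$; the paper instead extracts quasi-finiteness internally from the Hodge-cohomology graded pieces, which has the advantage of also covering the lci statement uniformly. The genuine gap is in your verification of the duality. You cannot ``reduce via the $t$-structure to individual homotopy group schemes, where it becomes Milne's Poincar\'e duality'': the duality functor is not $t$-exact (e.g.\ $R\underline{\Hom}(\mathbb{G}_a^{\mathrm{perf}},\bZ/p)\simeq\mathbb{G}_a^{\mathrm{perf}}[-1]$ by \cref{extcomp}), so the homotopy sheaves of $({\bZ/p^n(d-i)^{\uni}_X})^\vee[-2d]$ are computed by a two-row $\underline{\mathrm{Ext}}$ spectral sequence and the assertion is not a degreewise isomorphism of group schemes; moreover Milne's duality is itself a derived pairing statement, not a statement about each cohomology sheaf separately. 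The paper's route (\cref{thm duality syntomic mod p}, \cref{duality syntomic p^n}) is: construct the pairing exactly as you do (cup product composed with the trace $\bZ/p^n(d)^{\uni}_X\to\bZ/p^n[-2d]$, produced from Poincar\'e duality for the $F$-gauge $\cH_{\on{Syn}}(X)$), quote Milne's sheaf-level theorem \cite{Milne111} to see the mod $p$ pairing is perfect, and then deduce the mod $p^n$ case from the mod $p$ case using dualizability and derived Nakayama. So the missing content in your proposal is precisely this perfectness check, which should proceed by reduction mod $p$ rather than by a degreewise argument.
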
{}

\begin{remark}
In \cref{finalsection} we extend the equivalence of  \cref{maintheoremsyntomic} to the $p$-complete setting. Namely, we define full subcategory $\cC^{\on{pro-qft}}$ of $p$-complete unipotent $\bZ$-modules consisiting of \emph{pro}-quasi-finite objects, together with an involutive equivalence $\mathbb{D}: \cC^{\on{pro-}\on{qft}} \to  (\cC^{\on{pro-}\on{qft}})^\op $.  By definition, the functor determined by 
\[
\on{Sch}^{\on{perf}}_{k} \ni S \mapsto R \Gamma_{\on{Syn}}(X \times S, \bZ_p(i)) \in D(\mathbb{Z})
\]
is representable in this category, allowing us to extend the equivalence of \cref{maintheoremsyntomic} beyond the $p^n$ torsion case. 
\end{remark}

Our starting point for developing the $\infty$-category of perfect unipotent spectra was Breen's results in \cite{breenperf} on the vanishing of higher Ext groups of $\bG_a$ in the category of $\bF_p$-module sheaves over the perfect site. Due to this, the study of perfect unipotent $\infty$-category of perfect unipotent $\bZ$ and $\bF_p$ modules becomes much more tractable. Indeed, the Artin-Schreier sequence 
\[
 0 \to \bZ/p \to \bG_a \xrightarrow{F-1} \bG_a \to 0,
\]
allows us to control the behavior of the functor $R\underline{\Hom}_{\bF_p}(-, \bZ/p)$. Given Milne's work, one may hope that this functor restricts to a duality on some subcategory of perfect unipotent modules. This led us to the notion of a quasi-finite spectrum, which is exactly the conditions needed to get the duality. The latter notion is formulated using perfect quasi-finite type groups schemes, which has antecedents in the literature. Indeed, over an algebraically closed field, an equivalent notion was introduced by Serre in \cite{MR118722} under the name \emph{quasi-algebraic group}.  Artin, in \cite{Artin-supersingular}, following ideas of Grothendieck, conjectured a duality in which $\bQ_p/\bZ_p$ played the role as a dualizing (ind)-object in some derived category of quasi-algebraic quasi-unipotent groups, in which the flat cohomology of a surface is representable. This was of course realized by Milne's work in \cite{Milne111}; as we show, these phenomena all naturally live in our world of unipotent modules.

\subsection{Outline}
In \cref{unispectra}, we develop the foundations of unipotent spectra. In \cref{unispec2.1}, we introduce the definition of unipotent spectra and prove the existence of a certain $t$-structure. In \cref{monoidal}, we construct and prove the existence of a natural symmetric monoidal structure on the category of ind-unipotent spectra. In \cref{applyschemes}, we begin applying our constructions to schemes. Namely, we discuss unipotent stable homotopy types of schemes (\cref{defn:unipotent_stable_htpy_type}) and prove a result (\cref{specfreudenthalcomp}) relating unipotent stable homotopy groups with the unipotent homotopy group schemes studied in \cite{soon}. Then we discuss unipotent homology in \cref{def2.49} and prove the profiniteness theorem (\cref{profinitenessthm}). We also introduce a local variant of unipotent homology (\cref{defn:unipotent_local_homology}), which plays an important role in \cref{section3}. In \cref{subsection:recognition_thm}, we prove the recognition theorem for unipotent spectra.

In \cref{section3}, we discuss the coniveau filtration on unipotent homology. In \cref{descriptionofgradedpiece}, we describe the graded pieces of this filtration in terms of unipotent local homology. We use this to deduce a certain purity property for Cohen--Macaulay schemes in \cref{connect}. In \cref{sec3.2}, we reformulate the latter result using the language of Beilinson t-structures. In \cref{sec3.3} we apply this to flat cohomology of Cohen--Macaulay schemes with coefficients in unipotent group schemes and prove \cref{animpresult}.

In \cref{section4}, these tools are then applied to the study of Artin--Mazur formal groups, where we prove \cref{isitmainthm} (\cref{introartinmaz}).

In \cref{section5} we introduce perfect unipotent spectra and prove \cref{introduality} and \cref{maintheoremsyntomic}. In \cref{subsection:perfect_qft_group_schemes} we introduce some preliminaries on (perfect) quasi-finite type group schemes.  In \cref{subsection:perfect_unipotent_spectra} we define perfect affine stacks and perfect unipotent spectra. In \cref{subsection:perfect_uni_spectra_recognition} we prove a recognition theorem for perfect unipotent $\bF_p$-modules and $\bZ$-modules, which plays an important role  in the duality theory that we will establish. 
In \cref{subsection:duality_perfect_unipotent_spectra} we prove that linear duality on sheaves of $\bF_p$-module spectra restricts to a duality on the full subcategory of perfect quasi-finite type unipotent $\bF_p$-modules. In \cref{subsection:syntomic_duality} we show that $\mod p$-syntomic cohomology (for any given weight) admits a refinement to a perfect unipotent spectrum and describe how it behaves relative to the aforementioned duality. In \cref{subsection duality in Zmod} we extend the duality to the full-subcategory  perfect quasi-finite type unipotent $\bZ$-modules, and study mod $p^n$ syntomic cohomology. Finally, in    \cref{finalsection} we study this duality in the $p$-complete setting.

\subsection{Notation \texorpdfstring{\&}{&} conventions}
\begin{enumerate}
\item As in \cite{MR2244263} and \cite{luriehigher}, we work with a certain Grothendieck universe (containing the set of natural numbers); to deal with the size-related aspects of certain
constructions, one sometimes needs to choose an enlargement of the Grothendieck universe, which will be kept implicit in our paper, similarly to \cite{luriehigher}. 

    \item We freely use the theory of $\infty$-categories developed in \cite{luriehigher}. 
    We will implicitly regard 1-categories as $ \infty $-categories via the nerve of \S1.1.2 loc. cit. 
    We let $\mathcal{S}$ denote the $\infty$-category of spaces and $\mathrm{Sp}$ denote the $\infty$-category of spectra. For any presentable $\infty$-category $\cC$, we use the notation $\Sp(\cC)$ to denote the stable $\infty$-category of spectrum objects in $\cC$.  For $E \in \on{CAlg}(\Sp)$, we let $E-\Mod(\cC)$ denote the stable $\infty$-category of $E$-module objects in $\Sp(\cC)$. We let $\mathrm{Map}$ denote the mapping space and $\mathrm{RHom}$ denote the mapping spectrum. In the relevant set up, we let $\underline{\mathrm{Map}}$ denote the internal mapping space and $R\underline{\mathrm{Hom}}$ denote the internal mapping spectrum. For $t$-structures, we use the homological convention.

  \item  For a discrete commutative ring $A$, we let $\mathrm{Alg}_A$ denote the category of $A$-algebras (in a certain Grothendieck universe). We let $\mathrm{St}_A$ denote the full subcategory of $\mathrm{Fun}\left(\mathrm{Alg}_A, \mathcal{S}\right)$ on those functors which satisfy descent for the fpqc topology, and call it the category of stacks over $A$. We let $\mathrm{AffSt}_A$ denote the category of affine stacks over $A$ in the sense of \cite{MR2244263}. We use $\mathrm{St}_k^\mathrm{afin}$ to denote almost finitary stacks over a field $k$ (\cref{almostfin}). 

  \item We let $\mathrm{Sp}_A^\mathrm{U}$ denote the category of unipotent spectra in \cref{unispectradef}, and $\mathrm{Sp}_A^{\mathrm{U}-}$ for the category of bounded below unipotent spectra (\cref{notabelow}). We let $\mathrm{Sp}_A^{\mathrm{U}, \mathrm{perf}}$ denote the category of perfect unipotent spectra (\cref{perfectunispec}). For any $\mathbb{E}_\infty$ ring $E$, we let $E-\mathrm{Mod}_A^{\mathrm{U}} \coloneqq E-\mathrm{Mod}(\mathrm{Sp}_A^{\mathrm{U}})$, which is called the category of unipotent $E$-modules over $A$ (\cref{intro-mod-uni}). We let $\mathrm{Sp}_k^{\mathrm{U}, \mathrm{perf}, \mathrm{ft}}$ denote the category of quasi-finite type unipotent spectra over a perfect field $k$ (\cref{perfunispec}). We denote by $E-\mathrm{Mod}_A^{\mathrm{U}, \mathrm{perf}}$ and $E-\mathrm{Mod}_A^{\mathrm{U}, \mathrm{perf}, \mathrm{ft}}$ the category of $E$-module objects in $\mathrm{Sp}_k^{\mathrm{U}, \mathrm{perf}, \mathrm{}}$ and $\mathrm{Sp}_k^{\mathrm{U}, \mathrm{perf}, \mathrm{ft}}$, respectively.
\end{enumerate}{}

\subsection{Acknowledgements}
We would like to thank Ben Antieau, Bhargav Bhatt, Akhil Mathew, Emanuel Reinecke, and Bertrand Toën for helpful conversations about the ideas presented in this work. 
We would also like to thank Ben Antieau for helpful comments on a draft. 

This project was initiated while the first and second authors were in residence at the Institute for Advanced Study. During the preparation of this paper, S.M. was supported by the University of British Columbia and a start up grant from Purdue University. The authors wish to thank Columbia University for its hospitality. 
\newpage
\section{Unipotent spectra}\label{unispectra}

\subsection{Generalities on unipotent spectra}\label{unispec2.1}
Let $A$ be a fixed ordinary commutative ring. We start by recalling the definition of affine stacks due to \cite{MR2244263}.

\begin{definition}
Let $X \in \on{St}_{A}$. We say $X$ is an affine stack if there is an equivalence of presheaves 
\[
X(-) \simeq \Map_{\on{DAlg}_A}(B, -) 
\]
for some $B \in \on{DAlg}^{\on{ccn}}_A$. Here $\on{DAlg}^{\on{ccn}}_A$ denotes the $\infty$-category of coconnective derived rings over $ A $, equivalently the underlying $\infty$-category of cosimiplicial commutative rings over $ A $ by \cite{soon1}. We let $\mathrm{AffSt}_A$ denote the category of affine stacks over $A.$ 
\end{definition}

\begin{remark}
Note that $\mathrm{AffSt}_A$ is a category with all limits and colimits. Further, the natural functor $\mathrm{AffSt}_A \to \mathrm{St}_A$ preserves all limits. 
\end{remark}

For the purposes of this paper, the category $\mathrm{AffSt}_A$ should be thought of as the category of unipotent homotopy types over $A.$
 
\begin{construction}[Unipotent spectra]\label{unispectradef}
We will construct the category of unipotent stable homotopy types. For brevity, we will instead call them the category of unipotent spectra and denote it by $\mathrm{Sp}^\mathrm{U}_A.$ It is constructed as follows:

Let $\mathrm{AffSt}_{A*}$ denote the category of pointed affine stacks. We define $\mathrm{Sp}^\mathrm{U}_A$ to be the inverse limit of the tower of $\mathbb{Z}$-indexed $\infty$-categories
$$\ldots \to \mathrm{AffSt}_{A*}\xrightarrow{\Omega} \mathrm{AffSt}_{A*}\to\ldots \,.$$
By \cite[Proposition 1.4.2.25]{luriehigher}, we may equivalently define $\mathrm{Sp}^\mathrm{U}_A$ as the $ \infty $-category of spectrum objects in $ \mathrm{AffSt}_{A} $. 
\end{construction}

\begin{remark}
    Given any stable presentable $ \infty $-category $\mathcal{C}$, the functor $ \Sigma^\infty_+ \colon \mathrm{AffSt}_{A}\to \mathrm{Sp}_A^\mathrm{U} $ induces an equivalence between exact colimit-preserving functors $ \mathrm{Sp}_A^\mathrm{U}  \to \mathcal{C} $ and colimit-preserving functors $ \mathrm{AffSt}_A \to \mathcal{C} $ \cite[Corollary 1.4.4.5]{luriehigher}. 
\end{remark}{}

\begin{remark}\label{obs:basechange}
    Given a map of commutative rings $ A \to B $, the base change functor $(\cdot \otimes_A B) \colon  \mathrm{AffSt}_A \to \mathrm{AffSt}_B $ preserves limits, whence it induces a functor $\mathrm{Sp}^\mathrm{U}_A \to \mathrm{Sp}^\mathrm{U}_B $. 
\end{remark}

\begin{remark}
We can alternatively define the $\infty$-category $\mathrm{Sp}^{\mathrm{U}}_A$ as the opposite category to the $\infty$-category of \emph{cospectrum} objects in coconnective derived rings over $ A $, in view of  \cite[Corollaire 2.2.3]{MR2244263} and \cite{soon1}. This gives a purely algebraic description for the $\infty$-category of unipotent spectra. However, the geometric perspective developed in our paper will play a crucial role in elucidating this notion. 
\end{remark}

\begin{remark}
Let $\mathcal{X}$ be an $\infty$-topos. Then a group-like $\mathbb{E}_\infty$-monoid in $\mathcal{X}$ is also naturally a spectrum object of $\mathcal{X}.$ However, $ \mathrm{AffSt}_{A*} $ is not an $\infty$-topos and this breaks down. 
For example, the affine stack $\mathbb{G}_m$ can be given the structure of a group like $E_\infty$-monoid in $\mathrm{AffSt}_A$, but can not be given the structure of a unipotent spectrum by \cref{rep}.
\end{remark}{}

By construction, $\mathrm{Sp}^\mathrm{U}_A$ is a stable $\infty$-category. There is a canonical limit preserving functor $$\Omega^\infty: \mathrm{Sp}^\mathrm{U}_A \to \mathrm{AffSt}_{A*}.$$ 
It follows from \cite[Remark 1.4.2.4]{luriehigher} that $\mathrm{Sp}^\mathrm{U}_A$ is presentable and $ \Omega^\infty $ is accessible. 
By the adjoint functor theorem, $ \Omega^\infty $ admits a left adjoint
$$\Sigma^\infty: \mathrm{AffSt}_{A*} \to \mathrm{Sp}^\mathrm{U}_A.$$
Note that the canonical limit preserving functor 
$\Omega^\infty: \mathrm{Sp}^\mathrm{U}_A \to \mathrm{AffSt}_{A}$ also admits a left adjoint, which we will denote by $\Sigma^\infty_+: \mathrm{AffSt}_{A} \to \mathrm{Sp}^\mathrm{U}_A.$ 

\begin{remark}\label{rmk:sp_uni_to_spectra_in_stacks}
Let $\mathrm{Sp}(\mathrm{St}_A)$ denote the category of spectrum objects of the $\infty$-topos $\mathrm{St}_A$. By construction, we have a fully faithful limit preserving functor 
$$\mathrm{Sp}^\mathrm{U}_A \to \mathrm{Sp}(\mathrm{St}_A).$$ The essential image is spanned by objects $E \in \mathrm{Sp}(\mathrm{St}_A)$ such that $\Omega^{\infty - n}E:=\Omega^\infty (E[n])$ is an affine stack for all $n \ge 0$ (equivalently, for all $n \in \mathbb Z$). 
\end{remark}{}
 \begin{remark}[Unipotent completion of ordinary spectra]\label{unicomofspectra}
Note that $\mathrm{Sp}^\mathrm{U}_A \to \mathrm{Sp}(\mathrm{St}_A)$ admits a left adjoint $ (-)^u \colon \mathrm{Sp}(\mathrm{St}_A) \to \mathrm{Sp}^\mathrm{U}_A $, which can be regarded as ``unipotent completion" of an object of $\mathrm{Sp}(\mathrm{St}_A)$. For any spectrum $G \in \mathrm{Sp}$, we can associate the constant sheaf of spectra $G \in \mathrm{Sp}(\mathrm{St_A})$, whose unipotent completion $G^\uu$ is naturally an object of $\mathrm{Sp}^\U_A.$
\end{remark}{}

\begin{notation}\label{notabelow}
 Let $\mathrm{Sp}^{\mathrm{U-}}_A$ denote the full subcategory of unipotent spectra over $A$ spanned by objects $E$ such that $\pi_i (E) = 0$ for $i \ll 0.$ We will call this the category of bounded below unipotent spectra, which is also a stable $\infty$-category with finite limits and finite colimits.
\end{notation}{}

Let us now specialize to the case where $A = k$ is a field. We will show that in that case, $\mathrm{Sp}^{\mathrm{U-}}_k$ admits a very well-behaved $t$-structure. First we note the following:

\begin{proposition}\label{rep}
 Let $k$ be a field. A bounded below object $E \in \mathrm{Sp}(\mathrm{St}_k)$ is a unipotent spectrum (i.e. belongs to the essential image of the functor described in \cref{rmk:sp_uni_to_spectra_in_stacks}) if and only if for all $i \in \mathbb Z$, $\pi_i(E)$ is representable by a unipotent affine commutative group scheme over $k.$  
\end{proposition}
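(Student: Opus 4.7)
The plan is to handle both directions via the Postnikov tower of $E$, using as the key input the foundational result of To\"en that $K(G,n) \simeq B^n G$ is an affine stack for every unipotent commutative affine group scheme $G$ over $k$ \cite{MR2244263}.

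For the forward direction, suppose $E \in \mathrm{Sp}^{\mathrm{U}}_k$, so that each $E_m := \Omega^{\infty - m} E$ is a pointed affine stack with $\pi_j(E_m) \simeq \pi_{j-m}(E)$. Given $i \in \mathbb{Z}$, I would use that $E$ is bounded below to choose $m$ large enough that $E_m$ is at least $2$-connective. Then $E_m$ is a pointed simply connected affine stack, and by To\"en's structural results on the homotopy group schemes of pointed connected affine stacks over $k$, $\pi_{m+i}(E_m)$ is representable by a unipotent commutative affine group scheme. Since $\pi_i(E) \simeq \pi_{m+i}(E_m)$, this yields the claim.

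For the backward direction, suppose each $\pi_i(E)$ is representable by a unipotent commutative affine group scheme. By the cited result of To\"en, the Eilenberg--MacLane spectrum object $HG$ lies in (the essential image of) $\mathrm{Sp}^{\mathrm{U}}_k \hookrightarrow \mathrm{Sp}(\mathrm{St}_k)$ whenever $G$ is a unipotent commutative affine group scheme. I would then induct along the Postnikov tower: the cofiber sequences
\[
H(\pi_n E)[n] \to \tau_{\leq n} E \to \tau_{\leq n-1} E
\]
together with the fact that $\mathrm{Sp}^{\mathrm{U}}_k$ is a stable full subcategory of $\mathrm{Sp}(\mathrm{St}_k)$ show that each $\tau_{\leq n} E$ lies in $\mathrm{Sp}^{\mathrm{U}}_k$, starting from the bottom of the tower where it vanishes by the bounded-below hypothesis. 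Since the embedding $\mathrm{Sp}^{\mathrm{U}}_k \hookrightarrow \mathrm{Sp}(\mathrm{St}_k)$ preserves limits and $E \simeq \lim_n \tau_{\leq n} E$ (Postnikov convergence for bounded below spectrum objects), I conclude $E \in \mathrm{Sp}^{\mathrm{U}}_k$.

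The main obstacle is invoking the appropriate form of To\"en's theorem that $B^n G$ is an affine stack for unipotent commutative affine $G$; this is the substantial input that undergirds both directions. A secondary point requiring care is the compatibility between the Postnikov tower in $\mathrm{Sp}(\mathrm{St}_k)$ (with respect to its natural $t$-structure inherited from the $\infty$-topos $\mathrm{St}_k$) and the identification of the homotopy sheaves $\pi_n E$ with the representing group schemes, but this is automatic since the heart of this $t$-structure consists of abelian sheaves on $\mathrm{St}_k$ and representable commutative group schemes embed naturally into this heart.
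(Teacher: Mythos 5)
Your forward direction is essentially the paper's argument (pass to a high delooping, which is a pointed connected affine stack, and invoke To\"en's structure theory for its homotopy group schemes), and that half is fine. The converse, however, has a genuine gap at the last step: you assert that $E \simeq \lim_n \tau_{\leq n} E$ as ``Postnikov convergence for bounded below spectrum objects.'' In $\mathrm{Sp}(\mathrm{St}_k)$ this is not automatic: $\mathrm{St}_k$ is the big fpqc topos of sheaves (not hypersheaves), and for sheaves of spectra on such a site the $t$-structure need not be Postnikov complete. Concretely, the fiber of $E \to \lim_n \tau_{\leq n}E$ is $\lim_n \tau_{\geq n}E$, and since sections of a connective cover over a test affine $\mathrm{Spec}\,A$ are not connective (they see $H^i_{fl}(\mathrm{Spec}\,A, \pi_n E)$ in all degrees $\leq n$), this limit has no reason to vanish unless one controls cohomological dimension uniformly over all test objects and all the homotopy sheaves $\pi_n E$ (which here may be unipotent group schemes of infinite type). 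Nothing in the paper supplies such a convergence statement, and your argument as written only shows that the Postnikov completion of $E$ is a unipotent spectrum.

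Note that the paper's proof of the converse is structured precisely to avoid any infinite limit: for a single sufficiently large $n$ it observes that $\Omega^{\infty-n}E$ is a pointed connected stack whose homotopy sheaves are unipotent affine group schemes, hence an affine stack by To\"en's recognition theorem, and then descends to all lower levels using only that affine stacks are closed under $\Omega$ (limits). So the fix is either to replace your tower-limit step by this one-shot application of To\"en's theorem to the deloopings, or to prove the needed Postnikov convergence separately, which would be a substantial additional argument not available in the paper. A minor further caution: your input that $HG$ is a unipotent spectrum for unipotent $G$ should be justified directly from the affineness of the stacks $K(G,n)$ (as you indicate), rather than by citing the paper's Example of Eilenberg--MacLane unipotent spectra, whose stated proof route passes through the very proposition being proved.
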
{}

\begin{proof}
  Suppose that $E \in \mathrm{Sp}(\mathrm{St}_k)$ as in the proposition is a unipotent spectrum. Since $E$ is bounded below, for $n \gg 0,$ we can look at $\Omega^\infty (\tau_{\ge -n}{ E})[n] \in \mathrm{Sp}(\mathrm{St}_k)$, which is a pointed \emph{connected} affine stack. Therefore, its homotopy groups must be representable by commutative unipotent affine group schemes. This implies that $\pi_i (E)$ is representable by commutative unipotent affine group schemes for any $i \ge -n$, so in fact for all $i.$

Conversely, under our assumptions on $E$, we need to prove that $\Omega^{\infty - i} E$ is an affine stack for all $i \ge 0.$ For $n \gg 0, $ we note that $\Omega^{\infty-n }E$, by assumption, is a pointed connected stack whose homotopy sheaves are representable by unipotent affine group schemes. Therefore, for $n \gg 0,$ $\Omega^{\infty-n }E$ is an affine stack. Applying the loop construction repeatedly, we see that for $n \gg 0,$ $\Omega^{\infty - i} E$ is an affine stack for all $i \le n,$ so in fact, for all $i,$ as desired. 
\end{proof}{}

\begin{proposition}\label{tstruc}
Let $ k $ be a field. 
Let $(\mathrm{Sp}^\mathrm{U-}_k)_{\le 0}$ denote the full subcategory of $(\mathrm{Sp}^\mathrm{U-}_k)$ spanned by $K \in (\mathrm{Sp}^\mathrm{U-}_k)$ such that $\Omega^{\infty}(K[-1])$ is contractible. 
Then $(\mathrm{Sp}^\mathrm{U-}_k)_{\le 0}$ determines a $t$-structure on $(\mathrm{Sp}^\mathrm{U-}_k)$, where the connective objects $(\mathrm{Sp}^\mathrm{U-}_k)_{\ge 0}$ are given by $L \in (\mathrm{Sp}^\mathrm{U-}_k)$ such that $\pi_i (L) = 0$ for $ i <0 $. 
Moreover, this t-structure is left-separated. 
\end{proposition}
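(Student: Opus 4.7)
The plan is to transfer the natural Postnikov $t$-structure on the ambient category $\mathrm{Sp}(\mathrm{St}_k)$ along the fully faithful limit-preserving embedding $\mathrm{Sp}^\mathrm{U}_k \hookrightarrow \mathrm{Sp}(\mathrm{St}_k)$ of \cref{rmk:sp_uni_to_spectra_in_stacks}. Since $\mathrm{St}_k$ is an $\infty$-topos, $\mathrm{Sp}(\mathrm{St}_k)$ carries a natural $t$-structure whose connective (resp.~coconnective) part is cut out by the vanishing of homotopy sheaves $\pi_i$ for $i<0$ (resp.~$i>0$). Using that $\pi_i(\Omega^\infty K[-1]) = \pi_{i+1}(K)$ for $i \ge 0$, the condition that $\Omega^\infty(K[-1])$ be contractible is equivalent to $\pi_j(K) = 0$ for all $j > 0$; so both subcategories named in the proposition are precisely the intersections of $\mathrm{Sp}^\mathrm{U-}_k$ with the corresponding halves of this ambient $t$-structure.

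The heart of the argument is to check that the truncation functors computed in $\mathrm{Sp}(\mathrm{St}_k)$ preserve $\mathrm{Sp}^\mathrm{U-}_k$. Given $E \in \mathrm{Sp}^\mathrm{U-}_k$, form the Postnikov fiber sequence $\tau_{\ge 0} E \to E \to \tau_{\le -1} E$ in $\mathrm{Sp}(\mathrm{St}_k)$. Both terms are bounded below, and each of their homotopy sheaves is either a $\pi_i(E)$ or zero; by \cref{rep}, the $\pi_i(E)$ are unipotent commutative affine group schemes over $k$, so a second application of \cref{rep} places $\tau_{\ge 0} E$ and $\tau_{\le -1} E$ back inside $\mathrm{Sp}^\mathrm{U-}_k$. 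The remaining $t$-structure axioms---closure of the two halves under the appropriate shifts and the vanishing of $\mathrm{Map}$ between $(\mathrm{Sp}^\mathrm{U-}_k)_{\ge 0}$ and $(\mathrm{Sp}^\mathrm{U-}_k)_{\le -1}$---transport immediately along the fully faithful exact embedding.

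Left-separatedness is then immediate: any $E \in \bigcap_n (\mathrm{Sp}^\mathrm{U-}_k)_{\le -n}$ has all homotopy sheaves vanishing, and a bounded-below object with this property is zero. The only potential subtlety in the whole argument is ensuring that Postnikov truncation does not escape the subcategory $\mathrm{Sp}^\mathrm{U-}_k$, and this is exactly what \cref{rep} buys us; everything else is formal once one views unipotent spectra inside $\mathrm{Sp}(\mathrm{St}_k)$.
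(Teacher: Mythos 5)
Your main construction is sound and genuinely different in flavor from the paper's argument: where the paper identifies $(\mathrm{Sp}^{\mathrm{U}-}_k)_{\ge 0}$ with $\{L : \pi_{<0}(L)=0\}$ by an orthogonality/minimal-degree argument against Eilenberg--MacLane objects, you restrict the Postnikov $t$-structure of $\mathrm{Sp}(\mathrm{St}_k)$ and use \cref{rep} to see that the ambient truncations $\tau_{\ge 0}E$ and $\tau_{\le -1}E$ of a bounded below unipotent spectrum are again bounded below with unipotent homotopy sheaves, hence lie in $\mathrm{Sp}^{\mathrm{U}-}_k$; that part is correct and clean. The gap is in the step you treat as formal, namely the assertion that ``$\Omega^{\infty}(K[-1])$ contractible'' is \emph{equivalent} to $\pi_j(K)=0$ for $j>0$. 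The implication from contractibility to vanishing of homotopy sheaves is trivial, but the converse is exactly the statement that an $\infty$-connective object is trivial, which fails in a general (non-hypercomplete) topos and cannot simply be asserted for the fpqc topos $\mathrm{St}_k$. You need it: to prove the proposition \emph{as stated}, you must show that $\tau_{\le -1}E$ lies in the subcategory defined by contractibility of $\Omega^\infty$, not merely by vanishing of positive homotopy sheaves. The rescue is that $\Omega^\infty(\tau_{\le -1}E)$ is an \emph{affine stack} (since $\tau_{\le -1}E$ is a unipotent spectrum by \cref{rep}), and affine stacks are hypercomplete (\cite[Remark 2.1.14]{soon}); this ingredient must be invoked explicitly.

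The same issue sinks your left-separatedness paragraph as written. First, left-separatedness concerns $\bigcap_n (\mathrm{Sp}^{\mathrm{U}-}_k)_{\ge n}=0$, not $\bigcap_n (\mathrm{Sp}^{\mathrm{U}-}_k)_{\le -n}$ (a typo, but worth fixing). More seriously, the claim ``a bounded-below object with all homotopy sheaves vanishing is zero'' is not formal in $\mathrm{Sp}(\mathrm{St}_k)$: an $\infty$-connective sheaf of spectra is automatically bounded below, so your claim is equivalent to hypercompleteness of the ambient topos, which is not available. The correct argument is the paper's: if $P$ is $n$-connective for all $n$, then each $\Omega^{\infty-n}P$ is an $\infty$-connective \emph{affine stack}, hence trivial by hypercompleteness of affine stacks, so $P\simeq 0$. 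With these two points patched (both by the single input ``affine stacks are hypercomplete''), your restriction-of-the-ambient-$t$-structure argument goes through and is a legitimate alternative to the paper's proof.
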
{}

\begin{proof}
Note that for the $t$-structure defined as above, any object $L \in (\mathrm{Sp}^\mathrm{U-}_k)$ such that $\pi_i (L) = 0$ for $ i <0$ is connective. It suffices to prove that if $L \in (\mathrm{Sp}^\mathrm{U-}_k)_{\ge 0}$, then it has the property that $\pi_i (L) = 0$ for $i <0.$ Let $n$ be the integer minimal with respect to the property that $n \ge 1$ and $\pi_{-k} (L)= 0$ for all $k \ge n.$ Such an $n$ exists since $L$ is bounded below as an object of $\mathrm{Sp(St}_k)$. It suffices to show that $n=1.$ Suppose that $n >1.$ By construction, the mapping space $\mathrm{Map}(L, \pi_{-(n-1)}(L)[-n+1])$ must be contractible, since by \cref{rep}, $\pi_{-(n-1)}(L)$ is a unipotent spectrum and $n>1$. However, $\mathrm{Map}(L, \pi_{-(n-1)}(L)[-n+1]) \simeq \mathrm{Map}(\pi_{-(n-1)}(L), \pi_{-(n-1)}(L))$, so we conclude that $\pi_{-(n-1)}(L) = 0.$ But that contradicts the minimality of $n,$ which finishes the proof. 

Suppose we are given $ P \in (\mathrm{Sp}^\mathrm{U-}_k)_{\ge 0} $ which is $ n $-connective for all $ n $. 
To show that the t-structure is left-separated, it suffices to show that $ \Omega^\infty P $ is the trivial pointed affine stack. 
However, this follows from hypercompleteness of affine stacks (see \cite[Remark 2.1.14]{soon}). 
\end{proof}{}
\begin{corollary}\label{prc}
 Let $k$ be a field. The category of bounded below unipotent spectra $\mathrm{Sp}^{\mathrm{U-}}_k$ is equipped with a natural $t$-structure (from \cref{tstruc}) for which the heart is equivalent to the category of commutative unipotent affine group schemes over $k$.  
\end{corollary}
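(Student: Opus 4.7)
The plan is to leverage the fully faithful embedding $\iota \colon \mathrm{Sp}^{\mathrm{U}-}_k \hookrightarrow \mathrm{Sp}(\mathrm{St}_k)$ of \cref{rmk:sp_uni_to_spectra_in_stacks} together with the canonical $t$-structure on the stable $\infty$-category of spectrum objects in the $\infty$-topos $\mathrm{St}_k$, whose heart is the abelian category $\mathrm{AbSh}(\mathrm{St}_k)$ of sheaves of abelian groups. The first step is to verify that $\iota$ is $t$-exact: the defining condition $\Omega^{\infty}(K[-1]) \simeq *$ for $K \in (\mathrm{Sp}^{\mathrm{U}-}_k)_{\leq 0}$ from \cref{tstruc} unwinds to the vanishing of the homotopy sheaves $\pi_i(K) = 0$ for $i \geq 1$, which is exactly the coconnectivity condition in $\mathrm{Sp}(\mathrm{St}_k)$; the connective part was already handled in the proof of \cref{tstruc}. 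Hence $\iota$ restricts to a fully faithful functor $(\mathrm{Sp}^{\mathrm{U}-}_k)^{\heartsuit} \hookrightarrow \mathrm{AbSh}(\mathrm{St}_k)$.

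Next I would identify the essential image. By \cref{rep}, a bounded-below spectrum object on $\mathrm{St}_k$ lies in the image of $\iota$ iff all of its homotopy sheaves are representable by commutative unipotent affine group schemes. Specializing to objects concentrated in degree zero, this says that an abelian sheaf $F$ lifts to the heart of $\mathrm{Sp}^{\mathrm{U}-}_k$ iff $F$ is representable by a commutative unipotent affine group scheme. For the converse direction, given such a group scheme $G$, the Eilenberg–MacLane construction with $n$-th space $B^n G = K(G, n)$ defines an object of $\mathrm{Sp}^{\mathrm{U}-}_k$ — the key input here is that each delooping $B^n G$ is itself an affine stack, as recorded in \cref{examintro} — whose image under $\iota$ is $G$ concentrated in degree zero and which therefore lies in the heart.

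Combining these steps, $\iota$ identifies $(\mathrm{Sp}^{\mathrm{U}-}_k)^{\heartsuit}$ with the full subcategory of $\mathrm{AbSh}(\mathrm{St}_k)$ spanned by commutative unipotent affine group schemes, i.e., the classical abelian category of such groups. The main technical point to confirm is that iterated deloopings of a commutative unipotent affine group scheme remain affine stacks — a nontrivial feature that rests on To\"en's theory of affine stacks and is precisely what makes the heart of $\mathrm{Sp}^{\mathrm{U}-}_k$ behave so closely in parallel to the classical case of spectra.
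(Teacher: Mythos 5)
Your proposal is correct and follows essentially the same route the paper intends: unwinding the coconnectivity condition of \cref{tstruc} to vanishing of positive homotopy sheaves, using \cref{rep} to see that heart objects are exactly those spectrum objects in $\mathrm{St}_k$ concentrated in degree $0$ with $\pi_0$ a commutative unipotent affine group scheme, and using the Eilenberg--MacLane construction of \cref{introex1} (affineness of $K(G,n)$) for essential surjectivity. The explicit check that the embedding of \cref{rmk:sp_uni_to_spectra_in_stacks} is $t$-exact is a reasonable way to package the fully faithfulness on hearts, and matches the paper's implicit argument.
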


We have seen that an object $Y \in \mathrm{Sp}(\mathrm{St}_k)_{\ge 0}$ whose underlying stack is an affine stack may not define an unipotent spectrum (e.g., one may take $Y= \mathbb{G}_m$). Below, we will show that the only obstruction is due to $\pi_0 (Y)$ not being representable by a unipotent affine group scheme.
\begin{proposition}\label{beach}
 Let $k$ be a field. Let $Y \in \mathrm{Sp}(\mathrm{St}_k)$ be such that $\Omega^\infty Y$ is an affine stack. Then $\pi_i (Y)$ is representable by unipotent affine commutative group schemes for $i >0.$ 
\end{proposition}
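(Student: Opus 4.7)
The plan is to reduce to the pointed connected case. Set $X := \Omega^\infty Y$, equipped with its canonical basepoint $0$ coming from the spectrum structure on $Y$. Observe that for $i \geq 1$ we have $\pi_i(Y) \simeq \pi_i(X, 0)$, and this quantity only depends on the connected component of $X$ containing $0$. My strategy is to exhibit this connected component as a pointed connected affine stack and then invoke Toën's theorem on the structure of homotopy sheaves of such stacks.

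First I would identify the sheaf $\pi_0(X)$. Writing $X \simeq \mathrm{Spec}(B)$ for a coconnective derived $k$-algebra $B$, one computes $\pi_0(X) \simeq \mathrm{Spec}(\pi_0 B)$, a discrete affine scheme over $k$; this is part of Toën's foundational theory \cite{MR2244263}. Next, define the basepoint component $X_0 := X \times_{\pi_0(X)} \mathrm{Spec}(k)$. Because $\mathrm{AffSt}_k \hookrightarrow \mathrm{St}_k$ preserves limits (as recorded earlier in the excerpt), $X_0$ is itself an affine stack; by construction it is pointed and connected, and the canonical map $X_0 \hookrightarrow X$ induces isomorphisms $\pi_i(X_0, 0) \xrightarrow{\sim} \pi_i(X, 0)$ for every $i \geq 1$.

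Then I would invoke Toën's representability theorem for pointed connected affine stacks from \cite{MR2244263}: for such a stack, $\pi_1$ is representable by a (not necessarily commutative) unipotent affine group scheme, and $\pi_n$ for $n \geq 2$ by a commutative unipotent affine group scheme. Combining this with the identifications $\pi_i(Y) \simeq \pi_i(X, 0) \simeq \pi_i(X_0, 0)$ yields representability of $\pi_i(Y)$ by a unipotent affine group scheme for every $i \geq 1$. For $i = 1$, commutativity is automatic, since $Y$ is a spectrum object and so $\pi_1(Y)$ is canonically a sheaf of abelian groups.

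I do not anticipate any serious obstacle. The only substantive point is the verification that $X_0$ is an affine stack, which in turn rests on the representability of $\pi_0$ of an affine stack by an affine scheme together with the closure of $\mathrm{AffSt}_k$ under fiber products in $\mathrm{St}_k$; both facts are standard and already in play elsewhere in the excerpt.
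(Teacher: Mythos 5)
Your overall strategy---pass to the basepoint component $X_0$ of $X=\Omega^\infty Y$, quote To\"en's structure theorem for pointed connected affine stacks, and get commutativity of $\pi_1$ from the spectrum structure by Eckmann--Hilton---is sound, and it is likely close in spirit to what the paper's one-line citation of \cite[Lemma 4.3]{Toee23} encapsulates (the paper gives no argument beyond that citation). The identifications $\pi_i(Y)\simeq \pi_i(\Omega^\infty Y,0)$ for $i>0$, the closure of $\mathrm{AffSt}_k$ under limits in $\mathrm{St}_k$, and To\"en's theorem on homotopy sheaves of pointed \emph{connected} affine stacks are all correctly deployed.

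The one step you treat as free is exactly the step carrying the content: the claim that $\pi_0(X)$ is representable by an affine scheme (you assert $\pi_0(\mathrm{Spec}\,B)\simeq \mathrm{Spec}(\pi_0 B)$ as ``part of To\"en's foundational theory''), which is what you need for $X_0 = X\times_{\pi_0(X)}\mathrm{Spec}\,k$ to be a limit of affine stacks and hence affine. This is not formal: for a \emph{coconnective} derived ring $B$ the truncation $H^0(B)$ maps \emph{into} $B$, so $H^0$ is not left adjoint to the inclusion of discrete algebras, and there is no adjunction identifying $\pi_0\mathrm{Map}_{\mathrm{DAlg}}(B,A)$ with $\mathrm{Hom}(H^0B,A)$ for discrete $A$; one must argue, e.g., via the Postnikov-type tower of $B$ in coconnective derived rings, that after fpqc sheafification the fibers of $\mathrm{Spec}(\tau^{\le n}B)\to\mathrm{Spec}(\tau^{\le n-1}B)$ are connected and the lifting obstructions vanish (using vanishing of positive-degree flat cohomology of the relevant vector-group coefficients on affines), and then control the inverse limit. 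I believe the statement is true, but as far as I can tell it is not recorded in \cite{MR2244263} in the form you use it---To\"en's representability and unipotence results there are for pointed connected affine stacks---and the non-connected/spectrum-level statement is precisely what the paper outsources to \cite[Lemma 4.3]{Toee23}. So either supply a proof of the affineness of $\pi_0(X)$ (equivalently, that the basepoint component of an affine stack is again an affine stack), or replace your appeal to ``foundational theory'' with a precise reference (e.g.\ to the cited lemma of \cite{Toee23}); with that input in hand, the rest of your argument goes through.
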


\begin{proof}
Follows from \cite[Lemma 4.3]{Toee23}.
\end{proof}{}

\begin{corollary}\label{cor:affine_loops_implies_unipotent_spectrum}
   Let $k$ be a field. Let $Y \in \mathrm{Sp}(\mathrm{St}_k)_{\ge 0}$ be such that $\Omega^\infty Y$ is an affine stack and $\pi_0 (Y)$ is representable by unipotent affine commutative group scheme. Then $Y$ is a unipotent spectrum.
\end{corollary}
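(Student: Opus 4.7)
The plan is to assemble Proposition \ref{rep} and Proposition \ref{beach} directly. By Proposition \ref{rep}, it suffices to check that $Y$ is bounded below in $\mathrm{Sp}(\mathrm{St}_k)$ and that each homotopy sheaf $\pi_i(Y)$ is representable by a unipotent affine commutative group scheme.

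First, I would observe that since $Y$ is assumed to lie in $\mathrm{Sp}(\mathrm{St}_k)_{\ge 0}$, we have $\pi_i(Y) = 0$ for $i < 0$. In particular $Y$ is bounded below, and the representability condition is trivially satisfied for $i < 0$. Next, for $i > 0$, I would apply Proposition \ref{beach}: the hypothesis that $\Omega^\infty Y$ is an affine stack is exactly what is needed to conclude that $\pi_i(Y)$ is representable by a unipotent affine commutative group scheme for $i > 0$. For $i = 0$, this representability is given as a hypothesis.

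With representability established for every $i \in \mathbb{Z}$, and boundedness below confirmed, Proposition \ref{rep} immediately implies that $Y$ lies in the essential image of the fully faithful embedding $\mathrm{Sp}^\mathrm{U}_k \hookrightarrow \mathrm{Sp}(\mathrm{St}_k)$ from \cref{rmk:sp_uni_to_spectra_in_stacks}, i.e.~that $Y$ is a unipotent spectrum. Since every step is an application of a stated result, there is no substantive obstacle here; the corollary is essentially a bookkeeping consequence of Propositions \ref{rep} and \ref{beach}, with the role of the hypothesis on $\pi_0$ being to supply the one piece of data that Proposition \ref{beach} does not itself provide (as illustrated by the $\mathbb{G}_m$ example preceding Proposition \ref{beach}).
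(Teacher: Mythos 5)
Your proposal is correct and coincides with the paper's own argument, which is exactly the combination of \cref{rep} and \cref{beach}: connectivity gives boundedness below and trivial representability in negative degrees, \cref{beach} handles $\pi_i$ for $i>0$, the hypothesis supplies $\pi_0$, and \cref{rep} then identifies $Y$ as a unipotent spectrum. Nothing is missing.
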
{}

\begin{proof}
 Follows from \cref{rep} and \cref{beach}.   
\end{proof}{}

\begin{example}\label{introex1}
Let $G$ be a commutative unipotent group scheme over a field $k$. 
Then the Eilenberg–MacLane stacks $B^nG:= K(G,n)$ are all affine stacks for $n \ge 1$. 
Since $\Omega B^n G \simeq B^{n-1}G$, by \cref{cor:affine_loops_implies_unipotent_spectrum} the sequence of affine stacks $\left \{B^nG \right \}_{n \ge 0}$ defines a unipotent spectra over $k$. 
We will simply denote this by $G \in \mathrm{Sp}^{\U}_{k}.$ 
\end{example}

\begin{proposition}\label{beach1}
Let $ k $ be a field. The category $(\mathrm{Sp}^{\mathrm{U-}}_k)_{\ge 0}$ has all small limits and the inclusion functor $\iota \colon (\mathrm{Sp}^{\mathrm{U-}}_k)_{\ge 0} \to \mathrm{Sp}({\mathrm{St}}_k)_{\ge 0}$ preserves small limits.
\end{proposition}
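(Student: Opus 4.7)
The strategy is to compute the limit in the ambient stable $\infty$-category $\mathrm{Sp}(\mathrm{St}_k)$ and then apply the connective truncation $\tau_{\ge 0}$, using throughout that by \cref{unicomofspectra} the inclusion $\mathrm{Sp}^\mathrm{U}_k \hookrightarrow \mathrm{Sp}(\mathrm{St}_k)$ is a right adjoint and thus preserves limits.

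Given a small diagram $\{Y_\alpha\}$ in $(\mathrm{Sp}^{\mathrm{U-}}_k)_{\ge 0}$, I would first form $Z := \lim_\alpha Y_\alpha$ in $\mathrm{Sp}(\mathrm{St}_k)$. For each $n \ge 0$, the functor $\Omega^{\infty-n}(-) = \Omega^\infty((-)[n]) \colon \mathrm{Sp}(\mathrm{St}_k) \to \mathrm{St}_{k*}$ factors as the shift $[n]$ (an equivalence) followed by $\Omega^\infty$ (right adjoint to $\Sigma^\infty$), so it preserves limits. Thus $\Omega^{\infty-n} Z \simeq \lim_\alpha \Omega^{\infty-n} Y_\alpha$ is a limit of pointed affine stacks, and since $\mathrm{AffSt}_k$ is closed under limits in $\mathrm{St}_k$, each $\Omega^{\infty-n} Z$ is itself a pointed affine stack. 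By \cref{rmk:sp_uni_to_spectra_in_stacks}, $Z$ lies in $\mathrm{Sp}^\mathrm{U}_k$, though $Z$ need not be bounded below, since the limit may acquire homotopy in arbitrarily negative degrees.

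Next I would verify that $\pi_i Z$ is representable by a unipotent affine commutative group scheme for every $i \in \mathbb{Z}$. Fix $i$ and pick $n \ge 1 - i$; then $\Omega^\infty(Z[n]) = \Omega^{\infty-n} Z$ is an affine stack, so \cref{beach} applied to $Z[n]$ gives that $\pi_j(Z[n]) = \pi_{j-n}(Z)$ is a unipotent affine commutative group scheme for every $j \ge 1$, and specializing at $j = i + n$ yields the claim for $\pi_i Z$. Now set $Y := \tau_{\ge 0} Z$, the connective cover of $Z$ in the standard $t$-structure on $\mathrm{Sp}(\mathrm{St}_k)$. Then $Y$ is connective with $\pi_i Y = \pi_i Z$ for $i \ge 0$ and $\pi_i Y = 0$ otherwise; in particular $Y$ is bounded below with all homotopy sheaves unipotent, so by \cref{rep}, $Y$ lies in $\mathrm{Sp}^{\mathrm{U-}}_k$, and combined with connectivity in $(\mathrm{Sp}^{\mathrm{U-}}_k)_{\ge 0}$.

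To conclude, I would observe that $\tau_{\ge 0}$ is the right adjoint to the inclusion $\mathrm{Sp}(\mathrm{St}_k)_{\ge 0} \hookrightarrow \mathrm{Sp}(\mathrm{St}_k)$ and hence preserves limits, so $Y = \tau_{\ge 0}(\lim_\alpha Y_\alpha)$ is the limit of $\{Y_\alpha\}$ computed in $\mathrm{Sp}(\mathrm{St}_k)_{\ge 0}$. Because $Y$ already lies in $(\mathrm{Sp}^{\mathrm{U-}}_k)_{\ge 0}$, this subcategory is closed under small limits in $\mathrm{Sp}(\mathrm{St}_k)_{\ge 0}$, which simultaneously gives existence of small limits in the source and the fact that $\iota$ preserves them. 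The main obstacle is that taking the limit in $\mathrm{Sp}(\mathrm{St}_k)$ can introduce nontrivial negative homotopy, so one cannot stay inside $\mathrm{Sp}^{\mathrm{U-}}_k$ directly; the crucial point is that \cref{beach} applied to the shifts $Z[n]$ keeps all homotopy sheaves unipotent, and then \cref{rep} ensures the truncation $\tau_{\ge 0} Z$ remains a unipotent spectrum.
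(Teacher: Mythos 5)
Your proof is correct and follows essentially the same route as the paper: both compute the limit in $\mathrm{Sp}(\mathrm{St}_k)$, use that affine stacks are closed under limits together with \cref{beach} to control the homotopy sheaves, and identify the limit in the connective part with the connective cover of that ambient limit. The only cosmetic difference is that you invoke \cref{rep} once at the spectrum level for $\tau_{\ge 0}Z$, whereas the paper checks affineness of each space $\tau_{\ge n}Y_n$ levelwise via \cref{cor:affine_loops_implies_unipotent_spectrum}.
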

\begin{proof}
 Let $F : I \to (\mathrm{Sp}^{\mathrm{U-}}_k)_{\ge 0}$ be a diagram and let $Y$ be the limit of this diagram in $\mathrm{Sp}({\mathrm{St}}_k).$ We can think of $Y$ as the data of an infinite loop object $(\ldots, Y_2, Y_1, Y_0).$ By construction, $Y_n$ is an affine stack for all $ n \in \mathbb{Z} $, since affine stacks are closed under limits. Note that $Y_n = \Omega^\infty (Y[n]).$ By \cref{beach}, it follows that $\pi_i(Y_n)$ is unipotent for $ i>0.$ The limit of the diagram $F$ in $\mathrm{Sp}({\mathrm{St}}_k)_{\ge 0}$ is given by the infinite loop object $(\ldots, \tau_{\ge 2} Y_2, \tau_{\ge 1} Y_1, Y_0).$ It suffices to prove that $\tau_{\ge n} Y_n$ is an affine stack. For $n=0$ the claim follows directly. For $n \ge 1,$ the stack $\tau_{\ge n}Y_n$ is pointed, connected and the homotopy sheaves are unipotent. 
 By \cref{cor:affine_loops_implies_unipotent_spectrum}, each $\tau_{\ge n} Y_n$ is affine, which ends the proof. 
\end{proof}

\subsection{Symmetric monoidal structure on unipotent spectra}\label{monoidal}
In this section, we discuss the construction of a symmetric monoidal structure on ind-unipotent spectra. We will work over a fixed base field $k.$ 
Let $\mathrm{AffSt}_{k*}$ denote the category of pointed affine stacks over $k$. 
First, we will explain how to equip $\mathrm{AffSt}_{k*}$ with the structure of a symmetric monoidal $\infty$-category. 
Next, we show that $ \mathrm{Ind}\left(\mathrm{AffSt}_{k*}\right) $ inherits a symmetric monoidal structure which preserves all small colimits separately in each variable. 
Finally we show that this endows the stabilization of $ \mathrm{Ind}\left(\mathrm{AffSt}_{k*}\right) $ with a symmetric monoidal structure with the same property. 

Note that the left adjoint to the inclusion $\mathrm{AffSt}_{k*} \to \mathrm{St}_{k*}$ is not very well-behaved; namely, it does not commute with finite products. 
This causes difficulties in showing that the natural symmetric monoidal structure $ \mathrm{St}_{k*} $ induces one on $\mathrm{AffSt}_{k*}$. 
Let us illustrate a related issue in the remark below.

\begin{remark}\label{productissue}
 Let $X, Y \in \mathrm{St}_k,$ such that $Y$ is affine. Then it is not necessarily true that the mapping stacks $\underline{\mathrm{Map}}(X, Y)$ and $\underline{\mathrm{Map}} (\mathrm{U}(X), Y)$ are isomorphic. The issue originates from the fact that $\mathrm{U}(X \times \mathrm{Spec}\, A)$ is not in general isomorphic to $\mathrm{U}(X) \times \mathrm{Spec}\, A$; this can be seen by taking $X$ to be an infinite disjoint union of $\mathrm{Spec} \,k$.
\end{remark}

Below, we will impose a certain general condition on $X$ to resolve the issue in \cref{productissue}.

\begin{definition}\label{almostfin}
We call an object $X \in \mathrm{St}_k$ \emph{almost finitary} if for all $ n $, $ \tau_{\leq n}X $ is generated by affine schemes under finite colimits in the category $\tau_{\le n}\mathrm{St}_k.$ Let us denote the full subcategory of $\mathrm{St}_k$ spanned by almost finitary objects to be $\mathrm{St}_k^{\mathrm{afin}}.$
\end{definition}

\begin{proposition}\label{prop1}
The category $\mathrm{St}_k^{\mathrm{afin}}$ has finite colimits.    
\end{proposition}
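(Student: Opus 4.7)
The ambient category $\mathrm{St}_k$ is presentable and in particular admits all small colimits, so it suffices to show that the full subcategory $\mathrm{St}_k^{\mathrm{afin}}$ is closed under finite colimits in $\mathrm{St}_k$. Let $F : I \to \mathrm{St}_k^{\mathrm{afin}}$ be a finite diagram, and form $X := \colim_I F$ in $\mathrm{St}_k$. I need to verify that for every $n \geq 0$, the truncation $\tau_{\leq n} X$ lies in the closure of affine schemes under finite colimits inside $\tau_{\leq n}\mathrm{St}_k$.

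The key step is that for each $n$, the $n$-truncation functor $\tau_{\leq n} \colon \mathrm{St}_k \to \tau_{\leq n}\mathrm{St}_k$ is the left adjoint to the fully faithful inclusion of $n$-truncated objects, and in particular preserves all small colimits. Consequently
\[
\tau_{\leq n} X \;\simeq\; \colim_I\, \tau_{\leq n}\!\circ F,
\]
where the colimit on the right is computed in $\tau_{\leq n}\mathrm{St}_k$. By the almost finitariness of each $F(i)$, every $\tau_{\leq n} F(i)$ already belongs to the closure $\mathcal{A}_n \subseteq \tau_{\leq n}\mathrm{St}_k$ of affine schemes under finite colimits. Since $\mathcal{A}_n$ is by definition closed under finite colimits in $\tau_{\leq n}\mathrm{St}_k$, and $I$ is finite, the colimit $\tau_{\leq n} X$ lies in $\mathcal{A}_n$. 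This holds for all $n$, so $X \in \mathrm{St}_k^{\mathrm{afin}}$.

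There is no real obstacle here beyond verifying that truncation commutes with the relevant colimits, which is a formal consequence of the adjoint functor picture for the Postnikov tower of the $\infty$-topos $\mathrm{St}_k$. The only mildly subtle point is keeping track of the distinction between finite colimits computed in $\mathrm{St}_k$ versus $\tau_{\leq n}\mathrm{St}_k$, but the left-adjoint property of $\tau_{\leq n}$ precisely bridges the two and the argument reduces to the tautology that a full subcategory defined as a closure under finite colimits is itself closed under finite colimits.
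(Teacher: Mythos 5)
Your proof is correct and follows essentially the same route as the paper's (one-line) argument: both rest on the fact that $\tau_{\le n}\colon \mathrm{St}_k \to \tau_{\le n}\mathrm{St}_k$ is a left adjoint, hence preserves colimits, so the truncation of a finite colimit of almost finitary stacks lies in the closure of affine schemes under finite colimits by definition. You have merely spelled out the details the paper leaves implicit.
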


\begin{proof}
 This follows from the definition since the functor $\tau_{\le n}: \mathrm{St}_k \to \tau_{\le n} \mathrm{St}_k$ preserves colimits (being a left adjoint). 
\end{proof}{}

\begin{proposition}\label{prop2}
 The category $\mathrm{St}_k^{\mathrm{afin}}$ is stable under finite products.
\end{proposition}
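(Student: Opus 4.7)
The plan is to reduce the statement to the assertion that for each $n \ge 0$, the truncated product $\tau_{\le n}(X \times Y)$ is generated under finite colimits in $\tau_{\le n}\mathrm{St}_k$ by affine schemes, assuming this property for $X$ and $Y$ separately. The proof will therefore consist of two interlocking moves: commuting the $n$-truncation past the product, and commuting the product past the finite colimits of affines.

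First I would note that, working in a sufficiently large Grothendieck universe as set up in the conventions, $\mathrm{St}_k$ is an $\infty$-topos and the truncation functor $\tau_{\le n}\colon \mathrm{St}_k \to \tau_{\le n}\mathrm{St}_k$ is a left-exact localization, so in particular it preserves finite products. This yields
\[
\tau_{\le n}(X \times Y) \;\simeq\; \tau_{\le n}X \times \tau_{\le n}Y.
\]
Using almost finitariness of $X$ and $Y$, I would then pick finite diagrams and write $\tau_{\le n}X \simeq \colim_{i \in I} U_i$ and $\tau_{\le n}Y \simeq \colim_{j \in J} V_j$ with $U_i$ and $V_j$ affine.

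Next I would invoke universality of colimits in the $(n+1,1)$-topos $\tau_{\le n}\mathrm{St}_k$: for each fixed object the functor of product with it preserves all colimits. Applying this twice (once in each variable) gives
\[
\tau_{\le n}X \times \tau_{\le n}Y \;\simeq\; \colim_{(i,j)\in I\times J}\bigl(U_i \times V_j\bigr).
\]
Since $U_i \times_k V_j = \mathrm{Spec}(\mathcal{O}(U_i) \otimes_k \mathcal{O}(V_j))$ is again an affine scheme, and $I \times J$ is a finite $\infty$-category, this exhibits $\tau_{\le n}(X\times Y)$ as a finite colimit of affines in $\tau_{\le n}\mathrm{St}_k$, completing the argument.

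The step requiring the most care is the topos-theoretic input: verifying left exactness of $\tau_{\le n}$ on $\mathrm{St}_k$ and universality of colimits in $\tau_{\le n}\mathrm{St}_k$ in the fpqc setting. These are standard facts once one is in an $\infty$-topos, but one has to be a little careful since fpqc descent naively has accessibility issues; the Grothendieck universe conventions recalled in the notation section of the paper are precisely what allow one to treat $\mathrm{St}_k$ as a (very large) $\infty$-topos and thereby apply the above facts without subtlety.
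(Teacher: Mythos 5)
Your overall strategy---commute $\tau_{\le n}$ past the product and then combine universality of colimits in $\tau_{\le n}\mathrm{St}_k$ with the fact that a product of affine schemes over $k$ is affine---is exactly the mechanism behind the paper's proof. The gap is in the reduction step where you ``pick finite diagrams and write $\tau_{\le n}X \simeq \colim_{i\in I}U_i$'' with each $U_i$ affine. \cref{almostfin} only asserts that $\tau_{\le n}X$ lies in the smallest full subcategory $\mathcal{C}\subseteq \tau_{\le n}\mathrm{St}_k$ containing the affine schemes and closed under finite colimits. Objects of $\mathcal{C}$ are \emph{iterated} finite colimits of affines, and it is not clear that such an object admits a presentation as a single finite colimit of a diagram of affine schemes: a map between finite colimits need not be induced by a map of diagrams, so the class of ``one-step'' finite colimits of affines is not obviously closed under finite colimits. (Your reading would also leave \cref{prop1} in need of the same missing argument, whereas under the closure reading it is immediate; and the later uses of almost finitary stacks, e.g.\ for suspensions of affine stacks, really do rely on the closure reading.)

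The repair is to avoid choosing presentations and argue by closure, which is what the paper does: first fix an affine scheme $\mathrm{Spec}\, A$ and observe that the full subcategory of those $Z_0\in\tau_{\le n}\mathrm{St}_k$ with $Z_0\times \mathrm{Spec}\, A\in\mathcal{C}$ contains all affine schemes and is closed under finite colimits (by universality of colimits in $\tau_{\le n}\mathrm{St}_k$), hence contains $\mathcal{C}$; then fix $V\in\mathcal{C}$ and run the same argument for the subcategory of $Z_0$ with $Z_0\times V\in\mathcal{C}$, using the first step to handle affine $Z_0$. With this two-variable induction in place of your explicit presentations (your Fubini step then becomes unnecessary), your argument coincides with the paper's; the topos-theoretic inputs you flag (left exactness of $\tau_{\le n}$ and universality of colimits, under the universe conventions) are used in the same way there.
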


\begin{proof}
 Let $X , Y \in \mathrm{St}_k^{\mathrm{afin}}.$ Since $\tau_{\le n} (X \times Y) \simeq \tau_{\le n} X \times \tau_{\le n} Y$, it suffices to show that the full subcategory of $\tau_{\le n} \mathrm{St}_k$ generated under finite colimits by affine schemes, which we will denote by $\mathcal{C}$, is closed under products. 
 This essentially follows because colimits in an $ \infty $-topos are universal and products of affine schemes are affine. 

To this end, first we claim that if $Z \in \mathcal{C},$ then $Z \times \mathrm{Spec}\, A \in \mathcal{C}.$  Note that the full category $\mathcal{C}_{A}$ of $\tau_{\le n}\mathrm{St}_k$ spanned by $Z_0 \in \tau_{\le n}\mathrm{St}_k$ such that $Z_0 \times \mathrm{Spec}\, A \in \mathcal{C}$ contains all affine schemes and is stable under finite colimits (taken in $\tau_{\le n}\mathrm{St}_k)$. By definition of $\mathcal{C}$, this implies that there is a natural embedding $\mathcal{C} \subseteq \mathcal{C}_A$, which implies the claim.

Now we claim that for $U, V \in \mathcal{C}$, we have $U \times V \in \mathcal{C}.$ Note that the full category $\mathcal{C}_{V}$ of $\tau_{\le n}\mathrm{St}_k$ spanned by $Z_0$ such that $Z_0 \times V \in \mathcal{C}$ contains all affine schemes (by the previous paragraph) and is stable under finite colimits. By definition of $\mathcal{C}$, this implies that there is a natural embedding $\mathcal{C} \subseteq \mathcal{C}_V$, which implies the claim. This finishes the proof.
\end{proof}

Note that the category $\mathrm{St}_{k *}$ has a natural symmetric monoidal structure given by the smash product $X \wedge Y,$ defined as a pushout 
\begin{center}
\begin{tikzcd}
X \vee Y \arrow[d] \arrow[r] \ar[rd,phantom,"{\rotatebox{180}{$\lrcorner$}}",very near end] & * \arrow[d] \\
X \times Y \arrow[r]         & X \wedge Y  \end{tikzcd}
\end{center}{}
where $X, Y \in \mathrm{St}_{k *}.$

\begin{proposition}\label{prop:smash_product_stacks_afin}
 The unit of $\mathrm{St}_{k*}$ belongs to $\mathrm{St}_{k*}^{\mathrm{afin}}$. 
 If $X, Y \in \mathrm{St}_{k*}^\mathrm{afin}$, then $X \wedge Y \in \mathrm{St}_{k*}^{\mathrm{afin}}$. 
 In particular, the smash product equips $\mathrm{St}_{k*}^{\mathrm{afin}}$ with the structure of a symmetric monoidal $\infty$-category. 
\end{proposition}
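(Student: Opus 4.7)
The plan is to verify the two conditions required of a symmetric monoidal full subcategory of $\mathrm{St}_{k*}$: that the unit of the smash product lies in $\mathrm{St}_{k*}^{\mathrm{afin}}$, and that $\mathrm{St}_{k*}^{\mathrm{afin}}$ is closed under $\wedge$. Once both are in hand, a standard fact (see \cite[\S 2.2.1]{luriehigher}) promotes the inclusion $\mathrm{St}_{k*}^{\mathrm{afin}} \hookrightarrow \mathrm{St}_{k*}$ to a symmetric monoidal functor, giving the desired structure.

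For the unit, recall that the tensor unit of the smash product on $\mathrm{St}_{k*}$ is $S^0 = (\mathrm{Spec}\, k) \sqcup (\mathrm{Spec}\, k)_+ $, i.e.\ a finite coproduct of copies of $\mathrm{Spec}\, k$. Since finite coproducts are finite colimits, and since $\mathrm{Spec}\, k$ is affine (hence almost finitary by definition), \cref{prop1} immediately gives $S^0 \in \mathrm{St}_{k*}^{\mathrm{afin}}$.

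For closure under $\wedge$, I would use that the forgetful functor $\mathrm{St}_{k*} \to \mathrm{St}_k$ preserves pushouts; this allows the almost finitary condition, which is formulated in $\mathrm{St}_k$, to be checked on the underlying unpointed pushout defining $X \wedge Y$. Given $X, Y \in \mathrm{St}_{k*}^{\mathrm{afin}}$, the wedge $X \vee Y$ is the pushout of the diagram $X \leftarrow * \to Y$, hence a finite colimit of almost finitary stacks, so it lies in $\mathrm{St}_{k*}^{\mathrm{afin}}$ by \cref{prop1}. The product $X \times Y$ lies in $\mathrm{St}_{k*}^{\mathrm{afin}}$ by \cref{prop2}. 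Then $X \wedge Y$, which sits in the defining pushout square
\[
\begin{tikzcd}
X \vee Y \ar[r] \ar[d] & * \ar[d]\\
X \times Y \ar[r] & X \wedge Y,
\end{tikzcd}
\]
is again a finite colimit of almost finitary stacks and so lies in $\mathrm{St}_{k*}^{\mathrm{afin}}$ by a second application of \cref{prop1}.

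There is no real obstacle to carrying this out: the verification is essentially a bookkeeping argument that uses the two preceding propositions together with the fact that pushouts in $\mathrm{St}_{k*}$ are computed in $\mathrm{St}_k$. The only point that requires minor care is the final step where one packages closure-under-$\wedge$ and containment-of-unit into an honest symmetric monoidal structure; for this, it suffices to invoke the general principle that a full subcategory of a symmetric monoidal $\infty$-category closed under the tensor product and containing the unit inherits a (unique) symmetric monoidal structure such that the inclusion is symmetric monoidal.
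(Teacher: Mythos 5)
Your proposal is correct and follows essentially the same route as the paper: the unit is handled via \cref{prop1} (the paper simply calls it evident), closure under $\wedge$ is deduced from \cref{prop1} and \cref{prop2} applied to the defining pushout, and the symmetric monoidal structure is then inherited by the standard fact about full subcategories containing the unit and closed under the tensor product (the paper cites \cite[Remark 2.2.1.2]{luriehigher}). Your write-up merely spells out details the paper leaves implicit, such as computing pushouts of pointed stacks in underlying stacks.
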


\begin{proof}
That the unit belongs to $ \mathrm{St}_{k*}^{\mathrm{afin}} $ is evident. 
That $\mathrm{St}_{k*}^{\mathrm{afin}}$ is closed under the smash product $ \wedge $ follows from \cref{prop1} and \cref{prop2}. The final part follows from the previous statements and \cite[Remark 2.2.1.2]{luriehigher}.    
\end{proof}

\begin{remark}
 Note that the category $\mathrm{St}_{k*}^{\mathrm{afin}}$ is not closed under taking mapping stacks in $\mathrm{St}_{k_*}^{\mathrm{afin}}$. The mapping stack $\underline{\mathrm{Map}}\left(\mathbb{A}^1_k, \mathbb{A}^1_k\right)$ can be identified with the ind-(affine) scheme $\mathbb{A}^\infty$ which is not almost finitary.  
\end{remark}

\begin{proposition}\label{prop:aff_stack_is_almost_finitary}
 An affine stack $X \in \mathrm{AffSt}_k$ is almost finitary. 
 In other words, the inclusion $ \mathrm{AffSt}_k \to \mathrm{St}_k $ factors through the full subcategory $ \mathrm{St}_k^{\mathrm{afin}} $. 
\end{proposition}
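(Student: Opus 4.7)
The plan is to reduce to the case of Eilenberg--MacLane affine stacks via the Postnikov tower of $X$, and then handle those using the bar construction together with a skeletal filtration argument.

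First, I would invoke the basic structural results on affine stacks of \cite{MR2244263}, which imply that $\pi_0 X$ is representable by an affine scheme and, for $m \ge 1$, $\pi_m X$ is representable by a commutative unipotent affine group scheme. The Postnikov tower of $X$ then exhibits $\tau_{\le n}X$ as a finite iterated fiber product in $\mathrm{St}_k$ of $\pi_0 X$, the terminal object $*$, and the Eilenberg--MacLane stacks $K(\pi_m X, m+1)$ for $1 \le m \le n$. This reduces the proposition to the following two sub-claims: (a) each Eilenberg--MacLane stack $K(G, m)$ with $G$ a commutative unipotent affine group scheme is almost finitary; and (b) the class of almost finitary stacks is closed under the finite fiber products appearing in the Postnikov tower.

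For (a), write $K(G, m) \simeq |G_\bullet|$ as the geometric realization (in $\mathrm{St}_k$) of a simplicial affine scheme whose simplices are finite products of copies of $G$, via iterated bar constructions. Since $\tau_{\le N}$ preserves colimits, and since simplicial cells of dimension $\ge N+2$ only affect homotopy in degrees $\ge N+1$, the truncated realization $\tau_{\le N}|G_\bullet|$ agrees with the finite realization $\tau_{\le N}|\mathrm{sk}_{N+1} G_\bullet|$, which is manifestly a finite colimit of affine schemes in $\tau_{\le N}\mathrm{St}_k$. For (b), one uses the universality of colimits in the $\infty$-topos $\mathrm{St}_k$: given almost finitary stacks $X, Y, Z$, one distributes the fiber product $X \times_Z Y$ over the finite colimit presentations of $\tau_{\le N}X$, $\tau_{\le N}Y$, $\tau_{\le N}Z$, reducing to fiber products of affine schemes. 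These remain almost finitary, being (derived) affine schemes which, as particular instances of the proposition in a simpler setting, can be handled inductively.

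I expect the main obstacle to be step (b), where careful bookkeeping is required to control the interaction between $\tau_{\le N}$ and finite limits in $\mathrm{St}_k$, since $\tau_{\le N}$ only preserves colimits, not limits. The resolution requires temporarily working in a higher truncated topos $\tau_{\le N'}\mathrm{St}_k$ (with $N'$ exceeding the truncation levels of all the stacks in the Postnikov diagram of $\tau_{\le n}X$), and then descending to $\tau_{\le N}\mathrm{St}_k$ via further truncation, using that the left-adjoint $\tau_{\le N}$ preserves finite colimits and hence maintains the finite-colimit-of-affines structure.
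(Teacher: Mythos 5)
There is a genuine gap in step (b). Your reduction via the Postnikov tower forces you to prove that almost finitary stacks are closed under the \emph{fiber products} $\tau_{\le m}X \times_{K(\pi_{m+1}X,\,m+2)} *$, and this is precisely where the argument breaks. Distributing $X \times_Z Y$ over finite colimit presentations of $X$, $Y$ and $Z$ (by universality of colimits) does \emph{not} reduce you to fiber products of affine schemes that are again affine: a piece of the form $X_i \times_Z Y_j$ with $X_i, Y_j$ affine is a pullback of the diagonal of the non-affine base $Z$ (here an Eilenberg--MacLane stack), and is in general not an affine scheme, nor obviously almost finitary --- the assertion that these pieces ``remain almost finitary, being (derived) affine schemes'' is false as stated (and the fiber products in $\mathrm{St}_k$ are not derived in any case). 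Note that the paper only establishes closure of $\mathrm{St}_k^{\mathrm{afin}}$ under finite colimits (\cref{prop1}) and finite \emph{products} (\cref{prop2}); even the product case requires a careful two-step universality argument, and the fiber-product case is strictly harder because of the diagonal of the base, so it cannot be waved through. Your proposed fix of working in a higher truncation $\tau_{\le N'}\mathrm{St}_k$ addresses only the (real, but secondary) issue that $\tau_{\le N}$ fails to preserve pullbacks; it does not supply the missing closure statement. There is also a smaller gap at the start: for a general affine stack (not pointed, not connected) the homotopy sheaves $\pi_m X$ are not globally given by unipotent group schemes, and the k-invariants need not be principal/untwisted, so the clean ``finite iterated fiber product of $\pi_0X$, $*$, and $K(\pi_mX,m+1)$'' description already presupposes the pointed connected case with trivial $\pi_1$-action.

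By contrast, the paper's proof bypasses Postnikov towers entirely: by (the proof of) \cite[Théorème~2.2.9]{MR2244263}, every affine stack is the geometric realization of a simplicial affine scheme $\Spec A_\bullet$, and for each $n$ the truncation $\tau_{\le n}\bigl(\colim_{\Delta^{\op}}\Spec A_\bullet\bigr) \simeq \colim_{\Delta^{\op}}\tau_{\le n}\Spec A_\bullet$ is computed by a \emph{finite} colimit in $\tau_{\le n}\mathrm{St}_k$, since in an $n$-truncated topos realizations depend only on a finite skeleton. This is exactly the skeletal principle you invoke in your step (a) for $K(G,m)$, but applied directly to $X$; doing so removes any need for steps (b) and the Postnikov decomposition. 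If you want to salvage your route, you would have to prove a closure-under-pullback lemma for $\mathrm{St}_k^{\mathrm{afin}}$ (at least for pullbacks along maps to Eilenberg--MacLane stacks of unipotent groups), which is substantially more work than the statement itself.
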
{}

\begin{proof}
    By the proof of \cite[Theorem~2.2.9]{MR2244263}, there exists a simplicial scheme $ X_\bullet  =  \Spec A_\bullet $ so that $ \colim_{\Delta^\op} X_\bullet \simeq X $. 
    Now for each $ n $, $ \tau_{\leq n}\left(\colim_{\Delta^\op} X_\bullet\right) \simeq \colim_{\Delta^\op} \tau_{\leq n} X_\bullet $ is equivalent to a finite colimit. 
\end{proof}{}

\begin{lemma}\label{truncaa}
 Let $X \in \mathrm{St}_k$ and $n \ge 0$ be an integer. Then we have a natural isomorphism
\begin{equation}
 \tau_{\ge 0} (R\Gamma(X, \mathcal{O})[n]) \simeq \mathrm{Map}_{\tau_{\le n}\mathrm{St}_k}(\tau_{\le n}X, K(\mathbb{G}_a, n)).   
\end{equation}
   
\end{lemma}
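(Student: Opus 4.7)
The asserted identity is the standard representability of sheaf cohomology by Eilenberg--MacLane objects, combined with the fact that $K(\mathbb{G}_a,n)$ is itself $n$-truncated. I would carry it out in two essentially independent steps.

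First, observe that $K(\mathbb{G}_a,n)$ is an $n$-truncated object of $\mathrm{St}_k$: it is pointed connected, $(n{-}1)$-connected, with $\pi_n = \mathbb{G}_a$ and no higher homotopy sheaves. By the universal property of the truncation functor $\tau_{\le n}\colon \mathrm{St}_k \to \tau_{\le n}\mathrm{St}_k$, the unit $X \to \tau_{\le n}X$ therefore induces an equivalence
\begin{equation*}
\mathrm{Map}_{\tau_{\le n}\mathrm{St}_k}(\tau_{\le n}X, K(\mathbb{G}_a,n)) \xrightarrow{\sim} \mathrm{Map}_{\mathrm{St}_k}(X, K(\mathbb{G}_a,n)),
\end{equation*}
reducing the problem to identifying the right-hand side with $\tau_{\ge 0}(R\Gamma(X,\mathcal{O})[n])$.

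Second, I would establish the representability statement
\begin{equation*}
\mathrm{Map}_{\mathrm{St}_k}(X, K(\mathbb{G}_a,n)) \simeq \tau_{\ge 0}\bigl(R\Gamma(X,\mathcal{O})[n]\bigr),
\end{equation*}
which is a standard fact in the $\infty$-topos $\mathrm{St}_k$. The sheaf $\mathbb{G}_a$ is, tautologically, the sheaf of abelian groups corresponding to the structure sheaf $\mathcal{O}$ (sending $R \mapsto R$). Under Dold--Kan, the connective cover $\tau_{\ge 0}(\mathcal{O}[n])$ of the shifted structure sheaf—viewed as a sheaf of connective spectra, hence of $\mathbb{E}_\infty$-groups—corresponds via $\Omega^\infty$ to the Eilenberg--MacLane object $K(\mathbb{G}_a,n)$. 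Applying $\mathrm{Map}_{\mathrm{St}_k}(X,-)$ and using the adjunction between $\Sigma^\infty_+$ and $\Omega^\infty$ together with the fact that global sections of a sheaf of spectra computes $R\Gamma$ yields
\begin{equation*}
\mathrm{Map}_{\mathrm{St}_k}(X, K(\mathbb{G}_a,n)) \simeq \Omega^\infty \tau_{\ge 0}\bigl(R\mathrm{Hom}_{\mathrm{Sp}(\mathrm{St}_k)}(\Sigma^\infty_+ X, \mathcal{O}[n])\bigr) \simeq \tau_{\ge 0}\bigl(R\Gamma(X,\mathcal{O})[n]\bigr).
\end{equation*}
Combining both steps gives the claimed natural equivalence, and naturality in $X$ is automatic from the construction. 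The only subtlety to verify is the precise identification of $K(\mathbb{G}_a,n)$ with the $\Omega^\infty$-image of $\tau_{\ge 0}(\mathcal{O}[n])$, which follows from the fact that both are $n$-truncated, $(n{-}1)$-connected, pointed, and have $\pi_n \simeq \mathbb{G}_a$; hence the main obstacle is merely bookkeeping between the homological convention on the spectrum side and the geometric Eilenberg--MacLane construction.
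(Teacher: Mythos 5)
Your proposal is correct and follows essentially the same route as the paper: the paper likewise invokes the standard representability $\tau_{\ge 0}(R\Gamma(X,\mathcal{O})[n]) \simeq \mathrm{Map}_{\mathrm{St}_k}(X, K(\mathbb{G}_a,n))$ and then concludes by noting that $K(\mathbb{G}_a,n)$ is $n$-truncated, so maps out of $X$ factor through $\tau_{\le n}X$. Your extra elaboration of the representability step (via the identification of $K(\mathbb{G}_a,n)$ with $\Omega^\infty\tau_{\ge 0}(\mathcal{O}[n])$) is fine and merely spells out what the paper takes as standard.
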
{}

\begin{proof}
 Note that we have $ \tau_{\ge 0} (R\Gamma(X, \mathcal{O})[n]) \simeq \mathrm{Map}_{\mathrm{St}_k}(X, K(\mathbb{G}_a, n))$. The lemma now follows because $K(\mathbb{G}_a, n)$ is $n$-truncated.
\end{proof}

\begin{remark}\label{coro}
As a corollary, we obtain a natural isomorphism $ \tau_{\ge 0} (R\Gamma(X, \mathcal{O})[n]) \simeq  \tau_{\ge 0} (R\Gamma(\tau_{\le n}X, \mathcal{O})[n]).$   
\end{remark}

\begin{lemma}\label{filcolim}
Let $X \in \mathrm{St}_k.$ We have a natural isomorphism
$$\varinjlim R\Gamma (\tau_{\le n}X, \mathcal{O}) \simeq R\Gamma (X, \mathcal{O}).$$
\end{lemma}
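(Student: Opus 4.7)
The plan is to verify the claimed equivalence one cohomology degree at a time, leveraging \cref{coro} to control the range in which $R\Gamma(\tau_{\le n}X,\mathcal{O})$ already agrees with $R\Gamma(X,\mathcal{O})$.

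First, I would set up the relevant filtered diagram. The Postnikov tower
$$\cdots \to \tau_{\le n+1} X \to \tau_{\le n} X \to \cdots$$
together with the canonical maps $X \to \tau_{\le n} X$ induces (contravariantly) a filtered system $\{R\Gamma(\tau_{\le n}X, \mathcal{O})\}_{n \ge 0}$ equipped with a compatible family of comparison maps to $R\Gamma(X, \mathcal{O})$, hence a canonical morphism
$$\varinjlim_n R\Gamma(\tau_{\le n} X, \mathcal{O}) \to R\Gamma(X, \mathcal{O}).$$
The goal is to show this map is an equivalence.

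Next, I would unpack \cref{coro}. Using the homological convention for the $t$-structure on $D(k)$, the functor $\tau_{\ge 0}((-)[n])$ on a complex $C$ retains precisely the homotopy groups $\pi_i(C)$ for $i \ge -n$, equivalently the cohomology groups $H^i$ for $i \le n$. Thus the isomorphism $\tau_{\ge 0}(R\Gamma(X, \mathcal{O})[n]) \simeq \tau_{\ge 0}(R\Gamma(\tau_{\le n}X, \mathcal{O})[n])$ supplied by \cref{coro} translates into the assertion that the natural map $R\Gamma(\tau_{\le n}X, \mathcal{O}) \to R\Gamma(X, \mathcal{O})$ induces an isomorphism on $H^i$ for every $i \le n$.

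Finally, since filtered colimits in the derived category of $k$-modules commute with taking cohomology, we have
$$H^i\!\left(\varinjlim_n R\Gamma(\tau_{\le n} X, \mathcal{O})\right) \;\simeq\; \varinjlim_n H^i(\tau_{\le n} X, \mathcal{O}),$$
and by the previous step this filtered system is eventually constantly equal to $H^i(X, \mathcal{O})$ (as soon as $n \ge i$). Hence the canonical comparison map is a quasi-isomorphism, yielding the lemma. The proof is essentially a formal consequence of \cref{coro}; there is no substantive obstacle, as the real content of the statement is already packaged in the previous remark.
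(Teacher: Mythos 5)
Your argument is correct and is essentially the paper's proof: the paper passes \cref{coro} through the cofiber of $R\Gamma(\tau_{\le n}X,\mathcal{O}) \to R\Gamma(X,\mathcal{O})$ and notes the filtered colimit of these cofibers vanishes, which is just a repackaging of your degreewise observation that the map is an isomorphism on $H^i$ for $i \le n$ and that cohomology commutes with filtered colimits.
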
{}

\begin{proof}
 By \cref{coro}, $\varinjlim \mathrm{cofib} (R\Gamma(\tau_{\le n}X, \mathcal{O}) \to R\Gamma(X, \mathcal{O})) \simeq 0,$ which yields the claim.  
\end{proof}{}

\begin{proposition}\label{keyob1}
 Let $X \in \mathrm{St}_k^\mathrm{afin}$ and let $\mathrm{Spec}\, A$ be any affine scheme. Then we have a natural isomorphism $R\Gamma (X, \mathcal{O}) \otimes_k A \simeq R\Gamma (X \times_k \mathrm{Spec}\, A, \mathcal{O}). $ \end{proposition}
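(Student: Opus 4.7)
The plan is to reduce to the case where the input stack is a finite iterated colimit of affine schemes in $\mathrm{St}_k$, which admits a direct argument, and then transfer the result to the almost finitary case using \cref{truncaa} and a Postnikov-tower argument.

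First, consider the natural comparison map $\phi_Y \colon R\Gamma(Y, \mathcal{O}) \otimes_k A \to R\Gamma(Y \times_k \mathrm{Spec}\, A, \mathcal{O})$ defined for each $Y \in \mathrm{St}_k$. This is clearly an equivalence when $Y$ is affine. I would then show that the class of $Y$ for which $\phi_Y$ is an equivalence is closed under finite colimits in $\mathrm{St}_k$. This follows from three observations: $R\Gamma(-, \mathcal{O})$ sends colimits to limits; base change $- \times_k \mathrm{Spec}\, A$ preserves colimits, by universality of colimits in the $\infty$-topos $\mathrm{St}_k$; and $- \otimes_k A \colon D(k) \to D(A)$ is $t$-exact (as $A$ is flat over the field $k$), hence preserves finite limits in the stable sense. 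Consequently, $\phi_W$ is an equivalence for every $W \in \mathrm{St}_k$ built as a finite iterated colimit of affine schemes.

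Next, fix $n \ge 0$. Since $X$ is almost finitary, $\tau_{\le n}X$ lies in the closure of affine schemes under finite colimits inside $\tau_{\le n}\mathrm{St}_k$. The truncation functor $\tau_{\le n} \colon \mathrm{St}_k \to \tau_{\le n}\mathrm{St}_k$ is a left adjoint and hence preserves colimits, which allows this presentation of $\tau_{\le n}X$ to be lifted to a stack $W_n \in \mathrm{St}_k$ built as a finite iterated colimit of the same affine schemes (but now taken in $\mathrm{St}_k$), satisfying $\tau_{\le n}W_n \simeq \tau_{\le n}X$. By the previous paragraph, $\phi_{W_n}$ is an equivalence.

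Finally, I would transfer the conclusion from $W_n$ to $X$ using \cref{truncaa} together with the observation that $\tau_{\le n}(Y \times_k \mathrm{Spec}\, A) \simeq \tau_{\le n}Y \times_k \mathrm{Spec}\, A$ (since $\mathrm{Spec}\, A$ is $0$-truncated and truncation commutes with products of truncated objects). Applying both to $Y \in \{X, W_n\}$, which share the same $n$-truncation, together with the naturality of $\phi$ and the $t$-exactness of $- \otimes_k A$, yields naturality squares whose vertical maps become equivalences after applying $\tau_{\ge -n}$. This identifies $\tau_{\ge -n}\phi_X$ with $\tau_{\ge -n}\phi_{W_n}$ via the bridge $\tau_{\le n}X \simeq \tau_{\le n}W_n$, so $\tau_{\ge -n}\phi_X$ is an equivalence. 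Since this holds for every $n$, $\phi_X$ itself is an equivalence, completing the proof. The main bookkeeping concerns this zig-zag, since $W_n$ depends on $n$ and there is no canonical map between $X$ and $W_n$; the common truncation $\tau_{\le n}X \simeq \tau_{\le n}W_n$ is the crucial bridge making everything compatible.
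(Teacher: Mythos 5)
Your step reducing to finite colimits taken in $\mathrm{St}_k$ is correct (there $R\Gamma(-,\mathcal{O})$ turns colimits into limits, base change preserves colimits, and $-\otimes_k A$ is exact over the field $k$), and your final transfer via common $n$-truncations together with the passage $n \to \infty$ is also sound. The genuine gap is the construction of $W_n$. Being \emph{generated by affine schemes under finite colimits in $\tau_{\le n}\mathrm{St}_k$} (\cref{almostfin}) means lying in the smallest full subcategory of $\tau_{\le n}\mathrm{St}_k$ containing the affines and closed under finite colimits; it does \emph{not} hand you a single finite diagram of affine schemes with colimit $\tau_{\le n}X$. An object of this closure is an iterated colimit whose later-stage diagrams have vertices that are themselves $n$-truncated non-affine objects and whose morphisms (and higher coherences) live in $\tau_{\le n}\mathrm{St}_k$. "Truncation preserves colimits" only lets you lift a one-step colimit of a diagram of affines; to lift an iterated presentation you would need to lift maps $\tau_{\le n}W' \to \tau_{\le n}W''$ between truncations of previously constructed colimits to maps $W' \to W''$ in $\mathrm{St}_k$, and the map $\mathrm{Map}_{\mathrm{St}_k}(W',W'') \to \mathrm{Map}_{\mathrm{St}_k}(\tau_{\le n}W',\tau_{\le n}W'')$ is in general not surjective on components, let alone compatible across a whole diagram. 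So the existence of an untruncated finite iterated colimit of affines $W_n$ with $\tau_{\le n}W_n \simeq \tau_{\le n}X$ is unjustified; proving it (e.g.\ showing every object of the closure is the truncation of a single finite colimit of affines) is essentially the same difficulty the proposition is confronting, not a formal consequence of $\tau_{\le n}$ being a left adjoint.

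The paper avoids any lifting by never leaving $\tau_{\le n}\mathrm{St}_k$: it proves the truncated comparison $\tau_{\ge -n}\bigl(R\Gamma(Y,\mathcal{O})\otimes_k A\bigr) \simeq \tau_{\ge -n}R\Gamma(Y\times_k \mathrm{Spec}\,A,\mathcal{O})$ for \emph{every} object $Y$ of the closure, by checking that the full subcategory of such $Y$ contains the affines and is closed under finite colimits taken in $\tau_{\le n}\mathrm{St}_k$. The inductive step works because \cref{truncaa} converts both sides into mapping spaces into $K(\mathbb{G}_a,n)$, colimits in $\tau_{\le n}\mathrm{St}_k$ are universal, and $-\otimes_k A$ is a filtered colimit of finite free modules, hence commutes with the finite limits and truncations that appear. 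The untruncated statement for $X$ almost finitary is then recovered by the colimit over $n$ using \cref{filcolim}, much as in your last step. To repair your argument you should replace the construction of $W_n$ by this induction inside $\tau_{\le n}\mathrm{St}_k$ on the truncated comparison map (or else supply a proof that every object of the closure is the $n$-truncation of a finite colimit of affines computed in $\mathrm{St}_k$, which you have not given).
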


\begin{proof}
Let $\mathcal{C}$ denote the full subcategory of $\tau_{\le n} \mathrm{St}_k$ generated under finite colimits by affine schemes. We claim the following:
\begin{itemize}
    \item Let $Y \in \mathcal{C}.$ Then \begin{equation}\label{claaim}
        \tau_{\ge -n}R\Gamma (Y, \mathcal{O}) \otimes_k A \simeq \tau_{\ge -n} R\Gamma (Y \times_k \mathrm{Spec}\, A, \mathcal{O}).
    \end{equation}{}
\end{itemize}
Let $\mathcal{C}_A$ denote the full subcategory spanned by objects $ Y $ of $\tau_{\le n}\mathrm{St}_k$ for which \cref{claaim} holds. Then $\mathcal{C}_A$ contains all affine schemes. Thus to prove our claim, it suffices to prove that $\mathcal{C}_A$ is stable under finite colimits (taken in $\tau_{\le n}\mathrm{St}_k$). Let $\mathcal{Y}: {I} \to \mathcal{C}_A$ denote a finite colimit diagram. For $i \in \mathrm{ob}(I)$, we use $Y_i$ to denote $\mathcal{Y}(i) \in \mathcal{C}_A$. Let $Y:= \mathrm{colim}_I\mathcal{Y} \in \tau_{\le n}\mathrm{St}_k$. We wish to prove that $Y \in \mathcal{C}_A$.  By \cref{truncaa}, we have
\begin{equation}\label{eqq1}
\tau_{\ge 0} (R\Gamma (Y \times_k \mathrm{Spec}\, A, \mathcal{O})[n]) \simeq  \mathrm{Map}_{\tau_{\le n}\mathrm{St}_k}(Y \times_k \mathrm{Spec}\, A, K(\mathbb{G}_a, n)).   
\end{equation}
Using that colimits are universal in $\tau_{\le n} \mathrm{St}_k,$ we have 
\begin{equation}\label{eqq2}
 \mathrm{Map}_{\tau_{\le n}\mathrm{St}_k}(Y \times_k \mathrm{Spec}\, A, K(\mathbb{G}_a, n)) \simeq  \lim_{i \in I} \mathrm{Map}_{\tau_{\le n}\mathrm{St}_k}(Y_i \times_k \mathrm{Spec}\, A, K(\mathbb{G}_a, n)).  
\end{equation}
Now we note that
\begin{align*}
\lim_{i \in I} \mathrm{Map}_{\tau_{\le n}\mathrm{St}_k}(Y_i \times_k \mathrm{Spec}\, A, K(\mathbb{G}_a, n))&\simeq \lim_{i \in I} \tau_{\ge 0}((R\Gamma (Y_i, \mathcal{O}) \otimes_k A)[n])\\&\simeq \lim_{i \in I}\tau_{\ge 0} R\Gamma ((Y_i, \mathcal{O})[n]) \otimes_k A \\& \simeq \left (\lim_{i \in I} \mathrm{Map}_{\tau_{\le n}\mathrm{St}_k} (Y_i, K(\mathbb{G}_a, n))\right)\otimes_k A \\&\simeq \mathrm{Map}_{\tau_{\le n}\mathrm{St}_k}(Y, K(\mathbb{G}_a, n)) \otimes_k A \\&\simeq \tau_{\ge 0} (R\Gamma(Y, \mathcal{O})[n])\otimes_k A.
\end{align*}
In the above, the first isomorphism uses the hypothesis that $Y_i \in \mathcal{C}_A$, the second one uses that the functor $(\cdot) \otimes_k A$ is essentially a filtered colimit (since we are working over a field) and filtered colimits commute with truncation and finite limits, the third and fifth one uses \cref{truncaa} and finally the fourth one simply uses that $Y \simeq \mathrm{colim}_{i \in I}\mathcal{Y}_i$ in $\tau_{\le n} \mathrm{St}_k$.

Combining the above chain of isomorphisms with \cref{eqq1}
and \cref{eqq2} we see that $Y$ satisfies \cref{claaim}, i.e., $Y \in \mathcal{C}_A$. This proves our claim that if $Y \in \mathcal{C},$ then $Y$ satisfies \cref{claaim}.

 Now, for $X \in \mathrm{St}_k^{\mathrm{afin}}$, note that by \cref{filcolim}, we have $R\Gamma (X \times_k \mathrm{Spec}\, A, \mathcal{O})\simeq \varinjlim_n R\Gamma (\tau_{\le n}X \times_k \mathrm{Spec}\, A, \mathcal{O}),$ which is naturally isomorphic to $\varinjlim_n \tau_{\ge -n}R\Gamma (\tau_{\le n}X \times_k \mathrm{Spec}\, A, \mathcal{O})$.
By the claim we proved above, since $\tau_{\le n} X \in \mathcal{C}$, we have $$\tau_{\ge -n}R\Gamma (\tau_{\le n}X \times_k \mathrm{Spec}\, A, \mathcal{O}) \simeq \tau_{\ge -n}R\Gamma (\tau_{\le n}X, \mathcal{O}) \otimes_k A.$$ By taking filtered colimit over $n$ and using \cref{filcolim} again, we obtain
\begin{align*}
R\Gamma (X \times_k \mathrm{Spec}\, A, \mathcal{O}) &\simeq  \varinjlim_n \tau_{\ge -n}R\Gamma (\tau_{\le n}X \times_k \mathrm{Spec}\, A, \mathcal{O}) \\& \simeq \varinjlim_n \tau_{\ge -n}R\Gamma (\tau_{\le n}X, \mathcal{O}) \otimes_k A \\& \simeq R\Gamma(X, \mathcal{O})\otimes_k A.
\end{align*}{}
This proves the proposition.
\end{proof}{}

\begin{proposition}[K\"unneth formula]\label{keyob2}
 Let $X,X' \in \mathrm{St}_k^\mathrm{afin}$. Then we have a natural isomorphism $R\Gamma (X, \mathcal{O}) \otimes_k R\Gamma(X', \mathcal{O}) \simeq R\Gamma (X \times_k X', \mathcal{O}). $ \end{proposition}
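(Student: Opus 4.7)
The plan is to adapt the argument of \cref{keyob1} almost verbatim, with $R\Gamma(X', \mathcal{O})$ playing the role that the algebra $A$ played there. The crucial observation enabling this substitution is that $R\Gamma(X', \mathcal{O})$ is concentrated in non-positive homological degrees (since sheaf cohomology lives in non-negative cohomological degrees), which allows tensoring with it to be compatible with $\tau_{\ge -n}$-truncations on the other factor.

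First, I would reduce to a bounded setting. By \cref{filcolim}, the desired Künneth equivalence would follow from its $\tau_{\ge -n}$-truncated version for each $n \ge 0$. Using \cref{coro} applied to both sides, together with the fact that $n$-truncation preserves finite products, one may further replace $X$ and $X'$ by $\tau_{\le n}X$ and $\tau_{\le n}X'$, reducing to the case where both factors are $n$-truncated.

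Fixing $X' \in \mathrm{St}_k^{\mathrm{afin}}$, I would then consider the full subcategory $\mathcal{C}_{X'} \subseteq \tau_{\le n}\mathrm{St}_k$ of those $Y$ satisfying $\tau_{\ge -n}(R\Gamma(Y, \mathcal{O}) \otimes_k R\Gamma(X', \mathcal{O})) \simeq \tau_{\ge -n} R\Gamma(Y \times_k X', \mathcal{O})$. By \cref{keyob1}, with $R\Gamma(\Spec A, \mathcal{O}) = A$, this subcategory contains every affine scheme. The main task is to establish closure under finite colimits in $\tau_{\le n}\mathrm{St}_k$: given $Y = \mathrm{colim}_i Y_i$ with $Y_i \in \mathcal{C}_{X'}$, one traces through the same chain of isomorphisms used in \cref{keyob1} --- applying \cref{truncaa} to translate to a mapping space into $K(\mathbb{G}_a, n)$, invoking universality of colimits in $\mathrm{St}_k$ to commute with the limit over $i$, applying the inductive hypothesis on each $Y_i$, and finally using that $\tau_{\ge 0}$ commutes with limits and that over a field tensoring with any fixed complex commutes with finite limits (since tensor is exact).

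The main obstacle, where the non-positive concentration of $R\Gamma(X', \mathcal{O})$ becomes essential, is to identify $\tau_{\ge 0}\bigl((\lim_i R\Gamma(Y_i, \mathcal{O})) \otimes R\Gamma(X', \mathcal{O})[n]\bigr)$ with $\tau_{\ge 0}\bigl(R\Gamma(Y, \mathcal{O}) \otimes R\Gamma(X', \mathcal{O})[n]\bigr)$: these expressions coincide because, on the $\tau_{\ge -n}$-part, the two first factors agree by \cref{coro}, and in degree $d \ge -n$ the tensor product $M \otimes R\Gamma(X', \mathcal{O})$ only receives contributions from $\pi_{i}M$ with $i \ge -n$ (since $\pi_{d-i}R\Gamma(X', \mathcal{O}) = 0$ for $i < d$). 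Once closure is established, the definition of $\mathrm{St}_k^{\mathrm{afin}}$ gives the $\tau_{\ge -n}$-truncated Künneth formula for all almost finitary $X$; passing to the filtered colimit over $n$ via \cref{filcolim} then yields the full equivalence.
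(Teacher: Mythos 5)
Your proposal follows the paper's proof essentially verbatim: fix the almost finitary factor, use \cref{keyob1} for the affine base case, run the induction over the subcategory of $\tau_{\le n}\mathrm{St}_k$ generated by affine schemes under finite colimits via \cref{truncaa}, universality of colimits, and exactness of the tensor product over a field, and then conclude by the filtered colimit argument of \cref{filcolim}. The coconnectivity observation you isolate (that $\tau_{\ge -n}$ of a tensor product with the coconnective $R\Gamma(X',\mathcal{O})$ depends only on $\tau_{\ge -n}$ of the other factor) is exactly what the paper encodes by inserting the inner $\tau_{\ge -n}$-truncations in its inductive claim, so the two arguments coincide.
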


\begin{proof}
 Let $\mathcal{C}$ denote the full subcategory of $\tau_{\le n} \mathrm{St}_k$ generated under finite colimits by affine schemes. Let $X \in \mathrm{St}_k^{\mathrm{afin}}$ be fixed as in the proposition. We claim the following:
\begin{itemize}
    \item Let $ Y \in \mathcal{C}$ and $A:= R\Gamma (X, \mathcal{O})$. Then the natural map  
    \begin{equation}\label{claaimu}
     \tau_{\ge -n}  (\tau_{\ge -n}R\Gamma (Y, \mathcal{O}) \otimes_k\tau_{\ge -n} A) \to \tau_{\ge -n} R\Gamma (Y \times_k X, \mathcal{O}).
    \end{equation} is an isomorphism.
\end{itemize}   

Let $\mathcal{C}_X$ denote the full subcategory spanned by those objects $ Y $ of $\tau_{\le n}\mathrm{St}_k$ for which the natural map \cref{claaimu} is an equivalence. By \cref{keyob1}, $\mathcal{C}_X$ contains all affine schemes. Thus to prove our claim, it suffices to prove that $\mathcal{C}_X$ is stable under finite colimits (taken in $\tau_{\le n}\mathrm{St}_k$). Let $Y:= \mathrm{colim}_I{Y}_i \in \tau_{\le n}\mathrm{St}_k$, where $Y_i \in \mathcal{C}_X$. We wish to prove that $Y \in \mathcal{C}_X$. By \cref{truncaa}, we have
\begin{equation}\label{eqq1}
\tau_{\ge 0} (R\Gamma (Y \times_k X, \mathcal{O})[n]) \simeq  \mathrm{Map}_{\tau_{\le n}\mathrm{St}_k}(Y \times_k X, K(\mathbb{G}_a, n)).   
\end{equation}
Using that colimits are universal in $\tau_{\le n} \mathrm{St}_k,$ we have 
\begin{equation}\label{eqq2}
 \mathrm{Map}_{\tau_{\le n}\mathrm{St}_k}(Y \times_k X, K(\mathbb{G}_a, n)) \simeq  \lim_{i \in I} \mathrm{Map}_{\tau_{\le n}\mathrm{St}_k}(Y_i \times_k X, K(\mathbb{G}_a, n)).  
\end{equation}
Now we note that
\begin{align*}
\lim_{i \in I} \mathrm{Map}_{\tau_{\le n}\mathrm{St}_k}(Y_i \times_k X, K(\mathbb{G}_a, n))&\simeq \lim_{i \in I} \tau_{\ge 0}((\tau_{\ge -n}R\Gamma (Y_i, \mathcal{O}) \otimes_k \tau_{\ge -n}A)[n])\\&\simeq \tau_{\ge 0} \left(R\lim_{i \in I} \left( \tau_{\ge -n}R\Gamma (Y_i, \mathcal{O}) \otimes_k (\tau_{\ge -n}A)[n] \right)\right)\\&\simeq \tau_{\ge 0}\left( \left(R\lim_{i \in I}\tau_{\ge -n}R\Gamma (Y_i, \mathcal{O})[n] \right) \otimes_k \tau_{\ge -n}A\right) \\& \simeq \tau_{\ge 0}\left( \left(\lim_{i \in I}\tau_{\ge -n}R\Gamma (Y_i, \mathcal{O})[n]\right) \otimes_k \tau_{\ge -n}A\right)\\& \simeq \tau_{\ge 0}\left (\lim_{i \in I}  \left (\mathrm{Map}_{\tau_{\le n}\mathrm{St}_k} (Y_i, K(\mathbb{G}_a, n))\right)\otimes_k \tau_{\ge -n}A \right) \\&\simeq \tau_{\ge 0} \left(\mathrm{Map}_{\tau_{\le n}\mathrm{St}_k}(Y, K(\mathbb{G}_a, n)) \otimes_k \tau_{\ge -n}A \right)\\&\simeq \tau_{\ge 0} \left (\tau_{\ge 0}(R\Gamma(Y, \mathcal{O})[n])\otimes_k \tau_{\ge -n}A \right) \\& \simeq \tau_{\ge 0} ((\tau_{\ge -n}R\Gamma(Y, \mathcal{O}) \otimes \tau_{\ge -n} A) [n]),
\end{align*}
where the first isomorphism follows from our hypothesis that $Y_i \in \mathcal{C}_X,$ the second isomorphism uses that connective cover is a right adjoint ($R\lim$ denotes limits in spectra), the third isomorphism uses that finite limits in spectra commute with tensor products, the fourth isomorphism uses that tensor product of coconnective objects are coconnective (since we are working over a field), the fifth and seventh isomorphisms follows from \cref{truncaa}, the sixth isomorphism uses the hypothesis that $Y:= \mathrm{colim}_I{Y}_i \in \tau_{\le n}\mathrm{St}_k$, and the last one is clear. 
This proves that the natural map \cref{claaimu} is an equivalence. Now the proposition follows in a way entirely similar to the last paragraph in the proof of \cref{keyob1} by noting that $\tau_{\le 
 n}X' \in \mathcal{C}_X$. This finishes the proof. \end{proof}{}

\begin{proposition}\label{keyob3}
Let $X,X' \in \mathrm{St}_k^\mathrm{afin}$. 
Then the canonical map $\mathrm{U}(X\times_k X') \to \mathrm{U}(X) \times_k \mathrm{U}(X') $ is an equivalence which is moreover natural in $ X $ and $ X' $. 
\end{proposition}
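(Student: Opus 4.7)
The plan is to pass through the equivalence $\mathrm{AffSt}_k \simeq (\mathrm{DAlg}_k^{\mathrm{ccn}})^{\mathrm{op}}$ of \cite{MR2244263, soon1} and reduce to the Künneth formula of \cref{keyob2}. First, I would recall that under this equivalence the affinization $\U(X)$ of any $X \in \mathrm{St}_k$ corresponds to $R\Gamma(X, \mathcal{O})$, viewed as a coconnective derived ring, and that since the inclusion $\mathrm{AffSt}_k \hookrightarrow \mathrm{St}_k$ preserves limits, the product $\U(X) \times_k \U(X')$ computed in $\mathrm{St}_k$ agrees with the one computed in $\mathrm{AffSt}_k$. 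Under the dual equivalence, the latter corresponds to the pushout of $R\Gamma(X, \mathcal{O})$ and $R\Gamma(X', \mathcal{O})$ in $\mathrm{DAlg}_k^{\mathrm{ccn}}$. Because we work over a field, the derived tensor product of two coconnective $k$-modules is again coconnective, so this pushout is just the ordinary tensor product of $\mathbb{E}_\infty$-algebras $R\Gamma(X, \mathcal{O}) \otimes_k R\Gamma(X', \mathcal{O})$, with no further truncation needed.

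Next, I would identify the canonical comparison map $\U(X \times_k X') \to \U(X) \times_k \U(X')$ with the Künneth comparison under the Spec-Γ dictionary. By construction, this canonical map is adjoint, via the universal property of affinization applied to the two projections, to the $\mathbb{E}_\infty$-algebra map
\[
R\Gamma(X, \mathcal{O}) \otimes_k R\Gamma(X', \mathcal{O}) \longrightarrow R\Gamma(X \times_k X', \mathcal{O})
\]
obtained by combining $\mathrm{pr}_1^*$, $\mathrm{pr}_2^*$ and the multiplicative structure on cohomology. By \cref{keyob2}, applied to $X, X' \in \mathrm{St}_k^{\mathrm{afin}}$, this map is an equivalence on underlying spectra, hence an equivalence of coconnective derived rings. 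Passing back through the equivalence of categories, the comparison map $\U(X \times_k X') \to \U(X) \times_k \U(X')$ is therefore an equivalence in $\mathrm{AffSt}_k$, and hence in $\mathrm{St}_k$.

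Naturality in $X$ and $X'$ is automatic from the naturality of the unit $X \to \U(X)$, the naturality of $R\Gamma(-, \mathcal{O})$, and the naturality of the Künneth map. The main obstacle is really only the bookkeeping step of recognizing that the Künneth map of \cref{keyob2} is a map of $\mathbb{E}_\infty$-algebras, not merely of spectra, and that it agrees with the universal-property comparison; both points are routine, the first because the Künneth map is assembled from the pullbacks along the projections $X \times_k X' \to X$ and $X \times_k X' \to X'$, and the second follows by chasing adjunctions through the identification $\mathrm{AffSt}_k \simeq (\mathrm{DAlg}_k^{\mathrm{ccn}})^{\mathrm{op}}$.
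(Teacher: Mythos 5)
Your proposal is correct and follows essentially the same route as the paper, whose proof of this proposition is simply to invoke the K\"unneth formula of \cref{keyob2}: you have just made explicit the standard dictionary (affinization corresponds to $R\Gamma(-,\mathcal{O})$, products of affine stacks to tensor products of coconnective derived rings over a field, and the comparison map to the K\"unneth map). The only caveat — that the K\"unneth map should be a map of derived rings rather than merely $\mathbb{E}_\infty$-algebras in characteristic $p$ — is harmless, since the canonical map classified by the two projections is a derived ring map by construction and being an equivalence is detected on underlying spectra, where \cref{keyob2} applies.
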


\begin{proof}
 Follows from \cref{keyob2}.   
\end{proof}{}

\begin{proposition}\label{keyob4}
Let $L: \mathrm{St}_{k*} ^{\mathrm{afin}} \to \mathrm{AffSt}_{k*}$ denote the left adjoint to the inclusion functor (see \cref{prop:aff_stack_is_almost_finitary}). Then for any $X, Y \in \mathrm{St}_{k*} ^{\mathrm{afin}}$, we have a natural isomorphism
$$L(X \wedge Y) \simeq L(X \wedge L(Y)).$$
\end{proposition}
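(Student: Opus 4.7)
The plan is to reduce the claim to Proposition \ref{keyob3}, which asserts that $L$ commutes with binary products on $\mathrm{St}_k^{\mathrm{afin}}$. Once this is in hand, the statement follows formally by writing the smash product as a pushout and invoking that left adjoints preserve colimits.

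First I would unfold the definition of the smash product as the pushout
\[
X \wedge Y \;\simeq\; * \coprod_{X \vee Y} (X \times Y)
\]
computed in pointed stacks, noting by \cref{prop:smash_product_stacks_afin} that this lies in $\mathrm{St}_{k*}^{\mathrm{afin}}$. Since $L$ is a left adjoint it preserves all small colimits; in particular it preserves the terminal pointed object and coproducts, so $L(X \vee Y) \simeq L(X) \vee L(Y)$. Combined with \cref{keyob3}, which yields $L(X \times Y) \simeq L(X) \times L(Y)$, applying $L$ to the above pushout gives
\[
L(X \wedge Y) \;\simeq\; * \coprod_{L(X) \vee L(Y)} L(X) \times L(Y) \;\simeq\; L(X) \wedge_{\mathrm{AffSt}_{k*}} L(Y),
\]
where the right-hand smash product is now computed in $\mathrm{AffSt}_{k*}$.

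Next I would run exactly the same computation with $L(Y)$ in place of $Y$: since $L(Y) \in \mathrm{AffSt}_{k*} \subseteq \mathrm{St}_{k*}^{\mathrm{afin}}$, one has $X \wedge L(Y) \in \mathrm{St}_{k*}^{\mathrm{afin}}$ again by \cref{prop:smash_product_stacks_afin}, and the same argument produces
\[
L\bigl(X \wedge L(Y)\bigr) \;\simeq\; L(X) \wedge_{\mathrm{AffSt}_{k*}} L\bigl(L(Y)\bigr).
\]
Finally, since $\mathrm{AffSt}_{k*} \hookrightarrow \mathrm{St}_{k*}^{\mathrm{afin}}$ is fully faithful with left adjoint $L$, the unit is an equivalence on objects of $\mathrm{AffSt}_{k*}$, so $L(L(Y)) \simeq L(Y)$. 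Combining the two computations identifies both $L(X \wedge Y)$ and $L(X \wedge L(Y))$ with $L(X) \wedge_{\mathrm{AffSt}_{k*}} L(Y)$, and a short diagram chase shows that the canonical map $L(X \wedge Y) \to L(X \wedge L(Y))$ induced by the unit $Y \to L(Y)$ is compatible with these identifications, hence is an equivalence.

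The real content is entirely in \cref{keyob3} (which itself rested on the K\"unneth formula \cref{keyob2} and the base-change statement \cref{keyob1}); once products are known to be preserved, the rest of the argument is a routine manipulation of colimits and the idempotence $L \circ L \simeq L$ of the reflector, so I do not anticipate further obstacles.
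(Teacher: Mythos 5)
Your argument is correct, and it reaches the statement by a different formal route than the paper's, although both rest on the same key input, namely \cref{keyob3}. The paper argues by adjunction: it tests against an arbitrary $Z \in \mathrm{AffSt}_{k*}$, rewrites $\mathrm{Map}(X \wedge Y, Z) \simeq \mathrm{Map}\left(X, \underline{\mathrm{Map}}(Y,Z)\right)$ using the closed monoidal structure on $\mathrm{St}_{k*}$, and then invokes \cref{keyob3} only through its consequence $\underline{\mathrm{Map}}(Y,Z) \simeq \underline{\mathrm{Map}}(L(Y),Z)$ for affine $Z$; it never needs colimits in $\mathrm{AffSt}_{k*}$ or preservation of products by $L$. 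You instead decompose the smash as a pushout, use that this pushout is a colimit in $\mathrm{St}_{k*}^{\mathrm{afin}}$ (legitimate, since that subcategory is closed under finite colimits by \cref{prop1}, \cref{prop2} and \cref{prop:smash_product_stacks_afin}) and hence preserved by the left adjoint $L$, and then apply \cref{keyob3} in the form $L(X \times Y) \simeq L(X) \times L(Y)$ together with idempotence of the reflector to identify both sides with the pushout $\ast \coprod_{L(X)\vee L(Y)} L(X)\times L(Y)$ computed in $\mathrm{AffSt}_{k*}$. Your route buys a slightly stronger output, the explicit formula $L(X \wedge Y) \simeq L(X)\wedge_{\mathrm{AffSt}_{k*}} L(Y)$, which anticipates the symmetric monoidality of $L$ that the paper only extracts afterwards in \cref{monoidalaff} via Lurie's compatibility criterion; the paper's route is leaner in that it needs nothing about colimits or products in $\mathrm{AffSt}_{k*}$. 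Two points worth making explicit in your write-up: \cref{keyob3} is stated for the unpointed affinization $\mathrm{U}$, so one should record that the pointed reflector $L$ is computed on underlying stacks (this uses $\mathrm{U}(\ast)\simeq\ast$, and that products and the relevant pushouts of pointed stacks are computed on underlying stacks); and the final comparison uses that the equivalence of \cref{keyob3} is natural in both variables and that $L$ applied to the unit $Y \to L(Y)$ is an equivalence — both are available, so there is no gap.
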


\begin{proof}
 By adjunction it suffices to check that for any $Z \in \mathrm{AffSt}_{k*}$, the map 
$$\mathrm{Map}(X \wedge Y, Z) \to \mathrm{Map}(X \wedge L(Y), Z )$$ induced by the counit $ L(Y) \to Y $ is an equivalence, where the mapping spaces can be taken in $\mathrm{St}_{k*}$, which naturally contains $\mathrm{St}_{k*}^{\mathrm{afin}}.$ Note that by construction, the monoidal structure on $\mathrm{St}_{k*}^{\mathrm{afin}}$ is compatible with the (closed) monoidal structure on $\mathrm{St}_{k*}$. Therefore, we have 
$$\mathrm{Map}(X \wedge Y, Z) \simeq \mathrm{Map} (X, \underline{\mathrm{Map}}(Y,Z)).$$ 

By \cref{keyob3}, $ L(Y) \to Y $ induces an equivalence $$\mathrm{Map} (X, \underline{\mathrm{Map}}(Y,Z)) \simeq \mathrm{Map} (X, \underline{\mathrm{Map}}(L(Y),Z)).$$
However, the right hand side is naturally equivalent to $\mathrm{Map}(X \wedge L(Y), Z).$ This finishes the proof.
\end{proof}

\begin{proposition}\label{monoidalaff}
There is a natural symmetric monoidal structure on $\mathrm{AffSt}_{k*}$ such that the left adjoint $L: \mathrm{St}_{k*}^{\mathrm{afin}} \to \mathrm{AffSt}_{k*}$ to the inclusion (see \cref{prop:aff_stack_is_almost_finitary}) is symmetric monoidal, where the symmetric monoidal structure on the former is from \cref{prop:smash_product_stacks_afin}.  
In particular, the symmetric monoidal structure on $ \mathrm{AffSt}_{k*} $ preserves finite colimits separately in each variable. 
\end{proposition}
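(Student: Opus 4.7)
The plan is to transfer the symmetric monoidal structure on $\mathrm{St}_{k*}^{\mathrm{afin}}$ provided by \cref{prop:smash_product_stacks_afin} to $\mathrm{AffSt}_{k*}$ along the reflective localization $L$, using the standard formalism for compatible localizations in \cite[\S 2.2.1]{luriehigher}. To apply this, one must verify that the smash product is compatible with the localization: for every $L$-equivalence $f\colon X \to X'$ in $\mathrm{St}_{k*}^{\mathrm{afin}}$ and every $Y \in \mathrm{St}_{k*}^{\mathrm{afin}}$, the map $f \wedge \mathrm{id}_Y$ should again be an $L$-equivalence. Equivalently, the canonical map $L(X \wedge Y) \to L(L(X) \wedge L(Y))$ should be an equivalence for all $X, Y$.

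The key input is \cref{keyob4}, which yields a natural equivalence $L(X \wedge Y) \simeq L(X \wedge L(Y))$. Using the symmetry of the smash product and a second application of \cref{keyob4} in the other variable, one deduces a natural equivalence $L(X \wedge Y) \simeq L(L(X) \wedge L(Y))$. Consequently, if $L(f)$ is an equivalence in $\mathrm{AffSt}_{k*}$, then $L(f) \wedge L(Y)$ is an equivalence in $\mathrm{St}_{k*}^{\mathrm{afin}}$ (any functor preserves equivalences), hence so is its image under $L$; thus $L(f \wedge Y)$ is an equivalence, establishing the required compatibility. The general machinery then endows $\mathrm{AffSt}_{k*}$ with a symmetric monoidal structure for which $L$ is symmetric monoidal, with tensor product given on objects by $X \otimes Y := L(X \wedge Y)$.

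For the final claim about finite colimits, I would combine two facts: smash on $\mathrm{St}_{k*}^{\mathrm{afin}}$ preserves finite colimits in each variable (since finite colimits there are computed in the ambient pointed $\infty$-topos $\mathrm{St}_{k*}$ in which smash preserves all colimits, and $\mathrm{St}_{k*}^{\mathrm{afin}}$ is stable under them by \cref{prop1}), and $L$ preserves all colimits as a left adjoint. Hence $X \otimes Y = L(X \wedge Y)$ preserves finite colimits in each variable, using that a finite colimit in $\mathrm{AffSt}_{k*}$ is computed by applying $L$ to the corresponding finite colimit in $\mathrm{St}_{k*}^{\mathrm{afin}}$. Throughout, the main conceptual obstacle is precisely that $L$ fails to commute with products on all of $\mathrm{St}_{k*}$ (see \cref{productissue}); the almost finitary condition, together with the K\"unneth formula \cref{keyob2} packaged into \cref{keyob4}, is exactly what is needed to repair this and run the compatibility argument.
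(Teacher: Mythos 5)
Your proposal is correct and follows essentially the same route as the paper: the paper also deduces the compatibility of the localization with the smash product from \cref{keyob4} and then invokes \cite[Proposition 2.2.1.9, Example 2.2.1.7]{luriehigher} (viewing $L$ as a localization functor on $\mathrm{St}_{k*}^{\mathrm{afin}}$). You merely spell out explicitly the compatibility check and the finite-colimit statement that the paper leaves to the cited references.
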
{}

\begin{proof}
 Let $L'$ denote the composite functor $\mathrm{St}_{k*}^{\mathrm{afin}} \xrightarrow{L} \mathrm{AffSt}_{k*} \to \mathrm{St}_{k*}^{\mathrm{afin}}.$ Then $L'$ is a localization functor in the sense of \cite[Example 4.8.2.3]{luriehigher}. The claim now follows from \cref{keyob4} and \cite[Proposition 2.2.1.9]{luriehigher} (also see Example 2.2.1.7 \emph{loc. cit.}). 
\end{proof}{}

\begin{corollary}\label{ind-unispect}
    Let $ k $ be a field.  
    Then 
    \begin{enumerate}
        \item the $ \infty $-category $ \mathrm{Ind}\left(\mathrm{AffSt}_{k*}\right) $ has a symmetric monoidal structure which preserves small colimits separately in each variable. 
        \item the stable $ \infty $-category $ \mathrm{Sp} \,\mathrm{Ind}\left(\mathrm{AffSt}_{k*}\right) $ has a symmetric monoidal structure which preserves small colimits separately in each variable. 
    \end{enumerate} 
\end{corollary}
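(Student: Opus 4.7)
The plan is to derive both parts of the corollary formally from Proposition~\ref{monoidalaff} by invoking the standard functoriality of Ind-completion and stabilization for symmetric monoidal $\infty$-categories developed in \cite{luriehigher}. The substantive input has already been handled in \cref{monoidalaff}; what remains is essentially categorical bookkeeping.

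For part (1), \cref{monoidalaff} endows $\mathrm{AffSt}_{k*}$ with a symmetric monoidal structure whose tensor product preserves finite colimits separately in each variable. After restricting (in the chosen Grothendieck universe, as per the paper's conventions) to a small symmetric monoidal subcategory that generates $\mathrm{AffSt}_{k*}$ under filtered colimits, I would invoke Lurie's Day convolution construction (HA \S 4.8.1, in particular Corollary 4.8.1.12) to obtain a symmetric monoidal structure on $\mathrm{Ind}(\mathrm{AffSt}_{k*})$ for which the canonical embedding is symmetric monoidal and the tensor product preserves small colimits separately in each variable. The essential hypothesis needed to apply this machinery — compatibility of the smash product with finite colimits — is precisely what \cref{monoidalaff} established.

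For part (2), I would appeal to the identification $\mathrm{Sp}(\mathcal{C}) \simeq \mathrm{Sp} \otimes \mathcal{C}$ in $\mathrm{Pr}^L$, valid for any presentable $\infty$-category $\mathcal{C}$ (HA Example 4.8.1.23). Since $\mathrm{Sp}$ carries the structure of a commutative algebra object in $\mathrm{Pr}^L$ with respect to the Lurie tensor product, and since part (1) exhibits $\mathrm{Ind}(\mathrm{AffSt}_{k*})$ as another commutative algebra object in $\mathrm{Pr}^L$, the tensor product $\mathrm{Sp} \otimes \mathrm{Ind}(\mathrm{AffSt}_{k*})$ inherits a commutative algebra structure in $\mathrm{Pr}^L$. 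By the defining property of morphisms in $\mathrm{Pr}^L$, the resulting tensor product on $\mathrm{Sp}(\mathrm{Ind}(\mathrm{AffSt}_{k*}))$ automatically preserves small colimits separately in each variable.

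The main potential obstacle is not mathematical but bookkeeping: one must choose an enlargement of the Grothendieck universe to make sense of $\mathrm{Ind}(\mathrm{AffSt}_{k*})$ as a presentable $\infty$-category and to handle the size of the relevant generating subcategory. This is handled tacitly by the universe conventions already announced in the notation section of the paper.
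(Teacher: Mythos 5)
Your proposal is correct and follows essentially the same route as the paper: part (1) is deduced from \cref{monoidalaff} together with Lurie's machinery for symmetric monoidal structures on Ind-completions (the paper cites \cite[Corollary 4.8.1.14(2')]{luriehigher} rather than Day convolution on a generating subcategory), and part (2) from the general theory of stabilization of presentably symmetric monoidal $\infty$-categories (the paper cites \cite[Propositions 4.8.2.7 \& 4.8.2.18]{luriehigher}, which amounts to your $\mathrm{Sp}\otimes(-)$ argument in $\mathrm{Pr}^L$). One small caution: passing to a small subcategory generating $\mathrm{AffSt}_{k*}$ under filtered colimits would change the Ind-category (its compact objects would differ), so the correct bookkeeping is simply to enlarge the universe so that $\mathrm{AffSt}_{k*}$ itself is small and apply the cited results to it directly, exactly as the paper's conventions permit.
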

\begin{proof}
    The first point follows from \cref{monoidalaff} and \cite[Corollary 4.8.1.14(2')]{luriehigher}. 
    The latter point follows from the former and \cite[Propositions 4.8.2.7 \& 4.8.2.18]{luriehigher}. 
\end{proof}

\subsection{Unipotent homology and a profiniteness theorem}\label{applyschemes}

Note that the functor $\Omega^\infty: \mathrm{Sp}({\mathrm{St}}_k)_{\ge 0} \to \mathrm{St}_k$ preserves limits, so by \cref{beach1}, the composite functor $$(\mathrm{Sp}^{\mathrm{U-}}_k)_{\ge 0} \to \mathrm{St}_k$$ also preserves limits. Therefore, there is a left adjoint $$\Sigma^\infty_+ : \mathrm{St}_k \to (\mathrm{Sp}^{\mathrm{U-}}_k)_{\ge 0} .$$ Similarly, there is a left adjoint $$\Sigma^\infty : \mathrm{St}_{k*} \to (\mathrm{Sp}^{\mathrm{U-}}_k)_{\ge 0}. $$

\begin{definition}[Unipotent stable homotopy type] \label{defn:unipotent_stable_htpy_type} Let $Y \in \mathrm{St}_k.$ Then we call $\Sigma^\infty_+ Y$ the unipotent \textit{stable} homotopy type of $Y.$
\end{definition}{}

\begin{definition}[Unipotent stable homotopy groups] Let $Y \in \mathrm{St}_k$ (resp. $ Y \in \mathrm{St}_{k*} $). By \cref{rep}, $\pi_i (\Sigma^\infty_+ Y)$ (resp. $\pi_i(\Sigma^\infty Y) $) is represented by a commutative unipotent affine group scheme for all $i \ge 0.$ We call these the unipotent \textit{stable} homotopy groups of $Y$. 
\end{definition}{}

\begin{remark}\label{rmk:affinization_stabilization_commute}
 Let $Y \in \mathrm{St}_k.$ Let $\bU(Y
)$ denote the unipotent homotopy type of $Y$ in the sense of \cite[\S3]{soon}. 
Then we have a canonical isomorphism 
$ \left(\Sigma^\infty_+ Y\right)^u \simeq \Sigma^\infty_+ \bU(Y).$  
This holds because the diagram of right adjoints
\begin{equation*}
\begin{tikzcd}
   \mathrm{Sp}(\mathrm{St}_k) \ar[r,"{\Omega^\infty}"] & \mathrm{St}_k \\
    (\mathrm{Sp}^{\mathrm{U-}}_k)_{\ge 0}  \ar[u] \ar[r] & \mathrm{AffSt}_k \ar[u] 
\end{tikzcd}
\end{equation*}
commutes. 
\end{remark}

Now, let $Y \in \mathrm{St}_{k*}$ be a pointed $n$-connected stack for $n \ge 0$. Then by the Freudenthal suspension theorem for affine stacks (see the first part of the proof of \cite[Prop. 3.4.10]{soon}), it follows that for $i \le 2n$, the natural map $Y \to \Omega \mathbb U (\Sigma Y)$ induces an isomorphism $$\pi_i(Y) \to \pi_{i+1} (\mathbb U (\Sigma Y)).$$ Therefore, for any $Y \in \mathrm{St}_{k*}$ and  $i \in \mathbb{Z}$, the direct system of homotopy group schemes $\left \{\pi_{i+k} ((\mathbb{U} \Sigma)^k Y) \right \}_k$ is constant for $k \ge \mathrm{max}\left \{i+2, 0 \right \}.$ Therefore, one may define $$\pi_i^{\mathrm{st, U}}(Y) := \varinjlim_{k}\pi_{i+k} ((\mathbb{U} \Sigma)^k (Y)).$$ By definition, it follows that $\pi_i^{\mathrm{st, U}} (Y) =0$ for $i<0$.

\begin{proposition}\label{specfreudenthalcomp}
For any pointed stack $Y$ over a field $k$, we have a natural isomorphism of unipotent group schemes
$$\pi_i^{\mathrm{st, U}}(Y) \simeq \pi_i (\Sigma^\infty Y).$$
\end{proposition}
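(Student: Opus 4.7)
The plan is to build a natural comparison map $\pi_i^{\mathrm{st, U}}(Y) \to \pi_i(\Sigma^\infty Y)$ from the unit of the $\Sigma^\infty \dashv \Omega^\infty$ adjunction and then show it is an equivalence by invoking the Freudenthal suspension theorem for affine stacks cited above the proposition. First, we reduce to the case $Y \in \mathrm{AffSt}_{k*}$: passing to left adjoints in the commutative square of right adjoints from \cref{rmk:affinization_stabilization_commute}, the functor $\Sigma^\infty: \mathrm{St}_{k*} \to (\mathrm{Sp}^{\mathrm{U-}}_k)_{\ge 0}$ factors through $\mathbb{U}: \mathrm{St}_{k*} \to \mathrm{AffSt}_{k*}$; and the canonical equivalence $\mathbb{U}\Sigma Y \simeq \mathbb{U}\Sigma \mathbb{U}Y$ (both corepresent $\mathrm{Map}_{\mathrm{St}_{k*}}(Y, \Omega(-))$ on pointed affine stacks) identifies $\pi_i^{\mathrm{st, U}}(Y)$ with $\pi_i^{\mathrm{st, U}}(\mathbb{U}Y)$.

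Second, we observe that $\Sigma^\infty: \mathrm{AffSt}_{k*} \to \mathrm{Sp}^{\mathrm{U-}}_k$ intertwines the categorical suspension $\mathbb{U}\Sigma$ on the source with the stable shift $\Sigma$ on the target, since both $\Sigma \circ \Sigma^\infty$ and $\Sigma^\infty \circ (\mathbb{U}\Sigma)$ are left adjoint to $\Omega \circ \Omega^\infty = \Omega^\infty \circ \Omega$. Consequently, $\pi_i(\Sigma^\infty Y) \simeq \pi_{i+k}\bigl(\Omega^\infty \Sigma^\infty (\mathbb{U}\Sigma)^k Y\bigr)$ for every $k\ge 0$, and the unit map $(\mathbb{U}\Sigma)^k Y \to \Omega^\infty \Sigma^\infty(\mathbb{U}\Sigma)^k Y$ induces on $\pi_{i+k}$ a comparison $\pi_{i+k}((\mathbb{U}\Sigma)^k Y) \to \pi_i(\Sigma^\infty Y)$. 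These maps are compatible as $k$ varies and so assemble into the desired natural transformation $\pi_i^{\mathrm{st, U}}(Y) \to \pi_i(\Sigma^\infty Y)$.

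To show this map is an isomorphism, it suffices to establish the sharper claim that for any pointed $n$-connective affine stack $Z$, the unit $Z \to \Omega^\infty \Sigma^\infty Z$ induces an isomorphism on $\pi_j$ for $j \le 2n$. Granting the claim, choose $k \ge \max\{i+2, 0\}$ so that $(\mathbb{U}\Sigma)^k Y$ is $(k-1)$-connective and $i+k \le 2(k-1)$; the unit is then already an isomorphism on $\pi_{i+k}$, which matches the stabilized value $\pi_i^{\mathrm{st, U}}(Y)$. The proof of the sharper claim proceeds by describing $\Omega^\infty \Sigma^\infty Z$ as the sequential colimit in $\mathrm{AffSt}_{k*}$ of the affine stacks $\Omega^m (\mathbb{U}\Sigma)^m Z$ with structure maps given by units of adjunction. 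Iterating Freudenthal, each composite $Z \to \Omega^m(\mathbb{U}\Sigma)^m Z$ is an isomorphism on $\pi_j$ for $j \le 2n$, so the system stabilizes in this range with common value $\pi_j(Z)$, yielding the claim.

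The hardest step will be justifying the sequential colimit description of $\Omega^\infty \Sigma^\infty$ on $\mathrm{AffSt}_{k*}$ and verifying that this colimit commutes with the formation of homotopy group schemes in the stable range. This rests on a careful analysis of filtered colimits in $\mathrm{AffSt}_{k*}$, which under the equivalence with $(\mathrm{DAlg}^{\mathrm{ccn}}_k)^{\mathrm{op}}$ correspond to filtered limits of coconnective derived rings; such limits are computed at the level of underlying cochain complexes and interact well with truncations, but keeping track of this compatibility for $\Omega$ requires some care.
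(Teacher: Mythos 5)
Your overall strategy (reduce to affine $Y$, compare via unit maps, conclude by Freudenthal in a stable range) is reasonable, but the step you yourself flag as hardest is a genuine gap, and the way you propose to fill it does not work as stated. The identity $\Omega^\infty\Sigma^\infty Z \simeq \colim_m \Omega^m(\mathbb{U}\Sigma)^m Z$ with the colimit formed \emph{in} $\mathrm{AffSt}_{k*}$ is not a formal consequence of the definition of $\mathrm{Sp}^{\mathrm{U}}_k$: the classical proof of the formula $Q Z \simeq \colim_m \Omega^m\Sigma^m Z$ uses that filtered colimits commute with $\Omega$ (and with homotopy groups), which is an $\infty$-topos property. $\mathrm{AffSt}_{k*}$ is not an $\infty$-topos; filtered colimits there are affinizations, i.e.\ correspond to cofiltered limits of coconnective derived rings, and such limits do \emph{not} ``interact well with truncations'' in the sense you need — homotopy groups of a cofiltered limit of coconnective derived rings are not the limits (or colimits) of homotopy groups, and $\Omega$ need not commute with these colimits. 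So the levelwise-colimit description of the left adjoint $\Sigma^\infty$ is unjustified, and with it both your colimit description of $\Omega^\infty\Sigma^\infty$ and the claim that this colimit commutes with formation of homotopy group schemes in the stable range. Without this, the ``sharper claim'' about the unit being an isomorphism on $\pi_j$ for $j\le 2n$ has no proof.

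A second omission: the homotopy group schemes $\pi_i(\Sigma^\infty Y)$ are fpqc sheaves on $\mathrm{Aff}_k$, so any identification with $\varinjlim_k \pi_{i+k}((\mathbb{U}\Sigma)^k Y)$ must be checked after base change to arbitrary $\mathrm{Spec}\,A$, and this is where a nontrivial geometric input enters: one needs $\mathbb{U}\Sigma$ (equivalently the stabilization functor) to commute with $(\cdot)\times\mathrm{Spec}\,A$, which in the paper is supplied by the K\"unneth result \cref{keyob3} for almost finitary stacks. Your sketch never confronts base change. The paper's proof is structured precisely to avoid forming filtered colimits inside $\mathrm{AffSt}_{k*}$: it uses the colimit description \cref{unispectracolim} of the \emph{category} $\mathrm{Sp}^{\mathrm{U}}_A$ in $\mathrm{Pr}^{L}$, computes $\pi_0\mathrm{Map}(\mathbb{S}^{\mathrm{uni}}[i], {}'\Sigma^\infty(Y\times\mathrm{Spec}\,A))$ levelwise over each base $A$ using \cref{keyob3}, and then sheafifies, so the only colimit taken is a harmless filtered colimit of sheaves of groups; it also ends by checking connectivity of $'\Sigma^\infty Y$ to identify it with the $\Sigma^\infty$ of the statement (a point you also skip, though minor). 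To salvage your route you would either have to carry out the colimit construction in $\mathrm{St}_{k*}$ (where filtered colimits are exact), prove the resulting levels are affine and assemble into a unipotent spectrum with the correct universal property, or follow the paper's sheaf-theoretic argument; as written, the proposal's key step fails.
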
{}

\begin{proof}
By \cref{rmk:affinization_stabilization_commute}, we may without loss of generality assume that $Y$ is an affine stack. Let $ u: \mathrm{Spec} \, A \to \mathrm{Spec}\, k$ be a map of affine schemes. Note that by the adjoint functor theorem, the category of unipotent spectra $\mathrm{Sp}^{\U}_A$ can be equivalently described as the colimit of the following $\mathbb{Z}$-indexed diagram
\begin{equation}\label{unispectracolim}
 \ldots \to \mathrm{AffSt}_{A*} \xrightarrow{ \mathbb{U}\Sigma}  \mathrm{AffSt}_{A*} \to \ldots .  
\end{equation} Let $'\Sigma^\infty: \mathrm{AffSt}_{k*} \to \mathrm{Sp}^\U_k$ denote the functor that sends a pointed affine stack to the zeroth level of the above diagram. By construction, it follows that $'\Sigma^\infty$ admits a right adjoint which is given by $\Omega^\infty: \mathrm{Sp}^\U_k \to \mathrm{AffSt}_{k*}$. Further, we claim that there is a commutative diagram of the following form:

\begin{equation}\label{diagramus}
 \begin{tikzcd}
\mathrm{AffSt}_{A*} \arrow[rr, "\mathbb{U}\Sigma"]                                             &  & \mathrm{AffSt}_{A*}                                                 \\
\mathrm{AffSt}_{k*} \arrow[u, "{(\cdot)\times \mathrm{Spec}\,A}"] \arrow[rr, "\mathbb{U}\Sigma"'] &  & \mathrm{AffSt}_{k*} \arrow[u, "{(\cdot)\times \mathrm{Spec}\, A}"']
\end{tikzcd} 
\end{equation}

To this end, note that for any pointed stack $Y$, we have $\Sigma (Y \times \mathrm{Spec}\, A) \simeq (\Sigma Y) \times \mathrm{Spec}\, A$. Further, if $Y$ is an affine stack, by \cref{prop1}, $\Sigma Y$ is a pointed \emph{almost finitary} stack. Therefore, by \cref{keyob3}, we have $\mathbb{U} ( (\Sigma Y) \times \mathrm{Spec}\, A) \simeq (\mathbb{U} \Sigma Y) \times \mathrm{Spec}\, A.$ This checks the existence of the above commutative diagram. The above diagram induces a functor $$ u^*: \mathrm{Sp}^{\U}_k \to \mathrm{Sp}^\U_A$$ which corresponds to taking the pullback when we view unipotent spectra as certain sheaves of spectra on the sites $\mathrm{Aff}_A$ and $\mathrm{Aff}_k$ respectively. Our discussion implies that 
\begin{equation}\label{comp5}
  {u^*} (' \Sigma^\infty Y) \simeq ('\Sigma^\infty (Y\times \mathrm{Spec}\, A)).  
\end{equation}
Let us denote $\mathcal{F} \coloneqq\,
    '\Sigma^\infty Y$. We will show that 
$\pi_i^{\mathrm{st, U}}(Y) \simeq \pi_i (\mathcal{F})$. To this end, note that $\pi_i (\mathcal{F})$ is the sheafification of the group valued presheaf on $\mathrm{Aff}_k$ that sends $$\mathrm{Spec}\, A \mapsto \pi_0\mathrm{Map}_{\mathrm{Sp}^\U_A}(\mathbb{S}^\uu[i], u^* \mathcal{F}),$$ where $\mathbb{S}^\uu$ denotes the unipotent completion of the sphere spectrum (see \cref{unicomofspectra}). By \cref{comp5}, we have 
$$\pi_0\mathrm{Map}_{\mathrm{Sp}^\U_A}(\mathbb{S}^\uu[i], u^* \mathcal{F}) \simeq \pi_0\mathrm{Map}_{\mathrm{Sp}^\U_A}(\mathbb{S}^\uu[i],\, '\Sigma^\infty (Y \times \mathrm{Spec}\, A)).$$By the description of the category $\mathrm{Sp}^\U_A$ from \cref{unispectracolim}, the right hand side above is equivalent to $$\varinjlim_k \pi_0\mathrm{Map}_{\mathrm{AffSt}_{A*}}(\mathbb{U}(S^{i+k}), (\mathbb{U}\Sigma)^k (Y \times \mathrm{Spec}\, A)).$$ By \cref{diagramus} and adjunction, the above is equivalent to 
$$\varinjlim_k \pi_0\mathrm{Map}_{\mathrm{St}_{A*}}(S^{i+k}, (\mathbb{U}\Sigma)^k (Y )\times \mathrm{Spec}\, A).$$ Since sheafification is a left adjoint, it follows that $\pi_i(\mathcal{F})$ is equivalent to the following direct limit (in the category of sheaves)
$$\varinjlim_k \pi_{i+k}((\mathbb{U}\Sigma)^k (Y)).$$ However, by the discussion before \cref{specfreudenthalcomp}, the above direct system is ind-constant; further, the direct limit is naturally isomorphic to $\pi_i^{\mathrm{st, U}}(Y)$. This shows that $\pi_i^{\mathrm{st, U}}(Y) \simeq \pi_i (\mathcal{F})$, as desired. Finally, the latter isomorphism implies that $\pi_i ('\Sigma^\infty Y)=0$ for $i<0$, i.e., $'\Sigma^\infty Y$ is connective for the $t$-structure in \cref{tstruc}. By the property of adjunction, it follows that $\Sigma^\infty Y \simeq\, '\Sigma^\infty Y$. This gives $\pi_i^{\mathrm{st, U}}(Y) \simeq \pi_i (\Sigma^\infty Y) ,$ which finishes the proof.
\end{proof}{}

\begin{remark}\label{use}
 Let $L: \mathrm{Sp}(\mathrm{St}_k)_{\ge 0} \to (\mathrm{Sp}^{\mathrm{U-}}_k)_{\ge 0}$ denote the left adjoint of the functor in \cref{beach1}. Let $G$ be a commutative affine group scheme over a field $k$ viewed as an object of $\mathrm{Sp}(\mathrm{St}_k)_{\ge 0}.$ Then $L (G) \simeq G^\mathrm{uni},$ where $G^{\mathrm{uni}}$ denotes the universal unipotent, commutative group scheme that receives a map from $G$. We sketch the argument. By considering the kernel of the (surjective) map $G \to G^{\mathrm{uni}}$, one can without loss of generality assume that $G$ is such that $G^{\mathrm{uni}}=0.$ It would suffice to prove that $L(G) \simeq 0.$ By regarding the spectrum $G$ as an infinite loop object $(\ldots, B^2G, BG, G)$, it would suffice to show that $\mathbb{U}(B^nG) \simeq \mathrm{Spec}\, k$ for $n \ge 1.$ This amounts to showing that $R\Gamma (B^nG, \mathcal{O}) \simeq k$ for $n \ge 1$. Applying descent along $* \to B^n G$, we reduce checking the latter claim to $n=1$. Moreover, by base change, we can assume that the field $k$ is algebraically closed. In that case, the group scheme $G$ must be multiplicative which allows us to further reduce to the cases when $G= \mathbb{G}_m$ or $G= \mu_n$ for $n \in \mathbb{N}.$ In these cases, $\mathrm{QCoh}(BG)$ identifies with $\mathbb{Z}$ or $\mathbb{Z}/n\mathbb{Z}$ graded $k$-vector spaces, which implies that the global section functor is exact. This shows that $R\Gamma (BG,\mathcal{O}) \simeq k$, which finishes the argument.
\end{remark}{}

\begin{definition}\label{intro-mod-uni}
 The category $\mathrm{Sp}_A^\mathrm{U}$ is a presentable stable $\infty$-category. In particular, for any $\mathbb{E}_\infty$-ring spectrum $E$, one can talk about the category of (left) $E$-modules in $\mathrm{Sp}_A^\mathrm{U}$ (\cite[Definition 4.2.1.13 \& Remark 4.8.2.20]{luriehigher}). 
 We will denote this category by $E\mathrm{-Mod}^{\mathrm{U}}_A,$ and call it the category of \emph{unipotent $E$-modules} (over $A$).  
\end{definition}

In what follows, we will be most interested in the case when $E = \mathbb{Z}$ or $ E = \mathbb{Z}/p $, and when $A$ is a field $k.$ Note that there is a natural limit-preserving functor 
$$\mathbb{Z}\mathrm{-Mod}^{\mathrm{U}}_k \to \mathrm{Sp}^\mathrm{U}_k. $$
Define the full subcategory of $\mathbb{Z}\mathrm{-Mod}^{\mathrm{U}}_k$ denoted by $\mathbb{Z}\mathrm{-Mod}^{\mathrm{U}-}_k$ which is spanned by objects whose underlying unipotent spectrum is bounded below. Define the full subcategory of $\mathbb{Z}\mathrm{-Mod}^{\mathrm{U}-}_k$ denoted by $(\mathbb{Z}\mathrm{-Mod}^{\mathrm{U}-}_k)_{\ge 0}$ which is spanned by objects whose underlying unipotent spectrum is connective. Similarly, define the full subcategory of $\mathbb{Z}\mathrm{-Mod}^{\mathrm{U}}_k$ denoted by $(\mathbb{Z}\mathrm{-Mod}^{\mathrm{U}-}_k)_{\le 0}$ which is spanned by objects whose underlying unipotent spectrum is coconnective.
\begin{example}\label{ex:eilenberg_maclane_unipotent_spectra}
    Let $G$ be a commutative unipotent group scheme over a field $k$. 
    Then the unipotent spectrum $ G $ over $k$ of \cref{introex1} admits a canonical lift to unipotent $ \bZ $-modules. 
\end{example}
\begin{proposition}\label{ts}
The pair $\left(\left(\mathbb{Z}\mathrm{-Mod}^{\mathrm{U}-}_k\right)_{\ge 0},\left(\mathbb{Z}\mathrm{-Mod}^{\mathrm{U}-}_k\right)_{\le 0}\right)$ define a $t$-structure on $\mathbb{Z}\mathrm{-Mod}^{\mathrm{U}-}_k.$
\end{proposition}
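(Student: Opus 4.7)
My plan is to transfer the $t$-structure on $\mathrm{Sp}^{\mathrm{U}-}_k$ established in \cref{tstruc} along the conservative, exact forgetful functor $U \colon \mathbb{Z}\mathrm{-Mod}^{\mathrm{U}-}_k \to \mathrm{Sp}^{\mathrm{U}-}_k$, much as one lifts a $t$-structure on a presentable stable $\infty$-category to modules over a connective $\mathbb{E}_\infty$-ring (cf.~\cite[\S 7.1.1]{luriehigher}). By construction the two subcategories in the statement are the preimages under $U$ of the connective and coconnective parts of $\mathrm{Sp}^{\mathrm{U}-}_k$, so their stability under $[1]$ and $[-1]$ respectively is immediate from \cref{tstruc}, as is the preservation of boundedness below.

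The key input used below is that $\mathbb{Z}$ (that is, $H\mathbb{Z}$) is a connective $\mathbb{E}_\infty$-ring spectrum and that the canonical tensor action of $\mathrm{Sp}$ on $\mathrm{Sp}^{\mathrm{U}}_k$ preserves connectivity: if $E \in \mathrm{Sp}_{\ge 0}$ and $X \in (\mathrm{Sp}^{\mathrm{U}-}_k)_{\ge n}$, then $E \otimes X \in (\mathrm{Sp}^{\mathrm{U}-}_k)_{\ge n}$. This would follow because connective spectra are generated under colimits and extensions by non-negative shifts of the sphere, $E\otimes(-)$ commutes with colimits, and the connective part $(\mathrm{Sp}^{\mathrm{U}-}_k)_{\ge n}$ is itself closed under colimits and extensions.

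To verify orthogonality, for $X \in (\mathbb{Z}\mathrm{-Mod}^{\mathrm{U}-}_k)_{\ge 1}$ and $Y \in (\mathbb{Z}\mathrm{-Mod}^{\mathrm{U}-}_k)_{\le 0}$, I would express the mapping spectrum $R\mathrm{Hom}_{\mathbb{Z}-\mathrm{Mod}}(X, Y)$ via the monadic bar resolution as the totalization of a cosimplicial spectrum whose $n$-th term is of the form $R\mathrm{Hom}_{\mathrm{Sp}^\mathrm{U}}(\mathbb{Z}^{\otimes n} \otimes X, Y)$. Since each $\mathbb{Z}^{\otimes n} \otimes X$ is at least $1$-connective by the previous paragraph and $Y$ is coconnective, each term vanishes by the orthogonality in $\mathrm{Sp}^{\mathrm{U}-}_k$ coming from \cref{tstruc}, hence so does the totalization.

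To produce a truncation triangle for an arbitrary $X \in \mathbb{Z}\mathrm{-Mod}^{\mathrm{U}-}_k$, I would lift the triangle $\tau_{\ge 0}(UX) \to UX \to \tau_{<0}(UX)$ from $\mathrm{Sp}^{\mathrm{U}-}_k$ to $\mathbb{Z}\mathrm{-Mod}^{\mathrm{U}-}_k$. By the connectivity-preservation of the tensor action, the inclusion $(\mathrm{Sp}^{\mathrm{U}-}_k)_{\ge 0} \hookrightarrow \mathrm{Sp}^{\mathrm{U}-}_k$ is compatible with the $\mathrm{Sp}$-action, so its right adjoint $\tau_{\ge 0}$ acquires a natural lax $\mathrm{Sp}$-linear structure, and in particular carries $\mathbb{Z}$-modules to $\tau_{\ge 0}\mathbb{Z} = \mathbb{Z}$-modules; the cofiber $\tau_{<0}$ then inherits a compatible $\mathbb{Z}$-module structure as a cofiber of $\mathbb{Z}$-module maps. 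I expect the main subtlety to be the lax-linearity upgrade of $\tau_{\ge 0}$ — but it is formal once connectivity-preservation of the $\mathrm{Sp}$-action is in hand, which is really the crux of the whole argument.
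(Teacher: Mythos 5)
Your reduction to the single claim that the $\mathrm{Sp}$-action preserves connectivity is where the argument breaks: you assert that for $E \in \mathrm{Sp}_{\ge 0}$ and $X \in (\mathrm{Sp}^{\mathrm{U}-}_k)_{\ge n}$ one has $E \otimes X \in (\mathrm{Sp}^{\mathrm{U}-}_k)_{\ge n}$, "because $(\mathrm{Sp}^{\mathrm{U}-}_k)_{\ge n}$ is closed under colimits and extensions," but that closure is exactly what is unavailable here. The $t$-structure of \cref{tstruc} exists only on the bounded-below subcategory $\mathrm{Sp}^{\mathrm{U}-}_k$, which is not presentable, and colimits in $\mathrm{Sp}^{\mathrm{U}}_k$ are not computed in $\mathrm{Sp}(\mathrm{St}_k)$: unipotent spectra are not closed under colimits of sheaves of spectra (homotopy sheaves of a filtered colimit are only ind-unipotent — this is precisely why the paper passes to ind-objects in \cref{ind-unispect}), so a colimit in $\mathrm{Sp}^{\mathrm{U}}_k$ is the unipotent completion (\cref{unicomofspectra}) of the sheaf-level colimit, and nothing in the paper shows that this reflector preserves connectivity, or even bounded-belowness. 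Consequently the cell-attachment argument does not go through, and since the same claim underlies your "lax $\mathrm{Sp}$-linear structure on $\tau_{\ge 0}$," the construction of truncation triangles — which you correctly identify as the crux — is left without support. (The orthogonality step can be repaired without it: by the cotensor adjunction one has $R\mathrm{Hom}_{\mathrm{Sp}^{\mathrm{U}}_k}(\mathbb{Z}^{\otimes m} \otimes X, Y) \simeq R\mathrm{Hom}_{\mathrm{Sp}}(\mathbb{Z}^{\otimes m}, R\mathrm{Hom}_{\mathrm{Sp}^{\mathrm{U}}_k}(X,Y))$, and the latter is coconnective by \cref{tstruc} since $\mathbb{Z}^{\otimes m}$ is a connective spectrum; but this does not rescue the truncation step.)

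The paper's proof ("similar to \cref{tstruc}") avoids any statement about the tensor action. Everything is detected on underlying objects: one forms the truncations of the underlying object (equivalently, repeats the minimality argument of \cref{tstruc} verbatim, using that the heart objects $\pi_i$ carry canonical $\mathbb{Z}$-module structures as in \cref{ex:eilenberg_maclane_unipotent_spectra}), and then checks that these truncations again lie in $\mathbb{Z}\mathrm{-Mod}^{\mathrm{U}-}_k$ by applying the recognition result \cref{rep}: they are bounded below with homotopy sheaves among those of the original object, hence unipotent. If you want to salvage your module-theoretic approach, the missing ingredient you must supply is precisely a proof that tensoring a connective bounded-below unipotent spectrum with a connective spectrum (in $\mathrm{Sp}^{\mathrm{U}}_k$) remains connective — or else you should bypass it as the paper does, working with underlying objects and \cref{rep} rather than with the free–forgetful machinery of \cite[\S 7.1.1]{luriehigher}.
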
{}

\begin{proof}
    Similar to \cref{tstruc}.
\end{proof}{}

There is a natural limit-preserving functor 
$$\left(\mathbb{Z}\mathrm{-Mod}^{\mathrm{U}-}_k\right)_{\ge 0} \to \mathrm{St}_k,$$ whose left adjoint will be denoted by $H^{\uu}_*(\cdot).$

\begin{definition}[Unipotent homology]\label{def2.49}
Let $Y \in \mathrm{St}_k.$ 
We will call $$H^{\uu}_*(Y) \in (\mathbb{Z}\mathrm{-Mod}^{\mathrm{U}-}_k)_{\ge 0}$$ the \emph{unipotent homology} of $Y.$ If $ (Y,y) $ is a pointed stack over $ k $, then the \emph{reduced} unipotent homology of $ Y $ is the cofiber $ \widetilde{H}^{\uu}_*(Y) := \mathrm{cofib}\left( H^{\uu}_*(\{y\}) \to H^{\uu}_*(Y) \right) $. 

For each $i \ge 0,$ the unipotent group scheme $\pi_i (H^{\uu}(Y))$ will be denoted by $H_i^{\uu}(Y)$ and will be called the $i$-th unipotent homology group scheme of $Y.$   
\end{definition}{}

Since $\mathbb{Z}\mathrm{-Mod}^{\mathrm{U}}_k$ is a ($\mathbb{Z}$-linear) stable $\infty$-category, for $M, N \in \mathbb{Z}\mathrm{-Mod}^{\mathrm{U}}_k,$ there is a natural mapping ($\mathbb{Z}$-module) spectrum that we denote by $R\mathrm{Hom}(M,N).$ 
If $ G, H $ are commutative unipotent groups over $ k $, regarded as unipotent $ \bZ $-modules via \cref{ex:eilenberg_maclane_unipotent_spectra}, we will write $ \mathrm{Ext}^i(G,H) $ for $ \pi_{-i}R\mathrm{Hom}(G,H) $. 
\begin{proposition}\label{dual}
 Let $Y \in \mathrm{St}_k.$ Let $G$ be a commutative unipotent group scheme over $k$, which we regard as a unipotent $ \bZ $-module via \cref{ex:eilenberg_maclane_unipotent_spectra}. 
 Then we have a natural isomorphism
$$R\mathrm{Hom}\left(H^\uu_* (Y), G\right) \simeq  R\Gamma_{fl}(Y, G).$$   
\end{proposition}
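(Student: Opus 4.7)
The plan is to deduce this statement directly from the adjunction defining $H^\uu_*(\cdot)$ together with the standard description of fppf cohomology via Eilenberg--MacLane stacks. Recall that $H^\uu_*$ is defined as the left adjoint of the natural limit-preserving functor $U \colon (\mathbb{Z}\text{-Mod}^{\mathrm{U}-}_k)_{\ge 0} \to \mathrm{St}_k$, which on a unipotent $\mathbb{Z}$-module spectrum is given by applying $\Omega^\infty$ to its underlying unipotent spectrum. In particular, for $n \ge 0$, the stack $U(G[n])$ is the Eilenberg--MacLane stack $K(G,n)$.

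First, I would compute the homotopy groups of both sides. Since $\mathbb{Z}\text{-Mod}^{\mathrm{U}}_k$ is a stable $\infty$-category, for any integer $n$ we have $\pi_{-n} R\mathrm{Hom}(H^\uu_*(Y), G) \simeq \pi_0 \mathrm{Map}(H^\uu_*(Y), G[n])$. For $n \ge 0$, $G[n]$ lies in $(\mathbb{Z}\text{-Mod}^{\mathrm{U}-}_k)_{\ge 0}$, so the adjunction $H^\uu_* \dashv U$ gives a natural isomorphism
\begin{equation*}
    \pi_0 \mathrm{Map}(H^\uu_*(Y), G[n]) \simeq \pi_0 \mathrm{Map}_{\mathrm{St}_k}(Y, K(G,n)) \simeq H^n_{fl}(Y,G),
\end{equation*}
where the last identification uses the representability of fppf cohomology by Eilenberg--MacLane stacks for the unipotent group $G$ (which is in particular an fppf sheaf). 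For $n < 0$, the source $H^\uu_*(Y)$ is connective while $G[n]$ is strictly coconnective in the $t$-structure of \cref{ts}, so the mapping space vanishes in $\pi_0$; this matches the coconnectivity of $R\Gamma_{fl}(Y,G)$.

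To promote these pointwise identifications to a natural equivalence of spectra, I would argue as follows. The mapping spectrum $R\mathrm{Hom}(H^\uu_*(Y), G)$ is, as an $\Omega$-spectrum, assembled from the pointed mapping spaces $\{\mathrm{Map}(H^\uu_*(Y), G[n])\}_{n \ge 0}$ together with the canonical equivalences $\mathrm{Map}(H^\uu_*(Y), G[n]) \simeq \Omega\, \mathrm{Map}(H^\uu_*(Y), G[n+1])$. The adjunction $H^\uu_* \dashv U$ is an equivalence of mapping spaces compatible with taking loops (since $U$ preserves loops, and $G[n] = \Omega\, G[n+1]$). On the other side, the spectrum $R\Gamma_{fl}(Y,G)$ is assembled from the spaces $\mathrm{Map}_{\mathrm{St}_k}(Y,K(G,n))$ with the analogous structure maps coming from $K(G,n) \simeq \Omega K(G,n+1)$. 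Matching these two assembly procedures yields the desired natural isomorphism of $\mathbb{Z}$-module spectra.

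The main subtlety is the final step: ensuring that the pointwise adjunction isomorphisms are compatible with the spectrum structure, i.e.~are natural in $n$. This is essentially formal once one observes that the adjunction is $\mathbb{Z}$-linear (both sides are stable and the functors preserve the relevant additive structure), so one does not need to perform any calculation beyond the one above. No further input beyond the definitions of the categories and the representability of fppf cohomology by $K(G,n)$ is required.
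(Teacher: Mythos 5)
Your proposal is correct and follows essentially the same route as the paper: identify $\mathrm{Map}(H^\uu_*(Y), G[n])$ with $\mathrm{Map}_{\mathrm{St}_k}(Y, K(G,n)) \simeq \tau_{\ge 0}(R\Gamma_{fl}(Y,G)[n])$ via the defining adjunction and the representability of flat cohomology by Eilenberg--MacLane stacks, then assemble these identifications into an equivalence of mapping spectra. You merely spell out the spectrum-level assembly and the vanishing in negative degrees more explicitly than the paper does.
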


\begin{proof}
 For $n \ge 0,$ we have $$\mathrm{Map}_{\mathrm{St}_k}(Y, K(G,n)) \simeq \tau_{\ge 0} (R\Gamma _{fl}(Y, G)[n]) \simeq \Omega^{\infty - n}R\Gamma _{fl}(Y, G).$$ By adjunction, we have 
 $$\mathrm{Map}_{\left(\mathbb{Z}\mathrm{-Mod}^{\mathrm{U}}_k\right)_{\geq 0}}\left(H^\uu_* (Y), G[n]\right) \simeq \mathrm{Map}_{\mathrm{Sp}^{\mathrm{U}}_k}\left(\Sigma^\infty_+Y, G[n]\right) \simeq \mathrm{Map}_{\mathrm{St}_k}(Y, K(G,n)).$$ This implies that $R\mathrm{Hom}(H^\uu_* (Y), G) \simeq  R\Gamma_{fl}(Y, G),$ as desired.
\end{proof}{}

\begin{remark}\label{filtrationhomology}
Using \cref{dual} and the Postnikov filtration on $H^\uu_* (Y),$ one can obtain a new filtration on $R\Gamma_{fl} (Y, G),$ which we call the ``homology filtration". This gives a (cohomological) spectral sequence
\begin{equation}\label{ss1}
 E_2^{p,q}:= \mathrm{Ext}^p \left(H^\uu_{q}(Y), G\right) \implies H^{p+q}_{fl}(Y, G). 
\end{equation}{}

\end{remark}{}

\begin{remark}
Let $Y = \mathrm{Spec}\, k \in \mathrm{St}_k,$ where $k$ is a field of characteristic $p>0$. By universal properties, it follows that $H^{\uu}_*(\mathrm{Spec}\, k) \simeq L (\mathbb{Z}) \simeq \mathbb{Z}^\mathrm{uni} \simeq \mathbb{Z}_p$ (see \cref{use1}). Here, $\mathbb{Z}_p$ is thought of as the profinite group scheme $\varprojlim \mathbb{Z}/p^k\mathbb{Z}.$ In particular, we see that $\mathrm{Ext}^i (\mathbb{Z}_p, \mathbb{G}_a) \simeq H^i (\mathrm{Spec}\, k, \mathcal{O}),$ which is zero for $i>0.$ If $k$ is assumed to be of characteristic zero, then $H^{\uu}_{*} (\mathrm{Spec}\, k) \simeq \mathbb{Z}^{\mathrm{uni}} \simeq \mathbb{G}_a.$
\end{remark}

\begin{remark} \label{rmk:zeroth_unipotent_homology_counts_components} 
Let $k$ be a field of characteristic $p$. Let $Y \in \mathrm{St}_k$ be such that $H^0 (Y, \mathcal{O}) = k,$ i.e., $Y$ is cohomologically connected. Then by the spectral sequence \cref{ss1}, we have $\varinjlim \mathrm{Hom} (H^\uu_0 (Y), W_n) \simeq \varinjlim H^0 (Y, W_n).$ Note that $H^0 (Y, W_n) \simeq \mathrm{Hom}(Y, W_n).$ By universal property of mapping to affine schemes, any map $Y \to W_n$ factors uniquely through $\mathrm{Spec}\, H^0 (Y, \mathcal{O}) \to W_n.$ Thus, by our assumption that $H^0 (Y, \mathcal{O}) = k$, it follows that the Dieudonn\'e module of $H^\uu_0 (Y)$ is given by $\varinjlim W_n(k).$
This implies that $H^\uu_0 (Y) \simeq \mathbb{Z}_p.$

\begin{remark}\label{rmkabtred}
 If $ k $ is a field of arbitrary characteristic, a similar argument (by replacing $W_n$ with an arbitrary commutative unipotent group scheme $G$) shows that for any pointed, cohomologically connected stack $ Y $, we have an isomorphism $H^\uu_0 (*) \simeq H^\uu_0 (Y.)$ Therefore, one has $\widetilde{H}^{\uu}_0(Y)=0.$
\end{remark}

\end{remark}{}

\begin{proposition}\label{abelianizationofhomotopy}
    Let $ k $ be a field. 
    Let $Y $ be a pointed, cohomologically connected stack. Then we have a natural isomorphism
    $$H^\uu_{1}(Y) \simeq \pi_1^{\mathrm{U}}(X)^\mathrm{ab} $$ of unipotent group schemes over $ k $.  
\end{proposition}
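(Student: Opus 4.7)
The plan is to represent both $H_1^\uu(Y)$ and $\pi_1^\uu(Y)^{\mathrm{ab}}$ by the same functor on commutative unipotent affine group schemes, namely $G \mapsto \widetilde{H}^1_{fl}(Y,G)$, and conclude via the Yoneda lemma in this abelian category. By \cref{rmk:affinization_stabilization_commute}, both sides depend on $Y$ only through its unipotent homotopy type $\bU(Y)$, so I may reduce to the case where $Y$ is itself a pointed, cohomologically connected affine stack. The candidate comparison map $\bar h \colon \pi_1^\uu(Y)^{\mathrm{ab}} \to H_1^\uu(Y)$ is constructed by applying $\pi_1$ to the unit $Y \to \Omega^\infty H_*^\uu(Y)$ and factoring the resulting Hurewicz-type homomorphism through the abelianization; this factorization exists because $H_1^\uu(Y)$ is abelian.

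For the duality side, I apply \cref{dual} to the cofiber sequence $H_*^\uu(*) \to H_*^\uu(Y) \to \widetilde{H}_*^\uu(Y)$ to obtain a natural equivalence $R\mathrm{Hom}(\widetilde{H}_*^\uu(Y), G) \simeq R\widetilde{\Gamma}_{fl}(Y, G)$. By \cref{rmkabtred}, the reduced unipotent homology $\widetilde{H}_*^\uu(Y)$ vanishes in degree zero and is therefore $1$-connective. Since $G$ lies in the heart of the $t$-structure and $\mathrm{Ext}^{<0}(-,G)$ vanishes there, only $H_1^\uu(Y)$ contributes to $\pi_{-1} R\mathrm{Hom}(\widetilde{H}_*^\uu(Y), G)$, yielding $\mathrm{Hom}(H_1^\uu(Y), G) \simeq \widetilde{H}^1_{fl}(Y,G)$.

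For the homotopy-theoretic side, I use that $BG = K(G,1)$ is a $1$-truncated affine stack, so any pointed map $Y \to BG$ factors uniquely through the Postnikov truncation $\tau_{\le 1} Y \simeq B\pi_1^\uu(Y)$ (the latter identification using cohomological connectivity of $Y$). For commutative $G$ this gives $\widetilde{H}^1_{fl}(Y,G) \simeq \mathrm{Hom}(\pi_1^\uu(Y), G) \simeq \mathrm{Hom}(\pi_1^\uu(Y)^{\mathrm{ab}}, G)$, the last step because $G$ is abelian.

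Combining the two identifications produces a natural isomorphism $\mathrm{Hom}(H_1^\uu(Y), G) \simeq \mathrm{Hom}(\pi_1^\uu(Y)^{\mathrm{ab}}, G)$; by tracing through the definitions, this composite is induced by $\bar h$, and Yoneda in the category of commutative unipotent affine group schemes then forces $\bar h$ to be an isomorphism. The main obstacle I anticipate is the final compatibility check—i.e., verifying that the unit map $Y \to \Omega^\infty H_*^\uu(Y)$ is identified, under the two independent descriptions of $\widetilde{H}^1_{fl}(Y,G)$, with the canonical Postnikov truncation $Y \to B\pi_1^\uu(Y)$—which amounts to careful bookkeeping with the relevant adjunctions.
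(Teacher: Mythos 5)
Your proposal is correct and follows essentially the same route as the paper's proof: both sides are identified with the functor $G \mapsto \pi_0\mathrm{Map}_{\mathrm{St}_{k*}}(Y,BG)$ (reduced flat $H^1$), using the adjunction with reduced unipotent homology together with \cref{rmkabtred} and the identification of the heart on one side, and the factorization of maps $Y \to BG$ through $B\pi_1^{\mathrm{U}}(Y)$ (the paper cites \cite[Lemma 3.1.6]{soon} for this, rather than reducing to affine stacks) on the other, concluding by Yoneda. The explicit Hurewicz-type comparison map $\bar h$ and the final compatibility check you anticipate are not needed, since the natural isomorphism of corepresented functors already yields the isomorphism of representing objects.
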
{}

\begin{proof}
    Let $ G $ be an arbitrary commutative unipotent group scheme over $ k $; regard $ G $ as a  unipotent $ \mathbb{Z} $-module via \cref{ex:eilenberg_maclane_unipotent_spectra}. 
    Since $ Y $ is cohomologically connected, by \cite[Lemma 3.1.6]{soon}, we have equivalences $$ \mathrm{Map}_{\mathrm{St}_{k*}}(Y, BG) \simeq \mathrm{Map}_{\mathrm{St}_{k*}}(B \pi_1^{\mathrm{U}}(Y), BG) \simeq \mathrm{Hom}(\pi_1^{\mathrm{U}} (Y)^{\mathrm{ab}}, G)\, , $$ where the latter $ \mathrm{Hom} $ denotes maps of unipotent group schemes over $ k $. 
    On the other hand, we have 
    \begin{equation*}
    \begin{split}
        \mathrm{Map}_{\mathrm{St}_{k*}}(Y, BG) &\simeq \mathrm{Map}_{\left(\mathbb{Z}\mathrm{-Mod}^{\mathrm{U}}_k\right)_{\geq 0}}(\widetilde{H}^{\uu}_*(Y), G[1]) \\
        &\simeq \mathrm{Map}_{\left(\mathbb{Z}\mathrm{-Mod}^{\mathrm{U}}_k\right)_{\geq 1}}(\widetilde{H}^{\uu}_*(Y), G[1]) \\
        &\simeq \mathrm{Map}_{\left(\mathbb{Z}\mathrm{-Mod}^{\mathrm{U}}_k\right)_{\geq 1}}(\widetilde{H}^{\uu}_1(Y)[1], G[1])
    \end{split}
    \end{equation*}  
    where the second equivalence follows from \cref{rmkabtred}. 
    Now the category of 1-connective, 1-truncated unipotent $ \mathbb{Z} $-modules over $ k $ is equivalent to the category of \emph{commutative} unipotent group schemes over $ k $ by \cref{ts}, so $$\mathrm{Map}_{\mathrm{Mod}_{\mathbb{Z}}^{\geq 1}\left(\mathrm{Sp}^{\mathrm{U}}_k\right)}(\widetilde{H}^{\uu}_1(Y)[1], G[1])  \simeq \mathrm{Map}(H^{\uu}_1(Y), G).$$ 
    Therefore, there is a natural isomorphism $ H^{\uu}_1(Y) \simeq \pi_1^\mathrm{U}(Y)^{\mathrm{ab}} $.
\end{proof}{}
By universal properties, for any $X \in \mathrm{St}_{k*},$ there is a natural map
$$\mathbb{U}(X) \to H^\uu_*(X),$$ where the target is regarded as a pointed stack via the functor $$(\mathbb{Z}\mathrm{-Mod}^{\mathrm{U}-}_k)_{\ge 0} \to \mathrm{St}_{k*}.$$  This induces natural maps
$$\pi^\mathrm{U}_n(X) \to H^\uu_n(X),$$ which we call the Hurewicz map.

\begin{proposition}[Hurewicz theorem]\label{hurehomology}
Let $Y$ be a pointed and cohomologically connected stack over a field $k$. Let $n \ge 1$ be an integer such that $\mathbb{U}(Y)$ is $n$-connected. Then $H^\uu_{i}(Y)= 0$ for $0<i < n+1$ and the Hurewicz map
$$\pi_{n+1}^\U (X) \to H^\uu_{n+1}(X)$$ is an isomorphism.
\end{proposition}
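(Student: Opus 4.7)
The plan is to reduce to the case of a pointed $n$-connected affine stack, transport the problem to the stable setting via the Freudenthal suspension theorem, and finally analyze the $\mathbb{Z}$-linearization $(-) \otimes \mathbb{Z}: \mathrm{Sp}^{\mathrm{U}}_k \to \mathbb{Z}\mathrm{-Mod}^{\mathrm{U}}_k$. By \cref{rmk:affinization_stabilization_commute}, both $\pi_n^{\mathrm{U}}(Y)$ and $H^{\uu}_*(Y)$ depend only on $\mathbb{U}(Y)$, so we may assume $Y = \mathbb{U}(Y)$ is itself an $n$-connected pointed affine stack. Then iterating $\mathbb{U}\Sigma$ on $Y$ produces affine stacks $(\mathbb{U}\Sigma)^k Y$ that are $(n+k)$-connected, so the colimit $\pi_i^{\mathrm{st}, \mathrm{U}}(Y) = \varinjlim_k \pi_{i+k}((\mathbb{U}\Sigma)^k Y)$ vanishes for $i \le n$ and stabilizes immediately at $\pi_{n+1}(Y) = \pi_{n+1}^{\mathrm{U}}(Y)$ for $i = n+1$. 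Combined with \cref{specfreudenthalcomp}, this shows $\Sigma^\infty Y \in \mathrm{Sp}^{\mathrm{U}}_k$ is $n$-connective with $\pi_{n+1}(\Sigma^\infty Y) \simeq \pi_{n+1}^{\mathrm{U}}(Y)$.

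Next, the $\mathbb{Z}$-linearization $(-) \otimes \mathbb{Z}$ is left adjoint to the forgetful functor $\mathbb{Z}\mathrm{-Mod}^{\mathrm{U}}_k \to \mathrm{Sp}^{\mathrm{U}}_k$, which is $t$-exact by the very definition of the $t$-structure on its source (\cref{ts}). Hence $(-) \otimes \mathbb{Z}$ is right $t$-exact, so $\widetilde{H}^{\uu}_*(Y) = \Sigma^\infty Y \otimes \mathbb{Z}$ is again $n$-connective in $\mathbb{Z}\mathrm{-Mod}^{\mathrm{U}}_k$. Since $Y$ is cohomologically connected, \cref{rmkabtred} identifies $H^{\uu}_i(Y) \simeq \widetilde{H}^{\uu}_i(Y)$ for $i \ge 1$, which yields the vanishing $H^{\uu}_i(Y) = 0$ for $1 \le i \le n$.

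For the isomorphism on $\pi_{n+1}$, tensor the cofiber sequence $\tau_{\ge n+2} \Sigma^\infty Y \to \Sigma^\infty Y \to \pi_{n+1}^{\mathrm{U}}(Y)[n+1]$ in $\mathrm{Sp}^{\mathrm{U}}_k$ with $\mathbb{Z}$. Right $t$-exactness guarantees $\tau_{\ge n+2}(\Sigma^\infty Y) \otimes \mathbb{Z}$ remains $(n+1)$-connective, so the long exact sequence produces an isomorphism
\[
\pi_{n+1}(\Sigma^\infty Y \otimes \mathbb{Z}) \xrightarrow{\sim} \pi_{n+1}\left(\pi_{n+1}^{\mathrm{U}}(Y)[n+1] \otimes \mathbb{Z}\right).
\]
By \cref{ex:eilenberg_maclane_unipotent_spectra}, $\pi_{n+1}^{\mathrm{U}}(Y)[n+1]$ is already canonically a unipotent $\mathbb{Z}$-module, and a K\"unneth-type computation on the bottom homotopy—using that any commutative unipotent group scheme is automatically a module over $\pi_0(\mathbb{Z}^\uu)$ (equal to $\mathbb{Z}_p$ in characteristic $p$, or $\mathbb{G}_a$ in characteristic zero)—identifies the right-hand side with $\pi_{n+1}^{\mathrm{U}}(Y)$. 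The identification of the Hurewicz map with the resulting composite $\pi_{n+1}^{\mathrm{U}}(Y) \simeq \pi_{n+1}(\Sigma^\infty Y) \to \pi_{n+1}(\Sigma^\infty Y \otimes \mathbb{Z}) = H^{\uu}_{n+1}(Y)$ is formal from the universal property defining it.

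The main obstacle is the final K\"unneth step: verifying that the unit map $\pi_{n+1}^{\mathrm{U}}(Y)[n+1] \to \pi_{n+1}^{\mathrm{U}}(Y)[n+1] \otimes \mathbb{Z}$ induces an iso on $\pi_{n+1}$. This amounts to controlling the monoidal behavior of the unit object $\mathbb{Z}^\uu$ on the heart of the $t$-structure, and is essentially the analogue of the standard statement $\pi_0(E \otimes H\mathbb{Z}) \simeq \pi_0(E)$ for connective spectra $E$, transported to the unipotent setting.
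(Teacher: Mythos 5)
Your overall route is genuinely different from the paper's, and its first half works. The paper proves the vanishing by feeding $H^i(Y,\mathcal{O})=0$ for $0<i<n+1$ (from \cite[Proposition~3.2.11]{soon}) into the spectral sequence of \cref{filtrationhomology} with $\mathbb{G}_a$-coefficients and using unipotence, and it proves the isomorphism without ever stabilizing, by computing $\mathrm{Map}_{\mathrm{St}_{k*}}(Y,B^{n+1}G)$ in two ways and invoking Yoneda. Your reduction to $Y=\mathbb{U}(Y)$, the Freudenthal computation of $\pi_*(\Sigma^\infty Y)$ via \cref{specfreudenthalcomp}, and the deduction that $\widetilde H^\uu_i(Y)=0$ for $i\le n$ are essentially correct and give a legitimate alternative to the paper's spectral-sequence argument for the vanishing; one caveat is that the $t$-structures of \cref{tstruc} and \cref{ts} are only defined on bounded-below objects, so ``$-\otimes\mathbb{Z}$ is right $t$-exact'' should be replaced by the direct adjunction argument: $\mathrm{Map}(\widetilde H^\uu_*(Y),W)\simeq\mathrm{Map}_{\mathrm{Sp}^{\mathrm{U}}_k}(\Sigma^\infty Y,W)\simeq *$ for $W$ coconnective in degrees $\le n$, which suffices because $\widetilde H^\uu_*(Y)$ is connective by the very definition of $H^\uu_*$.

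The genuine gap is your final step, which is where the content of the Hurewicz statement actually sits: the claim that the unit $\pi_{n+1}^{\mathrm{U}}(Y)[n+1]\to\pi_{n+1}^{\mathrm{U}}(Y)[n+1]\otimes\mathbb{Z}$ is an isomorphism on $\pi_{n+1}$, i.e.\ that $\mathbb{Z}$-linearization preserves the bottom homotopy group scheme of a connective unipotent spectrum. The ``K\"unneth-type computation'' you invoke transports the proof of $\pi_0(E\otimes H\mathbb{Z})\simeq\pi_0(E)$, but that proof rests on connectivity estimates for the tensor which are not available here: the paper constructs a symmetric monoidal structure only on Ind-unipotent spectra (\cref{ind-unispect}); the object $E\otimes\mathbb{Z}$ is built from colimits taken in $\mathrm{Sp}^{\mathrm{U}}_k$, and since $\mathrm{Sp}^{\mathrm{U}}_k\subset\mathrm{Sp}(\mathrm{St}_k)$ is reflective (\cref{unicomofspectra}) such colimits are unipotent completions of the underlying ones, with no a priori control of their homotopy sheaves (nor even of bounded-belowness of $\tau_{\geq n+2}\Sigma^\infty Y\otimes\mathbb{Z}$ and $G[n+1]\otimes\mathbb{Z}$, which your long exact sequence step also tacitly assumes); and the retraction coming from the $\mathbb{Z}$-module structure of \cref{ex:eilenberg_maclane_unipotent_spectra} only yields split injectivity of the unit on $\pi_{n+1}$, not surjectivity. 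The claim you need is true, but the way to prove it with the paper's tools is by corepresentability rather than K\"unneth: for $H$ in the heart of \cref{ts}, take $\pi_0$ of the chain $\mathrm{Map}(\widetilde H^\uu_*(Y),H[n+1])\simeq\mathrm{Map}_{\mathrm{Sp}^{\mathrm{U}}_k}(\Sigma^\infty Y,H[n+1])\simeq\mathrm{Map}_{\mathrm{St}_{k*}}(Y,B^{n+1}H)\simeq\mathrm{Map}_{\mathrm{St}_{k*}}(\tau_{\le n+1}\mathbb{U}(Y),B^{n+1}H)$, use the $(n+1)$-connectivity of $\widetilde H^\uu_*(Y)$ already established together with the identification of the heart of unipotent $\mathbb{Z}$-modules with commutative unipotent group schemes, and conclude $\mathrm{Hom}(H^\uu_{n+1}(Y),H)\simeq\mathrm{Hom}(\pi_{n+1}^{\mathrm{U}}(Y),H)$ naturally in $H$, whence the result by Yoneda. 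This is exactly the paper's argument (as in \cref{abelianizationofhomotopy}); once you adopt it, the stable detour is no longer needed for the isomorphism, though your Freudenthal argument remains a valid alternative proof of the vanishing part.
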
{}

\begin{proof}

Let $0<i<n+1 $ be an integer. By \cite[Proposition~3.2.11]{soon}, it follows that $H^i (Y, \mathscr{O})=0$. By the spectral sequence in \cref{filtrationhomology}, it (inductively) follows that $\mathrm{Hom}(H^\uu_{i}(Y), \mathbb{G}_a)=0$. Since $H^\uu_{i}(Y)$ is unipotent, we must have $H^\uu_{i}(Y) = 0$ for $0 < i <n+1$. For the second part of the proposition, let $G$ be an arbitrary commutative unipotent group scheme over $k$. Similar to the proof of \cref{abelianizationofhomotopy}, it follows that $$ \mathrm{Map}_{\mathrm{St}_{k*}}(Y, B^{n+1}G) \simeq \mathrm{Map}_{\mathrm{St}_{k*}}(\tau_{\le n+1} \mathbb{U}(Y), B^{n+1}G) \simeq \mathrm{Hom}(\pi_{n+1}^{\mathrm{U}} (Y)^{\mathrm{}}, G)\,.$$ On the other hand, we have 
    \begin{equation*}
    \begin{split}
        \mathrm{Map}_{\mathrm{St}_{k*}}(Y, B^{n+1}G) &\simeq \mathrm{Map}_{\mathrm{Mod}_{\mathbb{Z}}^{\geq 0}\left(\mathrm{Sp}_k^{\mathrm{U}}\right)}(\widetilde{H}^{{\uu}}_*(Y), G[n+1]) \\
        &\simeq \mathrm{Map}_{\mathrm{Mod}_{\mathbb{Z}}^{\geq n+1}\left(\mathrm{Sp}^{\mathrm{U}}_k\right)}(\widetilde{H}^{{\uu}}_*(Y), G[n+1]) \\
        &\simeq \mathrm{Map}_{\mathrm{Mod}_{\mathbb{Z}}^{\geq 1}\left(\mathrm{Sp}^{\mathrm{U}}_k\right)}(\widetilde{H}^{{\uu}}_{n+1}(Y)[n+1], G[n+1]) \\&\simeq \mathrm{Hom}(\widetilde{H}^{{\uu}}_{n+1}(Y), G).
    \end{split}
    \end{equation*}  This proves the desired claim.
\end{proof}

\begin{corollary}
   Let $Y \in \mathrm{St}_k$ be cohomologically connected and pointed. Let $n \ge 1$ be an integer such that $H^i (Y, \mathcal{O})=0$ for $0<i<n+1$. Then $H^\uu_{i}(Y)= 0$ for $0<i < n+1$ and there is a natural isomorphism
$$\mathrm{Hom}(H^\uu_{n+1}(Y), \mathbb{G}_a) \simeq H^{n+1}(Y, \mathcal{O}).$$
\end{corollary}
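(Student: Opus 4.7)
The plan is to deduce both assertions from the homology spectral sequence of \cref{filtrationhomology} specialized to $G=\mathbb{G}_a$, following the pattern already used in the proof of \cref{hurehomology}. The hypothesis $H^i(Y,\mathcal{O})=0$ for $0<i<n+1$ is exactly the cohomological input exploited there (in Hurewicz it is extracted from $n$-connectedness of $\mathbb{U}(Y)$ via \cite[Proposition~3.2.11]{soon}); here it is given outright, so the same spectral sequence argument should apply directly, and the second assertion should be essentially the $G=\mathbb{G}_a$ shadow of the mapping space computation in the last display of the proof of \cref{hurehomology}.

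For the first assertion I would induct on $i\in\{1,\dots,n\}$, assuming $H^\uu_j(Y)=0$ for $0<j<i$. On the $E_2$-page $E_2^{p,q}=\mathrm{Ext}^p(H^\uu_q(Y),\mathbb{G}_a)$ of the spectral sequence converging to $H^{p+q}(Y,\mathcal{O})$, the inductive hypothesis kills all entries with $0<q<i$. Cohomological connectedness together with \cref{rmkabtred} identifies $H^\uu_0(Y)$ with $H^\uu_0(\mathrm{Spec}\,k)$, and the vanishing $\mathrm{Ext}^p(H^\uu_0(\mathrm{Spec}\,k),\mathbb{G}_a)\cong H^p(\mathrm{Spec}\,k,\mathcal{O})=0$ for $p>0$ kills the bottom row in positive degree. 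Consequently the only surviving contribution in total degree $i$ is $E_2^{0,i}=\mathrm{Hom}(H^\uu_i(Y),\mathbb{G}_a)$, and no higher differentials can enter or leave it, so it must equal $H^i(Y,\mathcal{O})$, which vanishes by hypothesis. Since $H^\uu_i(Y)$ is a commutative unipotent group scheme and $\mathbb{G}_a$ cogenerates that category, this forces $H^\uu_i(Y)=0$, closing the induction.

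For the second assertion I would repeat the mapping space calculation from the second half of the proof of \cref{hurehomology} verbatim with $G=\mathbb{G}_a$. Combining the vanishing just established with \cref{rmkabtred}, the reduced unipotent homology $\widetilde H^\uu_*(Y)$ is an $(n+1)$-connective object of $\mathbb{Z}\mathrm{-Mod}^{\mathrm{U}-}_k$ whose bottom homotopy is $H^\uu_{n+1}(Y)$. Using the $(\Sigma^\infty,\Omega^\infty)$ adjunction and the identification of the appropriate slice of the $t$-structure of \cref{ts} with commutative unipotent group schemes, I would obtain
$$\mathrm{Map}_{\mathrm{St}_{k*}}(Y,B^{n+1}\mathbb{G}_a)\simeq\mathrm{Hom}(H^\uu_{n+1}(Y),\mathbb{G}_a),$$
and the left hand side has $\pi_0=H^{n+1}(Y,\mathcal{O})$. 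The main obstacle is the bookkeeping in Step~1 -- in particular, cleanly confirming that no differentials from off-diagonal Ext terms can perturb the identification $H^i(Y,\mathcal{O})\cong\mathrm{Hom}(H^\uu_i(Y),\mathbb{G}_a)$ used to propagate the induction, which is where cohomological connectedness of $Y$ enters in an essential way.
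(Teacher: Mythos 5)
Your argument is correct, but it takes a somewhat different route from the paper, whose proof of this corollary is a two-line citation: the hypothesis $H^i(Y,\mathcal{O})=0$ for $0<i<n+1$ is converted via \cite[Proposition~3.2.11]{soon} into $n$-connectedness of $\mathbb{U}(Y)$, and then the Hurewicz theorem \cref{hurehomology} is invoked, which gives the vanishing of $H^\uu_i(Y)$ together with the extra identification $H^\uu_{n+1}(Y)\simeq \pi_{n+1}^{\mathrm{U}}(Y)$, from which the displayed isomorphism follows. You bypass the unipotent homotopy type and $\pi_*^{\mathrm{U}}$ entirely and argue purely homologically: your Step 1 is the same spectral-sequence induction that appears inside the proof of \cref{hurehomology}, but fed directly with the cohomological hypothesis, the bottom row being handled by \cref{rmkabtred} together with $\mathrm{Ext}^p(H^\uu_0(\mathrm{Spec}\,k),\mathbb{G}_a)\simeq H^p(\mathrm{Spec}\,k,\mathcal{O})=0$ for $p>0$ (a consequence of \cref{dual} applied to the point), and your Step 2 is the adjunction/$t$-structure computation from the second half of that proof with $G=\mathbb{G}_a$. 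What this buys is self-containedness (no appeal to \cite[Proposition~3.2.11]{soon} or to unipotent homotopy group schemes); what the paper's route buys is brevity plus the homotopical identification of $H^\uu_{n+1}(Y)$. The differential bookkeeping you flag as the main obstacle is in fact harmless: nothing maps into $E_r^{0,i}$ from negative filtration, and differentials out of it land in $E_r^{r,i-r+1}$, which vanishes either by the inductive hypothesis (when $0<i-r+1<i$) or by the bottom-row vanishing (when $i-r+1=0$); note also that once Step 1 is done, the second isomorphism can be read off even more directly as the edge map of the same spectral sequence \cref{ss1} in total degree $n+1$, avoiding the mapping-space argument altogether.
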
{}
\begin{proof}
  Follows from \cite[Proposition~3.2.11]{soon} and \cref{hurehomology}.  
\end{proof}

\begin{lemma}\label{bd1}
Let $M_j$ be an inverse system of commutative affine group schemes over $k.$ Then for all $i \ge 0,$ we have a natural isomorphism
$$\varinjlim_j \mathrm{Ext}^i \left(M_j, \mathbb{G}_a\right) \simeq \mathrm{Ext}^i \left(\varprojlim_{j}M_j, \mathbb{G}_a\right).$$   
\end{lemma}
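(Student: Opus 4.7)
The plan is to identify $\mathrm{Ext}^i(M,\bG_a)$, for a commutative affine group scheme $M$, with the cohomology of an explicit complex whose terms are built functorially from the coordinate ring $\mathcal{O}(M)$. The lemma then reduces to the fact that cofiltered limits of affine group schemes correspond to filtered colimits of Hopf algebras, together with exactness of filtered colimits.

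First, since $\bG_a$ is unipotent, I would apply the universal property of unipotent completion (cf.~\cref{use}) to reduce to the case where each $M_j$ is a commutative \emph{unipotent} affine group scheme, replacing $M_j$ by its maximal unipotent quotient $M_j^{\uni}$ if necessary. Writing $B_j := \mathcal{O}(M_j)$ and $B := \mathcal{O}(M)$ where $M := \varprojlim_j M_j$, the anti-equivalence between affine schemes and commutative $k$-algebras gives $B \simeq \varinjlim_j B_j$ as commutative Hopf algebras.

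Next, I would compute $R\mathrm{Hom}(M,\bG_a)$ using a Breen--Deligne type functorial resolution of $M$ as an abelian sheaf on $\mathrm{Aff}_k$, whose terms are direct sums of free abelian sheaves of the form $\mathbb{Z}[M^n]$. Applying $\underline{\mathrm{Hom}}(-,\bG_a)$ termwise yields a cochain complex whose $n$-th term is a direct sum of copies of $B^{\otimes n}$ (using that $\bG_a$ represents the forgetful functor on affine schemes), and whose cohomology computes $\mathrm{Ext}^\bullet(M,\bG_a)$.

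Because $B \simeq \varinjlim_j B_j$ and tensor products commute with filtered colimits, each term of this complex is the filtered colimit of the analogous terms associated to the $M_j$. Exactness of filtered colimits of $k$-vector spaces then allows cohomology to pass through the colimit, yielding $\mathrm{Ext}^i(M,\bG_a) \simeq \varinjlim_j \mathrm{Ext}^i(M_j,\bG_a)$. The main obstacle I anticipate is matching the Ext groups computed in $\mathbb{Z}\mathrm{-Mod}^{\mathrm{U}}_k$ with the sheaf-theoretic Ext groups computed by the Breen--Deligne complex; this should follow from the full faithfulness of the embedding $\mathbb{Z}\mathrm{-Mod}^{\mathrm{U}}_k \hookrightarrow \mathrm{Sp}(\mathrm{St}_k)$, which ensures that the mapping spectra in the two categories agree, so that the remaining steps reduce to routine colimit manipulations with Hopf algebras.
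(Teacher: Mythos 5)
Your core argument is exactly the paper's proof: apply the functorial Breen--Deligne resolution to $M=\varprojlim_j M_j$, identify the terms of the resulting cochain complex with finite direct sums of tensor powers of $\mathcal{O}(M)$, use $\mathcal{O}(M)\simeq\varinjlim_j\mathcal{O}(M_j)$, and pass filtered colimits through tensor powers and cohomology. One caveat: your opening reduction to unipotent group schemes is both unnecessary and mildly hazardous. The Breen--Deligne complex computes sheaf Ext for \emph{any} commutative affine group scheme viewed as an abelian fpqc sheaf (which is the sense in which the lemma must be read, since general $M_j$ such as $\mathbb{G}_m$ are not unipotent spectra), so no reduction is needed; and replacing each $M_j$ by its maximal unipotent quotient $M_j^{\uni}$ does not obviously preserve the statement, because $(-)^{\uni}$ need not commute with the cofiltered limit, i.e. $(\varprojlim_j M_j)^{\uni}$ and $\varprojlim_j M_j^{\uni}$ can differ, so the right-hand side would change in an uncontrolled way. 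Simply delete that step; your closing remark that the Ext groups in $\mathbb{Z}\mathrm{-Mod}^{\mathrm{U}}_k$ agree with the sheaf-theoretic ones (via full faithfulness of the embedding into $\mathrm{Sp}(\mathrm{St}_k)$, cf. the adjunction underlying \cref{use}) is a reasonable way to reconcile the two readings, and is a point the paper leaves implicit.
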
{}

\begin{proof}
Let $M:= \varprojlim_{j}M_j$. Since $M$ is affine, by the Breen--Deligne resolution, $R\mathrm{Hom}(M, \mathbb{G}_a)$ is naturally isomorphic to a complex 
\begin{equation}\label{breendeligne}
\mathcal{O}(M) \to \mathcal{O}(M)^{\otimes 2}\to \ldots \to \oplus_{j=1}^{n_i} \mathcal{O}(M)^{\otimes r_{i,j}} \to \ldots .    
\end{equation}
Since $\mathcal{O}(M) \simeq \varinjlim_j \mathcal{O}(M_j),$ the functoriality of the Breen--Deligne resolution shows that
$$\varinjlim_j R\mathrm{Hom}(M_j, \mathbb{G}_a) \simeq R\mathrm{Hom}(M, \mathbb{G}_a).$$ Taking cohomology yields the desired result.
\end{proof}{}

\begin{lemma}\label{useful}
    Let $M_j$ be an inverse system of commutative affine group schemes over $k.$ Let $G$ be a finite type commutative unipotent group scheme over $k.$ Then for all $i \ge 0,$ we have a natural isomorphism
$$\varinjlim_j \mathrm{Ext}^i \left(M_j, G\right) \simeq \mathrm{Ext}^i \left(\varprojlim_{j}M_j, G\right).$$
\end{lemma}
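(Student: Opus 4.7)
The strategy is dévissage, reducing to the case $G = \mathbb{G}_a$ handled by \cref{bd1}. Let $\mathcal{S}$ denote the class of finite type commutative unipotent group schemes $G$ over $k$ for which the natural map
$$\varinjlim_j \mathrm{Ext}^i(M_j, G) \to \mathrm{Ext}^i\bigl(\varprojlim_j M_j, G\bigr)$$
is an isomorphism for every $i \geq 0$. By \cref{bd1}, we have $\mathbb{G}_a \in \mathcal{S}$. The goal is to show that $\mathcal{S}$ contains every finite type commutative unipotent $G$.

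The key closure property of $\mathcal{S}$ is that it is closed under extensions. Indeed, suppose $0 \to G' \to G \to G'' \to 0$ is a short exact sequence of finite type commutative unipotent group schemes with $G', G'' \in \mathcal{S}$. Applying $R\mathrm{Hom}(M_j, -)$ gives a long exact sequence of $\mathrm{Ext}$-groups, and since filtered colimits are exact in abelian groups, passing to $\varinjlim_j$ preserves exactness. Comparing with the long exact sequence for $\mathrm{Ext}^\ast(\varprojlim_j M_j, -)$ and applying the five lemma yields $G \in \mathcal{S}$. By the same two-out-of-three principle applied to the Artin--Schreier-type sequences
$$0 \to \alpha_p \to \mathbb{G}_a \xrightarrow{F} \mathbb{G}_a \to 0, \qquad 0 \to \mathbb{Z}/p \to \mathbb{G}_a \xrightarrow{F-1} \mathbb{G}_a \to 0,$$
we deduce that $\alpha_p, \mathbb{Z}/p \in \mathcal{S}$ (in characteristic $p>0$).

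To conclude, I would invoke the structural fact that, at least after passing to a finite extension of $k$, every finite type commutative unipotent group scheme admits a finite filtration whose graded pieces lie in $\{\mathbb{G}_a, \alpha_p, \mathbb{Z}/p\}$: the identity component splits as an extension of a smooth connected unipotent group (filtered by copies of $\mathbb{G}_a$ over a perfect field) by an infinitesimal piece (filtered by $\alpha_p$'s), and $\pi_0(G)$ is a finite étale unipotent group (filtered by $\mathbb{Z}/p$'s). Combined with the closure under extensions, this proves $G \in \mathcal{S}$ over a perfect field. In characteristic zero, the filtration has only $\mathbb{G}_a$ pieces, so the argument is simpler.

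The main technical obstacle is the case of a non-perfect base field, where wound forms of $\mathbb{G}_a$ may appear and the classification above requires base change. To handle this, one should check that the formation of both sides is compatible with finite field extensions $k \to k'$: the right-hand side behaves well under base change of $\varprojlim_j M_j$, and the left-hand side can be compared via faithfully flat descent. Alternatively, one can apply dévissage directly in the category of $k$-group schemes using that wound unipotents can be filtered after a finite separable extension and then descended via an étale-cohomology spectral sequence. Either way, the crux is reducing the general statement to the $\mathbb{G}_a$ case provided by \cref{bd1}.
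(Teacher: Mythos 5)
Your overall strategy---d\'evissage to the case $G=\mathbb{G}_a$ supplied by \cref{bd1}, using long exact sequences, exactness of filtered colimits, and the five lemma---is the same in spirit as the paper's, and that part of your argument is fine (note only that for the Artin--Schreier sequences you invoke the ``sub'' direction of two-out-of-three, whereas you only proved closure under extensions; this direction does follow from the same five-lemma argument together with left exactness of $\mathrm{Hom}$ at $i=0$, but it should be said). The genuine gap is in the reduction of an arbitrary finite type commutative unipotent $G$ to composition factors $\mathbb{G}_a$, $\alpha_p$, $\mathbb{Z}/p$: over a general field this filtration only exists after an extension of scalars, and the descent step you defer to is both essential and, as sketched, incorrect. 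A wound unipotent group over an imperfect field remains wound after \emph{any} separable extension; it is split only by a suitable purely inseparable (e.g.\ perfecting) extension. So in exactly the problematic case there is no finite Galois extension to work with, and the proposed ``\'etale-cohomology spectral sequence'' descent is unavailable. Even in situations where a finite Galois extension does suffice, you would still need to prove that both sides of the lemma are computed compatibly after base change---that is, relate $\mathrm{Ext}^i_{k'}\left((M_j)_{k'}, G_{k'}\right)$ to $\mathrm{Ext}^i_{k}\left(M_j, G\right)$ by a Hochschild--Serre-type argument for fppf sheaf Ext---and that this comparison commutes with the filtered colimit over $j$ and with the formation of $\varprojlim_j M_j$; none of this is carried out, and it is where the real content of such an argument would lie.

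The paper avoids base change altogether: since $G$ is of finite type and unipotent (in characteristic $p$), $V_G^n=0$ for some $n$, so the exact sequences $0 \to VG \to G \to G/VG \to 0$ reduce one to the case $V_G=0$; then the classification of $V$-killed finite type unipotent groups in terms of $k_\sigma[F]$-modules (\cite[IV, \S~3, Corollary~6.7]{Demazure:1970}, \cite[Lemma~4.2.32]{soon}) gives a presentation $0 \to G \to \prod_I \mathbb{G}_a \to \prod_J \mathbb{G}_a \to 0$ with $I,J$ finite, defined over the given field $k$, and the statement follows from \cref{bd1}. This works uniformly over any $k$ (the characteristic zero case being trivial, as you note), requires neither perfectness nor the factors $\alpha_p$ and $\mathbb{Z}/p$, and sidesteps the descent problem entirely. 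If you want to keep your route, you must either restrict to a perfect (or algebraically closed) base field or replace the faulty separable-splitting claim by a genuine fppf descent argument along a purely inseparable extension---at which point the Verschiebung d\'evissage is strictly simpler.
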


\begin{proof}
 We have a natural map
$$\varinjlim_j R\mathrm{Hom}\left(M_j, {G}\right) \simeq R\mathrm{Hom}(M, {G}).$$ Since $G$ is finite type, $V_G^n=0$ for some $n$. To prove that the above natural map is an isomorphism, by using the short exact sequence $0 \to VG \to G \to G/VG \to 0,$ one may reduce to the case when $V_G=0.$ In that case one may write $G$ as 
$$0 \to G \to \prod_I \mathbb{G}_a \to \prod_J \mathbb{G}_a \to 0,$$ where $I$ and $J$ are finite sets. To obtain such an exact sequence one may use the classification of finite type unipotent group schemes in terms of $k_\sigma[F]$-modules (see \cite[IV, § 3, Corollary 6.7]{Demazure:1970} and \cite[Lemma~4.2.32]{soon}). Using this exact sequence, one may further reduce to $G=\mathbb{G}_a$, which follows from \cref{bd1}. 
\end{proof}{}

\begin{lemma}\label{huseful}
    Let $M_j$ be an inverse system in $\left(\mathbb{Z}\mathrm{-Mod}^{\mathrm{U}-}_k\right)_{\ge 0}$, and denote its inverse limit in $\left(\mathbb{Z}\mathrm{-Mod}^{\mathrm{U}-}_k\right)_{\ge 0}$ by $\varprojlim_{j} M_j.$ Let $G$ be a finite type commutative unipotent group scheme over $k$, regarded as a unipotent $ \bZ $-module via \cref{ex:eilenberg_maclane_unipotent_spectra}. 
    Then we have a natural isomorphism
    $$\varinjlim_j \mathrm{RHom} \left(M_j, G\right) \simeq \mathrm{RHom} \left(\varprojlim_{j}M_j, G\right).$$
\end{lemma}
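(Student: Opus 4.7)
The strategy is to upgrade \cref{useful} from a statement about individual $\mathrm{Ext}$ groups to one about the full mapping spectrum, via two successive reductions: one on the target $G$, and one on the source $M_j$ using a Postnikov argument.

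First I will reduce to the case $G = \mathbb{G}_a$. Both sides of the proposed equivalence are exact functors of $G$: the right-hand side because $R\mathrm{Hom}(M, -)$ is exact in the target, and the left-hand side because filtered colimits of spectra preserve fiber sequences. Following the proof of \cref{useful}, the classification of finite type commutative unipotent group schemes (using $0 \to VG \to G \to G/VG \to 0$ and the embedding of $V$-killed objects into finite products of $\mathbb{G}_a$ via \cite[IV, §3, Corollary 6.7]{Demazure:1970} and \cite[Lemma~4.2.32]{soon}) gives a finite filtration of $G$ by subquotients isomorphic to $\mathbb{G}_a$, reducing the problem to $G = \mathbb{G}_a$.

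Next, with $G = \mathbb{G}_a$, I will check the claim on homotopy groups: it suffices to show that
\[
\varinjlim_j \mathrm{Ext}^i(M_j, \mathbb{G}_a) \xrightarrow{\sim} \mathrm{Ext}^i\bigl(\varprojlim_j M_j, \mathbb{G}_a\bigr) \qquad \text{for every } i \in \mathbb{Z}.
\]
For $i < 0$ both sides vanish because the $M_j$ and $L := \varprojlim_j M_j$ are connective while $\mathbb{G}_a$ is $0$-truncated (connectivity of $L$ follows from the analogue of \cref{beach1} for $\mathbb{Z}$-modules, which identifies the limit in $\left(\mathbb{Z}\text{-Mod}^{\mathrm{U}-}_k\right)_{\ge 0}$ with the connective cover of the limit computed in $\mathbb{Z}\text{-Mod}\bigl(\mathrm{Sp}(\mathrm{St}_k)\bigr)$). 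For $i \ge 0$, I will use the Postnikov cohomological spectral sequence
\[
E_2^{p,q}(M) = \mathrm{Ext}^p\bigl(\pi_q(M), \mathbb{G}_a\bigr) \Longrightarrow \mathrm{Ext}^{p+q}(M, \mathbb{G}_a),
\]
which converges in each total degree because $M$ is connective and $\mathbb{G}_a$ is $0$-truncated. Taking the filtered colimit over $j$ of the $M_j$-spectral sequences (filtered colimits being exact on spectral sequences) yields a convergent spectral sequence whose $E_2$-page, by \cref{useful} applied to the inverse system $\{\pi_q(M_j)\}_j$ of affine group schemes, identifies with $\mathrm{Ext}^p(\varprojlim_j \pi_q(M_j), \mathbb{G}_a)$.

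The main obstacle will be comparing this $E_2$-page with $E_2^{p,q}(L) = \mathrm{Ext}^p(\pi_q(L), \mathbb{G}_a)$, since $\pi_q(L)$ need not agree with $\varprojlim_j \pi_q(M_j)$: they differ in general by a $R^1\!\varprojlim_j \pi_{q+1}(M_j)$ contribution coming from the Milnor-type short exact sequence for cofiltered limits of connective unipotent $\mathbb{Z}$-modules. To handle this, I will apply \cref{useful} once more, now to the inverse system $\{\pi_{q+1}(M_j)\}_j$, and chase the long exact sequence obtained by applying $\mathrm{Ext}^p(-, \mathbb{G}_a)$ to the Milnor extension $0 \to R^1\!\varprojlim_j \pi_{q+1}(M_j) \to \pi_q(L) \to \varprojlim_j \pi_q(M_j) \to 0$. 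Combining this with the convergence of both spectral sequences and the exactness of filtered colimits identifies the two $E_\infty$-pages degreewise, giving the desired isomorphism on $\mathrm{Ext}^i$.
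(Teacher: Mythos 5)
There is a genuine gap at the comparison step you yourself flag as the main obstacle. Your plan is to match the $E_2$-pages of the Postnikov spectral sequences, i.e.\ to identify $\varinjlim_j \mathrm{Ext}^p(\pi_q(M_j),\mathbb{G}_a)$ (which \cref{useful} converts into $\mathrm{Ext}^p(\varprojlim_j\pi_q(M_j),\mathbb{G}_a)$) with $\mathrm{Ext}^p(\pi_q(L),\mathbb{G}_a)$. But $\pi_q(L)$ and $\varprojlim_j\pi_q(M_j)$ differ by derived-limit terms, and your proposed fix does not control them: (i) the Milnor-type short exact sequence with a single $R^1\varprojlim$ correction only exists when the index category has derived-limit dimension $\leq 1$ (e.g.\ countable towers), whereas the lemma is stated, and later used in \cref{filtrationdeff} and \cref{descriptionofgradedpiece}, for general cofiltered systems such as the poset of closed subschemes, where all $R^s\varprojlim$ can contribute; (ii) even granting the short exact sequence, applying \cref{useful} to the system $\{\pi_{q+1}(M_j)\}$ tells you nothing about $\mathrm{Ext}^*\bigl(R^1\varprojlim_j\pi_{q+1}(M_j),\mathbb{G}_a\bigr)$, so the long exact sequence chase cannot close. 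In fact the two $E_2$-pages need not be isomorphic at all; only the abutments agree, so a page-by-page comparison is the wrong invariant to aim at. A secondary issue is convergence: since the relevant $\mathrm{Ext}$'s are computed in the ambient category of sheaves of $\mathbb{Z}$-module spectra, where $\mathrm{Ext}^p(-,\mathbb{G}_a)$ does not vanish for large $p$ (Breen), the Postnikov spectral sequence is only conditionally convergent, and exchanging it with the filtered colimit over $j$ requires justification you do not give.

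The paper avoids the homotopy-group level entirely and works directly with mapping spectra: the Breen--Deligne resolution is applied \emph{in animated form to the $\mathbb{Z}$-module objects $M_j$ themselves}, exactly as in \cref{bd1}, expressing $R\mathrm{Hom}(M,\mathbb{G}_a)$ through terms built from rings of functions on the underlying affine stacks; since functions on the inverse limit are the filtered colimit of functions, the colimit passes through at the level of the whole spectrum, with no $\lim^s$ issues. The dévissage in $G$ (from $\mathbb{G}_a$ to a general finite type commutative unipotent $G$ via the Verschiebung filtration and the presentation by finite products of $\mathbb{G}_a$) is then exactly as in \cref{useful}, and that part of your proposal does agree with the paper. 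To repair your argument you would either need to prove vanishing of the relevant sheaf-theoretic derived limits (not available from the cited lemmas), or abandon the $E_2$-comparison and argue at the level of total mapping spectra as the paper does.
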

\begin{proof}
The case when $G = \mathbb{G}_a$ follows in a way similar to \cref{bd1} by applying the Breen--Deligne resolution in an animated form. The case of a general finite type unipotent group scheme $G$ is deduced in a way similar to the proof of \cref{useful}.
\end{proof}{}

\begin{lemma}\label{bd2}
Let $M$ be a finite group scheme over $k.$ Then for all $i \ge 0,$ the $k$-vector space $\mathrm{Ext}^i (M,\mathbb{G}_a)$ is finite-dimensional.    
\end{lemma}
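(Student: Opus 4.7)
The plan is to apply the Breen--Deligne resolution in exactly the same manner as in the proof of \cref{bd1}, where the key input is now the finiteness of the coordinate ring of $M$.

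Since $M$ is a finite group scheme over $k$, its coordinate ring $\mathcal{O}(M)$ is a finite-dimensional $k$-vector space. By the Breen--Deligne resolution recalled in \cref{breendeligne}, the mapping spectrum $R\mathrm{Hom}(M, \mathbb{G}_a)$ is computed by the complex
$$ \mathcal{O}(M) \to \mathcal{O}(M)^{\otimes 2} \to \cdots \to \bigoplus_{j=1}^{n_i} \mathcal{O}(M)^{\otimes r_{i,j}} \to \cdots $$
in which each $n_i$ is finite. Since finite-dimensionality over $k$ is preserved under tensor products and finite direct sums, every term of this complex is finite-dimensional over $k$.

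Taking cohomology, we conclude that $\mathrm{Ext}^i(M, \mathbb{G}_a)$, being a subquotient of a finite-dimensional $k$-vector space, is itself finite-dimensional for every $i \ge 0$. No significant obstacle is expected: once the Breen--Deligne resolution is in hand, the argument is essentially a routine finite-dimensionality bookkeeping. The only subtle point worth remarking is that the resolution implicitly uses commutativity of $M$, which is safe in the present context as we are working within the category of commutative unipotent group schemes throughout.
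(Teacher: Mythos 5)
Your proof is correct and follows exactly the paper's argument: both invoke the Breen--Deligne complex \cref{breendeligne} and conclude from finite-dimensionality of $\mathcal{O}(M)$ that each term, and hence each $\mathrm{Ext}^i(M,\mathbb{G}_a)$, is finite-dimensional over $k$.
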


\begin{proof}
 By the Breen--Deligne resolution, $\mathrm{Ext}^i (M, \mathbb{G}_a)$ is the $i$-th cohomology of the complex \cref{breendeligne}. Since $\mathcal{O}(M)$ is a finite dimensional $k$-algebra, we obtain the desired claim.
\end{proof}{}
Recall that a left $ k_\sigma[F] $-module $ M $ is \emph{torsion} if each $ m \in M $ is contained in a $ k_\sigma[F] $-submodule $ N_m $ so that $ N_m $ is finite-dimensional as a $ k $-vector space. 
\begin{lemma}\label{use1}
 Let $M$ be a profinite commutative unipotent group scheme over $k.$ Then $\mathrm{Ext}^i (M, \mathbb{G}_a)$ is a torsion $k_\sigma[F]$-module for each $i \ge 0.$ 
\end{lemma}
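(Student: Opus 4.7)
My plan is to reduce to the finite case by expressing $M$ as an inverse limit and using the Ext-exchange result already established.

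First, since $M$ is profinite unipotent, I would write $M \simeq \varprojlim_j M_j$ as an inverse limit of finite commutative unipotent group schemes over $k$. By \cref{bd1} applied to this system, there is a natural isomorphism
\[
\mathrm{Ext}^i(M, \mathbb{G}_a) \simeq \varinjlim_j \mathrm{Ext}^i(M_j, \mathbb{G}_a),
\]
and this isomorphism is equivariant for the $k_\sigma[F]$-action induced by Frobenius on the target $\mathbb{G}_a$, since the Breen--Deligne resolution employed in the proof of \cref{bd1} is natural. Thus it suffices to prove that $\mathrm{Ext}^i(M_j, \mathbb{G}_a)$ is a torsion $k_\sigma[F]$-module for each $j$, and that torsion is preserved under filtered colimits of $k_\sigma[F]$-modules.

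For the first point, by \cref{bd2}, each $\mathrm{Ext}^i(M_j, \mathbb{G}_a)$ is a finite-dimensional $k$-vector space. Any finite-dimensional $k$-vector space equipped with a $\sigma$-semilinear endomorphism (i.e.\ $F$) is automatically a torsion $k_\sigma[F]$-module: any single element generates a $k_\sigma[F]$-submodule lying inside a finite-dimensional $k$-subspace (in fact the ambient module itself works). For the second point, if $N = \varinjlim_j N_j$ is a filtered colimit of torsion $k_\sigma[F]$-modules along $k_\sigma[F]$-linear transition maps, then any element $n \in N$ lifts to some $n_j \in N_j$, which by hypothesis lies in a finite-dimensional $k_\sigma[F]$-submodule of $N_j$; the image of this submodule in $N$ is then a finite-dimensional $k_\sigma[F]$-submodule containing $n$.

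Combining these observations yields the claim. No step appears to be a serious obstacle: the content is packaged in the already-proven \cref{bd1} and \cref{bd2}, and what remains is essentially bookkeeping about the $k_\sigma[F]$-structure and closure of torsion under filtered colimits.
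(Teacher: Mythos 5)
Your proof is correct and follows essentially the same route as the paper, whose proof is simply the one-line deduction from \cref{bd1} and \cref{bd2}; you have just spelled out the bookkeeping (profinite $M$ as an inverse limit of finite group schemes, torsion-ness of finite-dimensional $k_\sigma[F]$-modules, and closure of torsion under filtered colimits) that the paper leaves implicit.
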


\begin{proof}
Follows from \cref{bd1}  and \cref{bd2}. 
\end{proof}

\begin{proposition}[Profiniteness] \label{profinitenessthm}Let $X$ be a stack over $k$ such that $H^i (X, \mathcal{O})$ is a torsion $k_\sigma [F]$-module for each $i \ge 0.$ Then $H^\uu_{i}(X)$ is a profinite unipotent commutative group scheme for each $i \ge 0.$ 
    
\end{proposition}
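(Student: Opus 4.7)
The plan is to proceed by strong induction on $i$, applying the Postnikov/homology-filtration spectral sequence from \cref{filtrationhomology} with $G = \mathbb{G}_a$:
\[
E_2^{p,q} = \mathrm{Ext}^p(H^\uu_q(X), \mathbb{G}_a) \implies H^{p+q}(X, \mathcal{O}).
\]
The whole strategy rests on the following characterization, which I would establish as a preliminary step: a commutative unipotent group scheme $M$ over $k$ is profinite if and only if $\mathrm{Hom}(M, \mathbb{G}_a)$ is a torsion $k_\sigma[F]$-module. The forward implication is a specialization of \cref{use1}. For the converse I would appeal to the Dieudonné theory of commutative unipotent group schemes: the condition that $\mathrm{Hom}(M, \mathbb{G}_a)$ be torsion translates into the Dieudonné module of $M$ being assembled from finite-dimensional $F$-stable pieces, each of which corresponds to a finite unipotent quotient; dualizing produces an expression of $M$ as a cofiltered limit of finite unipotent group schemes.

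Granting this characterization, the base case $i=0$ can be handled directly using the Witt-vector calculation along the lines of \cref{rmk:zeroth_unipotent_homology_counts_components}, combined with the hypothesis that $\colim_n H^0(X, W_n)$ is torsion $k_\sigma[F]$. For the inductive step, assume $H^\uu_j(X)$ is profinite for all $j<i$. By \cref{useful} every $E_2^{p,j}$ with $j<i$ and $p\ge 0$ is then torsion $k_\sigma[F]$. The outgoing differentials $d_r\colon E_r^{0,i}\to E_r^{r,i-r+1}$ for $r\ge 2$ land in subquotients of $E_2^{r,i-r+1}$ with $i-r+1<i$, and hence in torsion $k_\sigma[F]$-modules; there are no nonzero incoming differentials into $E_r^{0,i}$. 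Therefore the quotient $E_2^{0,i}/E_\infty^{0,i}$ is torsion $k_\sigma[F]$. On the other hand, the edge map identifies $E_\infty^{0,i}$ with a quotient of $H^i(X,\mathcal{O})$, which is torsion $k_\sigma[F]$ by hypothesis. Since torsion $k_\sigma[F]$-modules form a Serre subcategory, $E_2^{0,i}=\mathrm{Hom}(H^\uu_i(X),\mathbb{G}_a)$ is torsion $k_\sigma[F]$, so $H^\uu_i(X)$ is profinite by the characterization, closing the induction.

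The principal obstacle is establishing the nontrivial direction of the characterization (torsion $\mathrm{Hom}(M,\mathbb{G}_a)$ implies profinite $M$); this is the step that genuinely requires structural input from the theory of commutative unipotent group schemes and Dieudonné modules. A secondary technical point is convergence of the spectral sequence: since $H^\uu_*(X)$ need not be bounded above, the filtration on each $H^{p+q}(X,\mathcal{O})$ by Postnikov stages must be shown to be exhaustive and Hausdorff in a sense sufficient to support the subquotient analysis above. This should follow from standard properties of the Postnikov tower in $(\mathbb{Z}\mathrm{-Mod}^{\mathrm{U}-}_k)_{\ge 0}$, but deserves verification. I would carry out the reduction to the cohomologically connected case (or otherwise handle a nontrivial $H^0(X,\mathcal{O})$) at the base case before invoking Witt-vector calculations.
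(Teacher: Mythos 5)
Your argument is correct and is essentially the paper's own proof: the paper runs the same spectral sequence \cref{ss1} with $G=\mathbb{G}_a$, performs the same induction on $i$ via the corner exact sequences $0 \to E_{r+1}^{0,i} \to E_r^{0,i} \to E_r^{r,i-r+1}$ (your "extension by torsion pieces" packaging is just its descending induction on $r$), uses \cref{use1} to see that $\mathrm{Ext}^r$ of the lower, already-profinite homology groups is torsion, and likewise treats the equivalence "$\mathrm{Hom}(M,\mathbb{G}_a)$ torsion if and only if $M$ profinite" as known rather than reproving it. The only real divergence is your base case: no Witt-vector detour or reduction to the cohomologically connected case is needed (nor available, since connectedness is not assumed) because for $i=0$ the corner argument already gives $E_2^{0,0}=E_\infty^{0,0}$, a subquotient of $H^0(X,\mathcal{O})$, hence torsion.
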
{}

\begin{proof}
 We use \cref{ss1} when $G= \mathbb{G}_a.$ Since $H^i (X, \mathcal{O})$ is a torsion $k_\sigma[F]$-module (and the filtration is compatible with the Frobenius), it follows that $E^{0,i}_\infty$ is naturally a torsion $k_{\sigma}[F]$-module. Our goal is to prove that $E^{0,i}_2 = \mathrm{Hom}\left(H^\uu_{i}(X), \mathbb{G}_a\right)$ is a torsion $k_\sigma[F]$-module. The claim is clear from the spectral sequence \cref{ss1} when $i=0.$ We will prove by descending induction on $r$ and ascending induction on $i$ that $E_r^{0,i}$ is a torsion $k_\sigma[F]$-module for $r \ge 2, i \ge 0.$ For a fixed $i>0,$ note that $E^{0,i}_{i+2} = E^{0,i}_\infty,$ and therefore, is a torsion $k_\sigma[F]$-module. Note that we have an exact sequence 
\begin{equation}\label{ssexact}
0 \to E_{r+1}^{0,i}  \to E_{r}^{0,i} \to E_{r}^{r, i-r+1}
\end{equation}for all $r \ge 2.$ 
Since $i-r+1 <i,$ by induction, $E_2^{0, i-r+1}$ is a torsion $k_\sigma[F]$-module, or equivalently, $H^\uu_{i-r+1}(X)$ is a profinite group scheme. By \cref{use}, $E^{r, i-r+1}_2 = \mathrm{Ext}^r (H^\uu_{i-r+1}(X), \mathbb{G}_a)$ is a torsion $k_\sigma[F]$-module. Therefore, $E^{r, i-r+1}_r$ is also a torsion $k_\sigma[F]$-module. By descending induction on $r$, we can suppose that $E^{0,i}_{r+1}$ is a torsion $k_\sigma[F]$-module. The exact sequence \cref{ssexact} therefore implies that $E_r^{0,i}$ is a torsion $k_\sigma[F]$-module. Therefore, by induction, we obtain the desired claim that $E_2^{0,i}$ is a torsion $k_\sigma[F]$-module. This finishes the proof.
\end{proof}

\begin{definition}[Unipotent local homology]
\label{defn:unipotent_local_homology}
Let $X$ be a scheme over $k.$ Let $Y$ be a closed subscheme of $X$ and let $U := X - Y.$ We define $$H^\uu_{*,Y}(X):= \mathrm{cofib}\left(H^\uu_*(U) \to H^\uu_*(X)\right),$$ where the cofiber is taken in the stable $\infty$-category $\mathbb{Z}\mathrm{-Mod}^\U_k.$
It follows that $H^\uu_{*, Y}(X) \in (\mathbb{Z}\mathrm{-Mod}^\U_k)_{\ge 0};$ we will call this object unipotent local homology. 
\end{definition}{}

The following definition is classical.

\begin{definition}[Local cohomology]Let $X$ be a scheme over $k.$ Let $Y$ be a closed subscheme of $X$ and let $U := X - Y.$ Let $G$ be a commutative unipotent group scheme over $k.$ One defines 
$$R\Gamma_Y(X, G):=\mathrm{fib}\left(R\Gamma(X, G) \to R\Gamma(U, G)\right).$$    
\end{definition}

\begin{proposition}\label{dual2}
 Let $X$ be a scheme over $k.$ Let $Y$ be a closed subscheme of $X$ and let $U := X - Y.$ Let $G$ be a commutative unipotent group scheme over $k.$ Then we have a natural isomorphism

$$R\mathrm{Hom}\left(H^\uu_{*, Y}(X), G\right) \simeq R\Gamma_Y (X, G). $$
\end{proposition}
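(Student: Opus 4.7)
The plan is to reduce the statement to \cref{dual} by exploiting the compatibility between the cofiber sequence defining $H^\uu_{*,Y}(X)$ and the fiber sequence defining $R\Gamma_Y(X,G)$. Specifically, applying the exact functor $R\mathrm{Hom}(-,G)\colon (\mathbb{Z}\mathrm{-Mod}^\U_k)^{\op} \to \mathrm{Sp}$ to the defining cofiber sequence
$$H^\uu_*(U) \longrightarrow H^\uu_*(X) \longrightarrow H^\uu_{*,Y}(X)$$
yields a fiber sequence of spectra
$$R\mathrm{Hom}(H^\uu_{*,Y}(X),G) \longrightarrow R\mathrm{Hom}(H^\uu_*(X),G) \longrightarrow R\mathrm{Hom}(H^\uu_*(U),G).$$

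First I would invoke \cref{dual} to identify the latter two terms with $R\Gamma_{fl}(X,G)$ and $R\Gamma_{fl}(U,G)$ respectively. The key point to verify is that the natural isomorphism in \cref{dual} is functorial in the stack, so that the right-hand map in the above fiber sequence corresponds to the restriction map $R\Gamma(X,G) \to R\Gamma(U,G)$ induced by the open immersion $U \hookrightarrow X$. This naturality is immediate from the adjunction $(\Sigma^\infty_+ , \Omega^\infty)$ used in the proof of \cref{dual}, since the construction factors through the functor $(\mathbb{Z}\mathrm{-Mod}^{\U-}_k)_{\geq 0} \to \mathrm{St}_k$, whose left adjoint is $H^\uu_*$ and all of this is natural in morphisms of stacks.

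Combining these identifications produces a fiber sequence
$$R\mathrm{Hom}(H^\uu_{*,Y}(X),G) \longrightarrow R\Gamma_{fl}(X,G) \longrightarrow R\Gamma_{fl}(U,G),$$
whose fiber is by definition $R\Gamma_Y(X,G)$. This gives the desired natural isomorphism. The main (and only) subtlety is checking that the isomorphism of \cref{dual} is natural in the stack variable with respect to the restriction map, but since \cref{dual} arises from a genuine adjunction this is formal.
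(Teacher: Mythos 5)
Your proposal is correct and is exactly the argument the paper intends: the paper's proof of this proposition is simply "Follows from \cref{dual}," which unwinds to your argument of applying the exact functor $R\mathrm{Hom}(-,G)$ to the defining cofiber sequence $H^\uu_*(U) \to H^\uu_*(X) \to H^\uu_{*,Y}(X)$ and using the naturality of the identification in \cref{dual} to recognize the resulting fiber as $R\Gamma_Y(X,G)$. Your attention to the naturality point is appropriate and, as you note, it is formal because the equivalence of \cref{dual} arises from the adjunction defining $H^\uu_*$.
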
{}

\begin{proof}
 Follows from \cref{dual}.   
\end{proof}

\begin{remark}\label{eat}
Using \cref{dual2} and the Postnikov filtration on $H^\uu_{*,Y} (X),$ one can obtain a new filtration on $R\Gamma_{Y} (X, G),$ which we call the ``local homology filtration". This gives a (cohomological) spectral sequence 
\begin{equation}\label{ss2}
 E_2^{p,q}:= \mathrm{Ext}^p \left(H^\uu_{q,Y}(X), G\right) \implies H^{p+q}_{Y}(X, G).   
\end{equation}{}

\end{remark}{}

To prove certain standard properties about unipotent local homology, the following results will be useful.

\begin{lemma}\label{uselemma}
 Let $f: P \to Q$ be a morphism in $\left(\mathbb{Z}-\mathrm{Mod}^\mathrm{U}_k\right)_{\ge 0}.$ Suppose that for every commutative unipotent group scheme $G$, the induced map
$$R\mathrm{Hom}(Q, G) \to R\mathrm{Hom}(P, G)$$ is an isomorphism. Then $f$ is an isomorphism.
\end{lemma}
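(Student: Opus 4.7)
The plan is to show that the cofiber $C := \mathrm{cofib}(f)$ vanishes; since $\mathbb{Z}-\mathrm{Mod}^\mathrm{U}_k$ is stable, this forces $f$ to be an equivalence. Applying $R\mathrm{Hom}(-, G)$ to the cofiber sequence $P \xrightarrow{f} Q \to C$ yields a fiber sequence of mapping spectra, so the hypothesis on $f$ gives $R\mathrm{Hom}(C, G) \simeq 0$ for every commutative unipotent group scheme $G$ over $k$ (viewed as a unipotent $\mathbb{Z}$-module via \cref{ex:eilenberg_maclane_unipotent_spectra}). Moreover, since $P$ and $Q$ are connective with respect to the $t$-structure of \cref{ts}, so is $C$.

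Next, I would inductively show $\pi_n(C) = 0$ for all $n \geq 0$. By the $\mathbb{Z}$-module analogue of \cref{prc}, each $\pi_n(C)$ lies in the heart of the $t$-structure and is in particular a commutative unipotent group scheme. Assuming $\pi_i(C) = 0$ for $i < n$, the object $C$ is $n$-connective, and the canonical Postnikov truncation provides a map $C \to \tau_{\leq n}C \simeq \pi_n(C)[n]$ that induces the identity on $\pi_n$. But the hypothesis applied to $G = \pi_n(C)$ gives $R\mathrm{Hom}(C, \pi_n(C)[n]) \simeq R\mathrm{Hom}(C, \pi_n(C))[n] \simeq 0$, whence $\mathrm{Hom}(C, \pi_n(C)[n]) = 0$. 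The truncation map must therefore be null, forcing $\pi_n(C) = 0$.

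Having shown $\pi_n(C) = 0$ for all $n \geq 0$, I would conclude $C \simeq 0$ by invoking left-separatedness of the $t$-structure, inherited from the analogous property for unipotent spectra established in \cref{tstruc}. The main (minor) obstacle is verifying that the $t$-structure on $\mathbb{Z}-\mathrm{Mod}^{\mathrm{U-}}_k$ of \cref{ts} has both the expected heart and left-separatedness; both follow immediately from the corresponding statements on underlying unipotent spectra, since the forgetful functor $\mathbb{Z}-\mathrm{Mod}^{\mathrm{U-}}_k \to \mathrm{Sp}^{\mathrm{U-}}_k$ is conservative and $t$-exact, and commutative unipotent group schemes carry a unique $\mathbb{Z}$-module structure as abelian group objects.
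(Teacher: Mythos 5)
Your proof is correct and follows essentially the same route as the paper: both pass to the cofiber, kill its homotopy group schemes one degree at a time by testing against $G = \pi_n$ of the cofiber (using that these are commutative unipotent group schemes by \cref{rep}), and conclude via left-separatedness/hypercompleteness of the $t$-structure. The only difference is cosmetic: the paper phrases the induction as repeatedly shifting ($Q[-1]$, $Q[-2]$, \dots) rather than via the truncation map $C \to \pi_n(C)[n]$.
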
{}

\begin{proof}
 By passing to the cofiber of $P\to Q$, we can without loss of generality assume that $P =  0.$ Then by hypothesis, $R\mathrm{Hom}(Q, G)=0$ for every unipotent group scheme $G.$ Note that $\Omega^\infty Q$, being an affine stack, is hypercomplete. Note that $\pi_i (Q)$ is representable by a commutative unipotent affine group scheme for all $i \ge 0.$ Since $\mathrm{Map}(Q, G)=0,$ setting $G = \pi_0(Q)$ shows that $Q[-1] \in \left(\mathbb{Z}-\mathrm{Mod}^\mathrm{U}_k\right)_{\ge 0}$. Repeating this argument with $Q'= Q[-1]$ shows that $Q[-2] \in \left(\mathbb{Z}-\mathrm{Mod}^\mathrm{U}_k\right)_{\ge 0}.$ Inductively, we obtain that $Q[-n]$ is connective for all $n$; or in other words, $Q$ is $\infty$-connective. 
 Since the t-structure on $ \mathbb{Z}-\mathrm{Mod}^{\mathrm{U}-}_k $ is left-separated by \cref{tstruc}, it follows that $ Q \simeq 0 $. 
 This finishes the proof.
\end{proof}

\begin{proposition}\label{chekisom}
   Let $f: P \to Q$ be a morphism in $\left(\mathbb{Z}-\mathrm{Mod}^\mathrm{U}_k\right)_{\ge 0}.$ Suppose that the induced map
$$R\mathrm{Hom}(Q, \mathbb{G}_a) \to R\mathrm{Hom}(P, \mathbb{G}_a)$$ is an isomorphism. Then $f$ is an isomorphism.  
\end{proposition}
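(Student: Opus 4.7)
The plan is to reduce the statement to \cref{uselemma}. That lemma tells us that a morphism $f \colon P \to Q$ in $\left(\mathbb{Z}\mathrm{-Mod}^\mathrm{U}_k\right)_{\ge 0}$ is an isomorphism provided that $R\mathrm{Hom}(f, G) \colon R\mathrm{Hom}(Q, G) \to R\mathrm{Hom}(P, G)$ is an isomorphism for \emph{every} commutative unipotent group scheme $G$ over $k$. Thus it suffices to upgrade the hypothesis from $G = \mathbb{G}_a$ to arbitrary commutative unipotent $G$.

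Let $\mathcal{C}$ denote the class of commutative unipotent group schemes $G$ such that $R\mathrm{Hom}(f,G)$ is an isomorphism. By hypothesis $\mathbb{G}_a \in \mathcal{C}$. Since the functor $R\mathrm{Hom}(-,-)$ preserves limits in the second variable, and the Eilenberg--MacLane embedding from commutative unipotent group schemes into $\mathbb{Z}$-modules in unipotent spectra preserves small limits (which in turn follows from the $t$-structure of \cref{ts} and the fact that limits of unipotent commutative group schemes are computed at the level of the underlying group functors), $\mathcal{C}$ is closed under arbitrary small cofiltered limits. Moreover, given a short exact sequence $0 \to G_1 \to G_2 \to G_3 \to 0$ of commutative unipotent group schemes, the resulting fiber sequence of $R\mathrm{Hom}$-spectra shows that $\mathcal{C}$ is closed under extensions (in the 2-out-of-3 sense).

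It remains to argue that every commutative unipotent group scheme $G$ can be built from $\mathbb{G}_a$ via these two operations. Using the structural facts also invoked in the proof of \cref{useful}: first, any finite-type commutative unipotent group scheme $G$ with trivial Verschiebung $V_G$ fits into an exact sequence
\[
0 \to G \to \textstyle\prod_I \mathbb{G}_a \to \prod_J \mathbb{G}_a \to 0
\]
with $I,J$ finite sets, so $G \in \mathcal{C}$ by limit closure (for the finite products) and extension closure (for the kernel). Second, for a general finite-type commutative unipotent group scheme $G$, the Verschiebung is nilpotent, so the filtration $0 = V_G^n G \subset \cdots \subset V_G G \subset G$ together with descending induction on $n$ (using that each successive quotient has trivial $V$) shows that $G \in \mathcal{C}$. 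Finally, an arbitrary commutative unipotent affine group scheme over $k$ is a cofiltered limit of its finite-type quotients, so again by limit closure we conclude $\mathcal{C}$ contains all commutative unipotent group schemes. Invoking \cref{uselemma} finishes the argument.

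The main potential obstacle is to confirm that these limits taken inside the category of commutative unipotent group schemes agree with the corresponding limits in $\mathbb{Z}\mathrm{-Mod}^{\mathrm{U}-}_k$, so that the limit-closure of $\mathcal{C}$ actually applies in the form needed. This is ensured by the left-separatedness of the $t$-structure (\cref{tstruc}) and the fact that the heart embeds in a limit-preserving way, which we will verify at the outset of the proof.
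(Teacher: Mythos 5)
Your overall strategy --- reduce to \cref{uselemma} and bootstrap from $\mathbb{G}_a$ to all commutative unipotent $G$ --- is the same as the paper's, and your treatment of finite-type groups (the sequence $0 \to G \to \prod_I \mathbb{G}_a \to \prod_J \mathbb{G}_a \to 0$ when $V_G = 0$, then dévissage along the $V$-adic filtration) is sound. The gap is in your final step, where you pass from finite-type groups to a general commutative unipotent $G$ by writing $G \simeq \varprojlim_i G_i$ over all finite-type quotients and invoking closure of $\mathcal{C}$ under arbitrary small cofiltered limits. The functor $R\mathrm{Hom}(P,-)$ commutes only with limits computed in the ambient stable category $\mathbb{Z}\mathrm{-Mod}^{\mathrm{U}}_k$ (equivalently, in $\mathrm{Sp}(\mathrm{St}_k)$), and there the limit of the Eilenberg--MacLane objects $G_i$ is the derived limit $R\varprojlim_i G_i$, whose homotopy sheaves in negative degrees are the sheafified higher derived functors ${\varprojlim}^s_i G_i$. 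Your claim that ``the Eilenberg--MacLane embedding preserves small limits'' is precisely the assertion that these error terms vanish, and your proposed justification does not address it: left-separatedness of the $t$-structure controls infinitely \emph{connective} objects, whereas the discrepancy $\mathrm{fib}\left(G \to R\varprojlim_i G_i\right)$ is concentrated in degrees $\leq -1$. For a general (possibly uncountable) cofiltered system of finite-type quotients there is no evident vanishing of these higher derived limits in the fpqc topos, so the argument does not close as written.

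The paper avoids this issue by never reducing to finite type. The hypothesis for $\mathbb{G}_a$ upgrades for free to arbitrary products $\mathbb{G}_a^I$ (these are honest group schemes, so no derived-limit subtlety arises), any $G$ with $V_G = 0$ --- finite type or not --- then sits in a sequence $0 \to G \to \mathbb{G}_a^I \to \mathbb{G}_a^J \to 0$ with possibly infinite $I,J$, and after the $V$-adic dévissage the only limit that must be inverted is the countable tower $G \simeq \varprojlim_n G/V_G^n$, whose transition maps are affine faithfully flat surjections, so its derived limit is under control. If you wish to keep your reduction to finite type, you must actually prove $R\mathrm{Hom}(P, G) \simeq \varprojlim_i R\mathrm{Hom}(P, G_i)$ for the system of all finite-type quotients (i.e., the vanishing of the higher ${\varprojlim}^s$ sheaves), which your outline does not supply; the simpler repair is to follow the paper and allow infinite products of $\mathbb{G}_a$ so that only an $\mathbb{N}$-indexed tower of faithfully flat surjections ever appears.
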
{}

\begin{proof}
By \cref{uselemma}, it is enough to show that for every commutative unipotent group scheme $G$, the induced map
$$R\mathrm{Hom}(Q, G) \to R\mathrm{Hom}(P, G)$$is an isomorphism. By our hypothesis, the above map is an isomorphism when $G= \mathbb{G}_a^I$, where $I$ is some index set. If the Verschiebung $V_G$ on $G$ is zero, then there is a fiber sequence
$G \to \mathbb{G}_a^I \to \mathbb{G}_a^J.$ Thus the map is an isomorphism when $V_G=0.$ Using induction and arguing using the filtration induced by $V_G$, the map is an isomorphism for $G/V_G^n.$ Since $G$ is unipotent, $G \simeq \varprojlim_n G/V_G^n.$ Thus the map is an isomorphism for any commutative unipotent group scheme $G.$ This finishes the proof.
\end{proof}{}

\subsection{Recognition theorem for unipotent spectra} \label{subsection:recognition_thm}
Let $ k $ be a field. 
In \cite[\S4.2]{Toee23}, To\"en shows that the category of $ \mathbb{Z} $-modules in unipotent spectra over $ k $ is equivalent to modules over the endomorphism ring spectrum of $ \mathbb{G}_a $; this may be regarded as a variation on Dieudonn\'e theory for unipotent group schemes (see \emph{loc. cit.} for subtleties). 
Let $\mathrm{Sp}^\mathrm{U-}_k$ denote the category of bounded below unipotent spectra over $k$, which is a stable $\infty$-category. 
Consider the object $ \mathbb{G}_a \in \mathrm{Sp}^\mathrm{U-}_k$ (\cref{introex1}). 
Then the endomorphism spectrum $$R:=\mathrm{End}_{\mathrm{Sp}^\mathrm{U-}_k}(\mathbb{G}_a)$$ can naturally be viewed as an $\mathbb{E}_1$-ring. 
For any $E \in \mathrm{Sp}^\mathrm{U-}_k,$ the mapping spectrum denoted by $R\mathrm{Hom}(E, \mathbb{G}_a)$ can naturally be viewed as a right module over $R$. The assignment $E \mapsto R\mathrm{Hom}(E, \mathbb{G}_a)$ promotes to a functor 
$$M: \mathrm{Sp}^\mathrm{U-}_k \to \mathrm{RMod}_R^{\mathrm{op}},$$ where the right-hand side denotes modules in the category of spectra. 
Our goal in this subsection is to prove the following.

\begin{proposition}\label{recog}
The functor $M: \mathrm{Sp}^\mathrm{U-}_k \to \mathrm{RMod}_R^{\mathrm{op}}$ constructed above is fully faithful.  
\end{proposition}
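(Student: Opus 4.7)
My plan is to prove, for every pair $E,F \in \mathrm{Sp}^{\mathrm{U-}}_k$, that the natural comparison
\[
\eta_{E,F} \colon R\mathrm{Hom}_{\mathrm{Sp}^{\mathrm{U-}}_k}(E,F) \longrightarrow R\mathrm{Hom}_R\bigl(M(F), M(E)\bigr)
\]
is an equivalence of spectra. The base case $F = \mathbb{G}_a[n]$ is immediate: $M(F) \simeq R[-n]$ as a right $R$-module, so the right-hand side collapses to $M(E)[n] \simeq R\mathrm{Hom}(E, \mathbb{G}_a[n]) = R\mathrm{Hom}(E,F)$ and $\eta_{E,F}$ is essentially the identity.

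I now fix $E$ and let $\mathcal{D}_E \subseteq \mathrm{Sp}^{\mathrm{U-}}_k$ be the full subcategory of $F$ for which $\eta_{E,F}$ is an equivalence. Both $R\mathrm{Hom}(E,-)$ and $R\mathrm{Hom}_R(M(-), M(E))$ are exact functors of $F$, so $\mathcal{D}_E$ is closed under fibers and cofibers, and contains every shift of $\mathbb{G}_a$. Using the $t$-structure of \cref{tstruc}, an arbitrary $F \in \mathrm{Sp}^{\mathrm{U-}}_k$ is the Postnikov limit $F \simeq \varprojlim_n \tau_{\le n} F$, where each $\tau_{\le n} F$ is an iterated extension of heart objects $\pi_i(F)[i]$. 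So it will suffice to show (a) every commutative unipotent affine group scheme $G$ lies in $\mathcal{D}_E$, and (b) $\mathcal{D}_E$ is closed under the relevant Postnikov-type inverse limits.

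For (a), I follow the strategy from the proof of \cref{chekisom}: any unipotent commutative group scheme $G$ is the inverse limit $G = \varprojlim_n G/V^n G$, and each finite-Verschiebung quotient sits in a short exact sequence $0 \to G/V^n G \to \mathbb{G}_a^I \to \mathbb{G}_a^J \to 0$. By the base case and the stability of $\mathcal{D}_E$ under fibers, (a) reduces to the cases of arbitrary products $F = \mathbb{G}_a^I$, together with the same inverse-limit closure required for (b). For the product case one computes both sides directly: the left-hand side is $\prod_I M(E)$, and the right-hand side becomes $R\mathrm{Hom}_R(M(\mathbb{G}_a^I), M(E))$, which by the Breen--Deligne style computation underlying \cref{bd1} identifies $M(\mathbb{G}_a^I)$ with the appropriate $R$-module making the two sides agree.

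The main obstacle is exactly the closure under the Postnikov and Verschiebung inverse limits. The left-hand side always satisfies $R\mathrm{Hom}(E, \varprojlim F_i) \simeq \varprojlim R\mathrm{Hom}(E, F_i)$. For the right-hand side I need a spectrum-level strengthening of \cref{useful} and \cref{huseful}, asserting that $M$ converts such inverse limits in $\mathrm{Sp}^{\mathrm{U-}}_k$ into filtered colimits in $\mathrm{RMod}_R$; I expect this to follow by applying the Breen--Deligne resolution to the associated pro-system, exactly as in the proof of \cref{huseful}. Granting this, one has
\[
R\mathrm{Hom}_R\bigl(M(\varprojlim F_i), M(E)\bigr) \simeq R\mathrm{Hom}_R\bigl(\varinjlim M(F_i), M(E)\bigr) \simeq \varprojlim R\mathrm{Hom}_R\bigl(M(F_i), M(E)\bigr),
\]
matching the left-hand side and completing the induction.
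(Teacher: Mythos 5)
Your overall strategy coincides with the paper's: reduce to showing that the comparison is an equivalence on a class of objects from which everything is built under limits (shifts of $\mathbb{G}_a$, cf.\ \cref{genbylim}), and then show that $M$ converts the relevant cofiltered limits into filtered colimits so that the devissage goes through. The packaging differs only cosmetically: the paper restricts to connective objects, invokes the adjoint functor theorem to produce a right adjoint $D$ of $M'$, and checks that the unit $\mathrm{id}\to D\circ M'$ is an equivalence on $\mathbb{G}_a$, whereas you verify the bifunctorial comparison $R\mathrm{Hom}(E,F)\to R\mathrm{Hom}_R(M(F),M(E))$ directly; both arguments rest on exactly the same two inputs.

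The genuine gap is the step you defer: the ``spectrum-level strengthening of \cref{useful} and \cref{huseful}'' asserting $M(\varprojlim F_i)\simeq\varinjlim M(F_i)$ is precisely the cocompactness of $\mathbb{G}_a[n]$ in connective unipotent spectra (\cref{cocompacthard}, \cref{qin}), and this is the technical heart of the paper's proof, not a routine transcription of \cref{huseful}. The proof of \cref{bd1}/\cref{huseful} uses the (classical, resp.\ animated) Breen--Deligne resolution, which is available because the objects there are commutative group schemes, resp.\ $\mathbb{Z}$-module objects; in your induction the terms $F_i$ (truncations, Verschiebung quotients, finite products) are merely unipotent \emph{spectra}, and there is no $\mathbb{Z}$-linear Breen--Deligne resolution of a bare spectrum object to apply ``exactly as in the proof of \cref{huseful}.'' The paper has to first construct a spectral Breen--Deligne resolution (\cref{breendel}) --- a functorial exhaustive filtration of the identity on connective spectra whose graded pieces are finite sums of $\Sigma^j\Sigma^\infty_+\prod\Omega^\infty(-)$ --- and then run the two-step induction of \cref{cocompacthard}, using that $K(\mathbb{G}_a,m)$ is cocompact in affine stacks, to get the commutation with cofiltered limits (and only for uniformly bounded-below towers, a hypothesis your limits do satisfy but which should be stated). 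Note also that your ``base case'' $F=\mathbb{G}_a^I$ is not independent of this: identifying $M(\mathbb{G}_a^I)$ with $\bigoplus_I R$ already uses the same cocompactness applied to $\mathbb{G}_a^I\simeq\varprojlim_{S\subseteq I\ \mathrm{finite}}\mathbb{G}_a^S$, and \cref{bd1} by itself only computes Ext groups of fppf sheaves of abelian groups, not mapping spectra in $\mathrm{Sp}^{\mathrm{U-}}_k$. Until the cocompactness statement is actually proved, the proposal does not constitute a proof.
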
{}

In order to prove the above proposition, we will need some preparations. 

\begin{lemma}\label{genbylim}
Let $ \mathcal{C} \subseteq \left(\mathrm{Sp}^\mathrm{U-}_k\right)_{\geq 0} $ be the full subcategory of connective unipotent spectra over $ k $ generated under arbitrary limits by the collection $ \left\{\mathbb{G}_a[n]\right\}_{n \geq 0} $. 
Then $ \mathcal{C} = \left(\mathrm{Sp}^\mathrm{U-}_k\right)_{\geq 0} $. 
\end{lemma}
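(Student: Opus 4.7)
My plan is to argue in two stages: first reduce to showing every Eilenberg--MacLane object $G[n]$ (for $G$ a commutative unipotent affine group scheme and $n \geq 0$) lies in $\mathcal{C}$, then verify this via the structure theory of unipotent group schemes. For the reduction, I would use that the $t$-structure on $\mathrm{Sp}^{\mathrm{U}-}_k$ is left-separated by \cref{tstruc}. Combined with a Milnor $\varprojlim^{1}$ argument, which is immediate here since for each fixed $i$ the tower $\{\pi_i \tau_{\leq n} E\}_{n}$ stabilizes for $n \geq i$, this gives $E \simeq \varprojlim_n \tau_{\leq n} E$ in $\mathrm{Sp}^{\mathrm{U}-}_k$. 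Since $\mathcal{C}$ is closed under arbitrary limits, it suffices to show $\tau_{\leq n} E \in \mathcal{C}$ for each $n \geq 0$; this follows by induction on $n$ from the Postnikov fiber sequences $\pi_n(E)[n] \to \tau_{\leq n} E \to \tau_{\leq n-1} E$, with base case $\tau_{\leq 0} E = \pi_0(E)$. Both base case and induction step reduce to the Eilenberg--MacLane claim.

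For the Eilenberg--MacLane claim, fix $n \geq 0$ and proceed in three sub-steps, filtered by the Verschiebung $V$. First, if $V$ vanishes on $G$, then $\mathrm{Hom}(G, \mathbb{G}_a)$ is a left $k_\sigma[F]$-module under the classification of such groups used in the proof of \cref{useful}; since $k_\sigma[F]$ is a skew polynomial ring over a perfect field and hence has left global dimension at most one, this module admits a length-one free resolution, which dualizes via the contravariant equivalence to a short exact sequence $0 \to G \to \prod_I \mathbb{G}_a \to \prod_J \mathbb{G}_a \to 0$ of fppf sheaves. By \cref{beach1}, which ensures that limits in $(\mathrm{Sp}^{\mathrm{U}-}_k)_{\geq 0}$ are computed as in sheaves of spectra, this lifts to a fiber sequence after shifting by $n$, exhibiting $G[n]$ as $\mathrm{fib}\bigl(\prod_I \mathbb{G}_a[n] \to \prod_J \mathbb{G}_a[n]\bigr)$; hence $G[n] \in \mathcal{C}$. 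Second, if $V^r G = 0$ for some $r \geq 1$, I would induct on $r$ via the fiber sequence $(VG)[n] \to G[n] \to (G/VG)[n]$. Third, for arbitrary unipotent $G$ one has $G \simeq \varprojlim_r G/V^r G$ in the heart (and also in spectra, via Mittag-Leffler vanishing of $\varprojlim^{1}$), and hence $G[n] \simeq \varprojlim_r (G/V^r G)[n]$ in $\mathrm{Sp}^{\mathrm{U}-}_k$, reducing back to the previous sub-step.

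The principal obstacle I would anticipate is the first sub-step of the Eilenberg--MacLane case. It requires combining the contravariant classification of Verschiebung-trivial unipotent groups by left $k_\sigma[F]$-modules with the structural fact that $k_\sigma[F]$ has left global dimension at most one, and then carefully identifying the resulting short exact sequence of fppf sheaves with a fiber sequence in $(\mathrm{Sp}^{\mathrm{U}-}_k)_{\geq 0}$. \cref{beach1} is the crucial input enabling this last identification; once it is in place, the remainder of the argument is formal manipulation of Postnikov and Verschiebung-adic towers together with the limit-closure of $\mathcal{C}$.
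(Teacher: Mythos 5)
Your argument is correct and follows essentially the same route as the paper's proof: reduce via the Postnikov tower and dévissage to an Eilenberg--MacLane object in the heart, filter by the Verschiebung to reduce to the $V$-killed case, and conclude there via a resolution $0 \to G \to \prod_I \mathbb{G}_a \to \prod_J \mathbb{G}_a \to 0$ coming from the $k_\sigma[F]$-module classification. You simply make explicit some steps the paper leaves implicit (Postnikov convergence via left-separatedness, and the global-dimension-one justification for the two-term resolution).
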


\begin{proof}
    Take $ E \in \left(\mathrm{Sp}^\mathrm{U-}_k\right)_{\geq 0} $; we will show that $ E $ belongs to $ \mathcal{C} $. 
    By writing $E \simeq \varprojlim \tau_{\le n} E$, we may assume that $E$ is connective and bounded. Moreover, by devissage, we can assume that $E$ lies in the heart of the t-structure on $\mathrm{Sp}^\mathrm{U-}_k$ (\cref{tstruc}). 
    By \cref{prc}, such an $E$ arises from a commutative unipotent group scheme over $k$ as in \cref{introex1}. 
    Since one can write $E \simeq \varprojlim_n E/V_E^n$, where $V_E$ denotes the Verschiebung on $E$, we may further assume by devissage that $E$ is killed by $V_E$. 
    In that case, there is a short exact sequence $$0 \to E \to \prod_I \mathbb{G}_a \to \prod_J \mathbb{G}_a \to 0,$$ where $I$ and $J$ are (possibly infinite) index sets. This finishes the proof. 
\end{proof}{}

We will additionally need the following lemma, which can be thought of as a spectral refinement of the Breen--Deligne resolution.

\begin{lemma}[Spectral Breen--Deligne resolution]\label{breendel} There exists a sequence of functors $F_i: \mathrm{Sp}_{\ge 0} \to \mathrm{Sp}_{\ge 0}$ for $i \ge -1$ with natural transformations $$0= F_{-1} \to F_0 \to F_1 \to \ldots $$ such that we have \begin{enumerate}

\item $F_i/ F_{i-1}$ is naturally isomorphic to a finite direct sum of functors of the form $\Sigma^i \Sigma^\infty_+ \prod_{t=1}^{n_{d,i}} \Omega^\infty (\cdot)$ for some fixed $n_{d,i} \in \mathbb{N}.$ 
    \item $\varinjlim F_i \simeq \mathrm{id}$ as endofunctors of $\mathrm{Sp}_{\ge 0}.$
\end{enumerate}{}
    
\end{lemma}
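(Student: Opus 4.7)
The plan is to lift the classical Breen--Deligne resolution of abelian groups to connective spectra. Recall that Breen and Deligne constructed, functorially in an abelian group $A$, a free resolution
\[
\ldots \to \bigoplus_{j=1}^{n_i} \mathbb{Z}[A^{r_{i,j}}] \to \ldots \to \mathbb{Z}[A] \to A \to 0,
\]
whose differentials are $\mathbb{Z}$-linear combinations of morphisms induced by polynomial maps $A^m \to A^n$ built from the addition on $A$ and the zero section.

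The key observation is that every such polynomial operation lifts naturally to the infinite loop space $\Omega^\infty E$ for any $E \in \mathrm{Sp}_{\ge 0}$, by interpreting $+$ via the $\mathbb{E}_\infty$-group structure on $\Omega^\infty E$ and $0$ via the basepoint. Applying $\Sigma^\infty_+$ then yields natural maps between the spectra $\Sigma^\infty_+ (\Omega^\infty E)^{\times m}$ for varying $m$, and integer linear combinations of such make sense in the stable homotopy category. Doing this at each degree of the classical complex produces a functorial chain complex in connective spectra
\[
C_\bullet(E) \colon \ldots \to \bigoplus_j \Sigma^\infty_+ (\Omega^\infty E)^{r_{i,j}} \to \ldots \to \Sigma^\infty_+ \Omega^\infty E,
\]
together with a natural augmentation $C_0(E) \to E$ given by the counit of the $(\Sigma^\infty_+, \Omega^\infty)$-adjunction. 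I would then take $F_i(E)$ to be the $i$-th stage of the skeletal filtration on the geometric realization $|C_\bullet(E)|$ (via Dold--Kan); by construction, $F_i/F_{i-1}$ has the required shape $\Sigma^i \bigoplus_j \Sigma^\infty_+ (\Omega^\infty E)^{\times r_{i,j}}$, and the natural transformations $F_{i-1} \to F_i$ arise from the skeletal inclusions.

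The main obstacle is verifying $\varinjlim_i F_i \simeq \mathrm{id}$, i.e.\ that the augmented chain complex $C_\bullet(E) \to E$ exhibits $E$ as a resolution. On $\pi_0$ this reduces, via $\pi_0 \Sigma^\infty_+(\Omega^\infty E)^{\times r} \cong \mathbb{Z}[\pi_0(E)^r]$, to the classical Breen--Deligne resolution of $\pi_0(E)$, which is exact by construction. For higher homotopy I would proceed by devissage along the Postnikov tower of $E$: it suffices to treat each Eilenberg--MacLane stage $E = HA$ separately, where K\"unneth controls the homology $H_\ast((\Omega^\infty HA)^{\times r})$ and reduces convergence back to the classical Breen--Deligne statement applied to lower homotopical degrees. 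A cleaner alternative is to package the entire construction as a spectrum-level resolution from the outset (in the spirit of the ``derived Breen--Deligne'' resolutions appearing in Scholze's condensed mathematics notes), where convergence is built into the construction; either route keeps the heart of the argument confined to the abelian-group case.
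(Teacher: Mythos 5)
Your construction has two gaps, one technical and one essential. The technical one: after lifting the Breen--Deligne differentials to maps $\Sigma^\infty_+(\Omega^\infty E)^{\times r} \to \Sigma^\infty_+(\Omega^\infty E)^{\times s}$, you only have a complex in the homotopy category (the relations $d\circ d \simeq 0$ hold up to unspecified homotopies). To form $|C_\bullet(E)|$ and its skeletal filtration functorially in $E$ you need a coherent lift to a filtered (or simplicial/complex-valued) functor, and nothing in the classical 1-categorical resolution supplies that data. This might be repairable, but it is not the routine step you present it as.

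The essential gap is the convergence claim $\varinjlim_i F_i \simeq \mathrm{id}$, which is exactly the hard content of the lemma. Exactness of the classical complex was arranged degree by degree using features specific to the functors $A \mapsto \mathbb{Z}[A^r]$ on abelian groups: at each stage one chooses a functorial surjection from a finite sum of such terms onto the kernel of the previous differential. After replacing $\mathbb{Z}[A^r]$ by $\Sigma^\infty_+(\Omega^\infty E)^{\times r}$, the fiber of the lifted differential is a completely different functor, and there is no reason the same integer linear combinations of matrices cover it; so it is unjustified (and likely false) that the lifted complex resolves $E$. Your proposed verification does not close this: $E \mapsto \Sigma^\infty_+(\Omega^\infty E)^{\times r}$ is neither exact nor compatible with Postnikov truncation, so devissage along the Postnikov tower does not apply, and at an Eilenberg--MacLane stage the homology of $\Sigma^\infty_+(\Omega^\infty HA)^{\times r}$ is the full stable homology of Eilenberg--MacLane spaces, vastly bigger than $\mathbb{Z}[A^r]$, so K\"unneth does not reduce anything to the classical statement. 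This is precisely why the paper does not lift the classical differentials at all: it runs the inductive construction directly in $\mathrm{Sp}_{\ge 0}$ using the formalism of $\mathcal{D}$-pseudocoherent functors (with $\mathcal{D}$ the suspension spectra of finite sets), where the required functorial surjections are produced spectrally from the fact that $\Sigma^\infty_+\Omega^\infty$ is corepresented by the sphere, an object of $\mathcal{D}$; both the finiteness of the graded pieces and the identification $\varinjlim F_i \simeq \mathrm{id}$ are outputs of that machinery. Your closing appeal to ``derived Breen--Deligne resolutions where convergence is built in'' is an appeal to a result of this kind rather than a proof; note also that the standard dodge in the condensed literature uses resolutions with infinitely many summands, which would not suffice here, since the finiteness of the $n_{d,i}$ is what later drives the cocompactness argument in \cref{cocompacthard}.
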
{}

\begin{proof}
    We will freely use the results from \cite[\S4.1]{MR4280864}. Let $\mathcal{C} := \mathrm{Sp}_{\ge 0}$, the category of connective spectra. Let $F: \mathcal{C} \to \mathrm{Sp}_{\ge 0}$ be the identity functor. Define $\mathcal{D}$ to be the full subcategory of $\mathcal{C}$ spanned by suspension spectrum of finite sets. By Proposition 4.7 loc. cit. we would be done if we can prove that $F$ is $\mathcal{D}$-pseudocoherent, which we do below.

Since $\mathcal D$ is closed under finite products, by Proposition 4.10, loc. cit. it is enough to show that $\Sigma^\infty_+ \Omega^\infty F$ is $\mathcal D$-pseudocoherent. To this end, by Def. 4.4, loc. cit. it is enough to prove that  $\Sigma^\infty_+ \Omega^\infty F$  is $\mathcal D$-perfect. This is however clear from the definition, as 
$$\Sigma^\infty_+ \Omega^\infty F (\cdot) \simeq  \Sigma^\infty_+ \mathrm{Map}_{\mathcal{C}} (\mathbb S, \cdot),$$ where $\mathbb S = \Sigma^\infty_+ \left \{*\right \},$ the sphere spectrum, which is in $\mathcal D$ by construction.
\end{proof}

\begin{proposition}\label{cocompacthard}
Fix an integer $n \ge 0$. Then $ \mathbb{G}_a[n] $ is cocompact as an object of $ \left(\mathrm{Sp}^{\mathrm{U}-}_{k}\right)_{\geq 0} $.     
\end{proposition}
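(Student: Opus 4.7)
The plan is to reduce cocompactness of $\mathbb{G}_a[n]$ to a cohomological computation on affine stacks via the spectral Breen--Deligne resolution (\cref{breendel}). Concretely, given a cofiltered inverse system $\{Y_\alpha\}$ in $(\mathrm{Sp}^{\mathrm{U}-}_k)_{\geq 0}$ with limit $Y$, I want to show that the natural map
\[
\varinjlim_\alpha R\mathrm{Hom}(Y_\alpha, \mathbb{G}_a[n]) \longrightarrow R\mathrm{Hom}(Y, \mathbb{G}_a[n])
\]
is an equivalence of spectra. Since the inclusion $(\mathrm{Sp}^{\mathrm{U}-}_k)_{\geq 0} \hookrightarrow \mathrm{Sp}(\mathrm{St}_k)_{\geq 0}$ preserves limits by \cref{beach1}, I may carry out all computations inside the ambient stable $\infty$-category of spectrum objects in stacks.

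The first step is to apply the resolution of \cref{breendel} sectionwise in $\mathrm{Sp}(\mathrm{St}_k)_{\geq 0}$, producing a tower $F_i(Y) \to Y$ with $\varinjlim_i F_i(Y) \simeq Y$, and with graded pieces $F_i(Y)/F_{i-1}(Y)$ given by finite direct sums of spectra of the form $\Sigma^i \Sigma^\infty_+ \prod_{t=1}^{n_{d,i}} \Omega^\infty Y$. Mapping into $\mathbb{G}_a[n]$ converts the colimit into an inverse limit of mapping spectra, and the connectivity shift $\Sigma^i$ ensures that on each given homotopy group only finitely many indices $i$ contribute, so the inverse limit stabilizes. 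Consequently it suffices, for each fixed $i$ and each finite $m = n_{d,i}$, to prove that the functor
\[
Y \ \longmapsto \ R\mathrm{Hom}\!\left(\Sigma^\infty_+ \textstyle\prod_{t=1}^{m} \Omega^\infty Y,\, \mathbb{G}_a[n-i]\right)
\]
sends the cofiltered limit $Y = \varprojlim Y_\alpha$ to the corresponding filtered colimit of mapping spectra.

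By the adjunction $\Sigma^\infty_+ \dashv \Omega^\infty$, \cref{truncaa}, and the K\"unneth formula (\cref{keyob2}), this mapping spectrum identifies, up to a shift, with the connective cover of $R\Gamma(\Omega^\infty Y, \mathcal{O})^{\otimes m}$. Since $\Omega^\infty$ preserves limits (\cref{beach1}), $\Omega^\infty Y \simeq \varprojlim_\alpha \Omega^\infty Y_\alpha$ in $\mathrm{AffSt}_k$. Under the anti-equivalence between $\mathrm{AffSt}_k$ and coconnective derived $k$-algebras of \cite[Corollaire~2.2.3]{MR2244263}, cofiltered limits correspond to filtered colimits via $X \mapsto R\Gamma(X, \mathcal{O})$; combined with the fact that finite tensor products commute with filtered colimits and connective covers do as well, the desired commutation follows. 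The main obstacle will be the bookkeeping in the second paragraph: one must verify carefully that \cref{breendel}, stated for $\mathrm{Sp}_{\geq 0}$, extends sectionwise to $\mathrm{Sp}(\mathrm{St}_k)_{\geq 0}$ while preserving the resolution property $\varinjlim F_i \simeq \mathrm{id}$, and that the tower of mapping spectra stabilizes in the claimed range. The finiteness of the $n_{d,i}$ in \cref{breendel} is essential here: an infinite product would obstruct the K\"unneth-based reduction to filtered colimits of derived rings.
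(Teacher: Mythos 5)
Your proposal is correct and follows essentially the same route as the paper's proof: apply the spectral Breen--Deligne resolution of \cref{breendel} sectionwise, use the connectivity of the graded pieces to reduce to a finite stage of the tower, and then commute the cofiltered limit past the graded pieces, where your K\"unneth/coconnective-derived-rings argument is just a rephrasing of the paper's use of cocompactness of $K(\mathbb{G}_a,m)$ in affine stacks. The only deviations are cosmetic imprecisions (the mapping spectrum out of $\Sigma^\infty_+\prod_t\Omega^\infty Y$ into $\mathbb{G}_a[m]$ is $R\Gamma(\prod_t\Omega^\infty Y,\mathcal{O})[m]$ rather than a connective cover, and the tower of mapping spectra stabilizes only degreewise), neither of which affects the argument.
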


\begin{proof}
Suppose that $E$ is a connective unipotent spectrum. By functoriality of the spectral Breen--Deligne resolution, we obtain a direct system $$0 = F_{-1}(E) \to F_0(E) \to F_1(E) \to \ldots \to F_j (E) \to \ldots$$   of sheaf (on $\mathrm{Alg}_k^{\mathrm{op}}$) of connective spectra such that $\varinjlim F_j(E) \simeq E$ and each cofiber $F_{r}(E)/F_{r-1}(E)$ is naturally isomorphic to a finite direct sum of objects of the form $\Sigma^r \Sigma^{\infty}_+ \prod_{t=1}^{n_{d,r}} \Omega^\infty E.$ 

We claim that \begin{equation}\label{theequ}
    \mathrm{Map}\left(E, \mathbb{G}_a[n]\right) \simeq \mathrm{Map}\left(F_{n+1}(E), \mathbb{G}_a[n]\right),
\end{equation}where the mapping spaces are taken in sheaves of connective spectra. To this end, note that 
\begin{equation*}
 \mathrm{Map}(E, \mathbb{G}_a[n]) \simeq \varprojlim \mathrm{Map}(F_j(E), \mathbb{G}_a[n]);   
\end{equation*}
therefore, it suffices to show that the maps
\begin{equation*}
   \mathrm{Map}(F_{j+1}(E), \mathbb{G}_a[n]) \to  \mathrm{Map}(F_j(E), \mathbb{G}_a[n])  
\end{equation*}
are isomorphisms for $j \ge n+1$. However, this follows the description of $F_i(E)/F_{i-1}(E)$ and the fact that the mapping spectrum $$R\mathrm{Hom} (\Sigma^j \Sigma^\infty_+ \prod_{t=1}^{n_{d,j}} \Omega^\infty E, \mathbb{G}_a[n]) $$ has a vanishing $\pi_{-1}$ for $j \ge n+2$. This proves the claim \cref{theequ}.

Further, we claim that if $G \simeq \lim_i G_i$ is a cofiltered limit diagram of connective unipotent spectra and $m \ge 0$, then for any fixed $j \ge 0$, we have \begin{equation}\label{secondclaim}
\varinjlim_i \mathrm{Map}(F_j (G_i), \mathbb{G}_a[m]) \simeq  \mathrm{Map}(F_j(G), \mathbb{G}_a[m]).  
\end{equation}
We will prove this claim by induction on $j$. When $j=0$, the claim follows from the fact that $F_0(E)$ is naturally isomorphic to a finite direct sum of objects of the form $\Sigma^{\infty}_+ \prod_{t=1}^{n_{d,0}} \Omega^\infty E$. Indeed, $$\varinjlim_i \mathrm{Map}\left(\Sigma^{\infty}_+ \prod_{t=1}^{n_{d,0}} \Omega^\infty G_i, \mathbb{G}_a[m]\right) \simeq \varinjlim_i \mathrm{Map}\left( \prod_{t=1}^{n_{d,0}} \Omega^\infty G_i, K(\mathbb{G}_a,m)\right),$$ where the latter mapping space can be considered in the category of affine stacks, as $\Omega^\infty G_i$ is an affine stack. However, $K(\mathbb{G}_a, m)$ is a cocompact object in the category of affine stacks. Therefore, we have $$\varinjlim_i \mathrm{Map}\left( \prod_{t=1}^{n_{d,0}} \Omega^\infty G_i, K(\mathbb{G}_a,m)\right) \simeq \mathrm{Map}\left( \prod_{t=1}^{n_{d,0}} \Omega^\infty G, K(\mathbb{G}_a,m)\right).$$ The latter is isomorphic to $\mathrm{Map}(F_0(G), \mathbb{G}_a[m])$ by adjunction, which proves the case when $j=0$.

Now we suppose the claim in \cref{secondclaim} holds for a fixed $j \ge 0$; we will check that it holds for $j+1$. Let $\mathrm{gr}_r(E):= F_r(E)/F_{r-1}(E).$ By arguing in a manner similar to the above paragraph, we obtain
\begin{equation}\label{greq}\varinjlim_i \mathrm{Map}\left(\mathrm{gr}_r(G_i), \mathbb{G}_a[m]\right) \simeq \mathrm{Map}\left(\mathrm{gr}_r(G), \mathbb{G}_a[m]\right)\end{equation} for all $r,m \ge 0$. Note that we have a map of fiber sequences 
\begin{center}
\begin{tikzcd}
{\varinjlim_{i}R\mathrm{Hom}\left(F_j(G_i), \mathbb{G}_a\right)} \arrow[r] \arrow[d] & {\varinjlim_{i}R\mathrm{Hom}\left(F_{j+1}(G_i), \mathbb{G}_a\right)} \arrow[r] \arrow[d] & {\varinjlim_{i}R\mathrm{Hom}(\mathrm{gr}_{j+1}(G_i), \mathbb{G}_a)} \arrow[d] \\
{R\mathrm{Hom}(F_j(G), \mathbb{G}_a)} \arrow[r]                           & {R\mathrm{Hom}(F_{j+1}(G), \mathbb{G}_a)} \arrow[r]                           & {R\mathrm{Hom}(\mathrm{gr}_{j+1}(G), \mathbb{G}_a)} \,.                         
\end{tikzcd}                        
 \end{center}
 The left vertical map is an isomorphism by our inductive hypothesis. The right vertical map is an isomorphism by \cref{greq}. Therefore the middle vertical map is also an isomorphism, implying our claim in \cref{secondclaim} for $j+1$. This completes the induction and proves the claim \cref{secondclaim} for all $j \ge 0$.

 Now we can deduce the cocompactness of $\mathbb{G}_a[n]$. Indeed, given a cofiltered limit diagram of connective unipotent spectra $G \simeq \lim_i G_i$,  
 \begin{align*}
  \varinjlim_i \mathrm{Map} (G_i, \mathbb{G}_a[n] ) & \simeq \varinjlim_i \mathrm{Map}(F_{n+1}(G_i), \mathbb{G}_a[n]) \\& \simeq \mathrm{Map}(F_{n+1}(G), \mathbb{G}_a[n]) \\& \simeq \mathrm{Map} (G, \mathbb{G}_a[n]), 
 \end{align*}where the first and the third isomorphism follow from \cref{theequ}; the second one follows from \cref{secondclaim}. This finishes the proof.
\end{proof}{}

\begin{corollary}\label{qin}
Let $E \simeq \varprojlim_i E_i$ be a cofiltered limit diagram in $\mathrm{Sp}^{\mathrm{U}-}_k$ where $E, E_i$ are all connective. Then the natural map
$$\varinjlim _i R\mathrm{Hom}\left(E_i, \mathbb{G}_a\right) \simeq R\mathrm{Hom}(E, \mathbb{G}_a)$$ is an isomorphism.
\end{corollary}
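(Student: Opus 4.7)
The plan is to deduce this corollary directly from the cocompactness of $\mathbb{G}_a[n]$ established in \cref{cocompacthard}. Since filtered colimits in the $\infty$-category of spectra are exact, they commute with the homotopy-group functors; hence the claim reduces to showing that for each $n \in \mathbb{Z}$ the canonical map $\varinjlim_i \pi_n R\mathrm{Hom}(E_i, \mathbb{G}_a) \to \pi_n R\mathrm{Hom}(E, \mathbb{G}_a)$ is an isomorphism of abelian groups.

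I will split the verification by the sign of $n$. For $n > 0$, both sides vanish: for any connective $F$ in $\mathrm{Sp}^{\mathrm{U}-}_k$ one has $\pi_n R\mathrm{Hom}(F, \mathbb{G}_a) \simeq \pi_0 \mathrm{Map}_{\mathrm{Sp}^{\mathrm{U}-}_k}(F[n], \mathbb{G}_a)$, and since $F[n]$ is $n$-connective while $\mathbb{G}_a$ lies in the heart of the $t$-structure of \cref{tstruc}, this mapping space is contractible. For $n \leq 0$, setting $m := -n \geq 0$, one has instead $\pi_n R\mathrm{Hom}(F, \mathbb{G}_a) \simeq \pi_0 \mathrm{Map}_{\mathrm{Sp}^{\mathrm{U}-}_k}(F, \mathbb{G}_a[m])$. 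Applying \cref{cocompacthard} to the cofiltered diagram $\{E_i\}$ of connective unipotent spectra yields the equivalence $\varinjlim_i \mathrm{Map}(E_i, \mathbb{G}_a[m]) \simeq \mathrm{Map}(E, \mathbb{G}_a[m])$ of mapping spaces, and passing to $\pi_0$ supplies the required isomorphism in each nonpositive degree.

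Since the corollary is essentially a direct translation of \cref{cocompacthard} from the setting of mapping spaces to that of mapping spectra, there is no significant new obstacle in this deduction; the genuine difficulty has already been dealt with in the proof of cocompactness, which relied on the spectral Breen--Deligne resolution to reduce to $\mathrm{Map}(\Sigma^\infty_+ \prod_t \Omega^\infty E, K(\mathbb{G}_a, m))$ and the cocompactness of the Eilenberg--MacLane stacks $K(\mathbb{G}_a, m)$ among affine stacks.
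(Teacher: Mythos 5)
Your proposal is correct and follows the same route as the paper, whose proof of \cref{qin} is simply that it follows from the cocompactness of $\mathbb{G}_a[n]$ established in \cref{cocompacthard}; you have merely spelled out the standard deduction (reduction to homotopy groups of the mapping spectra, vanishing in positive degrees via the $t$-structure of \cref{tstruc}, and the degreewise application of cocompactness in nonpositive degrees). No gaps.
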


\begin{proof}
 Follows from \cref{cocompacthard}.   
\end{proof}{}

\begin{proof}[Proof of \cref{recog}]
 There is a natural colimit-preserving embedding $\left(\mathrm{Sp}^\mathrm{U-}_k\right)_{\ge 0} \to \mathrm{Sp}^\mathrm{U-}_k$, where the source denotes the category of connective unipotent spectra. It suffices to prove that the restricted functor $$M': \left(\mathrm{Sp}^\mathrm{U-}_k\right)_{\ge 0} \to \mathrm{RMod}_R^{\mathrm{op}}$$ is fully faithful. Since both of the categories involved above admit small colimits and $M'$ preserves them, by the adjoint functor theorem, it follows that $M'$ has a right adjoint $$D: \mathrm{RMod}_R^{\mathrm{op}} \to (\mathrm{Sp}^\mathrm{U-}_k)_{\ge 0}.$$ 
To prove that $M'$ is fully faithful, it is enough to prove that the unit map $\mathrm{id} \to D \circ M' $ is an equivalence. By \cref{genbylim}, \cref{qin}, and the fact that $D$ preserves small limits, it suffices to show that the natural map $\mathbb{G}_a \to D(M'(\mathbb{G}_a))$ is an isomorphism. To this end, note that for any $Z \in \left(\mathrm{Sp}^\mathrm{U-}_k\right)_{\ge 0}$, we have
$$\mathrm{Map}(Z, D(M'(\mathbb{G}_a))) \simeq \mathrm{Map}_{R} (R, M'(Z)) \simeq \mathrm{Map}(Z, \mathbb{G}_a),$$ where the first isomorphism follows from adjunction and the second one follows from the construction of $M'$. This shows that the desired map $\mathbb{G}_a \to D(M'(\mathbb{G}_a))$ is an isomorphism, finishing the proof.
\end{proof}{}
\newpage
  
\section{Coniveau filtration on unipotent homology}\label{section3}

\subsection{Graded pieces of the coniveau filtration and unipotent local homology}
 Let $k$ be a field and $\mathrm{Sch}_k$ denote the category of $ k $-schemes. 
 For any scheme $X$ of dimension $n$, we will discuss a filtration on $H^{\uu}_*(X)$ constructed by To\"en \cite[Definition 3.1]{Toee23}. 
 We explain how the filtration on $H^{\uu}_*(X)$ is related to the coniveau filtration (which leads to the Cousin complex) on cohomology. This perspective is then used to directly deduce certain desired properties of the filtration on $H^\uu_*(X)$ from well-known properties of the coniveau filtration and local cohomology.

\begin{definition}[Coniveau filtration]\label{filtrationdeff}
    Let $Z_i$ be the set of closed subschemes of $X$ of codimension $\ge i.$ We say $C_1 \le C_2$ for $C_1, C_2 \in Z_i$ if $C_1 \subseteq C_2$. We will equip $Z_i$ with this partial order and view it as a category. First, define $$T_i:= \lim_{C\in Z_i^\mathrm{op}}H^\uu_{*} (X-C),$$ where the limit is taken in $\left(\mathbb{Z}-\mathrm{Mod}^\mathrm{U}_k \right)_{\ge 0}$. Note that $T_0 \simeq 0$ and $T_{n+1} \simeq H^\uu_*(X).$ Now let $$F_i:= T_{i+1}.$$ Then $F_i$ defines a finite, increasing filtration on $H^\uu_*(X)$, which we denote as $F^* H^\uu_*(X).$
\end{definition}{}

The main result of this section is an identification of $\mathrm{gr}^iH^\uu_*(X).$ 
Before delving into that, we discuss the relationship between the filtration of \cref{filtrationdeff} and the existing coniveau filtration on cohomology. Note that $$T^*_i:=R\mathrm{Hom} (T_i, \mathbb{G}_a) \simeq \mathrm{colim}_{C\in Z_i^\mathrm{}}R\Gamma (X-C, \mathcal{O}).$$There is a natural map $R\Gamma(X, \mathcal{O}) \to T_i^*.$ Let $$R_i:=\mathrm{colim}_{C\in Z_i^\mathrm{}} R\Gamma_C(X, \mathcal{O}),$$ where the latter denotes local cohomology. Note that $R_i$ equips $R\Gamma (X, \mathcal{O})$ with a decreasing finite filtration denoted as $F^*_{\mathrm{coniv}} R\Gamma (X, \mathcal{O})$, which one classically calls the \emph{coniveau filtration}. Note that $R_{n+1} \simeq 0$ and $R_0 \simeq R\Gamma (X, \mathcal{O}).$ The fiber sequence $$R\Gamma_C(X, \mathcal{O}) \to R\Gamma (X, \mathcal{O}) \to R\Gamma (X-C, \mathcal{O})$$ then induces a fiber sequence\begin{equation}\label{s}
  R_i \to R\Gamma (X, \mathcal{O}) \to T_i^*.  
\end{equation}
The following result is classical (e.g., see \cite{bloch-ogus}). 
\begin{proposition}\label{classical}
 In the above notation, $$R_i/R_{i+1} \simeq \mathrm{gr}^i_{\mathrm{coniv}} R\Gamma (X, \mathcal{O}) \simeq \bigoplus_{ x \in X^{(i)}} R\Gamma_x (X_x, \mathcal{O}),$$ where $X^{(i)}$ denotes the set of points of $X$ codimension $i.$   
\end{proposition}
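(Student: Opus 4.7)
The first isomorphism $R_i/R_{i+1} \simeq \mathrm{gr}^i_{\mathrm{coniv}} R\Gamma(X, \mathcal{O})$ is simply the definition of the associated graded of the coniveau filtration, so the entire content lies in the second equivalence. My plan is to rewrite the cofiber as a single filtered colimit of excision fiber sequences, use that the codimension-exactly-$i$ irreducible components of $C \in Z_i$ can be separated by enlarging $C'$ within a cofinal system, and then identify each resulting term with the stalk $R\Gamma_x(X_x, \mathcal{O})$ at the corresponding codimension-$i$ point.

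\textbf{Step 1.} The posets $Z_i$ and $Z_{i+1}$ are filtered (stable under finite unions), and the system of pairs $\{(C, C') : C' \subseteq C,\, C \in Z_i,\, C' \in Z_{i+1}\}$ is cofinal in each under the obvious projections. Rewriting $R_i$ and $R_{i+1}$ along this cofinal system and taking cofibers, I get
$$R_i/R_{i+1} \simeq \colim_{\substack{C \in Z_i,\, C' \in Z_{i+1}\\ C' \subseteq C}} \mathrm{cofib}\bigl(R\Gamma_{C'}(X, \mathcal{O}) \to R\Gamma_C(X, \mathcal{O})\bigr),$$
and excision identifies each cofiber with $R\Gamma_{C \setminus C'}(X \setminus C', \mathcal{O})$.

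\textbf{Step 2.} For each $(C, C')$, decompose $C$ as the union of its codimension-exactly-$i$ irreducible components $\overline{\{x_1\}}, \ldots, \overline{\{x_r\}}$ with a remainder of codimension $\ge i+1$. Within the filtered colimit I may replace $C'$ by $C' \cup (\text{remainder}) \cup \bigcup_{j \ne k} (\overline{\{x_j\}} \cap \overline{\{x_k\}})$; since all added pieces have codimension $\ge i+1$, the enlarged $C'$ stays in $Z_{i+1}$, and the sets $\overline{\{x_j\}} \setminus C'$ are now pairwise disjoint closed subsets of $X \setminus C'$. Additivity of local cohomology along disjoint closed subsets then gives
$$R\Gamma_{C \setminus C'}(X \setminus C', \mathcal{O}) \simeq \bigoplus_{j=1}^r R\Gamma_{\overline{\{x_j\}} \setminus C'}(X \setminus C', \mathcal{O}).$$
For fixed $x \in X^{(i)}$, the system of opens $X \setminus C'$ with $x \notin C'$ is cofinal in open neighborhoods of $x$, and passing to the colimit in $C'$ of the $x$-indexed summand yields $R\Gamma_x(\Spec \mathcal{O}_{X,x}, \mathcal{O}) = R\Gamma_x(X_x, \mathcal{O})$ via the standard compatibility of local cohomology with cofiltered limits of schemes.

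\textbf{Main obstacle.} The technical heart is reorganizing the filtered colimit so that it genuinely commutes with the direct sum: one must arrange a cofinal subsystem in which the finite direct sums of Step 2 extend compatibly to a direct sum indexed by the (possibly infinite) set $X^{(i)}$, with no spurious identifications of distinct generic points under the transition maps. This is the familiar Cousin-complex bookkeeping (cf.~\cite{bloch-ogus}); once it is in place, combining Steps 1 and 2 produces the desired identification $R_i/R_{i+1} \simeq \bigoplus_{x \in X^{(i)}} R\Gamma_x(X_x, \mathcal{O})$.
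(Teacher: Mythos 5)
The paper does not actually prove this proposition: it is invoked as classical with a pointer to \cite{bloch-ogus} (the same identification underlies the Cousin complex/coniveau spectral sequence in Hartshorne's \emph{Residues and Duality} and in Colliot-Th\'el\`ene--Hoyois--Kahn). Your sketch is essentially that classical argument, carried out at the level of complexes: Step 1 (rewriting $R_i/R_{i+1}$ as $\colim_{(C,C')}\mathrm{cofib}\bigl(R\Gamma_{C'}(X,\mathcal{O})\to R\Gamma_{C}(X,\mathcal{O})\bigr)\simeq \colim_{(C,C')} R\Gamma_{C\setminus C'}(X\setminus C',\mathcal{O})$) and the separation of components in Step 2 are exactly how the cited references proceed, so this is not a different route so much as a correct reconstruction of the proof the paper outsources.

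Two caveats. First, your final cofinality claim is false as literally stated: the opens $X\setminus C'$ with $C'\in Z_{i+1}$, $x\notin C'$, are \emph{not} cofinal among open neighborhoods of $x$ (their closed complements must have codimension $\geq i+1$, which an arbitrary complement $X\setminus U$ need not satisfy). The conclusion nevertheless holds, and the repair is one line of excision: given any open $U\ni x$, take $C'\supseteq \overline{\{x\}}\setminus U$ (a proper closed subset of $\overline{\{x\}}$, hence of codimension $\geq i+1$); then $U\subseteq X\setminus C'$ is an open containing the support $\overline{\{x\}}\setminus C'=\overline{\{x\}}\cap U$, so $R\Gamma_{\overline{\{x\}}\setminus C'}(X\setminus C',\mathcal{O})\simeq R\Gamma_{\overline{\{x\}}\cap U}(U,\mathcal{O})$. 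Thus the two filtered systems interleave term-by-term and have the same colimit, which is $R\Gamma_x(X_x,\mathcal{O})$ by the usual passage to the limit over affine neighborhoods. Second, the ``main obstacle'' you defer is genuinely only bookkeeping, and it is worth noting why: a point of codimension $i$ can never lie in a closed subset of codimension $\geq i+1$, so once the summand indexed by $x\in X^{(i)}$ appears it persists under all transition maps, those maps preserve the summand decomposition, and filtered colimits commute with direct sums; this gives the passage from the finite sums of Step 2 to $\bigoplus_{x\in X^{(i)}}R\Gamma_x(X_x,\mathcal{O})$ with no spurious identifications. With these two points made explicit, your argument is a complete proof of the statement the paper cites.
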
{}

The diagram below (where the horizontal arrows are fiber sequences; see \cref{s})
\begin{center}
\begin{tikzcd}
R_{i+1} \arrow[d] \arrow[r] & {R\Gamma(X, \mathcal O)} \arrow[r] \arrow[d] & T_{i+1}^* \arrow[d] \\
R_i \arrow[r]               & {R\Gamma(X, \mathcal O)} \arrow[r]           & T_i^*              
\end{tikzcd}
\end{center}induces a fiber sequence
\begin{equation}\label{eq1}
R_i/R_{i+1} \to 0 \to T_i^*/ T_{i+1}^*.    
\end{equation}
Further, we have a fiber sequence $F_{i-1} \to F_{i} \to \mathrm{gr}^iH^\uu_*(X).$ This gives a fiber sequence (we recall that $F_i = T_{i+1}$)
\begin{equation}\label{eq2}
 \mathrm{RHom}(\mathrm{gr}^iH^\uu_*(X), \mathbb{G}_a) \to  T_{i+1}^* \to T_i^*.  
\end{equation}
Combining \cref{eq1} and \cref{eq2}, we have
\begin{equation}\label{checkisom1}
 \mathrm{RHom}(\mathrm{gr}^iH^\uu_*(X), \mathbb{G}_a) \simeq R_i/R_{i+1}.   
\end{equation}{}

Now we are ready to prove the following.

\begin{proposition}\label{descriptionofgradedpiece}
 Let $X$ be a scheme over $k$ of dimension $n.$ 
 Then the associated graded of the coniveau filtration on the unipotent homology of $ X $ (\cref{filtrationdeff}) may be identified as 
$$\mathrm{gr}^i H^\uu_*(X) \simeq \prod_{x \in X^{(i)}} H^\uu_{*, x}(X_x),$$ where $X^{(i)}$ denotes the set of points of $X$ of codimension $i.$

\end{proposition}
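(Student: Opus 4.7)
The plan is to reduce the claim to a classical fact about the coniveau filtration on cohomology, exploiting the duality between unipotent (local) homology with $\mathbb{G}_a$-coefficients and (local) cohomology developed in \S\ref{unispectra}. Since both sides lie in $\left(\mathbb{Z}\mathrm{-Mod}^{\mathrm{U}-}_k\right)_{\ge 0}$, by \cref{chekisom} it suffices to construct a natural comparison map
\begin{equation*}
\Phi \colon \mathrm{gr}^i H^\uu_*(X) \longrightarrow \prod_{x \in X^{(i)}} H^\uu_{*, x}(X_x)
\end{equation*}
and verify that $R\mathrm{Hom}(\Phi, \mathbb{G}_a)$ is an equivalence.

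For the construction of $\Phi$: fix $x \in X^{(i)}$. The systems defining $T_i$ and $T_{i+1}$ pull back along the localization map $X_x \to X$; for $C \in Z_{i+1}^{\op}$, the restriction of $X - C$ to $X_x$ is either $X_x$ or the punctured local scheme $X_x - \{x\}$, depending on whether $\overline{\{x\}} \subset C$. Taking the limit yields compatible maps $T_{i+1} \to H^\uu_*(X_x)$ and $T_i \to H^\uu_*(X_x - \{x\})$, producing, on cofibers, a map $\mathrm{gr}^i H^\uu_*(X) = T_{i+1}/T_i \to H^\uu_{*, x}(X_x)$. Taking the product over $x \in X^{(i)}$ assembles these components into $\Phi$.

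Now I would verify that $R\mathrm{Hom}(\Phi, \mathbb{G}_a)$ is an equivalence. On one side, \cref{checkisom1} gives $R\mathrm{Hom}(\mathrm{gr}^i H^\uu_*(X), \mathbb{G}_a) \simeq R_i/R_{i+1}$, which equals $\bigoplus_{x \in X^{(i)}} R\Gamma_x(X_x, \mathcal{O})$ by the classical Bloch--Ogus description \cref{classical}. On the other side, write $\prod_x H^\uu_{*, x}(X_x) \simeq \varprojlim_{F} \bigoplus_{x \in F} H^\uu_{*, x}(X_x)$ as a cofiltered limit over finite subsets $F \subset X^{(i)}$ of connective unipotent spectra (finite products agree with finite coproducts in this additive setting). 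Then the cocompactness of $\mathbb{G}_a$ (\cref{qin}) combined with \cref{dual2} gives
\begin{equation*}
R\mathrm{Hom}\Bigl(\textstyle\prod_x H^\uu_{*, x}(X_x), \mathbb{G}_a\Bigr) \simeq \varinjlim_F \bigoplus_{x \in F} R\Gamma_x(X_x, \mathcal{O}) \simeq \bigoplus_{x \in X^{(i)}} R\Gamma_x(X_x, \mathcal{O}).
\end{equation*}
A diagram chase then identifies $R\mathrm{Hom}(\Phi, \mathbb{G}_a)$ with the Bloch--Ogus isomorphism, whence $\Phi$ is an equivalence by \cref{chekisom}.

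The main obstacle is the first step: constructing $\Phi$ and verifying its compatibility with Bloch--Ogus on the dual side. While $\Phi$ is essentially forced by the universal property of the inverse limits defining $T_i$ and $T_{i+1}$, carefully tracking naturality in $x$ and checking that the $x$-component factors through the quotient $T_{i+1}/T_i$ requires a detailed analysis of how the pro-system of complements $X-C$ localizes at each codim-$i$ point. Once this is in place, the cocompactness of $\mathbb{G}_a$ established in \cref{subsection:recognition_thm} and the classical Bloch--Ogus decomposition make the remainder formal.
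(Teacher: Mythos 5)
Your overall strategy (produce a comparison map between $\mathrm{gr}^i H^\uu_*(X)$ and $\prod_{x \in X^{(i)}} H^\uu_{*,x}(X_x)$, then dualize against $\mathbb{G}_a$ and invoke \cref{chekisom}, \cref{checkisom1}, the classical Bloch--Ogus identification \cref{classical}, and a cocompactness/continuity statement to pass through the infinite product) is exactly the paper's strategy. However, there is a genuine gap in your first step: the map $\Phi$ you describe does not exist in the direction you want, because unipotent homology is \emph{covariant}. For a point $x \in X^{(i)}$ and a closed subset $C$ with $x \notin C$ (resp.\ $x \in C$), the localization gives scheme maps $X_x \to X-C$ (resp.\ $X_x - \{x\} \to X-C$), and these induce maps $H^\uu_*(X_x) \to H^\uu_*(X-C)$, i.e.\ maps \emph{into} the terms of the diagrams whose limits define $T_i$ and $T_{i+1}$ --- not maps out of $T_{i+1}$ or $T_i$. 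There is no natural ``restriction'' map $H^\uu_*(X-C) \to H^\uu_*(X_x)$ on homology (that direction is available only for cohomology), and a map out of the inverse limit $T_{i+1}$ is not forced by any universal property here, since $X_x$ does not receive a map from any $X-D$. (Incidentally, your dichotomy is also slightly off: for $C$ of codimension $\geq i+1$ one always has $x \notin C$, so the fiber product is always $X_x$; the punctured local scheme only appears for $C$ of codimension $i$ containing $\overline{\{x\}}$.)

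The correct construction, which is what the paper does following To\"en, goes the other way: for a pair $D \subseteq C$ with $C \in Z_i$, $D \in Z_{i+1}$, the finite set $(C-D)^{(i)}$ of codimension-$i$ points gives a commutative square of schemes $\coprod_x (X_x - \{x\}) \to X-C$ over $\coprod_x X_x \to X-D$; passing to unipotent homology and cofibers yields $\prod_{x}H^\uu_{*,x}(X_x) \to H^\uu_*(X-D)/H^\uu_*(X-C)$, and taking the limit over pairs $(D \subseteq C)$ produces the natural map $\prod_{x \in X^{(i)}} H^\uu_{*,x}(X_x) \to \mathrm{gr}^i H^\uu_*(X)$. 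Once the map is built in this direction, your dualization argument is essentially the paper's: one applies \cref{chekisom}, identifies $R\mathrm{Hom}(\mathrm{gr}^i H^\uu_*(X),\mathbb{G}_a)$ via \cref{checkisom1} and \cref{classical}, and commutes $R\mathrm{Hom}(-,\mathbb{G}_a)$ past the cofiltered limit of finite subproducts --- though for this last step the relevant statement is the $\mathbb{Z}$-linear \cref{huseful} rather than \cref{qin}, since the $R\mathrm{Hom}$ here is taken in unipotent $\mathbb{Z}$-modules.
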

\begin{proof}
We will first construct a natural map from the right hand side to the left hand side; this part follows the same reasoning as in Proposition 3.2 of \cite{Toee23}. Let $D \subseteq C$ be two closed subsets of $X$ such that $\mathrm{codim}_X (C) \ge i$ and $\mathrm{codim}_X (D) \ge i+1.$ Let $x \in (C-D)$ be such that $x \in X^{(i)};$ we will use $(C-D)^{(i)}$ to denote the set of such points. For an $x \in (C-D)^{(i)}$, it follows that $X_x - \left \{x \right \} \subseteq X-C.$ This gives the following commutative diagram in $\mathrm{Sch}_k:$
\begin{center}
\begin{tikzcd}
\coprod_{x \in (C-D)^{(i)}}X_x - \left \{x \right \} \arrow[d] \arrow[rr] &  & X-C \arrow[d] \\
\coprod_{x \in (C-D)^{(i)}}X_x \arrow[rr]                                 &  & X-D \,.         
\end{tikzcd}
\end{center} Since $(C-D)^{(i)}$ is finite by \emph{loc. cit.}, on applying unipotent homology and taking cofibers, we obtain a map
$$\prod_{x \in (C-D)^{(i)}} H^\uu_{*, x}(X_x) \to H^\uu_* (X-D)/ H^\uu_* (X-C).$$

Taking limits over pairs $D \subseteq C$ such that $C \in Z_i$ and $D \in Z_{i+1}$, we obtain the desired map
$$\prod_{x \in X^{(i)}} H^\uu_{*, x}(X_x) \to \mathrm{gr}^i H^\uu_*(X).$$
    
We need to check that the above map is an isomorphism. However, that follows from \cref{huseful}, \cref{chekisom}, \cref{checkisom1} and \cref{classical}.
\end{proof}{}

\begin{remark}\label{specseq}Let $X$ be a scheme over $ k $ of dimension $n.$ As a consequence of \cref{descriptionofgradedpiece}, we obtain the following (homological) spectral sequence converging to unipotent homology:

\begin{equation*}
 E_1^{p,q} = \prod_{x \in X^{(p)}} H_{p+q,x}^\uu (X_x) \implies H_{p+q}^\uu(X).    
\end{equation*}{}
     
\end{remark}{}

\begin{proposition}[Purity]\label{connect}
 Let $X$ be a Cohen--Macaulay scheme, i.e., for every $x \in X,$ the local ring $\mathcal{O}_{X,x}$ is Cohen--Macaulay. Then for any $x \in X^{(i)}$, the object $H^\uu_{*, x}(X_x) \in (\mathbb{Z}\mathrm{-Mod}^{\mathrm{U}}_k)_{\ge 0}$ is $i$-connective.
\end{proposition}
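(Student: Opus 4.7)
The plan is to combine Proposition~\ref{dual2} (the duality between unipotent local homology and local cohomology) with the classical Grothendieck vanishing for local cohomology of Cohen--Macaulay local rings, and then unwind the resulting connectivity bound via a Postnikov tower argument.

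Since $x$ has codimension $i$ in $X$ and $X$ is Cohen--Macaulay, the local ring $\mathcal{O}_{X,x}$ is Cohen--Macaulay of dimension and depth $i$; the standard local cohomology vanishing then yields $H^j_x(X_x, \mathcal{O}) = 0$ for $j < i$. By Proposition~\ref{dual2}, this translates into the statement that the mapping spectrum
\begin{equation*}
R\mathrm{Hom}(H^\uu_{*,x}(X_x), \mathbb{G}_a) \simeq R\Gamma_x(X_x, \mathcal{O})
\end{equation*}
is concentrated in cohomological degrees $\geq i$.

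Write $M := H^\uu_{*,x}(X_x) \in (\mathbb{Z}\mathrm{-Mod}^{\mathrm{U}}_k)_{\geq 0}$; the goal becomes $\pi_q(M) = 0$ for all $q < i$. I would proceed by induction on $q$, with inductive hypothesis that $M$ is $q$-connective. Given this, the Postnikov cofiber sequence $\tau_{\geq q+1} M \to M \to \pi_q(M)[q]$ yields, after applying $R\mathrm{Hom}(-, \mathbb{G}_a)$, a fiber sequence whose third term $R\mathrm{Hom}(\tau_{\geq q+1}M, \mathbb{G}_a)$ lies in cohomological degrees $\geq q+1$ (since $\tau_{\geq q+1}M$ is $(q+1)$-connective and $\mathbb{G}_a$ is discrete). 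The associated long exact sequence therefore collapses to an isomorphism
\begin{equation*}
\mathrm{Hom}(\pi_q M, \mathbb{G}_a) \;\cong\; H^q(R\mathrm{Hom}(M, \mathbb{G}_a)) \;=\; H^q_x(X_x, \mathcal{O}),
\end{equation*}
and the right-hand side vanishes for $q < i$ by the first step.

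The main obstacle is the concluding step: deducing $\pi_q M = 0$ from the vanishing of $\mathrm{Hom}(\pi_q M, \mathbb{G}_a)$. Since $\pi_q M$ is a commutative unipotent affine group scheme which may be of infinite type, one cannot directly invoke Proposition~\ref{chekisom} (which lives at the level of the derived category and requires vanishing of the entire mapping spectrum, not just $\pi_0$). The required input is that $\mathbb{G}_a$ cogenerates the abelian category of commutative unipotent affine group schemes over $k$: writing any such $H$ as a cofiltered limit of its finite-type quotients $H_\alpha$, it suffices to exhibit a nonzero map $H_\alpha \to \mathbb{G}_a$ for each nonzero $H_\alpha$, which follows from the classification of finite-type commutative unipotent groups via their $k_\sigma[F]$-module structure (used earlier in the paper, cf.~the proof of Lemma~\ref{useful}) together with the Artin--Schreier and Frobenius exact sequences. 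This structural fact is standard but worth flagging as the one ingredient not immediately extracted from the preceding machinery of the paper.
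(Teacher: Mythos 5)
Your argument is essentially the paper's own proof: the paper runs the same induction via the spectral sequence of \cref{eat} (which is exactly the Postnikov filtration applied to \cref{dual2} with $G=\mathbb{G}_a$), combined with the Cohen--Macaulay vanishing $H^n_x(X_x,\mathcal{O})=0$ for $n<i$, and then concludes from unipotence that $\mathrm{Hom}(H^\uu_{q,x}(X_x),\mathbb{G}_a)=0$ forces $H^\uu_{q,x}(X_x)=0$. The cogeneration-by-$\mathbb{G}_a$ fact you flag is precisely the (standard) input the paper invokes implicitly in that last step, so there is no gap.
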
{}

\begin{proof}
Using \cref{eat}, we have the following spectral sequence
$$ E_2^{p,q}:= \mathrm{Ext}^p (H^\uu_{q,x}(X_x), \mathbb{G}_a) \implies H^{p+q}_{x}(X_x, \mathcal{O}). $$ Since $X$ is Cohen--Macaulay and $x \in X$ is of codimension $i$, it follows that $$H^n_x (X_x, \mathcal{O})=0$$ for $n <i.$ Using the above spectral sequence and induction on $q,$ we obtain $$\mathrm{Hom}(H^\uu_{q,x}(X_x), \mathbb{G}_a)=0$$ for $q<i.$ Since $H^\uu_{q,x}(X_x)$ is unipotent, we have $H^\uu_{q,x}(X_x)=0$ for $q<i$, as desired.
\end{proof}

\subsection{Reformulation in terms of Beilinson $t$-structures}\label{sec3.2}
                                 
\cref{connect} above admits a slick reformulation in the language of Beilinson $t$-structure on filtered stable $\infty$-categories, which we recall below.
\begin{notation}
Suppose $ \mathcal{C} $ is an $ \infty $-category. 
Let $F^* \mathcal{C}:= \mathrm{Fun}((\mathbb{Z},\leq), \mathcal{C}).$ We think of $F^*\mathcal{C}$ as the category of increasing $\mathbb{Z}$-indexed filtered objects.   
\end{notation}{} 
\begin{definition} [{\cite{MR4043467,MR923133}}]
 Let $\mathcal{C}$ be a stable $\infty$-category equipped with a $t$-structure. 
 Define $(F^*\mathcal{C})_{\ge 0}$ to be the full subcategory of $F^*\mathcal{C}$ spanned by objects $U$ such that for all $i$, $\mathrm{gr}^iU \in \mathcal{C}$ is $i$-connective. Define $(F^*\mathcal{C})_{\le 0}$ to be the full subcategory of $F^*\mathcal{C}$ spanned by objects $U$ such that for all $i$, $\mathrm{gr}^iU \in \mathcal{C}$ is $i$-coconnective. Then the pair $((F^*\mathcal{C})_{\ge 0}, (F^*\mathcal{C})_{\le 0})$ defines a $t$-structure on $F^*\mathcal{C}$, which we refer to as the Beilinson $t$-structure on $F^*\mathcal{C}$.   
\end{definition}{}                          
\begin{notation}
The truncation functors with respect to the Beilinson $t$-structure will be denoted as $\tau_{\ge n}^\mathrm{B}$ and $\tau_{\le n}^\mathrm{B}$. The functor $\tau_{\le n}^\mathrm{B} \tau_{\ge n}^\mathrm{B}$ will be denoted by $\pi_n^\mathrm{B}.$    
\end{notation}{}

 We will apply this construction to $\mathbb{Z}\mathrm{-Mod}^\mathrm{U-}_k$ equipped with the $t$-structure from \cref{ts}.                                                     \begin{proposition}\label{filtrationuse}
 Let $X$ be a Cohen--Macaulay scheme. Then $$F^* H^\uu_{*}(X) \in F^* \mathbb{Z}\mathrm{-Mod}^\mathrm{U-}_k $$ is connective with respect to the Beilinson $t$-structure.
\end{proposition}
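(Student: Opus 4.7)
The plan is to unwind the definition of the Beilinson $t$-structure and reduce the claim to the purity result (\cref{connect}) together with the identification of the graded pieces of the coniveau filtration (\cref{descriptionofgradedpiece}).

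By definition, to show that $F^* H^\uu_*(X) \in F^*\mathbb{Z}\mathrm{-Mod}^{\mathrm{U-}}_k$ lies in the connective part of the Beilinson $t$-structure, it suffices to check that for every $i \in \mathbb{Z}$, the graded piece $\mathrm{gr}^i H^\uu_*(X)$ is $i$-connective in $\mathbb{Z}\mathrm{-Mod}^{\mathrm{U-}}_k$. The coniveau filtration from \cref{filtrationdeff} is finite: $F_i \simeq 0$ for $i < 0$ and $F_i \simeq H^\uu_*(X)$ for $i \geq n+1$, where $n = \dim X$. Hence for $i < 0$ or $i > n$ the graded piece vanishes and the condition is vacuous.

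For $0 \leq i \leq n$, I would apply \cref{descriptionofgradedpiece} to obtain the identification
\begin{equation*}
\mathrm{gr}^i H^\uu_*(X) \simeq \prod_{x \in X^{(i)}} H^\uu_{*, x}(X_x).
\end{equation*}
Since $X$ is Cohen--Macaulay, \cref{connect} (purity) tells us that each factor $H^\uu_{*,x}(X_x)$ is $i$-connective for $x \in X^{(i)}$. The remaining point is that arbitrary products of $i$-connective objects are $i$-connective in $\mathbb{Z}\mathrm{-Mod}^{\mathrm{U-}}_k$.

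This last point is the only technicality, though a mild one: I would handle it by noting that the embedding $(\mathbb{Z}\mathrm{-Mod}^{\mathrm{U-}}_k)_{\geq 0} \hookrightarrow \mathrm{Sp}(\mathrm{St}_k)_{\geq 0}$ is limit-preserving (applying the $\mathbb{Z}$-module analogue of \cref{beach1}), while in $\mathrm{Sp}(\mathrm{St}_k)$ products are computed levelwise and commute with the homotopy sheaves $\pi_j$; connectivity is thus preserved under arbitrary products. Combined with the previous two steps, this yields the desired connectivity of each $\mathrm{gr}^i H^\uu_*(X)$ and hence of $F^* H^\uu_*(X)$ itself for the Beilinson $t$-structure.
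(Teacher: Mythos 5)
Your argument is exactly the paper's: the proof there consists of citing \cref{descriptionofgradedpiece} and \cref{connect}, which is precisely your reduction of Beilinson-connectivity to the $i$-connectivity of each graded piece $\mathrm{gr}^i H^\uu_*(X) \simeq \prod_{x \in X^{(i)}} H^\uu_{*,x}(X_x)$. The only extra detail you supply — that arbitrary products preserve connectivity (via the limit-preserving embedding of \cref{beach1} into $\mathrm{Sp}(\mathrm{St}_k)$) — is left implicit in the paper and is fine here.
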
{}                 
\begin{proof}
 Follows from \cref{descriptionofgradedpiece} and \cref{connect}.   
\end{proof}

\begin{remark}\label{whynotaddalabel} Note that for a scheme $X$ of dimension $n$, the filtered object $\pi_0^\mathrm{B} (F^* H^\uu_* (X))$ can be identified as a chain complex of unipotent group schemes, equipped with its naive increasing filtration (see \cite[Tag~12.15~(2)]{stacks-project}). This identifies with the chain complex $E^{\bullet, 0}_1$ from \cref{specseq}. Concretely, the complex is the following
\begin{equation}\label{grothendieck}
  0 \to \prod_{x \in X^{(n)}} H^\uu_{n,x}(X_x) \to  \prod_{x \in X^{(n-1)}} H^\uu_{n-1,x}(X_x) \to \ldots \to \prod_{x \in X^{(0)}} H^\uu_{0,x}(X_x) \to 0.
\end{equation}{}
\end{remark}{}
\begin{notation}\label{ntn:filtered_generalized_loc_jacobian}
 The chain complex in \cref{grothendieck} will be denoted by $J_*^\uu(X).$ 
 It can be viewed as a filtered object by equipping it with the naive filtration, which we will denote by $F^*J_*^\uu(X)$. 
\end{notation}

 In the situation when $X$ is Cohen--Macaulay, since $F^* H^\uu_*(X)$ is connective, we obtain a map of filtered objects
\begin{equation}\label{filteredmap}
  F^* H^\uu_*(X) \to F^*J_*^\uu(X).  
\end{equation}
Now let $G$ be a commutative unipotent group scheme over $k.$ 
The identification $R\Gamma (X, G) \simeq R\mathrm{Hom}(H^\uu_*(X), G )$ of \cref{dual} and the filtration $F^* H^\uu_* (X)$ allow us to endow $R\Gamma (X, G)$ with a decreasing filtration. 
Composing with \cref{filteredmap}, we obtain a map of filtered objects
$$R\mathrm{Hom}(F^*J_*^\uu(X), G ) \to R\mathrm{Hom}(F^*H^\uu_*(X), G ).$$  Let $D(\mathrm{Uni})$ denote the derived category of the abelian category of unipotent commutative group schemes over $k.$ We have a natural map of filtered objects
$$R\mathrm{Hom}_{D(\mathrm{Uni})}(F^*J_*^\uu(X), G ) \to R\mathrm{Hom}(F^*J_*^\uu(X), G ).$$ This induces a map of filtered objects
\begin{equation}\label{filunder}
R\mathrm{Hom}_{D(\mathrm{Uni})}(F^*J_*^\uu(X), G ) \to   R\mathrm{Hom}(F^*H^\uu_*(X), G ).  
\end{equation}{}

\subsection{Cohomology of Cohen--Macaulay schemes}\label{sec3.3}

We will prove that at the level of underlying objects, \cref{filunder} induces an isomorphism. More precisely,

\begin{proposition}\label{animpresult}
 Let $X$ be a Cohen--Macaulay scheme over $k$. For any commutative unipotent group scheme $G$ over $k$, we have an isomorphism (induced by \cref{filunder})
\begin{equation}
R\mathrm{Hom}_{D(\mathrm{Uni})}(J_*^\uu(X), G ) \xrightarrow{\sim}   R\mathrm{Hom}(H^\uu_*(X), G ) \xrightarrow{\sim} R\Gamma (X, G) 
\end{equation}
\end{proposition}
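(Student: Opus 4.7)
The second equivalence $R\mathrm{Hom}(H^\uu_*(X), G) \xrightarrow{\sim} R\Gamma(X, G)$ is immediate from \cref{dual}, so the real task is the first equivalence. The plan is to factor the map of \cref{filunder} into two pieces: a comparison $R\mathrm{Hom}_{D(\mathrm{Uni})}(J_*^\uu(X), G) \to R\mathrm{Hom}(J_*^\uu(X), G)$ between derived mapping spectra computed in $D(\mathrm{Uni})$ and in $\mathbb{Z}\mathrm{-Mod}^{\mathrm{U}}_k$, and a map $R\mathrm{Hom}(J_*^\uu(X), G) \to R\mathrm{Hom}(H^\uu_*(X), G)$ induced by the Beilinson-heart-truncation $F^*H^\uu_*(X) \to F^*J_*^\uu(X) = \pi_0^\mathrm{B}(F^*H^\uu_*(X))$ (which makes sense by \cref{filtrationuse} and \cref{whynotaddalabel}). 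I will show each factor is an equivalence on underlying objects.

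\textbf{Reduction to $G = \mathbb{G}_a$.} Both $G \mapsto R\mathrm{Hom}_{D(\mathrm{Uni})}(J_*^\uu(X), G)$ and $G \mapsto R\mathrm{Hom}(H^\uu_*(X), G)$ commute with cofiltered limits and fiber sequences in $G$. Writing $G \simeq \varprojlim_n G/V_G^n$ along powers of the Verschiebung, and then using that a Verschiebung-zero commutative unipotent group sits in a fiber sequence between products of $\mathbb{G}_a$'s (as in the proof of \cref{chekisom}), the whole assertion reduces to the case $G = \mathbb{G}_a$.

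\textbf{Concentration of local unipotent homology.} The technical heart is the following concentration lemma: for Cohen--Macaulay $X$ and $x \in X^{(i)}$, the object $H^\uu_{*,x}(X_x)$ is concentrated in homological degree $i$. By \cref{connect} it is already $i$-connective. For the upper bound, the plan is to apply the local-cohomology spectral sequence of \cref{eat},
\[
E_2^{p,q} = \mathrm{Ext}^p(H^\uu_{q,x}(X_x), \mathbb{G}_a) \Longrightarrow H^{p+q}_x(X_x, \mathcal{O}),
\]
whose abutment vanishes in total degrees $\neq i$ because $X$ is Cohen--Macaulay and $x$ has codimension $i$. Proceeding by induction on $q > i$, observing that no differentials can enter $E_r^{0,q}$ (since $E_r^{-r,\bullet} = 0$) and that \cref{connect} together with the induction hypothesis kills all outgoing differentials except possibly $d_{q-i+1}: E_{q-i+1}^{0,q} \to E_{q-i+1}^{q-i+1, i}$, I aim to force $\mathrm{Hom}(H^\uu_{q,x}(X_x), \mathbb{G}_a) = 0$ and hence $H^\uu_{q,x}(X_x) = 0$ by unipotence.

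\textbf{Conclusion.} Once concentration is in hand, \cref{descriptionofgradedpiece} makes the second factor an equivalence on graded pieces of the Beilinson filtration, hence on underlying objects: $H^\uu_*(X) \xrightarrow{\sim} J_*^\uu(X)$ in $\mathbb{Z}\mathrm{-Mod}^{\mathrm{U}}_k$. For the first factor, concentration yields $R\mathrm{Hom}_{D(\mathrm{Uni})}(H^\uu_{i,x}(X_x), \mathbb{G}_a) \simeq H^i_x(X_x, \mathcal{O})$ in degree $0$, so that $R\mathrm{Hom}_{D(\mathrm{Uni})}(J_*^\uu(X), \mathbb{G}_a)$ is identified with the classical Cousin complex $\bigoplus_i \bigoplus_{x \in X^{(i)}} H^i_x(X_x, \mathcal{O})$, which for Cohen--Macaulay $X$ is a resolution of $R\Gamma(X, \mathcal{O})$ by a standard result of Hartshorne. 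The hard part will be the concentration lemma, where one must carefully rule out the potentially surviving differential $d_{q-i+1}$ landing in the higher-Ext groups of the already-nonzero object $H^\uu_{i,x}(X_x)$.
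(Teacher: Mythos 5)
Your reduction to $G=\mathbb{G}_a$ and your final assembly are broadly reasonable, but the proof stands or falls on the ``concentration lemma,'' and that is a genuine gap — indeed it is a strictly stronger statement than anything the paper establishes, and the paper's own argument is structured precisely so as to avoid it. Your induction on $q>i$ breaks down exactly where you flag it: after using \cref{connect} and the inductive hypothesis, the class $\mathrm{id}\in\mathrm{Hom}(H^\uu_{q,x}(X_x),\mathbb{G}_a)=E_2^{0,q}$ (nonzero whenever $H^\uu_{q,x}(X_x)\neq 0$) need only be killed by the single remaining differential $d_{q-i+1}\colon E^{0,q}_{q-i+1}\to E^{q-i+1,i}_{q-i+1}$, whose target is a subquotient of the \emph{sheaf-theoretic} $\mathrm{Ext}^{q-i+1}(H^\uu_{i,x}(X_x),\mathbb{G}_a)$ computed in $\mathbb{Z}\mathrm{-Mod}^{\mathrm{U}}_k$. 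These higher flat Ext groups into $\mathbb{G}_a$ do not vanish in general (this failure is the entire raison d'\^etre of comparing $R\mathrm{Hom}_{D(\mathrm{Uni})}$ with $R\mathrm{Hom}$ in \cref{animpresult}), so nothing forces $E_2^{0,q}=0$, and you have no mechanism to rule the differential out. There is also a secondary conflation in your conclusion: even granting concentration, \cref{dual2} only identifies the \emph{sheaf} $R\mathrm{Hom}(H^\uu_{i,x}(X_x),\mathbb{G}_a)$ with $H^i_x(X_x,\mathcal{O})$; to get the same for $R\mathrm{Hom}_{D(\mathrm{Uni})}$ you still need the vanishing of $\mathrm{Ext}^{j}_{D(\mathrm{Uni})}$ for $j\geq 1$ (the $j\geq 2$ part is \cref{fckd}, via Demazure--Gabriel, and $j=1$ needs the injection into sheaf $\mathrm{Ext}^1$), which you do not address.

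The paper's proof uses only the purity statement (\cref{connect}, $i$-connectivity) and never claims $H^\uu_{*,x}(X_x)$ is concentrated in degree $i$. Instead it compares the two spectral sequences coming from the naive filtration on $J_*^\uu(X)$ and the coniveau filtration on $H^\uu_*(X)$, and proves the $E_1$-comparison maps $\mathrm{Ext}^j_{D(\mathrm{Uni})}\bigl(\prod_{x\in X^{(i)}}H^\uu_{i,x}(X_x),G\bigr)\to\bigoplus_{x\in X^{(i)}}H^{i+j}_x(X_x,G)$ are isomorphisms degree by degree: for $j\leq 0$ by \cref{lw0}, for $j=1$ by the identification $H^{i+1}_x(X_x,G)\simeq\mathrm{Ext}^1(H^\uu_{i,x}(X_x),G)$ of \cref{micro1} (and \cref{b2}), and for $j\geq 2$ because \emph{both} sides vanish — $H^{i+j}_x(X_x,G)=0$ by the d\'evissage along the Verschiebung in \cref{lw}, and $\mathrm{Ext}^{j}_{D(\mathrm{Uni})}=0$ by \cref{fckd,fckd2}. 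In other words, the paper never needs to know the higher local homology group schemes vanish; it only needs the two filtered mapping spectra to agree, and in degrees $\geq 2$ this is a vanishing statement on each side separately. If you want to salvage your route, you would have to either prove the concentration lemma by a genuinely new argument or restructure along the paper's lines.
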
{}                                                         Only the left isomorphism needs to be proven since the other one follows from \cref{dual}. We first note the following lemmas.

\begin{lemma}\label{lw0}
    Let $X$ be a Cohen--Macaulay scheme over $k.$  Then for any $i \ge 0$, $x \in X^{(i)}$ and any commutative unipotent group scheme $G$ over $k$, we have
    $$H^i_x(X_x, {G}) \simeq \mathrm{Hom}\left(H^\uu_{i,x}(X_x), {G}\right)$$ and $H^t_x(X_x, G)=0$ for $t<i.$
\end{lemma}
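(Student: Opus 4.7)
The plan is to derive both assertions simultaneously from the local homology spectral sequence of \cref{eat} combined with the connectivity estimate already established in \cref{connect}. Since $x$ is the (unique) closed point of the local scheme $X_x$, the spectral sequence from \cref{eat} specializes to
\[
E_2^{p,q} = \mathrm{Ext}^p\bigl(H^\uu_{q,x}(X_x),\, G\bigr) \implies H^{p+q}_x(X_x, G),
\]
for an arbitrary commutative unipotent group scheme $G$ over $k$, and it suffices to analyze its $E_2$-page using the vanishing of unipotent local homology in low degrees.

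By \cref{connect}, the Cohen--Macaulay hypothesis guarantees that $H^\uu_{q,x}(X_x) = 0$ for all $q < i$, so the entire $E_2$-page vanishes in the strip $q < i$. Consequently, for any total degree $t < i$ and any $(p,q)$ with $p+q = t$, the inequality $q \leq t < i$ forces $E_2^{p,q} = 0$; this yields $H^t_x(X_x, G) = 0$ for $t < i$. For total degree exactly $i$, the only $(p,q)$ with $p \geq 0$, $q \geq i$, and $p + q = i$ is $(0, i)$, giving $E_2^{0,i} = \mathrm{Hom}(H^\uu_{i,x}(X_x), G)$.

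To conclude the second assertion I would verify that this term survives to the $E_\infty$-page. Any differential $d_r \colon E_r^{0,i} \to E_r^{r, i-r+1}$ lands in a region where $q = i - r + 1 < i$ (for $r \geq 2$), so its target vanishes by \cref{connect}; incoming differentials originate from $E_r^{-r, i+r-1}$ and vanish since the Postnikov filtration gives a first-quadrant spectral sequence with $p \geq 0$. Hence $E_\infty^{0,i} = E_2^{0,i}$, and since this is the only nonzero contribution to $H^i_x(X_x, G)$, the filtration yields the desired isomorphism $H^i_x(X_x, G) \simeq \mathrm{Hom}(H^\uu_{i,x}(X_x), G)$.

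The argument is essentially formal given the inputs, and the only real point requiring care is the convergence and edge behavior of the spectral sequence, so I expect no significant obstacle beyond bookkeeping for the differentials at the edge.
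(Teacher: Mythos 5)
Your argument is correct and is essentially the paper's proof: the paper deduces the lemma in one line from \cref{dual2} (the identification $R\mathrm{Hom}(H^\uu_{*,x}(X_x),G)\simeq R\Gamma_x(X_x,G)$) together with the $i$-connectivity of $H^\uu_{*,x}(X_x)$ from \cref{connect}, and your use of the spectral sequence of \cref{eat} is just the Postnikov-filtration form of that same duality. Your edge/differential bookkeeping is fine, and convergence is already supplied by \cref{eat}, so no gap remains.
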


\begin{proof}
By \cref{dual2}, we have $ \mathrm{RHom}(H^\uu_{*, x}(X_x), G) \simeq R\Gamma_{x}(X_x, G).$ The claim now follows directly from \cref{connect}.    
\end{proof}

\begin{proposition}\label{lw}
  Let $X$ be a Cohen--Macaulay scheme over $k.$  Then for any $i \ge 0$, $x \in X^{(i)}$ and any commutative unipotent group scheme $G$ over $k$, we have 
$$H^{i+j}_x(X_x, G)=0 $$ for $j \ge 2.$  
\end{proposition}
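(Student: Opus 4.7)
The plan is to prove the vanishing by devissage in $G$, reducing via the Verschiebung filtration to the case $G = \mathbb{G}_a$, where the conclusion follows directly from the Cohen--Macaulay hypothesis.

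First I would handle $G = \mathbb{G}_a$. By comparison between flat and Zariski cohomology for quasi-coherent sheaves, $R\Gamma_x(X_x, \mathbb{G}_a) \simeq R\Gamma_{\mathfrak{m}_x}(\mathcal{O}_{X,x})$. Since $\mathcal{O}_{X,x}$ is a Cohen--Macaulay local ring of dimension $i$, classical local cohomology theory gives $H^n_{\mathfrak{m}_x}(\mathcal{O}_{X,x}) = 0$ for every $n \neq i$; in particular $H^{i+j}_x(X_x, \mathbb{G}_a) = 0$ for all $j \geq 1$, which is even stronger than needed.

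Next I would extend to an arbitrary finite type unipotent $G$. If $V_G = 0$, the argument used in the proof of \cref{useful} supplies a short exact sequence $0 \to G \to \mathbb{G}_a^r \to \mathbb{G}_a^s \to 0$ with $r, s$ finite; the associated long exact sequence for $R\Gamma_x(X_x, -)$, combined with the previous paragraph, yields $H^{i+j}_x(X_x, G) = 0$ for $j \geq 2$. For general finite type $G$, the Verschiebung satisfies $V_G^m G = 0$ for some $m$; applying induction on $m$ to the short exact sequence $0 \to V_G^{m-1} G \to G \to G/V_G^{m-1} G \to 0$ (whose left term is killed by $V$ and whose right satisfies $V^{m-1} = 0$) and invoking the long exact sequence yields the same vanishing for $G$ itself.

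For a general commutative unipotent group scheme $G$, write $G \simeq \varprojlim_\alpha G_\alpha$ as the inverse limit of its finite type unipotent quotients, with each transition surjective. The functor $R\Gamma_x(X_x,-)$ is a fiber of right adjoints on flat sheaves, hence preserves inverse limits, and the Milnor sequence reads
\[
0 \to R^1\varprojlim_\alpha H^{i+j-1}_x(X_x, G_\alpha) \to H^{i+j}_x(X_x, G) \to \varprojlim_\alpha H^{i+j}_x(X_x, G_\alpha) \to 0.
\]
For $j \geq 3$ both outer terms vanish by the finite type case. For $j = 2$ the rightmost term vanishes directly; the $R^1\varprojlim$ term vanishes by Mittag--Leffler, since for any surjection $G_\beta \twoheadrightarrow G_\alpha$ with kernel $K$ (itself finite type unipotent) the long exact sequence shows surjectivity of $H^{i+1}_x(X_x, G_\beta) \to H^{i+1}_x(X_x, G_\alpha)$, using $H^{i+2}_x(X_x, K) = 0$ from the finite type case.

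The main obstacle will be the limit step: verifying rigorously that $R\Gamma_x$ commutes with the inverse limit defining $G$ in the flat topology, and checking the Mittag--Leffler condition. Both ultimately rest on the finite type vanishing already in hand, but some care is needed regarding the presentation of $G$ as an inverse limit of finite type pieces and the interpretation of the limit in the appropriate sheaf category.
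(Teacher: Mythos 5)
Your reduction to $G=\mathbb{G}_a$ and the devissage for finite type $G$ (first $V_G=0$ via $0\to G\to \mathbb{G}_a^r\to\mathbb{G}_a^s\to 0$, then induction on the power of the Verschiebung) are fine and match the paper's devissage pattern. The genuine gap is the last step. You pass from finite type to general $G$ by writing $G\simeq\varprojlim_\alpha G_\alpha$ over \emph{all} finite type quotients and then invoking a Milnor sequence and Mittag--Leffler. But that system is a general cofiltered poset, not an $\mathbb{N}$-indexed tower: it need not have a countable cofinal subsystem (think of $G=\prod_I\mathbb{G}_a$ with $I$ uncountable), so the displayed $R^1\varprojlim$ sequence is not available, Mittag--Leffler is a tower-specific criterion, and higher derived limits $R^q\varprojlim$ for $q\ge 2$ would also have to be controlled. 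In addition, the assertion that $R\Gamma_x(X_x,-)$ ``preserves inverse limits'' needs the identification of $G$ with the \emph{derived} inverse limit of the $G_\alpha$ in fppf sheaves, which surjectivity of the transition maps of affine group schemes does not give you for free over an arbitrary cofiltered index. You flag this as the main obstacle, but as set up it cannot be closed by the tools you cite, so the proof is incomplete.

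The paper's proof avoids this by never restricting to finite type. For $V_G=0$ it uses a presentation $0\to G\to\prod_I\mathbb{G}_a\to\prod_J\mathbb{G}_a\to 0$ with \emph{possibly infinite} $I,J$, and the key point that $H^{i+j}_x\bigl(X_x,\prod_I\mathbb{G}_a\bigr)\simeq\prod_I H^{i+j}_x(X_x,\mathbb{G}_a)=0$ for $j\ge 1$; then devissage handles $V_G^\ell=0$ for arbitrary (not necessarily finite type) $G$. For general $G$ it uses the canonical $\mathbb{N}$-indexed tower $G\simeq\varprojlim_\ell G/V^\ell G$, where Milnor sequences do apply and the transition maps $H^{i+j-1}_x(X_x,G/V^{\ell}G)\to H^{i+j-1}_x(X_x,G/V^{\ell-1}G)$ are surjective because $H^{i+j}_x(X_x,V^{\ell-1}G/V^\ell G)=0$ from the earlier step, killing $R^1\varprojlim$. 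If you want to salvage your argument, replace your cofiltered system with this tower — but note that its stages $G/V^\ell G$ and the graded pieces $V^{\ell-1}G/V^\ell G$ are still not finite type in general, so you would in any case need the infinite-product version of the $V$-killed case rather than only the finite type statement borrowed from \cref{useful}.
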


\begin{proof}
First we prove the result when $G$ satisfies the property that $V_G=0$. Then there exists a short exact sequence
\begin{equation}\label{fckd1}
0 \to G \to \prod_I \mathbb{G}_a \to \prod_J \mathbb{G}_a \to 0,    
\end{equation}
where $I$ and $J$ are possibly infinite index sets. Note that $$H^{i+j}_x\left(X_x, \prod_I \mathbb{G}_a\right) \simeq \prod_I H^{i+j}_x(X_x, \mathbb{G}_a)=0 $$ for $j \ge 1$ (and similarly for $J$).
The desired claim now follows from a long exact sequence chase.  

Suppose now that $G$ satisfies $V_G^\ell=0$ for some $ \ell \geq 2.$ 
The claim in this case follows from induction on $ \ell $ and chasing the long exact sequence on cohomology associated to the short exact sequence $0 \to VG \to G \to G/VG \to 0.$

Now for a general commutative unipotent group scheme $G,$ we have $G \simeq \varprojlim G/V^\ell G.$ By the previous paragraph, for $j \ge 2$, we have $H^{i+j}_x (X_x, G/V^\ell G)=0$ for all $ \ell .$ Note that for $j \ge 2,$ the induced maps 
$$H^{i+j-1}_x(X_x, G/V^{\ell} G) \to H^{i+j-1}_x(X_x, G/V^{\ell-1} G)$$ are surjective, since $H^{i+j} _x(X_x, V^{\ell-1}G/V^\ell G )=0.$ In particular, $R^1\varprojlim H^{i+j-1}_x(X_x, G/V^{\ell} G)=0.$ The claim in the lemma now follows from Milnor sequences.
\end{proof}

\begin{proposition}\label{micro1}
  Let $X$ be a Cohen--Macaulay scheme over $k.$  Then for any $i \ge 0$, $x \in X^{(i)}$ and any commutative unipotent group scheme $G$ over $k$, we have a natural isomorphism
$$H^{i+1}_{x}(X_x, G) \xrightarrow{\sim} \mathrm{Ext}^1 \left(H^\uu_{i,x}(X_x), G\right).$$
\end{proposition}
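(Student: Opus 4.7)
Set $M := H^\uu_{*, x}(X_x)$; by \cref{connect}, $M$ is $i$-connective. The plan is to identify $H^{i+1}_x(X_x, -)$ with the first right-derived functor of $\mathrm{Hom}(H^\uu_{i,x}(X_x), -) \cong H^i_x(X_x, -)$, the latter isomorphism being \cref{lw0}. Applying $R\mathrm{Hom}(-, G)$ to the Postnikov fiber sequence $\tau_{\geq i+1} M \to M \to H^\uu_{i,x}(X_x)[i]$ and invoking \cref{dual2} yields a fiber sequence
\[
R\mathrm{Hom}(H^\uu_{i,x}(X_x), G)[-i] \to R\Gamma_x(X_x, G) \to R\mathrm{Hom}(\tau_{\geq i+1} M, G).
\]
Since $\tau_{\geq i+1} M$ is $(i+1)$-connective, its Postnikov spectral sequence against $G$ contributes nothing in cohomological degrees $\leq i$, so $\mathrm{Ext}^j(\tau_{\geq i+1} M, G) = 0$ for $j \leq i$. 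The resulting long exact sequence furnishes a natural injection $\mathrm{Ext}^1(H^\uu_{i,x}(X_x), G) \hookrightarrow H^{i+1}_x(X_x, G)$; the isomorphism of the statement will be defined as its inverse.

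For surjectivity, extending the long exact sequence one step using $H^{i+2}_x(X_x, G) = 0$ (\cref{lw}) and a similar computation of $\mathrm{Ext}^{i+1}(\tau_{\geq i+1} M, G) \cong \mathrm{Hom}(H^\uu_{i+1,x}(X_x), G)$ (via the further truncation $\tau_{\geq i+2} M \to \tau_{\geq i+1} M \to H^\uu_{i+1,x}(X_x)[i+1]$) yields a four-term exact sequence
\[
0 \to \mathrm{Ext}^1(H^\uu_{i,x}, G) \to H^{i+1}_x(X_x, G) \xrightarrow{\alpha_G} \mathrm{Hom}(H^\uu_{i+1,x}(X_x), G) \to \mathrm{Ext}^2(H^\uu_{i,x}, G) \to 0,
\]
so the proposition reduces to showing that the natural map $\alpha_G$ vanishes for every commutative unipotent group scheme $G$. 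I would accomplish this via effaceability: every such $G$ admits an embedding $G \hookrightarrow G'$ into a commutative unipotent $G'$ with $H^{i+1}_x(X_x, G') = 0$. A natural candidate is $G' = \prod_\alpha W_{n_\alpha}$, a product of truncated Witt vector schemes: Cohen--Macaulayness gives $H^{i+1}_x(X_x, \bG_a) = 0$, and the Witt SES $0 \to \bG_a \to W_n \to W_{n-1} \to 0$ combined with \cref{lw} gives $H^{i+1}_x(X_x, W_n) = 0$ inductively (with products commuting with $R\Gamma_x$). Naturality of $\alpha$ then makes $\alpha_G$ composed with the injection $\mathrm{Hom}(H^\uu_{i+1,x}, G) \hookrightarrow \mathrm{Hom}(H^\uu_{i+1,x}, G')$ factor through $H^{i+1}_x(X_x, G') = 0$, and left-exactness of $\mathrm{Hom}(H^\uu_{i+1,x}, -)$ forces $\alpha_G = 0$.

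The main technical obstacle is the embedding claim---that $\{W_n\}_{n \geq 1}$ is a cogenerating family for commutative unipotent affine $k$-group schemes. This can be approached through Dieudonn\'e-theoretic structure results (when $k$ is perfect), or by bootstrapping: the case $V_G = 0$ is handled by the embedding $G \hookrightarrow \prod_I \bG_a$ coming from the SES used in the proof of \cref{lw}, the case $V^\ell_G = 0$ follows inductively via $0 \to VG \to G \to G/VG \to 0$, and general $G$ via the Verschiebung filtration $G \simeq \varprojlim G/V^n G$ together with a Milnor-sequence argument mirroring that proof. A parallel approach, sidestepping this cogeneration claim, would be to compare cokernels in the $H^*_x$- and $\mathrm{Ext}^*$-long exact sequences directly for the case $V_G = 0$ (using the $\prod \bG_a$-presentation), and bootstrap to arbitrary $G$ by the same Verschiebung devissage combined with \cref{chekisom}.
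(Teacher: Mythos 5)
Your reduction of the proposition to the vanishing of the edge map $\alpha_G \colon H^{i+1}_x(X_x,G) \to \mathrm{Hom}\bigl(H^\uu_{i+1,x}(X_x),G\bigr)$ is correct: the truncation fiber sequence, the $t$-structure vanishing $\mathrm{Ext}^j(\tau_{\geq i+1}M,G)=0$ for $j\le i$, the identification $\mathrm{Ext}^{i+1}(\tau_{\geq i+1}M,G)\simeq\mathrm{Hom}\bigl(H^\uu_{i+1,x}(X_x),G\bigr)$, and the use of \cref{lw} in degree $i+2$ all check out, and the effaceability square does kill $\alpha_G$ whenever an embedding $G\hookrightarrow G'$ with $H^{i+1}_x(X_x,G')=0$ exists (the computation $H^{i+1}_x(X_x,\prod_\alpha W_{n_\alpha})=0$ via the Witt extensions and Cohen--Macaulayness is fine over any field). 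This is a genuinely different route from the paper's: the paper builds the comparison map $\theta_1$ directly, proves injectivity by a long exact sequence chase, and then establishes the isomorphism by d\'evissage in the coefficient $G$ --- first $\mathbb{G}_a$ and products thereof, then $V_G=0$ via the $\prod_I\mathbb{G}_a$-presentation, then $V_G^\ell=0$ using the $\mathrm{Ext}^2$-vanishing of \cref{fckd}, then arbitrary $G$ via $G\simeq\varprojlim_\ell G/V^\ell G$ and Milnor sequences --- so it never needs any cogenerator statement for unipotent groups.

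The gap is exactly the point you flag: the claim that every commutative affine unipotent $k$-group embeds into a product of truncated Witt group schemes. Over a perfect field this is classical (it follows from the exactness and faithfulness of $\varinjlim_n\mathrm{Hom}(-,W_n)$, the Demazure--Gabriel result the paper itself invokes in \cref{usethislemma}), so your argument is complete in that case; it also holds trivially in characteristic zero. But \cref{micro1} is stated over an arbitrary field, and your proposed bootstrap for the cogeneration claim does not work as written: an embedding of $G$ cannot be assembled from embeddings of $VG$ and $G/VG$ along $0\to VG\to G\to G/VG\to 0$ unless maps $VG\to W_n$ extend (after composing with $W_n\hookrightarrow W_{n'}$) to maps out of $G$, and that extension property is precisely the exactness of the Dieudonn\'e functor, a perfect-field statement; the Milnor-sequence step for $G\simeq\varprojlim G/V^\ell G$ has the same defect. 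Your ``parallel approach'' is in substance the paper's d\'evissage, but as sketched it is not yet a proof: \cref{chekisom} is not the relevant ingredient there; what the coefficient d\'evissage actually needs is the surjectivity of $\mathrm{Ext}^1\bigl(H^\uu_{i,x}(X_x),G\bigr)\to\mathrm{Ext}^1\bigl(H^\uu_{i,x}(X_x),G/VG\bigr)$ (supplied by an $\mathrm{Ext}^2$-vanishing as in \cref{fckd}) together with the Milnor-sequence argument of \cref{lw}. In short: a clean and correct alternative over perfect $k$, but incomplete in the stated generality unless you either prove the Witt cogeneration claim over imperfect fields or fall back on the coefficient d\'evissage with the missing ingredients supplied.
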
                                                                                \begin{proof}
  By \cref{dual2}, we have $ \mathrm{RHom}(H^\uu_{*, x}(X_x), G) \simeq R\Gamma_{x}(X_x, G).$ By \cref{connect}, $H^\uu_{*,x}(X_x)$ is $i$-connective. This gives a natural truncation map $H^\uu_{*,x}(X_x) \to H^\uu_{i,x}(X_x)[i].$ Thus, we have a natural map
\begin{equation*}
\mathrm{RHom}\left(H^\uu_{i, x}(X_x), G[-i]\right) \to \mathrm{RHom}\left(H^\uu_{*, x}(X_x), G\right).    
\end{equation*}
This induces natural maps
$$\theta_j: \mathrm{Ext}^{j} \left(H^\uu_{i,x}(X_x), G\right) \to H^{i+j}_{x}(X_x, G).$$ By \cref{lw0}, the map $\theta_0$ is an isomorphism. We would like to show that $\theta_1$ is an isomorphism. Note that by construction and a long exact sequence chase, it follows that $\theta_1$ is injective. It is thus an isomorphism when $G = \mathbb{G}_a$, since the target of $\theta_1$ vanishes in this case. It follows that $\theta_1$ is also an isomorphism when $G= \prod_I \mathbb{G}_a$, where $I$ is a possibly infinite index set. In particular, $\mathrm{Ext}^1 (H^\uu_{i,x}(X_x), \prod_I \mathbb{G}_a )=0.$ Now we pause to prove the following lemma.

\begin{lemma}\label{fckd}
 Let $X$ be a Cohen--Macaulay scheme over $k.$ Then for any $i \ge 0,$ $x \in X^{(i)}$ and any commutative unipotent group scheme $G$ over $k$, we have $$\mathrm{Ext}^j_{D(\mathrm{Uni})}(H^\uu_{i,x}(X_x), G)=0$$ for $j \ge 2.$  
\end{lemma}

\begin{proof}
 Since $H^\uu_{i,x}(X_x)$ is unipotent, by \cite[Proposition~V-1 5.1 and 5.2]{Demazure:1970}, we have $\mathrm{Ext}^j (H^\uu_{i,x}(X_x), \mathbb{G}_a)=0$ for $j \ge 2.$ It follows that for any index set $I$, we have $\mathrm{Ext}^j \left(H^\uu_{i,x}(X_x), \prod_I \mathbb{G}_a\right)=0$ for $j \ge 2.$ 
 If $G$ is such that $V_G=0,$ observe that  the short exact sequence \cref{fckd1} induces a long exact sequence on Ext groups. 
 By \cite[Proposition~V-1 5.1 and 5.2]{Demazure:1970}, we have $\mathrm{Ext}^1 \left(H^\uu_{i,x}(X_x), \prod_I \mathbb{G}_a \right)=0$; applying this to the aforementioned long exact sequence and using exactness gives the desired result. 
 
 Suppose now that $G$ satisfies $V_G^\ell =0$ for some $\ell \geq 2.$ The desired vanishing follows inductively from the short exact sequence $0 \to VG \to G \to G/VG \to 0$ and the argument of the previous paragraph. 
 
 For a general unipotent group scheme $G,$ we have $G \simeq \varprojlim G/V^\ell G.$ Since we have proven the statement of the lemma for unipotent group schemes killed by a power of $V$, it follows from a long exact sequence chase that the maps 
$$\mathrm{Ext}^{j-1}\left(H^\uu_{i,x}(X_x), G/V^\ell\right) \to \mathrm{Ext}^{j-1}\left(H^\uu_{i,x}(X_x), G/V^{\ell-1}\right)$$ are surjective for $j \ge 2.$ Similarly to \cref{lw}, by a Milnor sequence argument, we obtain the desired vanishing in general.
\end{proof}{}

We return to the proof of \cref{micro1}. 
Proceeding in a manner similar to the proof of \cref{lw} using \cref{fckd1} shows that $\theta_1$ is an isomorphism when $G$ has the property $V_G=0.$ Suppose now that $G$ satisfies $V_G^\ell =0$ for some $\ell \geq 2.$ The long exact sequences associated to the short exact sequence $0 \to VG \to G \to G/VG \to 0$ along with \cref{fckd} (which implies that the map $\mathrm{Ext}^1(H^\uu_{i,x}(X_x),G) \to \mathrm{Ext}^1 (H^\uu_{i,x}(X_x),G/VG)$ is surjective) and five lemma implies that $\theta_1$ is an isomorphism in that case. The case of a general unipotent group scheme follows from the fact that $G \simeq \varprojlim G/V^\ell G$ and using Milnor sequences along with the five lemma.
\end{proof}{}                                                             \begin{lemma}\label{b1}
Let $X$ be a Cohen--Macaulay scheme over $k.$  Then for any $i \ge 0$ and any finite type commutative unipotent group scheme $G$ over $k$, we have
$$\bigoplus_{x \in X^{(i)}}H^i_x(X_x, {G}) \simeq  \mathrm{Hom}\biggl(\prod_{x \in X^{(i)}} H^\uu_{i,x}(X_x), {G} \biggl).$$    
\end{lemma}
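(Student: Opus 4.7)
The plan is to deduce the isomorphism by combining the pointwise identification of \cref{lw0} with the Ext-limit compatibility established in \cref{useful}. Both ingredients are already available, so the proof amounts to a clean limit--colimit swap.

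First, I will rewrite the product as a cofiltered limit in the category of commutative affine group schemes:
$$\prod_{x \in X^{(i)}} H^\uu_{i,x}(X_x) \;\simeq\; \varprojlim_{S} \prod_{x \in S} H^\uu_{i,x}(X_x),$$
where $S$ ranges over the filtered poset of finite subsets of $X^{(i)}$ and the transition maps are the canonical projections. Since $G$ is a finite type commutative unipotent group scheme, applying \cref{useful} in degree $0$ yields
$$\mathrm{Hom}\biggl(\prod_{x \in X^{(i)}} H^\uu_{i,x}(X_x), G\biggr) \;\simeq\; \varinjlim_S \mathrm{Hom}\biggl(\prod_{x \in S} H^\uu_{i,x}(X_x), G\biggr).$$

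Next, for each finite $S$, a finite product in the additive category of commutative affine group schemes coincides with a finite coproduct, so additivity of $\mathrm{Hom}(-, G)$ gives
$$\mathrm{Hom}\biggl(\prod_{x \in S} H^\uu_{i,x}(X_x), G\biggr) \;\simeq\; \bigoplus_{x \in S} \mathrm{Hom}\bigl(H^\uu_{i,x}(X_x), G\bigr).$$
Passing to the filtered colimit over $S$, and noting that filtered colimits commute with direct sums, identifies the right-hand side with $\bigoplus_{x \in X^{(i)}} \mathrm{Hom}(H^\uu_{i,x}(X_x), G)$. Finally, I will invoke \cref{lw0} (which uses the Cohen--Macaulay hypothesis on $X$) to identify each summand $\mathrm{Hom}(H^\uu_{i,x}(X_x), G)$ with $H^i_x(X_x, G)$, yielding the stated isomorphism.

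I do not expect any genuine obstacle; the only point worth highlighting is the essential role of the finite type hypothesis on $G$, which is exactly what makes \cref{useful} applicable and thus allows $\mathrm{Hom}(-, G)$ to turn the cofiltered limit of group schemes into a filtered colimit, ultimately converting an infinite product into an infinite direct sum.
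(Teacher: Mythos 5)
Your proposal is correct and follows essentially the same route as the paper, whose proof simply cites \cref{useful} and \cref{lw0}; you have merely spelled out the implicit step of writing the infinite product as the cofiltered limit of finite subproducts so that \cref{useful} (in degree $0$) converts $\mathrm{Hom}(-,G)$ of the limit into a filtered colimit of direct sums, after which \cref{lw0} identifies each summand. No gaps.
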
    
\begin{proof}
 Follows from \cref{useful} and \cref{lw0}.   
\end{proof}{}
\begin{lemma}\label{b2}
      Let $X$ be a Cohen--Macaulay scheme over $k.$  Then for any $i \ge 0$ and any finite type commutative unipotent group scheme $G$ over $k$, we have a natural isomorphism
$$\bigoplus_{x \in X^{(i)}}H^{i+1}_{x}(X_x, G) \xrightarrow{\sim}  \mathrm{Ext}^1 \biggl(\prod_{{x \in X^{(i)}}} H^\uu_{i,x}(X_x), G \biggl).$$
\end{lemma}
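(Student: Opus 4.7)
The plan is to combine the pointwise identification from \cref{micro1} with an inverse-limit argument directly analogous to the one used for \cref{b1}. First, apply \cref{micro1} at each $x \in X^{(i)}$ to obtain a canonical isomorphism $H^{i+1}_x(X_x, G) \xrightarrow{\sim} \mathrm{Ext}^1(H^\uu_{i,x}(X_x), G)$; summing over all codimension-$i$ points $x$ reduces the claim to exhibiting a natural isomorphism
$$\bigoplus_{x \in X^{(i)}} \mathrm{Ext}^1(H^\uu_{i,x}(X_x), G) \xrightarrow{\sim} \mathrm{Ext}^1\!\left(\prod_{x \in X^{(i)}} H^\uu_{i,x}(X_x),\, G\right),$$
the map being induced by the projection maps from the product onto each factor.

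Next, I would express the infinite product as a cofiltered limit of finite sub-products:
$$\prod_{x \in X^{(i)}} H^\uu_{i,x}(X_x) \simeq \varprojlim_{S} \prod_{x \in S} H^\uu_{i,x}(X_x),$$
where $S$ ranges over finite subsets of $X^{(i)}$. By \cref{prc}, each $H^\uu_{i,x}(X_x)$ lies in the heart and is a commutative unipotent affine group scheme, so every finite sub-product, and hence the full inverse limit, is a commutative affine group scheme. Since $G$ is of finite type, \cref{useful} then yields
$$\mathrm{Ext}^1\!\left(\prod_{x \in X^{(i)}} H^\uu_{i,x}(X_x),\, G\right) \simeq \varinjlim_S \mathrm{Ext}^1\!\left(\prod_{x \in S} H^\uu_{i,x}(X_x),\, G\right).$$
For finite $S$, the product is also a coproduct in the abelian category of commutative affine group schemes, so $\mathrm{Ext}^1$ splits as $\bigoplus_{x \in S} \mathrm{Ext}^1(H^\uu_{i,x}(X_x), G)$; passing to the filtered colimit over $S$ recovers the full direct sum on the left, producing the desired isomorphism.

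The one point to track carefully is that the isomorphism provided by \cref{useful} is compatible with the natural comparison map built from the projections---that is, that the identification is natural in the inverse system. This follows from the functoriality of the Breen--Deligne resolution argument underpinning \cref{useful}, so I do not expect a real obstacle; the proof is essentially parallel to that of \cref{b1}, with $\mathrm{Hom}$ replaced by $\mathrm{Ext}^1$ and the pointwise input supplied by \cref{micro1} in place of \cref{lw0}.
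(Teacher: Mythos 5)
Your proposal is correct and follows essentially the same route as the paper, whose proof is simply to combine \cref{useful} with \cref{micro1} in exact parallel with the proof of \cref{b1}. The extra detail you supply (writing the product as a cofiltered limit of finite sub-products and splitting $\mathrm{Ext}^1$ over finite biproducts) is just an unpacking of how \cref{useful} is applied, not a different argument.
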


\begin{proof}
Follows from \cref{useful} and \cref{micro1}.    
\end{proof}

\begin{lemma}\label{fckd2}
 Let $X$ be a Cohen--Macaulay scheme over $k.$ Then for any $i \ge 0$ and any finite type commutative unipotent group scheme $G$ over $k$, we have $$\mathrm{Ext}^j_{D(\mathrm{Uni})}\biggl(\prod_{x \in X^{(i)}}H^\uu_{i,x}(X_x), G\biggl)=0$$ for $j \ge 2.$  
\end{lemma}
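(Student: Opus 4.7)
The plan is to prove this lemma by dévissage on $G$, mimicking the strategy of \cref{fckd} with the source $H^\uu_{i,x}(X_x)$ replaced by the product $M := \prod_{x \in X^{(i)}} H^\uu_{i,x}(X_x)$. The essential preliminary observation is that $M$ itself is a commutative unipotent affine group scheme over $k$, since the class of unipotent commutative affine group schemes is closed under arbitrary products.

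For the base case $G = \mathbb{G}_a$, the vanishing $\mathrm{Ext}^j_{D(\mathrm{Uni})}(M, \mathbb{G}_a) = 0$ for $j \ge 2$ follows by applying \cite[Proposition V-1, 5.1 and 5.2]{Demazure:1970} to the unipotent group scheme $M$, just as in the first step of \cref{fckd}. For $G$ of finite type with $V_G = 0$, one writes $G$ as an extension $0 \to G \to \mathbb{G}_a^I \to \mathbb{G}_a^J \to 0$ with $I, J$ \emph{finite}---this sequence being available precisely because $G$ is of finite type, by the classification of finite type unipotent group schemes in terms of $k_\sigma[F]$-modules (cf.\ the proof of \cref{useful}). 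Applying $\mathrm{RHom}_{D(\mathrm{Uni})}(M, -)$ to this sequence and using the base case (combined with the splitting of Ext over finite direct sums) gives the vanishing for such $G$. For general finite type $G$, one inducts on the smallest $\ell$ with $V_G^\ell = 0$ via the short exact sequence $0 \to V^{\ell-1}G \to G \to G/V^{\ell-1}G \to 0$, exactly as in \cref{fckd}.

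The main obstacle is the base case: verifying that Demazure's vanishing applies to the (possibly non--quasi-finite type) unipotent group scheme $M$. If this is not directly available, a fallback is to reduce to finite subproducts $M_S := \prod_{x \in S} H^\uu_{i,x}(X_x)$ indexed by finite $S \subset X^{(i)}$, via an analogue of \cref{useful} for $\mathrm{Ext}_{D(\mathrm{Uni})}$---whose proof via the Breen--Deligne resolution should adapt directly since $G$ is finite type. For each finite $S$, $M_S$ is a finite direct sum, so the vanishing follows from \cref{fckd} applied summand-by-summand, and then passing to the colimit. Notably, because $G$ is assumed of finite type, the Milnor-sequence argument at the end of the proof of \cref{fckd} is unnecessary, so the dévissage terminates cleanly in finitely many steps.
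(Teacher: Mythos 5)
Your main route has a genuine gap at the $V_G=0$ step, precisely at $j=2$. Applying $R\mathrm{Hom}_{D(\mathrm{Uni})}(M,-)$ to $0 \to G \to \mathbb{G}_a^I \to \mathbb{G}_a^J \to 0$ places $\mathrm{Ext}^2(M,G)$ in an exact sequence $\mathrm{Ext}^1(M,\mathbb{G}_a^J) \to \mathrm{Ext}^2(M,G) \to \mathrm{Ext}^2(M,\mathbb{G}_a^I)$. The right-hand term vanishes by your base case, but the left-hand term is not controlled by it: the base case only gives vanishing in degrees $j \ge 2$, and $\mathrm{Ext}^1_{D(\mathrm{Uni})}(U,\mathbb{G}_a)$ is nonzero for a general unipotent $U$ in characteristic $p$ (e.g.\ $\mathrm{Ext}^1(\mathbb{G}_a,\mathbb{G}_a)\neq 0$, witnessed by $W_2$); correspondingly $\mathrm{Ext}^2_{D(\mathrm{Uni})}(U,G)$ need not vanish for arbitrary unipotent $U$ and finite type $G$ (compute $\mathrm{Ext}^2(\mathbb{G}_a,\mathbb{Z}/p)$ from the Artin--Schreier sequence). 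What is needed is the specific vanishing $\mathrm{Ext}^1\bigl(\prod_{x\in X^{(i)}}H^\uu_{i,x}(X_x),\mathbb{G}_a\bigr)=0$, which the paper extracts from \cref{b2}: this $\mathrm{Ext}^1$ is identified with $\bigoplus_x H^{i+1}_x(X_x,\mathcal{O})$, which vanishes because $X$ is Cohen--Macaulay and $x$ has codimension $i$. This is exactly where the Cohen--Macaulay hypothesis of the lemma enters; your main argument never uses it, which is the telltale sign of the missing step.

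Your fallback does not repair this as written. Reducing to finite subproducts $M_S$ and citing \cref{fckd} would indeed import the needed input, but it requires continuity $\varinjlim_S \mathrm{Ext}^j_{D(\mathrm{Uni})}(M_S,G) \simeq \mathrm{Ext}^j_{D(\mathrm{Uni})}(M,G)$ for Ext taken in $D(\mathrm{Uni})$, and the Breen--Deligne argument of \cref{bd1} and \cref{useful} does not ``adapt directly'': its resolution has terms of the form $\mathbb{Z}[M^r]$, which are not unipotent group schemes, so it computes the sheaf-theoretic $R\mathrm{Hom}$ rather than $R\mathrm{Hom}_{D(\mathrm{Uni})}$ --- keeping these two apart is the whole point of \cref{animpresult}. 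The paper instead argues with the full product directly: Demazure's vanishing in degrees $j\ge 2$ against $\mathbb{G}_a$ is applied to the unipotent group $M$ itself, the crucial $\mathrm{Ext}^1(M,\mathbb{G}_a)=0$ is supplied by \cref{b2}, and then the dévissage you describe (finite $I,J$ for $V_G=0$, induction on the power of $V$, no Milnor sequences needed) goes through exactly as you set it up.
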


\begin{proof}
 Since $\prod_{x \in X^{(i)}}H^\uu_{i,x}(X_x)$ is unipotent, by \cite[Proposition~V-1 5.1 and 5.2]{Demazure:1970}, $$\mathrm{Ext}^j \biggl(\prod_{x \in X^{(i)}}H^\uu_{i,x}(X_x), \mathbb{G}_a \biggl)=0$$ for $j \ge 2.$ Suppose now that $V_G=0.$ Since $G$ is finite type, we have an exact sequence of the form $$0 \to G \to \prod_I \mathbb{G}_a \to \prod_J \mathbb{G}_a \to 0,$$ where $I, J$ are finite sets. By \cref{b2}, we have $\mathrm{Ext}^1 (\prod_{{x \in X^{(i)}}} H^\uu_{i,x}(X_x), \mathbb{G}_a)=0.$ Thus, by a long exact sequence chase we obtain the desired claim when $V_G=0.$ Since $G$ is finite type, $V_G^n=0$ for some $n$. Thus, to prove the claim in general, by using the short exact sequence $0 \to VG \to G \to G/VG \to 0,$ one may reduce to the case when $V_G=0.$ This finishes the proof.
\end{proof}

Finally, we are ready to give a proof of \cref{animpresult}.
\begin{proof}[Proof of \cref{animpresult}]
As discussed before the statement of \cref{animpresult}, there is a map of filtered objects $$R\mathrm{Hom}_{D(\mathrm{Uni})}(F^*J_*^\uu(X), G ) \to R\mathrm{Hom}(F^*H^{\uu}_*(X), G ).$$Note that the filtered object on the left induces a convergent spectral sequence
$$'E_1^{i,j}= \mathrm{Ext}^j_{D(\mathrm{Uni})}\biggl(\prod_{x \in X^{(i)}}H^\uu_{i,x}(X_x), G\biggl) \implies \mathrm{Ext}^{i+j}_{D(\mathrm{Uni})} (J^\uu_* (X), G).$$The filtered object on the right induces a convergent spectral sequence
$$E_1^{i,j}= \mathrm{Ext}^{i+j}(\mathrm{gr}^i H^\uu_*(X), G) \implies \mathrm{Ext}^{i+j} (H^\uu_* (X), G).$$ The map of filtered objects induces natural morphisms between the above two spectral sequences, and to prove the proposition it suffices to prove that the natural maps $'E_1^{i,j} \to E_1^{i,j}$ are isomorphisms. Using \cref{huseful} and \cref{descriptionofgradedpiece}, it follows that we need to prove that the natural maps 
$$\mathrm{Ext}^j_{D(\mathrm{Uni})}\biggl(\prod_{x \in X^{(i)}}H^\uu_{i,x}(X_x), G \biggl) \to \bigoplus_{x\in X^{(i)}}H^{i+j}_{x} (X_x, G)$$ are isomorphisms. 

To this end, note that for $j \le 0,$ the map is an isomorphism by \cref{lw0}. For $j=1$, the isomorphism follows from \cref{b2}, and for $j \ge 2$, it follows from the vanishings from \cref{lw} and \cref{fckd2}.
\end{proof}{}            

\section{Artin--Mazur formal groups}\label{section4}                 
In \cite{soon}, the authors explained how to recover Artin--Mazur formal groups from the unipotent homotopy group schemes introduced in loc. cit. under certain hypotheses on vanishing of cohomology groups. 
In this section, we will explain how to recover these Artin--Mazur formal groups in general from unipotent homology groups studied in this paper, with \emph{no such vanishing assumptions}.  To this end, let us recall their definition.

\begin{definition}[{\cite{MR457458}}]\label{artinmazur}
Let $k$ be a field and $X$ be a smooth proper scheme over $k$. Let $\mathrm{Art}_k$ be the category of Artinian $k$-algebras. Define an abelian group valued functor $\Phi^n_X \colon (\mathrm{Art}/k)^{\mathrm{op}} \to \mathrm{Ab}$ as
\[ A \mapsto \mathrm{Ker}\bigl(H^n_{\mathrm{\acute{e}t}}(X_A,\mathbb{G}_m) \to H^n_{\mathrm{\acute{e}t}}(X,\mathbb{G}_m)\bigr). \]   
\end{definition}{}
Note that when $n=1$, $\Phi^n_X$ is the formal completion of the Picard scheme of $X$. When $n=2$, it is the formal Brauer group. In general, the above functor is not pro-representable. Artin and Mazur gave certain conditions regarding pro-representability for this functor. 
Recently, Bragg--Olsson gave a new proof \cite[Theorem~10.8]{BOL} of the following result of Raynaud \cite[Proposition 2.7.5]{MR563468}. 
\begin{theorem}[Raynaud, Bragg--Olsson]\label{braols}
Let $X$ be a smooth proper scheme over $k.$ Let $(\Phi^n_X)^\mathrm{fl}$ denote the sheafification of $\Phi^n_X$ for the fppf topology on $\mathrm{Art}_k^\mathrm{op}.$ Then $(\Phi^n_X)^\mathrm{fl}$ is pro-representable for every $n.$ 
\end{theorem}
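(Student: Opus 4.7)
The plan is to verify a Schlessinger-type criterion for the sheafified functor $(\Phi_X^n)^{\mathrm{fl}}$, after first reducing to coherent cohomology via the unipotent part of $\mathbb{G}_m$. First I would fix a local Artinian $k$-algebra $(A,\mathfrak{m})$ with residue field $k$ and use the short exact sequence of fppf sheaves on $X_A$
\[
1 \to 1 + \mathfrak{m}\cdot \mathcal{O}_{X_A} \to (\mathbb{G}_m)_{X_A} \to (\mathbb{G}_m)_X \to 1.
\]
The associated long exact cohomology sequence identifies $\Phi_X^n(A)$ as a subquotient of $H^n_{\mathrm{fl}}(X_A,\, 1+\mathfrak{m}\cdot\mathcal{O}_{X_A})$. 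Filtering this unipotent sheaf by powers of $\mathfrak{m}$ and using truncated-Witt (or exponential, depending on the characteristic) isomorphisms on the graded pieces reduces the computation to coherent cohomology of the form $H^n(X,\mathcal{O}_X)\otimes_k \mathrm{gr}^\bullet \mathfrak{m}$, which is finite-dimensional over $k$ since $X$ is smooth and proper.

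Next I would verify Schlessinger's axioms (H1)--(H3) for $(\Phi_X^n)^{\mathrm{fl}}$. Axioms (H1) and (H2) reduce to the observations that $\Phi_X^n$ is defined as a kernel and that both kernels and fppf sheafification preserve the relevant finite limits of Artin rings. Axiom (H3), the finite-dimensionality of the tangent space, follows from applying the above description to $A = k[\epsilon]/\epsilon^2$, which yields a tangent space that embeds into $H^n(X,\mathcal{O}_X)$. Together these imply pro-representability by a formal group scheme; alternatively one could invoke Artin's refinement of Schlessinger's theorem, which is better adapted to the fppf setting and allows non-Artinian test objects.

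The main obstacle is controlling the interaction of sheafification with the kernel, especially in characteristic $p$, where cohomology of $\mathbb{G}_m$ acquires $p$-divisible contributions through Frobenius that behave differently in the fppf versus étale topologies. Concretely, one needs to check that forming the kernel commutes with fppf sheafification, which uses smoothness and properness of $X$ together with finiteness properties of $\mathrm{Pic}(X_A)$ modulo $\mathrm{Pic}(X)$. An alternative path, more native to the framework developed in this paper, would be to use \cref{dual} to identify $(\Phi_X^n)^{\mathrm{fl}}(A)$ with $\mathrm{Hom}$ out of a suitable piece of $H^{\uu}_\ast(X)$ whose coefficients reflect the formal multiplicative group, and then invoke \cref{profinitenessthm} to exhibit the pro-representing object as the Cartier dual of the resulting profinite unipotent commutative group scheme. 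This second route directly motivates and anticipates \cref{introartinmaz}.
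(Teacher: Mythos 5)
You should first note that the paper does not prove \cref{braols} at all: it is quoted with attribution to Raynaud \cite[Proposition 2.7.5]{MR563468} and Bragg--Olsson \cite[Theorem~10.8]{BOL}, so your argument has to stand entirely on its own, and as written it has a genuine gap at the decisive step. The reduction of $\Phi^n_X(A)$ to a subquotient of $H^n(X_A, 1+\mathfrak{m}\mathcal{O}_{X_A})$ and the finite-dimensionality coming from $H^n(X,\mathcal{O}_X)\otimes_k \mathrm{gr}^\bullet\mathfrak{m}$ are fine, but your verification of Schlessinger's (H1)--(H2) is an assertion, not a proof: the functor $A\mapsto H^n_{\mathrm{fl}}(X_A,\mathbb{G}_m)$ is only half-exact, and for a fiber product $A'\times_A A''$ the comparison map fails to be bijective in general because of the boundary terms involving $H^{n-1}$ and $H^{n+1}$ in the long exact sequence you wrote down; this is exactly why Artin--Mazur needed formal smoothness of $\Phi^{n-1}_X$ (e.g.\ vanishing of $H^{n-1}(X,\mathcal{O})$) to pro-represent $\Phi^n_X$ itself. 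The remark that ``kernels and fppf sheafification preserve the relevant finite limits'' does not address this: left exactness of sheafification is a statement about limits of presheaves, whereas the condition at issue is whether one fixed sheaf sends the ring $A'\times_A A''$ to the fiber product of its values, which the sheaf axiom does not control. In addition, (H1)--(H3) only yield a hull; pro-representability requires the homogeneity/(H4)-type condition, which is again the crux, and after sheafification even (H3) needs an argument, since the value of $(\Phi^n_X)^{\mathrm{fl}}$ at $k[\epsilon]$ is not a priori the same as $\Phi^n_X(k[\epsilon])$. These are precisely the points where Raynaud's and Bragg--Olsson's proofs do real work, and nothing in the sketch replaces that input.

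The suggested ``alternative path'' via \cref{dual} and \cref{profinitenessthm} also inverts the paper's logic: in the proof of \cref{isitmainthm} the paper \emph{uses} \cref{braols} (together with \cite[Theorem~12.1]{BOL}) as an input --- Cartier duality of $(\Phi^n_X)^{\mathrm{fl}}$ only makes sense once pro-representability is known --- so to run the argument in the other direction you would have to prove directly that the fppf sheafification of the Artin--Mazur functor agrees with the dual of a unipotent group scheme extracted from $H^{\uu}_*(X)$, i.e.\ essentially reprove the Bragg--Olsson comparison rather than cite it. As it stands, the proposal does not establish the statement.
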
{}
Following Bragg--Olsson, we will refer to $(\Phi^n_X)^\mathrm{fl}$ as the $n$-th flat Artin--Mazur formal group. We will actually recover the \emph{flat} Artin--Mazur formal groups from unipotent homology. Recall that (\cref{specseq}) for a $ k $-scheme $X$, we have the following (homological) spectral sequence (arising from the coniveau filtration of \cref{filtrationdeff}) converging to its unipotent homology:
\begin{equation}\label{fromres}
 E_1^{p,q} = \prod_{x \in X^{(p)}} H_{p+q,x}^\uu (X_x) \implies H_{p+q}^\uu(X).    
\end{equation}{}

We will prove the following.
\begin{theorem}\label{isitmainthm}
  Let $X$ be a smooth proper scheme over a perfect field $k$ of positive characteristic. 
  Then the Cartier dual of the flat Artin--Mazur formal group $(\Phi_X^p)^\mathrm{fl}$ is canonically isomorphic to the unipotent group scheme $E_2^{p,0}$ obtained by turning the page of the spectral sequence \cref{fromres}. 
\end{theorem}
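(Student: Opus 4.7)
The strategy is to show that $E_2^{p, 0}$ and $((\Phi_X^p)^{\mathrm{fl}})^\vee$ co-represent the same functor on Artinian $k$-algebras, with the comparison flowing through the cohomological identification of \cref{animpresult}.

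First, I would interpret $E_2^{p, 0}$ cohomologically. By \cref{specseq} together with \cref{whynotaddalabel}, $E_2^{p, 0}$ is the $p$-th cohomology of the Cousin complex $J_*^\uu(X)$. Then \cref{animpresult} yields $R\Gamma(X, G) \simeq R\mathrm{Hom}_{D(\mathrm{Uni})}(J_*^\uu(X), G)$ for every commutative unipotent group scheme $G$. Extracting the edge term of the associated $\mathrm{Hom}$ spectral sequence produces, for each unipotent $G$, a natural map
\[
  \mathrm{Hom}_{\mathrm{Uni}}(E_2^{p, 0}, G) \to H^p_{\mathrm{fl}}(X, G)
\]
whose image I would identify with the ``unramified'' subgroup—the kernel of the restriction-to-divisors maps. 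The injectivity and description of the image rely on \cref{connect} and the Ext-vanishing \cref{fckd2}.

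Next, I would unpack $(\Phi_X^p)^{\mathrm{fl}}$. For an Artinian local $k$-algebra $A$ with maximal ideal $\mathfrak{m}_A$ and residue field $k$, the short exact sequence
\[
  1 \to 1 + \mathfrak{m}_A \mathcal{O}_{X_A} \to \mathbb{G}_{m, X_A} \to \mathbb{G}_{m, X} \to 1
\]
identifies $(\Phi_X^p)^{\mathrm{fl}}(A)$ with the image of $H^p_{\mathrm{fl}}(X_A, 1 + \mathfrak{m}_A \mathcal{O})$ in $H^p_{\mathrm{fl}}(X_A, \mathbb{G}_m)$. The sheaf $1 + \mathfrak{m}_A \mathcal{O}$ carries a finite $\mathfrak{m}_A$-adic filtration with graded pieces $(\mathfrak{m}_A^i / \mathfrak{m}_A^{i+1}) \otimes \mathcal{O}$, which are finite products of copies of $\mathbb{G}_a$ and therefore unipotent. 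By dévissage and \cref{animpresult}, this lets us compute $H^p_{\mathrm{fl}}(X_A, 1 + \mathfrak{m}_A \mathcal{O})$ as $\mathrm{Hom}$-groups out of $J_*^\uu(X)$; extracting the ``unramified'' piece gives, via the first paragraph, a natural isomorphism
\[
  (\Phi_X^p)^{\mathrm{fl}}(A) \;\cong\; \mathrm{Hom}_{\mathrm{Uni}}\bigl(E_2^{p, 0} \otimes_k A, \, 1 + \mathfrak{m}_A \mathcal{O}\bigr).
\]
Since the right-hand side is, by construction of Cartier duality, the $A$-points of the Cartier dual $(E_2^{p, 0})^\vee$, Yoneda on $\mathrm{Art}_k$ yields the desired isomorphism $((\Phi_X^p)^{\mathrm{fl}})^\vee \simeq E_2^{p, 0}$.

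The main obstacle will be verifying that the edge-term identification of the first paragraph really captures the ``formal'' part of cohomology that defines $(\Phi_X^p)^{\mathrm{fl}}$, i.e., that higher differentials $d_r$ for $r \geq 2$ and higher $\mathrm{Ext}^j$ contributions for $j \geq 1$ do not interfere at the $E_2^{p, 0}$-slot. The connectivity result \cref{connect} (applicable since smooth schemes are Cohen--Macaulay) and the unipotent Ext-vanishing \cref{fckd2} make this plausible, but the bookkeeping across the two spectral sequences—the coniveau spectral sequence \cref{fromres} and the $\mathrm{Hom}$ spectral sequence coming from \cref{animpresult}—must be handled carefully. A further subtlety is compatibility with the fppf sheafification implicit in $(\Phi_X^p)^{\mathrm{fl}}$, for which the Bragg--Olsson pro-representability \cref{braols} is essential.
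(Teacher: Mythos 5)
There is a genuine gap, and it occurs at the very first step. From \cref{animpresult} applied with coefficients in a unipotent group $G$, the hyper-Ext spectral sequence (degenerate because $\mathrm{Ext}^{\geq 2}$ into unipotent groups vanishes) produces short exact sequences
\begin{equation*}
0 \to \mathrm{Ext}^1\left(E_2^{p-1,0}, G\right) \to H^p_{\mathrm{fl}}(X,G) \to \mathrm{Hom}\left(E_2^{p,0}, G\right) \to 0 ,
\end{equation*}
so $\mathrm{Hom}(E_2^{p,0},G)$ is naturally a \emph{quotient} of $H^p_{\mathrm{fl}}(X,G)$, not the target of a map $\mathrm{Hom}(E_2^{p,0},G)\to H^p_{\mathrm{fl}}(X,G)$ whose image is an ``unramified subgroup''; no such map exists in general, and the kernel-of-residues description you invoke is neither available in this setting nor justified by \cref{connect} or \cref{fckd2}. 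Moreover, for the coefficients your d\'evissage actually produces (vector groups $(\mathfrak{m}_A^i/\mathfrak{m}_A^{i+1})\otimes\mathbb{G}_a$), the error term $\mathrm{Ext}^1(E_2^{p-1,0},\mathbb{G}_a)$ does \emph{not} vanish, so the edge term does not isolate $E_2^{p,0}$ degreewise; you would have to show that these contributions, together with the boundary image of $H^{p-1}(X,\mathbb{G}_m)$ in $H^p(X_A,1+\mathfrak{m}_A\mathcal{O})$ and the fppf sheafification defining $(\Phi^p_X)^{\mathrm{fl}}$, all cancel in the passage to $\Phi^p_X(A)$. That is exactly the hard content of the theorem (it is where Artin--Mazur needed vanishing hypotheses), and your proposal asserts the resulting identification $(\Phi_X^p)^{\mathrm{fl}}(A)\cong \mathrm{Hom}(E_2^{p,0}\otimes_k A, 1+\mathfrak{m}_A\mathcal{O})$ rather than proving it.

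The paper's proof is structured precisely to avoid this bookkeeping: it takes $G=W_n$, so that $\varinjlim_n\mathrm{Hom}(E_2^{p,0},W_n)$ is the Dieudonn\'e module of $E_2^{p,0}$, kills the error term in the colimit using $\varinjlim_n\mathrm{Ext}^1(E_2^{p-1,0},W_n)=0$ (\cref{usethislemma}), thereby identifying this Dieudonn\'e module with $\varinjlim_n H^p(X,W_n)$, and then quotes Bragg--Olsson's computation (\cite[Theorem~12.1]{BOL}) of the Dieudonn\'e module of $((\Phi^p_X)^{\mathrm{fl}})^\vee$ as the same Witt-vector cohomology; Dieudonn\'e theory over the perfect field $k$ then concludes. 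Your route, which attempts a direct functor-of-points comparison over $\mathrm{Art}_k$, would in effect have to reprove that input of Bragg--Olsson (including pro-representability and the effect of sheafification, for which citing \cref{braols} alone does not suffice), and as written the proposal does not carry out any of these steps.
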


\begin{proof}
 Note that $E_2^{p,0},$ by definition, is the $p$-th homology of the chain complex $E_1^{\bullet, 0}$ of unipotent group schemes, which we denoted by $J^u_* (X)$ (see \cref{ntn:filtered_generalized_loc_jacobian}).  
 We will begin by computing the Dieudonn\'e module of $E^{p,0}_2$, which is given by $\varinjlim \mathrm{Hom}\left(E_2^{p,0}, W_n\right)$. By \cref{animpresult}, we have 
$$R\mathrm{Hom}_{D(\mathrm{Uni})}\left(J_*^\uu(X), W_n \right)\xrightarrow{\sim} R\Gamma (X,W_n) .$$ This yields a spectral sequence where we may identify the $ E_2 $-page: 
$$ \mathrm{Ext}^q _{D(\mathrm{Uni})}\left(E_2^{p,0}, W_n\right) \implies H^{q+p}(X, W_n).$$ Since $ \mathrm{Ext}^q _{D(\mathrm{Uni})}\left(E_2^{p,0}, W_n\right) =0$ for $q \ge 2$, the spectral sequence degenerates on the second page, and we obtain exact sequences 

$$0 \to \mathrm{Ext}^1 \left(E_2^{p-1,0}, W_n\right) \to H^p (X, W_n) \to \mathrm{Hom}\left(E_2^{p,0}, W_n\right) \to 0.$$
Compatibility of these exact sequences for varying $n$ implies that we have an exact sequence
$$0 \to \varinjlim_n \mathrm{Ext}^1 \left(E_2^{p-1,0}, W_n\right) \to \varinjlim_n H^p (X, W_n) \to \varinjlim_n \mathrm{Hom}\left(E_2^{p,0}, W_n\right) \to 0.$$
By \cref{usethislemma}, we obtain 
$$\varinjlim_n H^p (X, W_n) \simeq \varinjlim_n \mathrm{Hom}\left(E_2^{p,0}, W_n\right).$$ Note that the flat Artin--Mazur formal group $(\Phi^p_X)^{\mathrm{fl}}$ is connected, so its Cartier dual $((\Phi^p_X)^{\mathrm{fl}})^\vee$ is a commutative unipotent group scheme over $ k $. Further, by \cite[Theorem~12.1]{BOL} (cf. \cite[Proposition 8.1]{MR800174}), we have
$$\varinjlim_n \mathrm{Hom} \left(((\Phi^p_X)^{\mathrm{fl}})^\vee, W_n\right) \simeq \varinjlim_{n} H^p (X, W_n). $$
Therefore, by Dieudonn\'e theory, we conclude that $$ ((\Phi^p_X)^{\mathrm{fl}})^\vee \simeq E^{p,0}_2,$$ which finishes the proof.
\end{proof}{}
The following lemma was used in the above proof.

\begin{lemma}\label{usethislemma}
Let $G$ be a commutative unipotent group scheme over a perfect field $k$. 
Then $$\varinjlim_n \mathrm{Ext}^1(G, W_n) =0.$$ 
\end{lemma}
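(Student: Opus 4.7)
My plan is to show that for every class $\xi \in \mathrm{Ext}^1(G, W_n)$, there is an integer $m \geq 0$ such that $V^m_{*}(\xi) = 0$ in $\mathrm{Ext}^1(G, W_{n+m})$; this will yield the vanishing of the direct limit.

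First, I would reduce to the case where $G$ is of finite type. Writing $G \simeq \varprojlim_\alpha G_\alpha$ as a cofiltered limit of its finite-type unipotent quotients (which exist as $\mathcal{O}(G)$ is a filtered colimit of finitely generated Hopf subalgebras, and quotients of unipotent groups remain unipotent), \cref{useful} applied to the finite-type target $W_n$ gives $\mathrm{Ext}^1(G, W_n) \simeq \varinjlim_\alpha \mathrm{Ext}^1(G_\alpha, W_n)$. Commuting the two filtered colimits reduces the claim to the finite-type case.

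Assuming $G$ is of finite type, I fix $\xi$ represented by an extension $0 \to W_n \to E \to G \to 0$ (so $E$ is also finite-type unipotent). The key input is the exactness of the Dieudonn\'{e} functor $M(H) := \varinjlim_m \mathrm{Hom}(H, W_m)$, with the colimit taken along the Verschiebung inclusions $V \colon W_m \hookrightarrow W_{m+1}$, on the category of commutative unipotent affine group schemes over the perfect field $k$ (see \cite[Ch.~V]{Demazure:1970}). Applied contravariantly to the extension, this gives a surjection $M(E) \twoheadrightarrow M(W_n)$; unpacking the filtered colimit, the class $[\mathrm{id}_{W_n}] \in M(W_n)$ lifts to a morphism $\psi \colon E \to W_M$ for some $M \geq n$ satisfying $\psi|_{W_n} = V^{M-n} \colon W_n \hookrightarrow W_M$. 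Since $V^{M-n}(W_n)$ lies in the kernel of the projection $W_M \twoheadrightarrow W_{M-n}$, the composition $E \xrightarrow{\psi} W_M \twoheadrightarrow W_{M-n}$ factors through a morphism $\phi \colon G \to W_{M-n}$, yielding the commutative diagram
\begin{equation*}
\begin{array}{ccccccccc}
0 & \to & W_n & \to & E & \to & G & \to & 0 \\
& & \| & & \downarrow & & \downarrow & & \\
0 & \to & W_n & \xrightarrow{V^{M-n}} & W_M & \to & W_{M-n} & \to & 0
\end{array}
\end{equation*}
in which the middle and right vertical maps are $\psi$ and $\phi$ respectively. This exhibits $\xi$ as the image of $\phi$ under the connecting map $\delta$ of the bottom row; since $V^{M-n}_{*} \circ \delta = 0$ in the associated long exact sequence, we conclude that $V^{M-n}_{*}(\xi) = 0$ in $\mathrm{Ext}^1(G, W_M)$, so $\xi$ vanishes in the direct limit.

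The principal obstacle will be correctly setting up the Dieudonn\'{e} functor and verifying its exactness in the required generality, while tracking the variance conventions; this is classical, but merits some care (particularly because one wishes to apply it to the possibly non-finite-type object $E$ before the reduction, or else handle the reduction step first as above).
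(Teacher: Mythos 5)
Your proof is correct and follows essentially the same route as the paper: both arguments rest on the exactness of the Dieudonn\'e functor $\varinjlim_m \mathrm{Hom}(-,W_m)$ on commutative unipotent groups (\cite[V-1 Th\'eor\`eme 4.3 b)]{Demazure:1970}), lift the identity of $W_n$ through the extension to a map $\psi\colon E \to W_M$ restricting to $V^{M-n}$, and conclude that the class dies at a finite stage of the tower (you via the connecting map of $0 \to W_n \to W_M \to W_{M-n} \to 0$, the paper via an explicit pushout and splitting, which is the same computation). Your preliminary reduction to finite-type $G$ is harmless but unnecessary, since the cited exactness is applied in the paper directly to arbitrary commutative unipotent group schemes.
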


\begin{proof}
Suppose that $\gamma \in \mathrm{Ext}^1 (G, W_t)$ for some $t \in \mathbb{N}.$ Suppose that $ \gamma $ is classified by an extension 
\begin{equation}\label{ifil}
    0 \to W_t \to H \to G \to 0.
\end{equation}Note that on the category of commutative unipotent group schemes over $ k $, the functor $\varinjlim_{n} \mathrm{Hom}(\cdot, W_n)$ is exact \cite[V-1 Théorème 4.3 b)]{Demazure:1970}. 
Applying this to \cref{ifil}, we obtain an exact sequence
\begin{equation}\label{ifil2}
 0 \to \varinjlim_{n}\mathrm{Hom}(G, W_n) \to  \varinjlim_{n}\mathrm{Hom}(H, W_n) \to  \varinjlim_{n}\mathrm{Hom}(W_t, W_n)\to 0.   
\end{equation}
The exactness implies that there exists a map $v: H \to W_s$ for some $s > t$ such that the composition $W_t \to H \to W_s$ is the canonical map. The class $\gamma$ induces a class in $\mathrm{Ext}^1 (G, W_s)$ which can be described as an exact sequence
\begin{equation}\label{ifil3}
 0 \to W_s \to H' \to G \to 0,   
\end{equation}
where $H'$ is the pushout  of $W_t \to H$ along the canonical map $W_t \to W_s$. Using the pushout description of $ H' $, the map $v : H \to W_s$ induces a map $H' \to W_s$ which splits the exact sequence \cref{ifil3}. This proves that $\varinjlim_n \mathrm{Ext}^1(G, W_n) =0$, as desired.
\end{proof}{}

\newpage

\section{Perfect unipotent spectra and duality theorems}\label{section5}
In this section, we fix a perfect field $ k $ of characteristic $ p>0 $ and study perfect unipotent spectra over $ k $. 
In \cref{subsection:perfect_qft_group_schemes}, we study a certain finiteness condition on perfect group schemes, leading to the notion of quasi-finite type perfect group schemes. 
We define perfect unipotent spectra in \cref{subsection:perfect_unipotent_spectra} and record a recognition theorem in \cref{subsection:perfect_uni_spectra_recognition}, in parallel with the results of \cref{subsection:recognition_thm}. 
In \cref{subsection:duality_perfect_unipotent_spectra}, we use the finiteness condition described in \cref{subsection:perfect_qft_group_schemes}, and prove that a certain  full subcategory $ \mathbb{Z}/p $-modules in perfect unipotent spectra admits a good theory of duality (\cref{duality}). 
In \cref{subsection:syntomic_duality}, we show that the weight $ i $ syntomic cohomology (modulo $p$) of a proper lci scheme $ X $ admits a canonical enhancement $ \mathbb{Z}/p(i)_X^{\mathrm{uni}} $ to perfect unipotent spectra over $ k $ for each $ i $; if moreover $ X $ is smooth, the unipotent spectrum $ \mathbb{Z}/p(i)_X^{\mathrm{uni}} $ has good finiteness properties. 
Finally, we show that when $ X $ is a smooth proper $ k $-scheme of dimension $ d $, there is an equivalence of unipotent spectra $ \mathbb{Z}/p(i)^{\mathrm{uni}}_X \simeq ({\mathbb{Z}/p(d-i)^{\mathrm{uni}}_X})^\vee [-2d] $ (under the duality from \cref{duality}) refining Milne's duality theorem \cite[Theorem 1.9]{Milne111}. 

\subsection{Preliminaries on quasi-finite type perfect group schemes}\label{subsection:perfect_qft_group_schemes}
In this section, we discuss the foundations on quasi-finite type perfect group schemes, which will be used later in the context of perfect unipotent spectra. 

\begin{definition}
    Let $G$ be an affine group scheme over a perfect field $k$ of positive characteristic. The \emph{perfection} of $G$ is defined to be $G^{\mathrm{perf}}:= \varprojlim_{\varphi} G$; it is a perfect affine group scheme over $k$. 
\end{definition}

\begin{definition}[Quasi-finite type perfect group schemes]\label{qftgp}
A perfect affine group scheme $G$ over $k$ is called \emph{quasi-finite type} if $G$ is the perfection of some finite type group scheme over $k$.    
\end{definition}

\begin{remark}
Note that the category of commutative group schemes over $k$ is an abelian category. The full subcategory of perfect commutative group schemes over $k$ forms an abelian subcategory of the former. As we will prove in \cref{rmk:quasi_ft_perf_gp_sch_are_abelian_cat}, the full subcategory of commutative group schemes spanned by perfect commutative quasi-finite type group schemes also naturally forms an abelian category.  
\end{remark}
The following proposition will give an intrinsic reformulation of \cref{qftgp}.

\begin{proposition}\label{cocom} 
A perfect affine group scheme $G$ over $k$ is quasi-finite type if and only if $G$ is a cocompact object in the category of perfect affine group schemes over $k$.
\end{proposition}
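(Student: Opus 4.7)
The plan is to proceed by the standard ``pro-finite type presentation'' technique for affine group schemes, with each direction handled by a separate mechanism: the forward direction via an adjunction and finite presentation of Hopf algebras, the backward direction via a retract argument. Throughout, the essential point is that the fully faithful inclusion $\mathrm{PerfAffGp}_k \hookrightarrow \mathrm{AffGp}_k$ admits perfection $(-)^{\mathrm{perf}}$ as right adjoint, so both directions can be translated into statements about ordinary (not necessarily perfect) affine group schemes.

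For the forward direction, suppose $G \simeq G_0^{\mathrm{perf}}$ with $G_0$ of finite type, and let $\{H_i\}$ be a cofiltered system in $\mathrm{PerfAffGp}_k$ with limit $H$. The limit of perfect affine group schemes computed in all affine group schemes is again perfect (perfectness of $k$-algebras is preserved under arbitrary colimits), so limits in $\mathrm{PerfAffGp}_k$ agree with limits in $\mathrm{AffGp}_k$. Combined with the adjunction $\mathrm{Hom}_{\mathrm{perf}}(-, G_0^{\mathrm{perf}}) \simeq \mathrm{Hom}_{\mathrm{all}}(-, G_0)$, the question reduces to showing that $G_0$ is cocompact in $\mathrm{AffGp}_k$. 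This is classical: since $\mathcal{O}(G_0)$ is a finitely presented $k$-algebra and the Hopf algebra compatibility conditions are closed under finitely many generator equations, $\mathrm{Hom}_{\mathrm{Hopf}}(\mathcal{O}(G_0), -)$ commutes with filtered colimits of Hopf algebras, which corresponds to cofiltered limits of affine group schemes.

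For the backward direction, I would use the classical fact that every affine group scheme $G$ over $k$ can be written as a cofiltered limit $G \simeq \varprojlim_\alpha G_\alpha$ with $G_\alpha$ of finite type. Since $G$ is perfect and perfection preserves limits, $G \simeq G^{\mathrm{perf}} \simeq \varprojlim_\alpha G_\alpha^{\mathrm{perf}}$ is a presentation of $G$ as a cofiltered limit in $\mathrm{PerfAffGp}_k$ of quasi-finite type perfect group schemes. Applying the cocompactness hypothesis to $\mathrm{id}_G \in \mathrm{Hom}(G, G) \simeq \varinjlim_\alpha \mathrm{Hom}(G_\alpha^{\mathrm{perf}}, G)$, the identity factors through some $G_\alpha^{\mathrm{perf}}$, exhibiting $G$ as a retract of $G_\alpha^{\mathrm{perf}}$.

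The main obstacle is then the final step: a retract of a quasi-finite type perfect group scheme is quasi-finite type. In the commutative setting (which is what is used subsequently in the paper), I would invoke the abelian category structure on commutative perfect group schemes (this is the content of \cref{rmk:quasi_ft_perf_gp_sch_are_abelian_cat}, referenced just above) to split $G_\alpha^{\mathrm{perf}} \simeq G \oplus G'$ as a direct summand via the idempotent $e = s\circ \pi$, and then argue at the level of Hopf algebras. Writing $\mathcal{O}(G_\alpha^{\mathrm{perf}}) \simeq \mathcal{O}(G) \otimes \mathcal{O}(G')$ and using that $\mathcal{O}(G_\alpha)$ generates $\mathcal{O}(G_\alpha^{\mathrm{perf}}) = \varinjlim_\varphi \mathcal{O}(G_\alpha)$ as a perfect ring, the projector onto $\mathcal{O}(G)$ restricted to $\mathcal{O}(G_\alpha)$ must land at some finite stage of the Frobenius colimit, and this allows one to identify $G$ with the perfection of a finite-type subquotient of a Frobenius twist of $G_\alpha$, hence as quasi-finite type. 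For the non-commutative case, essentially the same Hopf-algebra analysis of retracts applies once one replaces ``direct summand'' with ``retract'', the key input again being the finite generation of $\mathcal{O}(G_\alpha)$ as a $k$-algebra.
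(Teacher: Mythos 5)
Your forward direction is essentially the paper's: the adjunction $\mathrm{Hom}_{\mathrm{perf}}(-,G_0^{\mathrm{perf}})\simeq \mathrm{Hom}(-,G_0)$ together with cocompactness of finite type group schemes among all affine group schemes. (One small slip: perfectness of $k$-algebras is \emph{not} preserved under arbitrary colimits --- quotients destroy it --- but only filtered colimits of coordinate rings arise here, so no harm.) The backward direction is where you genuinely diverge. The paper writes $G$ as the cofiltered limit of the perfections of its finite type \emph{quotients}; the map $G \to G_i$ through which $\mathrm{id}_G$ factors is then simultaneously a surjection and a split monomorphism, hence an isomorphism, so $G \simeq (G_i^0)^{\mathrm{perf}}$ on the nose. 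That choice of presentation makes the retract problem --- which you correctly identify as the main obstacle of your route --- disappear entirely. Your route instead requires the lemma that a retract of a quasi-finite type perfect group scheme is again quasi-finite type, and that lemma is the weak point of your write-up.

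Concretely: (i) appealing to \cref{rmk:quasi_ft_perf_gp_sch_are_abelian_cat} is a forward reference (harmless, since its proof does not rely on \cref{cocom}, but worth flagging), and in any case it only covers commutative $G$, whereas the proposition is stated for arbitrary perfect affine group schemes; (ii) the step ``identify $G$ with the perfection of a finite-type subquotient of a Frobenius twist of $G_\alpha$'' is where all the content lives and is left vague; (iii) ``essentially the same Hopf-algebra analysis applies'' in the noncommutative case is not an argument. The lemma is true, and the clean way to finish your route --- needing neither commutativity nor any direct-sum splitting --- is this: $e = s\circ\pi$ is an idempotent group-scheme endomorphism of $G_\alpha^{\mathrm{perf}}$, so $e^* = \pi^*\circ s^*$ is an idempotent Hopf-algebra endomorphism of $\mathcal{O}(G_\alpha^{\mathrm{perf}}) = \varinjlim_{\varphi}\mathcal{O}(G_\alpha)$ with image $\pi^*\mathcal{O}(G)\cong\mathcal{O}(G)$, and it commutes with Frobenius since it comes from a group homomorphism; hence $B := e^*\bigl(\mathrm{im}\,\mathcal{O}(G_\alpha)\bigr)$ is a finitely generated Hopf subalgebra and $\mathcal{O}(G) = \mathrm{im}(e^*) = \bigcup_N B^{1/p^N}$ is its perfection, so $G \simeq (\mathrm{Spec}\,B)^{\mathrm{perf}}$ is quasi-finite type. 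With that lemma supplied your proof goes through and is a legitimately different argument; as written, however, there is a real gap at exactly this step, and the paper's quotient trick is the cheaper fix.
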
{}
\begin{proof}
Let $H \simeq H_0^{\mathrm{perf}}$ where $H_0$ is a finite type affine group scheme over $ k $. We will show that $H$ is a cocompact object in the category of perfect affine group schemes. Let $G \simeq \varprojlim G_i$ in the category of perfect group schemes. By adjunction, we have
$\mathrm{Hom}(G, H) \simeq \mathrm{Hom}(G, H_0)$. Since $H_0$ is finite type, we have $\mathrm{Hom}(G, H_0) \simeq \varinjlim_i \mathrm{Hom}(G_i, H_0) \simeq \varinjlim_i\mathrm{Hom}(G_i, H)$, as desired.

Conversely, we will show that a cocompact object $G$ is quasi-finite type. 
One can write $ G $ as a cofiltered limit $G \simeq \varprojlim G^0_{i}$, where each $G^0_{i}$ is a finite type quotient of $G$. 
Passing to perfection induces an equivalence $G \simeq \varprojlim G_i$, where $G \to G_i$ is a surjection of perfect group schemes for each $i$. Since $G$ is cocompact, we have
$$\mathrm{Hom}(G, G) \simeq \varinjlim_i \mathrm{Hom}(G_i, G).$$This implies that there exists a map $G_i \xrightarrow{f} G$ such that the composition $G \to G_i \xrightarrow{f} G $ is the identity map. It follows that the surjection $G \to G_i$ must also be an injection. Thus $G \simeq G_i \simeq (G^0_i)^{\mathrm{perf}}$, which finishes the proof since $G^0_i$ was finite type by choice.
\end{proof}{}

\begin{corollary}\label{cccc}
 A perfect affine group scheme $G$ over a field $k$ is quasi-finite type if and only if $G$ is the perfection of a finite type quotient of $G$.   
\end{corollary}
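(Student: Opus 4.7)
The plan is to observe that the ``if'' direction is immediate from Definition~5.2, while the ``only if'' direction is essentially a repackaging of the converse argument already carried out in the proof of Proposition~5.4.

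For the easy direction, if $G \simeq H^{\mathrm{perf}}$ for some finite type group scheme $H$ which is a quotient of $G$, then $G$ is quasi-finite type directly from the definition. For the nontrivial direction, suppose $G$ is quasi-finite type; by Proposition~5.4, $G$ is a cocompact object in the category of perfect affine group schemes over $k$. Write $G = \Spec A$, and express the perfect Hopf algebra $A$ as the filtered colimit of its finitely generated sub-Hopf algebras $A_i^0$, so that $G$ is realized as the cofiltered limit of its finite type quotients $G_i^0 := \Spec A_i^0$ in affine group schemes. Setting $G_i := (G_i^0)^{\mathrm{perf}}$ and using that $A$ is already perfect and that perfection commutes with filtered colimits, one has $A = \varinjlim (A_i^0)^{\mathrm{perf}}$, whence $G \simeq \varprojlim G_i$ in perfect affine group schemes, and each $G \twoheadrightarrow G_i$ remains a surjection (since it comes from a Hopf subalgebra inclusion).

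Now I would invoke cocompactness of $G$: the canonical map $\varinjlim_i \mathrm{Hom}(G_i, G) \to \mathrm{Hom}(G, G)$ is an isomorphism, so the identity $\mathrm{id}_G$ factors as $G \twoheadrightarrow G_i \xrightarrow{f} G$ for some $i$. The first arrow being a split surjection forces it to be an isomorphism, so $G \simeq G_i = (G_i^0)^{\mathrm{perf}}$ with $G_i^0$ a finite type quotient of $G$, as desired.

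There is essentially no obstacle here, as the argument is already present inside the proof of Proposition~5.4; the only bookkeeping is to note that the finite type scheme exhibiting $G$ as a perfection can be chosen as a genuine quotient of $G$, which is manifest from the construction via finitely generated sub-Hopf algebras of $A$.
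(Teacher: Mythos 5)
Your proposal is correct and follows essentially the same route as the paper, whose proof of this corollary simply cites \cref{cocom} and its proof: quasi-finite type gives cocompactness, and the converse argument in the proof of \cref{cocom} then exhibits $G$ as the perfection of a finite type quotient via the factorization of $\mathrm{id}_G$ through some $G_i$. One tiny terminological quibble: the map $G \twoheadrightarrow G_i$ is an epimorphism admitting a retraction (so a split monomorphism as well), rather than a ``split surjection''; either way the conclusion that it is an isomorphism is the same as in the paper.
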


\begin{proof}
Follows from \cref{cocom} and its proof.    
\end{proof}

\begin{remark}\label{c1}
Note that a cocompact object in the category of perfect, affine, commutative group schemes over $k$ can be equivalently regarded as a perfect, affine, quasi-finite type group scheme over $k$ that also happens to be commutative. This follows in a manner similar to the proof of \cref{cocom}. Further, by \cref{cccc}, any such group scheme $G$ is isomorphic to perfection of a finite type quotient of $G$, which is necessarily commutative. 
\end{remark}{}

\begin{remark}\label{frobpower}
Let $G$ and $H$ be two perfect quasi-finite type group schemes over $k$. Suppose that $G\simeq G_0^{\mathrm{perf}}$ and $ H \simeq H_0^{\mathrm{perf}}$. Then it follows that 
\begin{equation}\label{indpro}
 \mathrm{Hom}(G, H) \simeq  \mathrm{Hom} (G, H_0) \simeq \varinjlim_{\varphi} \mathrm{Hom}(G_0, H_0 ).   
\end{equation} In other words, for every $f: G \to H$, there is a $k \ge 0$ such that $f \varphi^k$ is induced from $f_0: G_0 \to H_0$ via perfection.   
\end{remark}{}

\begin{example}\label{basicex}
The perfection of $\mathbb{G}_a$ (resp.~$\mathbb{G}_m$ ) is a group scheme denoted by $\mathbb{G}_a^{\mathrm{perf}}$, whose underlying scheme is isomorphic to $\mathrm{Spec}\, k[x^{1/p^\infty}]$ (resp.~$\mathrm{Spec}\, k[x^{\pm 1/p^\infty}]$). By definition, $\mathbb{G}_a^{\mathrm{perf}}$ (resp.~$\mathbb{G}_m^{\mathrm{perf}}$ ) is a perfect quasi-finite type group scheme.   
\end{example}{}

\begin{example}
 The profinite group scheme $\underline{\mathbb{Z}_p}:= \lim_n \underline{\mathbb{Z}/p^n}$ is a perfect group scheme, but not of quasi-finite type. 
\end{example}

\begin{example}\label{basicex2}
Let $\mu_n:= \mathbb{G}_m [n].$ If $n$ is a power of $p$, it follows that $(\mu_n)^{\mathrm{perf}} \simeq *$. If $n$ is coprime to $p$, then $(\mu_n)^{\mathrm{perf}} \simeq \mu_n$.  
\end{example}{}
\begin{example}\label{basicex3}
The group scheme $ \alpha_p = \mathbb{G}_a[p] $ satisfies $ \alpha_p^{\mathrm{perf}} \simeq 0 $. 
\end{example}{}

\begin{proposition}\label{uniqf}
Let $G$ be a perfect affine group scheme over a perfect field $k$. Then $G$ is unipotent and quasi-finite type (resp.~commutative) if and only if $G$ is the perfection of some unipotent and finite type (resp.~commutative) group scheme.    
\end{proposition}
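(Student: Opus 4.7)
The plan is to prove the equivalence in both directions using the structure theory of commutative affine group schemes of finite type over a perfect field, together with the universal properties of perfection and \cref{cccc}, \cref{c1}.

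For the backward direction, suppose $G = H^{\mathrm{perf}}$ with $H$ a unipotent finite type group scheme (and, optionally, commutative). Then $G$ is perfect by construction, quasi-finite type by \cref{qftgp}, and commutative whenever $H$ is. The only substantive point is unipotence of $G$, which I would verify by showing $\mathrm{Hom}(G_{\bar k}, \mathbb{G}_{m,\bar k}) = 0$. Since perfection commutes with base change to perfect fields and $\mathbb{G}_m$ is finitely presented,
\[
\mathrm{Hom}(G_{\bar k}, \mathbb{G}_m) \simeq \mathrm{Hom}\bigl(\varprojlim_{\varphi} H_{\bar k}, \mathbb{G}_m\bigr) \simeq \varinjlim_\varphi \mathrm{Hom}(H_{\bar k}, \mathbb{G}_m) = 0,
\]
the last equality following from unipotence of $H_{\bar k}$.

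For the forward direction in the commutative case, I would begin with $G = H^{\mathrm{perf}}$ for some commutative finite type $H$ (such $H$ exists by \cref{cccc} and \cref{c1}). Replacing $H$ by $H_{\mathrm{red}}$---which has the same perfection since iterated Frobenius kills nilpotents and $k$ is perfect---one may assume $H$ is smooth over $k$. By the classical decomposition for smooth commutative affine group schemes over a perfect field (\cite[IV, \S 3]{Demazure:1970}), such $H$ admits a canonical direct product decomposition $H \simeq H^m \times H^u$ into a smooth subgroup of multiplicative type $H^m$ and a smooth unipotent subgroup $H^u$. Perfecting gives $G \simeq (H^m)^{\mathrm{perf}} \times (H^u)^{\mathrm{perf}}$, so the direct summand $(H^m)^{\mathrm{perf}}$ is itself unipotent. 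Writing $H^m = D(M)$ with $M$ finitely generated and $p$-torsion-free (by smoothness of $H^m$), one has $(H^m)^{\mathrm{perf}} \simeq D(M \otimes \mathbb{Z}[1/p])$, whose character group is $M \otimes \mathbb{Z}[1/p]$; unipotence forces $M \otimes \mathbb{Z}[1/p] = 0$, i.e.\ $M = 0$, hence $H^m = 0$. Consequently $G \simeq (H^u)^{\mathrm{perf}}$ with $H^u$ the desired smooth unipotent commutative finite type group scheme.

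The non-commutative case admits a parallel argument via the Chevalley decomposition of the identity component of $H_{\mathrm{red}}$: perfection of a non-trivial torus retains characters to $\mathbb{G}_m^{\mathrm{perf}}$, so unipotence of $G$ forces the reductive quotient of $H_{\mathrm{red}}^\circ$ to be trivial and identifies $H_{\mathrm{red}}^\circ$ with its unipotent radical. The main anticipated obstacle is executing this structural reduction cleanly when $H$ is disconnected, which requires handling $\pi_0(H_{\mathrm{red}})$ via its canonical splitting over perfect $k$ into prime-to-$p$ (multiplicative type) and $p$-primary (étale unipotent) parts; for our applications the commutative case is the primary one used, so this complication is ultimately bypassed.
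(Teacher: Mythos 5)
Your proposal has a genuine gap, and it also misses the observation that makes the paper's proof a one-liner. The key point, which you quote but do not use, is that \cref{cccc} produces $H$ not as an arbitrary finite type group scheme with $H^{\mathrm{perf}} \simeq G$, but as a finite type \emph{quotient of $G$ itself}. Since unipotence and commutativity pass to quotients, such an $H$ is automatically unipotent (resp.\ commutative), and the forward direction is finished with no structure theory at all; conversely, $H^{\mathrm{perf}} = \varprojlim_{\varphi} H$ is an inverse limit of unipotent group schemes and hence unipotent, which disposes of the backward direction. By instead taking an arbitrary $H$ and repairing it via the smooth commutative decomposition $H \simeq H^m \times H^u$, your argument for the forward direction becomes workable only in the commutative case (that part is essentially correct, including the point that smoothness makes $M$ $p$-torsion-free so $M \otimes \mathbb{Z}[1/p] = 0$ forces $M = 0$), while the non-commutative case — which the proposition does assert — is explicitly left unfinished; your proposed reduction via the reductive quotient and a splitting of $\pi_0$ is only a sketch, and the claim that the prime-to-$p$ part of $\pi_0(H_{\mathrm{red}})$ is of multiplicative type is false for non-abelian component groups.

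The backward direction as written is also flawed beyond the commutative case: showing $\mathrm{Hom}(G_{\bar k}, \mathbb{G}_m) = 0$ does not imply unipotence of an affine group scheme (the perfection of $\mathrm{SL}_2$ has no characters but is not unipotent), so character vanishing cannot serve as the criterion when $H$, hence $G$, is allowed to be non-commutative. Even in the commutative case this criterion silently invokes the canonical multiplicative-times-unipotent decomposition of affine commutative group schemes over a perfect field, which you should cite if you go that route. The clean fix in both directions is the closure of unipotent group schemes under inverse limits and quotients, exactly as in the paper's proof via \cref{cccc} and \cref{c1}.
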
{}
\begin{proof}
Follows from \cref{cccc} and \cref{c1} since the category of unipotent group schemes is closed under inverse limits and quotients. 
\end{proof}

\begin{lemma}\label{exactperfection}
 Let $G$ be a finite type commutative affine group scheme over a perfect field $k$. Let $F_G$ denote the Frobenius map on $G$. Then $$R^1 \varprojlim _{F_G} G \simeq 0.$$ 
\end{lemma}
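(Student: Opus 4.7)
The plan is to combine a Noetherian reduction with a Mittag--Leffler style vanishing argument. First, I would reduce to the case where Frobenius is surjective on the group scheme. Since $G$ is of finite type, the descending chain of closed subgroup schemes $G \supseteq F_G(G) \supseteq F_G^2(G) \supseteq \cdots$ stabilizes at some $H := F_G^N(G)$ for $N \gg 0$. Since $F_G^{N*}: \mathcal{O}_G \to \mathcal{O}_G$ is the $p^N$-th power map, its kernel equals the nilradical of $\mathcal{O}_G$ for $N$ large, so $H = G^{\mathrm{red}}$ is a reduced closed subgroup scheme of $G$ (the subgroup property uses perfectness of $k$). Being reduced, finite type, and a group scheme over the perfect field $k$, $H$ is in fact smooth, and $F_G|_H: H \to H$ is a finite flat surjection (surjectivity follows because the kernel of the $p$-th power map on the reduced ring $\mathcal{O}_H$ is trivial). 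The inclusion $H \hookrightarrow G$, combined with the factorization of $F_G^N: G \to G$ through $H$, exhibits a pro-isomorphism of Frobenius towers, giving $R^1\varprojlim_{F_G} G \simeq R^1\varprojlim_{F_G|_H} H$.

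Next, I would compute the derived inverse limit via the standard Milnor exact sequence in the abelian category of commutative affine group schemes (regarded as fpqc sheaves):
$$0 \to \varprojlim_{F_G|_H} H \to \textstyle\prod_n H \xrightarrow{\mathrm{id} - F^*} \prod_n H \to R^1\varprojlim_{F_G|_H} H \to 0,$$
where $F^*((h_n)_n) = (F_G(h_{n+1}))_n$. Vanishing of $R^1\varprojlim_{F_G|_H} H$ then reduces to the claim that $\mathrm{id} - F^*$ is an epimorphism of fpqc sheaves. Given a section $(t_n)_n \in \prod_n H(A)$, I would solve the recursion $h_n - F_G(h_{n+1}) = t_n$ by setting $h_0 = 0$ and inductively choosing $h_{n+1}$ with $F_G(h_{n+1}) = h_n - t_n$. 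After base change along the fpqc cover $A \to A^{\mathrm{perf}}$, Frobenius becomes bijective on $H(A^{\mathrm{perf}})$-points (since $H$ is smooth over the perfect field $k$), so the entire sequence $(h_n)_n$ can be constructed simultaneously over $A^{\mathrm{perf}}$, yielding the required epimorphism property.

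The main obstacle is the infinite nature of the recursion, which at first glance appears to demand infinitely many $p$-th root extensions. The Noetherian reduction to $H = G^{\mathrm{red}}$ is essential here: on the smooth group scheme $H$, Frobenius becomes a bijection on perfect points, and all the required $p$-th root extensions are packaged into the single fpqc cover $A \to A^{\mathrm{perf}}$. Without first stabilizing the Frobenius tower, non-smooth infinitesimal pieces of $G$ (whose Frobenius is eventually zero) would interact with the smooth part in a more delicate way, requiring additional dévissage; the clean appeal to perfection only works once the tower is pro-represented by a smooth group scheme.
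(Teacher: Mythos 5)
Your overall strategy is the same as the paper's: stabilize the images of Frobenius (the paper takes $G'=\mathrm{Im}(F_G^k)$ for $k\gg 0$, which is precisely your $H=G^{\mathrm{red}}$), note the two Frobenius towers are pro-isomorphic, and then conclude by a Mittag--Leffler/Milnor-sequence argument for a tower whose transition map is surjective. The reduction step, including the identification of the stabilized image with $G^{\mathrm{red}}$ and the smoothness and finite flatness of $F$ on it, is fine.

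The genuine gap is in the final verification that $\mathrm{id}-F^*$ is an epimorphism of fpqc sheaves: $A\to A^{\mathrm{perf}}$ is \emph{not} an fpqc cover in general, because the perfection map is usually not flat. For instance, for $A=k[x]/(x^2)$ one has $A^{\mathrm{perf}}=k$, which is not flat over $A$; so producing a solution of the recursion over $A^{\mathrm{perf}}$ does not exhibit a lift over a cover of $\mathrm{Spec}\,A$, and the epimorphism claim is left unproved. (Incidentally, bijectivity of $F$ on $H(B)$ for perfect $B$ holds for \emph{any} affine $H$ by the adjunction defining perfection, so smoothness is not what that step needs.) The repair uses what smoothness of $H$ actually buys you: $F_H\colon H\to H$ is finite locally free and surjective, so each step of the recursion $F(h_{n+1})=h_n-t_n$ can be solved after the finite faithfully flat cover $\mathrm{Spec}\,A_{n+1}:=\mathrm{Spec}\,A_n\times_{h_n-t_n,\,H,\,F}H\to \mathrm{Spec}\,A_n$, and the filtered colimit $B=\varinjlim_n A_n$ is a faithfully flat $A$-algebra (fpqc imposes no finite presentation condition), over which all the $h_n$ exist simultaneously. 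A related point you should at least acknowledge is that the Milnor sequence in fpqc abelian sheaves requires exactness of countable products, which again comes down to the existence of a common faithfully flat refinement of infinitely many covers (e.g.\ an infinite tensor product); this is also the mechanism implicitly behind the paper's one-line conclusion from surjectivity of $F_{G'}$.
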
{}

\begin{proof}   
Note that since $G$ is finite type, the image of $F_G^k$ stabilizes for  large enough $k$. Let $G':= \mathrm{Im}(F_G^k)$ for $k \gg 0$. It suffices to prove that $\displaystyle{R^1 \varprojlim_{F_{G'}} G'} \simeq 0$. However, by construction, $F_{G'}$ is a surjection. This finishes the proof.
\end{proof}{}

\begin{proposition}\label{rmk:quasi_ft_perf_gp_sch_are_abelian_cat}
The category of perfect commutative quasi-finite type group schemes over $k$ is an abelian subcategory of the category of commutative group schemes over $k$ that is closed under extensions.   
\end{proposition}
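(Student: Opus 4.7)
The plan is to verify that the full subcategory $\mathcal{B}$ of perfect commutative quasi-finite type group schemes is (i) closed under kernels, cokernels, and finite products inside the abelian category of all commutative affine group schemes over $k$, and (ii) closed under extensions. Closure under finite products and containment of the zero object are immediate, since perfection $(-)^{\mathrm{perf}}$ is right adjoint to the inclusion of perfect group schemes and hence preserves finite limits; in particular $G_0^{\mathrm{perf}} \times H_0^{\mathrm{perf}} \simeq (G_0 \times H_0)^{\mathrm{perf}}$ is again the perfection of a finite type group scheme.

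For kernels and cokernels, I would start with a morphism $f \colon G \to H$ in $\mathcal{B}$ and choose finite type presentations $G \simeq G_0^{\mathrm{perf}}$, $H \simeq H_0^{\mathrm{perf}}$. The description in \cref{frobpower} then yields $k \geq 0$ and $f_0 \colon G_0 \to H_0$ with $f \circ \varphi_G^k = f_0^{\mathrm{perf}}$. Since $\varphi_G$ is an automorphism of $G$, the kernel and cokernel of $f$ computed in commutative group schemes agree with those of $f_0^{\mathrm{perf}}$. Combining \cref{exactperfection} with a Milnor exact sequence argument, perfection is exact on short exact sequences of finite type commutative group schemes, so $\ker f \simeq \ker(f_0)^{\mathrm{perf}}$ and $\mathrm{coker}\, f \simeq \mathrm{coker}(f_0)^{\mathrm{perf}}$; both belong to $\mathcal{B}$ since $\ker(f_0)$ and $\mathrm{coker}(f_0)$ are finite type.

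The main step is closure under extensions, and this is where I expect the principal obstacle. Let $0 \to G' \to G \to G'' \to 0$ be a short exact sequence of commutative affine group schemes with $G', G'' \in \mathcal{B}$. Applying the snake lemma to the diagram whose vertical arrows are Frobenius identifies $\ker(\varphi_G)$ and $\mathrm{coker}(\varphi_G)$ with the (vanishing) kernels and cokernels of $\varphi_{G'}$ and $\varphi_{G''}$, so $\varphi_G$ is an isomorphism and $G$ is perfect. Showing $G$ is quasi-finite type is more delicate. I would write $G \simeq \varprojlim_\alpha G_\alpha^{\mathrm{perf}}$ as the cofiltered limit of the perfections of the finite type quotients $G \twoheadrightarrow G_\alpha$; each transition $G \to G_\alpha^{\mathrm{perf}}$ is surjective because dually $\mathcal{O}(G_\alpha^{\mathrm{perf}}) \simeq \colim_\varphi \mathcal{O}(G_\alpha) \hookrightarrow \mathcal{O}(G)$ is a filtered colimit of injections. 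By cocompactness of $G''$ (\cref{cocom}), the surjection $G \twoheadrightarrow G''$ factors as $G \twoheadrightarrow G_{\alpha_0}^{\mathrm{perf}} \twoheadrightarrow G''$ for some $\alpha_0$; for $\alpha$ cofinal above $\alpha_0$, setting $K_\alpha := \ker(G_\alpha^{\mathrm{perf}} \twoheadrightarrow G'')$, the snake lemma yields surjections $p_\alpha \colon G' \twoheadrightarrow K_\alpha$ whose kernel equals $\ker(G \to G_\alpha^{\mathrm{perf}})$. Taking the limit, $\varprojlim_\alpha K_\alpha \simeq G'$.

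The climactic step is then to invoke cocompactness of $G'$ itself: the identity of $G' \simeq \varprojlim_\alpha K_\alpha$, viewed in $\mathrm{Hom}(\varprojlim_\alpha K_\alpha, G') \simeq \varinjlim_\alpha \mathrm{Hom}(K_\alpha, G')$, is represented by a morphism $s \colon K_{\alpha_1} \to G'$ satisfying $s \circ p_{\alpha_1} = \mathrm{id}_{G'}$. Hence $p_{\alpha_1}$ is simultaneously a surjection and a split monomorphism, so it is an isomorphism; consequently $\ker(G \to G_{\alpha_1}^{\mathrm{perf}}) = 0$ and the already surjective map $G \to G_{\alpha_1}^{\mathrm{perf}}$ is an isomorphism, exhibiting $G$ as the perfection of a finite type group scheme. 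The heart of the argument is this interlocking use of cocompactness of both $G''$ and $G'$: the first reduces the arbitrary perfect $G$ to a single finite type stage, and the second rigidifies that stage to an actual isomorphism.
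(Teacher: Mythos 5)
Your proposal is correct. For the abelian-subcategory half (kernels and cokernels) you follow essentially the paper's own argument: reduce via \cref{frobpower} to a map induced from the finite type level (your explicit handling of the Frobenius twist, $f\circ\varphi_G^k = f_0^{\mathrm{perf}}$ with $\varphi_G$ an automorphism, is a clean way to justify the paper's ``without loss of generality''), and then use exactness of perfection coming from \cref{exactperfection}. For closure under extensions, however, you take a genuinely different route. The paper works in the derived category of fpqc abelian sheaves: using $R\varprojlim_{F_T}T \simeq T^{\mathrm{perf}}$ and cocompactness at the level of $R\mathrm{Hom}$, it shows $\mathrm{Ext}^1(G'',G') \simeq \varinjlim \mathrm{Ext}^1(G''_0,G'_0)$, so the extension class itself descends to the finite type level and the middle term is the perfection of a finite type extension. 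You instead argue entirely at the level of Hom-sets, in the spirit of the paper's proof of \cref{cocom}: write the (perfect) middle term $G$ as the cofiltered limit of the perfections of its finite type quotients, use cocompactness of the quotient $G''$ to factor $G \twoheadrightarrow G''$ through a single stage, run the snake lemma to produce surjections $p_\alpha\colon G' \twoheadrightarrow K_\alpha$ with $\varprojlim_\alpha K_\alpha \simeq G'$, and then use cocompactness of the sub $G'$ to split some $p_{\alpha_1}$, forcing $\ker(G \to G_{\alpha_1}^{\mathrm{perf}})=0$ and hence $G \simeq G_{\alpha_1}^{\mathrm{perf}}$. Your argument avoids any identification of $\mathrm{Ext}^1$-groups and the derived inverse-limit formalism, at the cost of a longer diagram chase; the paper's argument is shorter once that machinery is available and yields the slightly stronger conclusion that every such extension arises by perfection from an extension of finite type group schemes. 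Both arguments are sound.
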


\begin{proof}
Let $G$ and $H$ be two perfect commutative quasi-finite type group schemes and let $f: G \to H$ be a map. For the abelian subcategory part, it will suffice to prove that kernel and cokernel of $f$ in the category of commutative affine group schemes is already perfect and of quasi-finite type. The perfectness follows directly. To prove that they are of quasi-finite type, by \cref{frobpower}, we can assume without loss of generality that $f$ is induced from a map $f_0: G_0 \to H_0$ of finite type algebraic groups via perfection. Since kernel and cokernel of $f_0$ are both finite type, our claim follows from \cref{exactperfection}. 

Now we will prove the closure under extension property. In what follows, we work in the derived category of fpqc abelian sheaves over $k$. By \cref{exactperfection}, it follows that ${R\varprojlim_{F_T} T \simeq T^{\mathrm{perf}}}$ for any finite type commutative affine group scheme $T$. Let us consider an extension $0 \to H \to E \to G \to 0$ of group schemes, where $G,H$ are perfect and of quasi-finite type. It follows directly that $E$ is also perfect. Our goal now is to prove that $E$ is of quasi-finite type. Suppose that $G_0, H_0$ are finite type group schemes such that $G \simeq G_0^{\mathrm{perf}}$ and $H \simeq H_0^{\mathrm{perf}}$. It follows that $R\mathrm{Hom}(G, H) \simeq R\varprojlim_{\varphi} R\mathrm{Hom}(G, H_0) \simeq  R\mathrm{Hom}(G, H_0)$, where the latter isomorphism follows because $G$ is perfect. Since $H_0$ is finite type, it further follows that 
$R\mathrm{Hom}(G, H_0) \simeq \varinjlim R\mathrm{Hom}(G_0, H_0)$. In particular, we have $\mathrm{Ext}^1 (G, H) \simeq \varinjlim \mathrm{Ext}^1 (G_0, H_0).$ Therefore, without loss of generality, we may assume that the extension $0 \to H \to E \to G \to 0$ arises as perfection of an extension $ 0 \to H_0 \to E_0 \to G_0 \to 0$. Thus $E \simeq E_0^{\mathrm{perf}}$, which finishes the proof since $E_0$ must be of finite type.
\end{proof}

\begin{corollary}
\label{rmk:uni_quasi_ft_perf_gp_sch_are_abelian_cat}
The category of perfect commutative quasi-finite type unipotent group schemes over $k$ is an abelian subcategory of the category of commutative group schemes over $k$ that is closed under extensions.    
\end{corollary}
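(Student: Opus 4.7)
The plan is to deduce this corollary directly from the preceding \cref{rmk:quasi_ft_perf_gp_sch_are_abelian_cat} by intersecting with the unipotent condition. The key point is that the full subcategory of unipotent commutative affine group schemes is already known to be closed under kernels, cokernels, and extensions inside commutative affine group schemes over $k$ (see e.g.\ the references to \cite{Demazure:1970} used earlier in the paper).

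First, suppose $f \colon G \to H$ is a morphism in the category of perfect commutative quasi-finite type unipotent group schemes. By \cref{rmk:quasi_ft_perf_gp_sch_are_abelian_cat}, the kernel and cokernel of $f$ computed in commutative affine group schemes are again perfect and of quasi-finite type. Since they also sit in exact sequences with unipotent terms, they are themselves unipotent. Hence the inclusion functor into commutative group schemes exhibits perfect commutative quasi-finite type unipotent group schemes as an abelian subcategory.

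For closure under extensions, consider a short exact sequence
\[ 0 \to H \to E \to G \to 0 \]
of commutative group schemes, with $H$ and $G$ perfect commutative quasi-finite type unipotent. Again by \cref{rmk:quasi_ft_perf_gp_sch_are_abelian_cat}, $E$ is perfect and of quasi-finite type, so it remains to verify that $E$ is unipotent. This follows from the fact that unipotent commutative affine group schemes form a Serre subcategory of commutative affine group schemes; alternatively, using \cref{uniqf} one may lift the extension to a finite type extension $ 0 \to H_0 \to E_0 \to G_0 \to 0 $ with $H_0, G_0$ unipotent, in which case $E_0$ is unipotent by the classical theory, and perfection preserves unipotence.

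There is no real obstacle here; the proof is a formal combination of the preceding proposition with standard closure properties of the unipotent subcategory.
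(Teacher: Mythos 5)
Your proof is correct and follows essentially the same route as the paper, which simply deduces the corollary from \cref{rmk:quasi_ft_perf_gp_sch_are_abelian_cat} together with \cref{uniqf}, using that unipotent commutative affine group schemes are stable under subobjects, quotients, and extensions. The extra detail you supply (checking unipotence of kernels, cokernels, and extensions explicitly) is exactly the intended content of the paper's one-line proof.
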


\begin{proof}
Follows from \cref{uniqf} and \cref{rmk:quasi_ft_perf_gp_sch_are_abelian_cat}.    
\end{proof}

\begin{remark}
 When $k$ is algebraically closed, the category of quasi-finite type perfect group schemes is equivalent to the category of \emph{quasi-algebraic} group schemes due to Serre \cite[\S1]{MR118722}. This follows from \cite[Proposition~2]{MR118722}.  
\end{remark}

\begin{proposition}\label{filtrationonuni}
 Let $G$ be a perfect quasi-finite type commutative unipotent group scheme over a perfect field $k$. Then $G$ has a finite filtration where the graded pieces are all perfect, quasi-finite type, unipotent, closed subgroup schemes of $\mathbb{G}_a^{\mathrm{perf}}$.
\end{proposition}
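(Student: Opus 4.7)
The plan is to reduce the claim to a finite-type statement via \cref{uniqf}, to construct a finite filtration of the finite-type model whose graded pieces are closed subgroup schemes of $\mathbb{G}_a$, and to transport that filtration through the perfection functor using exactness already established in the excerpt.

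First I would apply \cref{uniqf} to write $G \simeq G_0^{\mathrm{perf}}$ for some commutative unipotent finite type group scheme $G_0$ over $k$. The task then becomes that of building a finite filtration of $G_0$ whose successive quotients are each closed subgroup schemes of $\mathbb{G}_a$. I would combine two classical inputs. Since $k$ is perfect and $G_0$ is unipotent of finite type, the Verschiebung $V$ on $G_0$ is nilpotent, yielding a finite filtration $G_0 \supset V G_0 \supset V^2 G_0 \supset \cdots \supset V^N G_0 = 0$. Next, each successive quotient $M_i := V^i G_0 / V^{i+1} G_0$ is killed by $V$, and the classification of $V$-killed finite type commutative unipotent group schemes via $k_\sigma[F]$-modules used in the proof of \cref{useful} (following \cite[IV, \S3, Corollary 6.7]{Demazure:1970}) provides a closed embedding $M_i \hookrightarrow \mathbb{G}_a^{n}$ for some $n \geq 1$. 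Iteratively projecting such an embedding onto a single factor of $\mathbb{G}_a$ refines $M_i$ by a finite filtration whose graded pieces are closed subgroup schemes of $\mathbb{G}_a$; concatenating with the Verschiebung filtration produces the desired filtration of $G_0$.

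Finally I would apply perfection termwise. By \cref{exactperfection}, the derived inverse limit along Frobenius is concentrated in degree zero on commutative finite type affine group schemes, which makes perfection exact on short exact sequences of such groups (as was exploited in the proof of \cref{rmk:quasi_ft_perf_gp_sch_are_abelian_cat}). The perfected filtration of $G \simeq G_0^{\mathrm{perf}}$ therefore has successive quotients of the form $K^{\mathrm{perf}}$ for various closed subgroup schemes $K \subset \mathbb{G}_a$. Since perfection is a cofiltered limit of closed immersions along Frobenius, each $K^{\mathrm{perf}}$ is a closed subgroup scheme of $\mathbb{G}_a^{\mathrm{perf}}$; it is moreover perfect, quasi-finite type and unipotent by the results already in the excerpt (e.g.~\cref{uniqf}). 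The main potential obstacle is verifying the two classical inputs in the construction of the finite-type filtration, namely nilpotence of $V$ on commutative unipotent finite type groups over a perfect field and the embedding of $V$-killed such groups into $\mathbb{G}_a^{n}$; both are standard and the latter is already used elsewhere in the excerpt, so the remainder of the argument is a formal transport through perfection.
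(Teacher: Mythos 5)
Your proposal is correct and follows essentially the same route as the paper's (much terser) proof: reduce via \cref{uniqf} to a finite type model $G_0$, invoke the classical finite filtration of $G_0$ with graded pieces subgroup schemes of $\mathbb{G}_a$, and take perfection. The extra details you supply (nilpotence of $V$, the embedding of $V$-killed groups into $\mathbb{G}_a^n$ and its refinement by projections, and exactness of perfection via \cref{exactperfection}) are exactly the classical facts the paper leaves implicit, and they are correctly handled.
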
{}

\begin{proof}\label{structure} By \cref{uniqf}, $G$ is perfection of a finite type, unipotent, commutative, affine group scheme $G_0$. Any such $G_0$ has a finite filtration where the graded pieces are subgroup schemes of $\mathbb{G}_a$. The result follows from taking perfection.     
\end{proof}
\begin{corollary}
Let $G$ be a perfect quasi-finite type commutative unipotent group scheme over a perfect field $k$. Then $G$ is killed by a power of $p$.    
\end{corollary}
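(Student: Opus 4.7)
The plan is to combine the filtration result of \cref{filtrationonuni} with the observation that $\mathbb{G}_a$ is $p$-torsion in characteristic $p$. First I would apply \cref{filtrationonuni} to produce a finite filtration $0 = G_0 \subset G_1 \subset \cdots \subset G_n = G$ whose successive quotients $G_i/G_{i-1}$ are each perfections of closed subgroup schemes of $\mathbb{G}_a$. This reduces the corollary to the following two observations: (i) each graded piece is killed by $p$, and (ii) an extension of a group scheme killed by $p^a$ by one killed by $p^b$ is killed by $p^{a+b}$.

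For observation (i), the key point is that in characteristic $p$, multiplication by $p$ on $\mathbb{G}_a$ is the zero map, so every closed subgroup scheme $H_0 \subseteq \mathbb{G}_a$ satisfies $[p]_{H_0} = 0$. Taking perfection commutes with the multiplication-by-$p$ map (since perfection is a limit along Frobenius, and Frobenius commutes with $[p]$), so $H_0^{\mathrm{perf}}$ is likewise killed by $p$. Thus each graded piece $G_i/G_{i-1}$ is killed by $p$.

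For observation (ii), if $H \subseteq G'$ is a subgroup scheme with $H$ killed by $p^a$ and $G'/H$ killed by $p^b$, then for any functor-of-points section $g$ of $G'$ one has $p^b g \in H$, hence $p^{a+b} g = 0$. Applying this inductively along the filtration shows that $G$ is killed by $p^n$, which gives the corollary. No step here is a substantive obstacle; the only subtlety worth flagging is the compatibility of perfection with $[p]$, which is immediate from the definition of $(-)^{\mathrm{perf}}$ as a limit of the group scheme itself along Frobenius.
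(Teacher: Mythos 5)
Your proof is correct and follows the same route as the paper, which simply invokes the filtration of \cref{filtrationonuni} with graded pieces that are subgroups of $\mathbb{G}_a^{\mathrm{perf}}$ (hence $p$-torsion) and concludes by the evident extension argument. Your write-up just makes explicit the two observations the paper leaves implicit, including the harmless point that perfection commutes with $[p]$.
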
{}
\begin{proof}
Follows from using the filtration in \cref{filtrationonuni}.    
\end{proof}

\begin{proposition}\label{filtrationuni1}
  Let $G$ be a perfect quasi-finite type commutative unipotent group scheme over an algebraically closed field $k$ of characteristic $p>0$. Then $G$ has a finite filtration where the graded pieces are isomorphic to either $\mathbb{G}_a^{\mathrm{perf}}$ or $\mathbb{Z}/p$.   
\end{proposition}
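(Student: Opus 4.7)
The plan is to take the filtration produced by \cref{filtrationonuni} and refine each graded piece. That proposition already reduces the problem to proving the claim for closed subgroup schemes $H \subseteq \mathbb{G}_a^{\mathrm{perf}}$ that are perfect, quasi-finite type, and unipotent.

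First I would identify such an $H$ as the perfection of a closed subgroup scheme of $\mathbb{G}_a$. By \cref{uniqf} one may write $H = H_0^{\mathrm{perf}}$ for some finite-type unipotent group scheme $H_0$. Since $\mathbb{G}_a$ is of finite type, \cref{frobpower} will show that the inclusion $H \hookrightarrow \mathbb{G}_a^{\mathrm{perf}}$ is represented by a morphism $f\colon H_0 \to \mathbb{G}_a$ (after possibly replacing $H_0$ by a Frobenius twist, which does not alter its perfection). Setting $H_0' := \mathrm{Im}(f) \subseteq \mathbb{G}_a$, I would then check using \cref{exactperfection} that perfection of the surjection $H_0 \twoheadrightarrow H_0'$ remains a surjection, whence $H \simeq (H_0')^{\mathrm{perf}}$ with $H_0'$ a closed subgroup scheme of $\mathbb{G}_a$.

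Next I would invoke the classical classification of closed subgroups of $\mathbb{G}_a$ over an algebraically closed field of characteristic $p$: any such subgroup is $0$, all of $\mathbb{G}_a$, or a finite group scheme appearing as the kernel of a nonzero additive polynomial $\sum a_i x^{p^i}$. In the last case it sits in an extension of a constant elementary abelian $p$-group $(\mathbb{Z}/p)^n$ by a connected infinitesimal subgroup. The infinitesimal subgroup is killed by a power of the Frobenius and hence dies upon perfection (cf.~\cref{basicex3}), while the étale part $(\mathbb{Z}/p)^n$ is already perfect and trivially filters by copies of $\mathbb{Z}/p$. Thus $H$ is isomorphic to one of $0$, $\mathbb{G}_a^{\mathrm{perf}}$, or $(\mathbb{Z}/p)^n$, yielding the desired filtration.

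The only delicate step will be the first: the passage from a closed subgroup of $\mathbb{G}_a^{\mathrm{perf}}$ to (the perfection of) a closed subgroup of $\mathbb{G}_a$, which requires care with Frobenius twists and with the exactness properties of perfection. The remainder reduces to the standard structure theory of commutative unipotent group schemes over an algebraically closed field of positive characteristic and the elementary observation that infinitesimal group schemes perfect to zero.
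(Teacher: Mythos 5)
Your argument is correct, but it takes a longer route than the paper's. The paper's proof is a one-step application of classical structure theory: write $G \simeq G_0^{\mathrm{perf}}$ via \cref{uniqf} and use the fact that over an algebraically closed field any finite type commutative unipotent $G_0$ already has a composition series with graded pieces $\mathbb{G}_a$, $\alpha_p$, and $\mathbb{Z}/p$; perfecting this filtration (exactness coming from \cref{exactperfection}) and invoking $\alpha_p^{\mathrm{perf}}\simeq 0$ (\cref{basicex3}) gives the statement directly. You instead pass through \cref{filtrationonuni} and then refine its graded pieces, which forces you to prove an extra descent statement — that a perfect quasi-finite type closed subgroup $H \subseteq \mathbb{G}_a^{\mathrm{perf}}$ is the perfection of a closed subgroup $H_0'\subseteq \mathbb{G}_a$ — followed by the classification of closed subgroups of $\mathbb{G}_a$ as $0$, $\mathbb{G}_a$, or kernels of additive polynomials, and the connected–étale decomposition of the finite case. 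Your descent step does work: after a Frobenius twist the inclusion is the perfection of some $f_0\colon H_0 \to \mathbb{G}_a$ by \cref{frobpower}, and the induced map $H \to (\mathrm{Im}\,f_0)^{\mathrm{perf}}$ is surjective by \cref{exactperfection} and injective because its composite with the monomorphism $(\mathrm{Im}\,f_0)^{\mathrm{perf}} \hookrightarrow \mathbb{G}_a^{\mathrm{perf}}$ is a Frobenius twist of the original closed immersion — this is essentially the image computation already carried out in the proof of \cref{rmk:quasi_ft_perf_gp_sch_are_abelian_cat}, so you could simply cite that. What your route buys is a more explicit picture of which perfect groups actually occur as graded pieces (only $0$, $\mathbb{G}_a^{\mathrm{perf}}$, and constant elementary abelian groups, with the infinitesimal kernels of additive polynomials visibly dying); what the paper's route buys is brevity, since the filtration of \cref{filtrationonuni} is itself produced by the same devissage and there is no need to re-analyze subgroups of $\mathbb{G}_a^{\mathrm{perf}}$ at all.
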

\begin{proof}
\label{structure} By \cref{uniqf}, $G$ is perfection of a finite type, unipotent, commutative, affine group scheme $G_0$ over $k$. Since $k$ is algebraically closed, any such $G_0$ has a filtration where the graded pieces are isomorphic to $\mathbb{G}_a,\, \alpha_p$, and $\mathbb{Z}/p$. The result follows from taking perfection and using \cref{basicex3}.
\end{proof}

\begin{proposition}[Galois descent] \label{galdescent}
 Let $G$ be a perfect affine group scheme over a perfect field $k$. Let $\overline{k}$ be an algebraic closure of $k$. Suppose that $G_{\overline{k}}:=G \times_{\mathrm{Spec}\,k} \mathrm{Spec}\,\overline{k}$ is quasi-finite type over $\overline{k}$. Then $G$ is quasi-finite type over $k$.  
\end{proposition}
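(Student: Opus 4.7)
My plan is to combine the cocompactness criterion of \cref{cocom} with faithfully flat descent along the extension $k \hookrightarrow \overline{k}$. First, I would write $G$ as a cofiltered inverse limit $G \simeq \varprojlim_{\alpha} G_\alpha^{\mathrm{perf}}$, where $G_\alpha$ ranges over the finite type affine group scheme quotients of $G$ over $k$ and each projection $G \to G_\alpha^{\mathrm{perf}}$ is (scheme-theoretically) surjective, exactly as in the presentation used in the proof of \cref{cocom}. Since base change to $\overline{k}$ is dual to the filtered colimit $(-) \otimes_k \overline{k}$ on coordinate rings, it commutes both with this cofiltered limit of affine schemes and with the perfection construction, yielding the analogous presentation $G_{\overline{k}} \simeq \varprojlim_\alpha \bigl((G_\alpha)_{\overline{k}}\bigr)^{\mathrm{perf}}$ over $\overline{k}$ with each projection surjective.

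Next, by hypothesis $G_{\overline{k}}$ is quasi-finite type, hence cocompact in perfect affine group schemes over $\overline{k}$ by \cref{cocom}. Mimicking the argument in the proof of that proposition, the identity $\mathrm{id}_{G_{\overline{k}}}$ lies in $\mathrm{Hom}(G_{\overline{k}}, G_{\overline{k}}) \simeq \varinjlim_\alpha \mathrm{Hom}\bigl(((G_\alpha)_{\overline{k}})^{\mathrm{perf}},\, G_{\overline{k}}\bigr)$, and so factors through some $\bigl((G_{\alpha_0})_{\overline{k}}\bigr)^{\mathrm{perf}}$; combined with the surjectivity of the projection, this forces $G_{\overline{k}} \to \bigl((G_{\alpha_0})_{\overline{k}}\bigr)^{\mathrm{perf}}$ to be an isomorphism.

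Finally, I would conclude by faithfully flat descent. The map $G \to G_{\alpha_0}^{\mathrm{perf}}$ of affine $k$-schemes becomes an isomorphism after base change along the faithfully flat morphism $\mathrm{Spec}\,\overline{k} \to \mathrm{Spec}\,k$, and is therefore already an isomorphism over $k$. This exhibits $G$ as the perfection of the finite type $k$-group scheme $G_{\alpha_0}$, proving that $G$ is quasi-finite type over $k$. The main point deserving care is the compatibility of the inverse-limit presentation with both base change and perfection, together with the preservation of scheme-theoretic surjectivity of the projections after base change; once these formal checks are in hand, the rest of the argument is a direct transport of the cocompactness reasoning from $\overline{k}$ to $k$.
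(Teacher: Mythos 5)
Your argument is correct, but it follows a genuinely different route from the paper's. The paper starts on the $\overline{k}$-side: it chooses a finite type model $H$ with $G_{\overline{k}} \simeq H^{\mathrm{perf}}$, spreads out/descends $H$ and the comparison map to a finite extension $L/k$ (so that $G_L$ is quasi-finite type over $L$), and then proves that $G$ is cocompact over $k$ via a Galois fixed-point argument, $\mathrm{Hom}_k(T,G) \simeq \mathrm{Hom}(T_L,G_L)^{\mathrm{Gal}(L/k)}$, using that fixed points under the \emph{finite} group $\mathrm{Gal}(L/k)$ commute with filtered colimits, before invoking \cref{cocom}. You instead work with the canonical $k$-rational presentation $G \simeq \varprojlim_\alpha G_\alpha^{\mathrm{perf}}$ by perfections of finite type quotients (exactly the presentation used inside the proof of \cref{cocom}), base change it to $\overline{k}$, use cocompactness of $G_{\overline{k}}$ to split the identity through a finite stage — so that $G_{\overline{k}} \simeq ((G_{\alpha_0})_{\overline{k}})^{\mathrm{perf}}$ by the surjection-plus-retraction argument — and then descend the isomorphism $G \to G_{\alpha_0}^{\mathrm{perf}}$ along the faithfully flat map $k \to \overline{k}$. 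What your route buys: no spreading out, no intermediate field $L$, and no Galois fixed-point manipulation; the only descent input is the elementary fact that a map of affine $k$-schemes which becomes an isomorphism after faithfully flat base change is an isomorphism, and as a consequence your argument works verbatim for any extension of perfect fields, not just $\overline{k}/k$ (the paper's route uses finiteness of $\mathrm{Gal}(L/k)$). What it costs: the compatibility checks you flag — that the pro-presentation, the surjectivity of its projections, and perfection all commute with base change (the last using that $k$ and $\overline{k}$ are perfect, so that $((G_{\alpha})^{\mathrm{perf}})_{\overline{k}} \simeq ((G_\alpha)_{\overline{k}})^{\mathrm{perf}}$ and the base-changed projection is identified with the canonical one) — these are routine and no stronger than what the paper's proof of \cref{cocom} already uses, but they should be spelled out if this is written up.
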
{}

\begin{proof}
Let us choose an isomorphism $f: G_ {\overline{k}} \, {\simeq} \,H^{\mathrm{perf}}$, where $H$ is a finite type group scheme over $\overline{k}$. By adjunction, $f$ is induced from a canonical map $f': G_{\overline{k}} \to H$ of group schemes over $ \overline{k} $. 
Using the spreading out technique, by possibly replacing $k$ by a finite extension, we can without loss of generality assume that the finite type group scheme $H$ is isomorphic to $H'_{\overline{k}}$ where $H'$ is a finite type group scheme defined over $k$. In the category of affine group schemes over $k$, $$\mathrm{Hom} (G_{\overline{k}}, H') \simeq  \mathrm{Hom}\left(\varprojlim_{[L: k] <\infty}G_L, H' \right)\simeq \varinjlim_{[L: k] <\infty}\mathrm{Hom} \left(G_L, H' \right),$$ where the latter isomorphism follows because $H'$ is a cocompact object, since it is a finite type affine group scheme over $k$. This implies that there exists a finite extension $L$ of $k$, and a map $f'_0: G_L \to H'_L$ such that $f'$ is the pullback of $ f'_0 $ along $\mathrm{Spec}\, \overline{k} \to \mathrm{Spec}\, L.$ Note that since $k$ is perfect, the finite extension $L$ is also perfect and $G_L$ is a perfect group scheme over $L$. Therefore, we have a map $f_0: G_L \to (H'_L)^{\mathrm{perf}} \simeq (H'^{\mathrm{perf}})_L$ which induces an isomorphism when base changed along $\mathrm{Spec}\, \overline{k} \to \mathrm{Spec}\, L $. Therefore, $f_0$ is an isomorphism. This implies that $G_L$ is a perfect, quasi-finite type group scheme over $L$.

We will now show that $G$ is a cocompact object in the category of perfect affine group schemes over $k$, which will imply that it is of quasi-finite type by \cref{cocom}. To this end, let $T \simeq \varprojlim T_i$, where $(T_i)_{i \in I}$ is an inverse system of perfect affine group schemes over $k$. Then we also have $T_L:= \varprojlim (T_i)_L.$ Note that
$$ \mathrm{Hom}_k(T, G) \simeq \mathrm{Hom}(T_L, G_L)^{\mathrm{Gal}(L/k)} \simeq \left (\varinjlim \mathrm{Hom}((T_i)_L, G_L) \right)^{\mathrm{Gal}(L/k)},$$ where the latter isomorphism follows from the fact that $G_L$ is a cocompact object in the category of perfect affine group schemes by the previous paragraph and \cref{cocom}. Moreover, since $\mathrm{Gal}(L/k)$ is a finite group, taking fixed points commutes with filtered colimits. 
Therefore, we have $$\left (\varinjlim \mathrm{Hom}((T_i)_L, G_L) \right)^{\mathrm{Gal}(L/k)} \simeq \varinjlim \left (\mathrm{Hom}((T_i)_L, G_L) \right)^{\mathrm{Gal}(L/k)} \simeq \varinjlim \mathrm{Hom}(T_i, G_L).$$ 
This proves the desired cocompactness of $G$ which finishes the proof.
\end{proof}{}

\subsection{Perfect affine stacks and perfect unipotent spectra}\label{subsection:perfect_unipotent_spectra}
In this section, we restrict our attention to the category of perfect schemes over $ k $ and introduce a notion of perfect affine stacks and perfect unipotent spectra over $ k $. First we discuss a few relevant preliminaries regarding perfect algebras, and recall their derived analogues (\cref{defn:perfect_derived_algebra}).  
The starting point here is Breen's theorem on the vanishing of higher Ext-groups of the additive group over the site of perfect $k$-schemes.  As a consequence, upon restricting to the site of perfect schemes, the recognition theorem can be refined to modules over a far less complicated $ \mathbb{E}_1$-algebra than the one we encountered in \cref{subsection:recognition_thm}.

 \begin{definition}
Let $k$ be a perfect field of characteristic $ p > 0 $. Let $\mathrm{Alg}_k^{\mathrm{perf}}$ denote the category of perfect  $k$-algebras. The inclusion of categories
\[
 \mathrm{Alg}^{\mathrm{perf}}_k  \hookrightarrow\mathrm{Alg}_k 
\]
admits a left adjoint, given by $A \mapsto A_{\mathrm{perf}}:= \varinjlim _{\varphi}A$, and a right adjoint, given by $A^\flat:= \varprojlim _{\varphi}A$, where $\varphi$ is the Frobenius endomorphism of $A$. 
\end{definition}

The notion of being perfect also extends to derived commutative rings over $ k $:

\begin{definition}\label{defn:perfect_derived_algebra}
Note that every object of $\on{DAlg}_k$ admits a Frobenius endomorphism (e.g., see \cite[Construction~2.4.1]{holeman2023}) extending the Frobenius on $ \on{Alg}_k $. 
Let $\on{DAlg}^{\perf}_k \subseteq \on{DAlg}_k$ be the full subcategory spanned by those objects for which the Frobenius map is an isomorphism of derived rings. We will refer to objects of $ \on{DAlg}^{\perf}_k $ as \emph{perfect derived $ k $-algebras}. 
\end{definition}

\begin{remark}
For a derived $\mathbb{F}_p$-algebra $B$, the Frobenius map induces the zero map on $\pi_i(B)$ for $i >0$. This implies that a perfect derived ring is always coconnective. 
\end{remark}

\begin{definition}[Perfect prestacks]
Let $\mathrm{Aff}^{\mathrm{perf}}_k$ denote the category of perfect affine schemes over $k$. We let $\mathrm{PSt}_k^{\mathrm{perf}}:= \mathrm{Fun}\left(\mathrm{Aff}_k^{\mathrm{op}}, \mathcal{S}\right)$, and call it the $ \infty $-category of \emph{perfect prestacks}.  
\end{definition}

\begin{remark}\label{pushpull!}
Note that there is a natural restriction functor $u^{*}: \mathrm{PSt}_k \to \mathrm{PSt}_k^{\mathrm{perf}}$. 
Precomposition with $(\cdot)_{\mathrm{perf}}: \mathrm{Alg}_k \to \mathrm{Alg}_k^{\mathrm{perf}}$ defines a canonical functor denoted by $u_*: \mathrm{PSt}_k^{\mathrm{perf}} \to \mathrm{PSt}_k $. By construction, $u_*$ is right adjoint to $u^*$. Similarly, precomposition with $ (\cdot)^{\flat}: \mathrm{Alg}_k \to \mathrm{Alg}_k^{\mathrm{perf}}$ defines a canonical functor denoted as $u_!: \mathrm{PSt}_k^{\mathrm{perf}} \to \mathrm{PSt}_k $. By construction, $u_!$ is left adjoint to $u^*$.
\end{remark}{}

\begin{lemma}\label{lemmatopoi}
 The functor $u_!$ is fully faithful and its essential image is given by the full subcategory of $\mathrm{PSt}_k$ spanned by $X \in \mathrm{PSt}_k$ such that the natural map $X(A^\flat) \to X(A)$ is an equivalence for every $A \in \mathrm{Alg}_k$. Moreover, for any $B \in \mathrm{Alg}_k^{\mathrm{perf}}$, we have $u_! u^*\mathrm{Spec}\,{B} \simeq \mathrm{Spec}\, B.$   
\end{lemma}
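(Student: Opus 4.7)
The plan is to leverage the adjunction $\iota \dashv (\cdot)^\flat$ between the inclusion $\iota \colon \mathrm{Alg}_k^{\mathrm{perf}} \hookrightarrow \mathrm{Alg}_k$ and the tilt functor. For any adjunction $L \dashv R$ of (small) $\infty$-categories, precomposition with $R$ is left adjoint to precomposition with $L$ on presheaves, since the slice category indexing the left Kan extension along $L$ has a terminal object given by the counit. Applied to $L = \iota$, this identifies $u_!$ (precomposition with $(\cdot)^\flat$, as defined in \cref{pushpull!}) with the left Kan extension along $\iota$ of presheaves of spaces; in particular, the adjunction $u_! \dashv u^*$ is formal.

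I would first verify fully faithfulness of $u_!$ by checking that the unit $F \to u^* u_! F$ is an equivalence for every $F \in \mathrm{PSt}_k^{\mathrm{perf}}$. Evaluated at a perfect algebra $B$, the unit is the map $F(B) \to F((\iota B)^\flat)$ induced by the counit $(\iota B)^\flat \to \iota B$ of $\iota \dashv (\cdot)^\flat$. Since $\iota B$ is already perfect, this counit is an isomorphism, so the unit is an equivalence.

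Next, for the description of the essential image, I would apply the general fact that $X \in \mathrm{PSt}_k$ lies in the essential image of a fully faithful left adjoint $u_!$ if and only if the counit $u_! u^* X \to X$ is an equivalence. Unwinding definitions, $(u_! u^* X)(A) = X(A^\flat)$ and the counit at $A \in \mathrm{Alg}_k$ is the map $X(A^\flat) \to X(A)$ induced by the natural map $A^\flat \to A$, yielding the stated criterion. Finally, for $B \in \mathrm{Alg}_k^{\mathrm{perf}}$ and any $A \in \mathrm{Alg}_k$, one directly computes
$$ u_! u^* \mathrm{Spec}\, B(A) = \mathrm{Hom}_{\mathrm{Alg}_k^{\mathrm{perf}}}(B, A^\flat) \simeq \mathrm{Hom}_{\mathrm{Alg}_k}(B, A) = \mathrm{Spec}\, B(A) $$
using the adjunction $\iota \dashv (\cdot)^\flat$ applied to the perfect algebra $B$. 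The entire argument is formal once the adjoint functor picture is in place, so I do not foresee a substantive obstacle.
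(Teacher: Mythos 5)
Your proof is correct and takes essentially the same route as the paper, which likewise verifies that the unit $\mathrm{id} \to u^*u_!$ is an equivalence and deduces the remaining claims from the adjunction between the inclusion and $(\cdot)^\flat$; you merely spell out the formal steps (the Kan-extension identification of $u_!$, the counit criterion for the essential image, the Yoneda computation) in more detail. One tiny quibble: the map $F(B) \to F((\iota B)^\flat)$ is induced by the \emph{unit} $B \to (\iota B)^\flat$ rather than the counit, but since both are isomorphisms when $B$ is perfect this does not affect the argument.
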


\begin{proof}
 Full faithfulness of $u_!$ follows from the observation that the natural map $\mathrm{id} \to u^*u_!$ is an equivalence. 
 The rest follows from the fact that $(\cdot)^\flat: \mathrm{Alg}_k \to \mathrm{Alg}^{\mathrm{perf}}_k$ is right adjoint to the inclusion functor.
\end{proof}{}

\begin{notation}
 For $B \in \mathrm{Alg}_k^\perf$, $u^* \mathrm{Spec}\, B$ will be simply denoted by $\mathrm{Spec}\, B \in \mathrm{Pst}_k^\perf.$   
\end{notation}

\begin{construction}\label{globalperf}
Note that left Kan extension of the global section functor $\mathrm{Aff}_{k}^{\mathrm{perf}}  \to \mathrm{DAlg}_k^{\op}$ along $\mathrm{Aff}_{k}^{\mathrm{perf}} \to \mathrm{PSt}_k^{\mathrm{perf}}$ produces a functor that we denote as 
$$R\Gamma' (\,\cdot\,, \mathcal{O}):  \mathrm{PSt}_k^{\mathrm{perf}} \to \mathrm{DAlg}_k^{\op}.$$
Note that for $X \in \mathrm{PSt}_k^{\mathrm{perf}},$ we have
$R\Gamma (u_!X, \mathcal{O}) \simeq R\Gamma' (X, \mathcal{O}).$
To see this, note that
$$R\Gamma (u_!X, \mathcal{O}) \simeq \lim_{\substack{A \in \Alg_k;\\(\mathrm{Spec}\, A \to u_!X) \in \mathrm{PSt}_k }}A \simeq \lim_{\substack{A \in \mathrm{Alg}_k;\\(\mathrm{Spec}\, A^\flat \to X) \in \mathrm{Pst}_k^\perf }}A ,$$ where the limits are taken in $\mathrm{DAlg}_k$.
However, the latter is equivalent to
$$ \lim_{\substack{A^\flat \in \mathrm{Alg}_k;\\(\mathrm{Spec}\, A^\flat \to X) \in \mathrm{Pst}_k^\perf }}A \simeq \lim_{\substack{B \in \mathrm{Alg}^\perf_k;\\(\mathrm{Spec}\, B \to X) \in \mathrm{Pst}_k^\perf }}B \simeq R\Gamma'(X, O).$$
Therefore, for $X \in \mathrm{Pst}_k^\perf$, we will simply use $R\Gamma (X,\mathcal{O})$ to denote $R\Gamma' (X, \mathcal{O}).$
\end{construction}

\begin{definition}[Perfect affine stacks]
By the Yoneda embedding, we have a functor $$ (\mathrm{DAlg}_k^{\perf})^\mathrm{op} \to \mathrm{Fun}\left(\mathrm{DAlg}_k^{\perf}, \mathcal{S}\right).$$ Composing with $\mathrm{Alg}_k^{\perf} \to \mathrm{DAlg}_k^{\perf}$, we obtain a functor
$$\mathrm{Spec}^\mathrm{pf}: (\mathrm{DAlg}_k^{\perf})^\mathrm{op} \to \mathrm{PSt}_k^\perf.$$ We define the essential image of this functor to be the category of \emph{perfect affine stacks} over $k$ and denote it by $\mathrm{AffSt}_k^{\mathrm{perf}}$.
\end{definition}

\begin{remark}\label{adjprfder}
 Note that we have an adjunction  \[
 R\Gamma(\, \cdot\, , \mathcal{O}): \mathrm{PSt}_k^{\mathrm{perf}} \leftrightarrows  (\mathrm{DAlg}^{\mathrm{perf}}_k)^{\mathrm{op}} :   \mathrm{Spec}^{\mathrm{pf}},
 \] where the left adjoint $R\Gamma(\, \cdot\, , \mathcal{O})$ is as defined in \cref{globalperf}. 
\end{remark}

\begin{remark}
By definition, for $B \in \mathrm{DAlg}_k^\perf$, we have $\mathrm{Spec}^\mathrm{pf} \, B \simeq u^* \Spec\,B$.    
\end{remark}

\begin{remark}
 Let $\mathrm{St}_k^\wedge$ (resp. $(\mathrm{St}^\perf_k)^\wedge$) denote the full subcategory of $\mathrm{PSt}_k$ (resp. $\mathrm{PSt}^\perf_k$) that satisfies hyperdescent for the fpqc topology on $\mathrm{Aff}_k$ (resp. $\mathrm{Aff}^\perf_k$). The functor $u^*$ from \cref{pushpull!} restricts to a functor again denoted as $u^*: \mathrm{St}_k^\wedge \to (\mathrm{St}^\perf_k)^\wedge.$ Note that the functor $u_*$ also restricts to a functor $u_*: (\mathrm{St}^\perf_k)^\wedge \to \mathrm{St}_k^\wedge $; this follows from the observation that if $A \to B$ is faithfully flat map of $\mathbb{F}_p$-algebras, then $A_{\perf} \to B_{\perf}$ is also faithfully flat. By \cref{pushpull!}, $u_*$ is right adjoint to $u^*$. Further, $u^*$ also admits a left adjoint, (obtained as hypersheafification of $u_!$) which we denote by $u^\sharp_!$. Similarly to \cref{lemmatopoi}, we have the following.
\end{remark}

\begin{lemma}\label{lemmatopoi2}
The functor $u^\sharp_!:  \mathrm{St}_k^\wedge \to (\mathrm{St}^\perf_k)^\wedge$ is fully faithful. Moreover, for any $B \in \mathrm{DAlg}_k^{\mathrm{perf}}$, we have $$ B \simeq R\Gamma \left(u^\sharp_! \mathrm{Spec}^{\mathrm{pf}}\,{B} , \mathcal{O}\right).$$       
\end{lemma}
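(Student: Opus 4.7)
The plan is to show that the unit of the adjunction $u^\sharp_! \dashv u^*$ is an equivalence on $(\mathrm{St}^{\perf}_k)^\wedge$; this is equivalent to full faithfulness of the left adjoint $u^\sharp_!$ (I am assuming the arrow in the statement is a typo and the functor goes from $(\mathrm{St}^{\perf}_k)^\wedge$ to $\mathrm{St}_k^\wedge$, matching the description of $u^\sharp_!$ as the left adjoint of $u^*$). By construction $u^\sharp_! = L \circ u_!$, where $L \colon \mathrm{PSt}_k \to \mathrm{St}_k^\wedge$ is hypersheafification, and $u_!$ is already fully faithful at the presheaf level by \cref{lemmatopoi}.

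The main obstacle is to verify that $u^*$ commutes with hypersheafification, i.e., that for any $Y \in \mathrm{PSt}_k$ the canonical comparison $L^{\perf} u^* Y \to u^* L Y$ is an equivalence (where $L^{\perf}$ denotes hypersheafification on perfect prestacks). The approach is to exploit that $u^*$ admits both $u_!$ as a left adjoint and $u_*$ as a right adjoint at the presheaf level, hence preserves all limits and colimits. Combined with the fact that the inclusion $\mathrm{Aff}^{\perf}_k \hookrightarrow \mathrm{Aff}_k$ preserves fpqc covers (faithful flatness of a map of perfect algebras is preserved under this inclusion), this means $u^*$ carries the \v{C}ech nerves of fpqc hypercovers on $\mathrm{Aff}_k$ to \v{C}ech nerves of fpqc hypercovers on $\mathrm{Aff}^{\perf}_k$, and therefore preserves the local equivalences defining hypersheafification.

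Granted this, for any $X \in (\mathrm{St}^{\perf}_k)^\wedge$ one computes
\[
u^* u^\sharp_! X \;=\; u^* L u_! X \;\simeq\; L^{\perf} u^* u_! X \;\simeq\; L^{\perf} X \;\simeq\; X,
\]
where the penultimate equivalence uses the full faithfulness of $u_!$ from \cref{lemmatopoi} and the last uses that $X$ is already a hypersheaf; this establishes full faithfulness of $u^\sharp_!$.

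For the moreover part, the key input is that the structure sheaf $\mathcal{O}$ on $\mathrm{Aff}_k$ satisfies fpqc hyperdescent, so the functor $R\Gamma(-, \mathcal{O}) \colon \mathrm{PSt}_k \to \mathrm{DAlg}_k^{\op}$ factors through $L$. Combining this with \cref{globalperf}, which identifies $R\Gamma(u_! X, \mathcal{O}) \simeq R\Gamma(X, \mathcal{O})$ for $X \in \mathrm{PSt}^{\perf}_k$, and with the adjunction of \cref{adjprfder} which yields $R\Gamma(\mathrm{Spec}^{\mathrm{pf}} B, \mathcal{O}) \simeq B$ by Yoneda, one obtains
\[
R\Gamma\bigl(u^\sharp_! \mathrm{Spec}^{\mathrm{pf}} B,\, \mathcal{O}\bigr) \;\simeq\; R\Gamma\bigl(u_! \mathrm{Spec}^{\mathrm{pf}} B,\, \mathcal{O}\bigr) \;\simeq\; R\Gamma\bigl(\mathrm{Spec}^{\mathrm{pf}} B,\, \mathcal{O}\bigr) \;\simeq\; B,
\]
as desired.
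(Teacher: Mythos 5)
Your reduction of the ``moreover'' statement contains the real gap. The steps you do carry out are fine: since $K(\mathbb{G}_a,n)$ is truncated, hence hypercomplete, the functor $R\Gamma(-,\mathcal{O})$ inverts hyperlocal equivalences, so $R\Gamma(u^\sharp_!\,\mathrm{Spec}^{\mathrm{pf}}B,\mathcal{O}) \simeq R\Gamma(u_!\,\mathrm{Spec}^{\mathrm{pf}}B,\mathcal{O}) \simeq R\Gamma(\mathrm{Spec}^{\mathrm{pf}}B,\mathcal{O})$ by \cref{globalperf}. But the final claim, that $R\Gamma(\mathrm{Spec}^{\mathrm{pf}}B,\mathcal{O}) \simeq B$ ``by Yoneda'' from the adjunction of \cref{adjprfder}, is not a formality: it says the counit of that adjunction is an equivalence, i.e.\ that $\mathrm{Spec}^{\mathrm{pf}}$ is fully faithful, which is exactly \cref{embedingofperf} --- and the paper deduces \emph{that} from \cref{lemmatopoi2}, not the other way around. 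So your argument reduces the lemma to an equivalent statement and then asserts it. Yoneda only identifies the functor of points of $\mathrm{Spec}^{\mathrm{pf}}B$ on discrete perfect algebras; recovering the coconnective derived ring $B$ as the limit of its discrete perfect points is genuinely nontrivial (over the big site the analogous statement is To\"en's theorem on global sections of affine stacks). The paper supplies this content by writing $\mathrm{Spec}\,B$ as a colimit of a simplicial affine scheme $\mathrm{Spec}\,A_\bullet$, using that $u^*$ and $u^\sharp_!$ preserve colimits and that $u^\sharp_!\,\mathrm{Spec}^{\mathrm{pf}}A \simeq \mathrm{Spec}\,A$ for discrete perfect $A$ (\cref{lemmatopoi}), and then identifying $\mathrm{Tot}\bigl((A_\bullet)_{\mathrm{perf}}\bigr) \simeq (\mathrm{Tot}\,A_\bullet)_{\mathrm{perf}} \simeq B$; the key analytic input is that the filtered colimit defining perfection commutes with totalizations of coconnective objects. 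None of this appears in your proposal.

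In the full-faithfulness half your overall strategy (show $u^*$ preserves hyperlocal equivalences, then use $u^*u_! \simeq \mathrm{id}$ from \cref{lemmatopoi}) matches the paper, but the justification you cite is the wrong flatness fact. That faithfully flat maps of perfect algebras remain faithfully flat in $\mathrm{Alg}_k$ only shows that $u^*$ preserves hypersheaves; it does not show that $u^*$ carries hypercovers in $\mathrm{Aff}_k$ to hypercovers in $\mathrm{Aff}^{\mathrm{perf}}_k$. Since $u^*\mathrm{Spec}\,T \simeq \mathrm{Spec}^{\mathrm{pf}}\,T_{\mathrm{perf}}$, what is actually needed is that perfection preserves faithful flatness --- equivalently the fact the paper invokes, that for $S$ perfect and $S \to T$ faithfully flat the map $S \to T_{\mathrm{perf}}$ is again faithfully flat, which uses perfectness of $S$ to untwist Frobenius. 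Alternatively one can bypass hypercovers: since $u_*$ preserves hypersheaves, its left adjoint $u^*$ formally sends hyperlocal equivalences to hyperlocal equivalences, after which your computation $u^*u^\sharp_!X \simeq X$ goes through.
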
{}

\begin{proof}
For $S \in \mathrm{Alg}_k^{\perf}$, any faithfully flat map $S \to T$, where $T \in \mathrm{Alg}_k$, the map $S \to T_\perf$ is also faithfully flat and factors through $S \to T$. This property implies that the natural map $\id \to u^* u^\sharp_!$ is an equivalence. Therefore, the full faithfulness of $u^\sharp_!:  \mathrm{St}_k^\wedge \to (\mathrm{St}^\perf_k)^\wedge$ follows from adjunction. 

For the second part, note that there is a natural map $u^\sharp_! \Spec^{\mathrm{pf}} B \to \Spec B$ by adjunction, which induces a map $B \to R\Gamma (u^\sharp_! \Spec^{\mathrm{pf}} B,\mathcal{O})$. We will prove that this is an isomorphism. Since $\Spec B$ is an affine stack, we can write it as a colimit of a simplicial affine scheme $\Spec A_{\bullet}$ in $\mathrm{St}_k^\wedge.$ Since $u^*$ is left adjoint to $u_*$, it preserves small colimits. Therefore, $ \mathrm{Spec}\, ^\mathrm{pf} B$ is a colimit of $u^* \Spec A_\bullet \simeq \Spec^{\mathrm{pf}}(A_\bullet)_\perf $ in $(\mathrm{St}^\perf_k)^\wedge$. Since $u^\sharp_!$ is left adjoint to $u^*$, it also preserves small colimits. Therefore, $u^\sharp_! \Spec^{\mathrm{pf}} B$ is a colimit of $u^\sharp_! \Spec^{\mathrm{pf}}(A_\bullet)_\perf .$ However, for an $A \in \Alg_k^\perf$, by \cref{lemmatopoi}, $u^\sharp_! \Spec^\mathrm{pf} A \simeq u_! \Spec^\mathrm{pf} A \simeq \Spec A $. Therefore, the simplicial object $u^\sharp_! \Spec^{\mathrm{pf}}(A_\bullet)_\perf$ is isomorphic to $\Spec (A_\bullet)_\perf.$ This implies that
$$R\Gamma (u^\sharp_! \Spec^{\mathrm{pf}} B, \mathcal{O}) \simeq \mathrm{Tot} (A_\bullet)_{\perf}.$$ Since filtered colimits commute with totalizations of coconnective objects, it follows that the latter is isomorphic to 
$$(\mathrm{Tot} A_\bullet)_{\perf} \simeq B_\perf \simeq B,$$ which finishes the proof.
\end{proof}
\begin{proposition}[Embedding of perfect derived rings]\label{embedingofperf} Let $k$ be a perfect field of characteristic $p>0$. The functor 
$$\mathrm{Spec}^{\mathrm{pf}}: (\mathrm{DAlg}^{\mathrm{perf}}_k)^{\mathrm{op}} \to \mathrm{PSt}_k^\perf$$ is fully faithful. 
    
\end{proposition}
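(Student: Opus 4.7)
To show $\mathrm{Spec}^{\mathrm{pf}}$ is fully faithful, it suffices by the adjunction of \cref{adjprfder} to show that its counit is an equivalence: for every $B \in \mathrm{DAlg}^{\mathrm{perf}}_k$, the canonical map $R\Gamma(\mathrm{Spec}^{\mathrm{pf}} B, \mathcal{O}) \to B$ is an equivalence in $\mathrm{DAlg}^{\mathrm{perf}}_k$. My plan is to deduce this from \cref{lemmatopoi2}, which already provides $R\Gamma(u^{\sharp}_! \mathrm{Spec}^{\mathrm{pf}} B, \mathcal{O}) \simeq B$ on the hypercomplete side.

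The key intermediate step is to identify the left Kan-extended global sections functor of \cref{globalperf} with the composite $R\Gamma \circ u^{\sharp}_!$, where the latter $R\Gamma$ denotes global sections on $\mathrm{St}^{\wedge}_k$. Both functors preserve small colimits: the first tautologically, since it is the left Kan extension from representables; the second because $u^{\sharp}_!$ is a left adjoint and the ordinary global sections functor is itself a colimit-preserving extension from representables. Moreover, they agree on the representables $\mathrm{Spec}^{\mathrm{pf}} A$ with $A \in \mathrm{Alg}^{\mathrm{perf}}_k$: the first returns $A$ by definition, while the second returns $A$ because $u^{\sharp}_! \mathrm{Spec}^{\mathrm{pf}} A \simeq \mathrm{Spec}\, A$ (the hypercomplete analogue of the last statement of \cref{lemmatopoi}, already implicit in the computations of \cref{lemmatopoi2}), and the global sections of $\mathrm{Spec}\, A$ are $A$. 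By the universal property of $\mathrm{PSt}^{\mathrm{perf}}_k$ as the free cocompletion of $\mathrm{Aff}^{\mathrm{perf}}_k$, the two colimit-preserving extensions agree as functors $\mathrm{PSt}^{\mathrm{perf}}_k \to (\mathrm{DAlg}_k)^{\mathrm{op}}$.

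Combining this identification with \cref{lemmatopoi2} yields
\[
R\Gamma(\mathrm{Spec}^{\mathrm{pf}} B, \mathcal{O}) \simeq R\Gamma(u^{\sharp}_! \mathrm{Spec}^{\mathrm{pf}} B, \mathcal{O}) \simeq B,
\]
which completes the verification. As a concrete realization, one may further use that by \cite[Corollaire 2.2.3]{MR2244263} the coconnective derived ring $B$ admits a cosimplicial presentation by discrete $k$-algebras $A_{\bullet}$, so that $\mathrm{Spec}^{\mathrm{pf}} B = u^* \mathrm{Spec}\, B$ is the colimit of $\mathrm{Spec}^{\mathrm{pf}}((A_{\bullet})_{\mathrm{perf}})$; the colimit-preservation of $R\Gamma$ identifies its global sections with $\mathrm{Tot}\,(A_{\bullet})_{\mathrm{perf}} \simeq (\mathrm{Tot}\, A_{\bullet})_{\mathrm{perf}} \simeq B_{\mathrm{perf}} \simeq B$, exactly as in the last step of the proof of \cref{lemmatopoi2}.

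The main technical obstacle lies in the bookkeeping between the prestack and hypercomplete stack versions of the theory, and in ensuring that passage through hypersheafification does not alter global sections. Once this identification is set up, the desired equivalence is essentially a formal consequence of the adjunctions already developed together with \cref{lemmatopoi2}.
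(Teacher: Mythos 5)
Your proof is correct and follows essentially the same route as the paper's: reduce by the adjunction of \cref{adjprfder} to showing $B \simeq R\Gamma(\mathrm{Spec}^{\mathrm{pf}}B,\mathcal{O})$ and then deduce this from \cref{lemmatopoi2}, the only difference being that the paper bridges prestack and hypercomplete global sections via the identification $R\Gamma'(-)\simeq R\Gamma(u_!(-),\mathcal{O})$ of \cref{globalperf} together with the (implicit) insensitivity of $R\Gamma$ to hypersheafification, which is the same fact underlying your comparison of $R\Gamma'$ with $R\Gamma\circ u^{\sharp}_!$. (One cosmetic caveat: the simplicial presentation in your last paragraph exhibits $\mathrm{Spec}^{\mathrm{pf}}B$ as a colimit in $(\mathrm{St}^{\mathrm{perf}}_k)^{\wedge}$, not in $\mathrm{PSt}^{\mathrm{perf}}_k$, exactly as in the proof of \cref{lemmatopoi2}; this does not affect your main argument.)
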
{}

 \begin{proof}By virtue of adjunction from \cref{adjprfder} and \cref{globalperf}, it will be enough to prove that $B \simeq R\Gamma (u_! \Spec^\mathrm{pf} B, \mathcal{O})$ for $B \in \mathrm{DAlg}_k^{\mathrm{perf}}.$ This follows from \cref{lemmatopoi2}, as we have $B \simeq R\Gamma (u^\sharp_! \Spec^\mathrm{pf} B, \mathcal{O}) \simeq R\Gamma (u_! \Spec^\mathrm{pf} B, \mathcal{O}).$  
 \end{proof}

 \begin{corollary}
Let $\mathrm{AffSt}_k^{\perf'}$ denote the full subcategory of $\mathrm{AffSt}_k$ spanned by $X \in \mathrm{AffSt}_k$ such that $R\Gamma (X, \mathcal{O})$ is a perfect derived ring. Then the functor $u^*$ induces an equivalence
$$\mathrm{AffSt}_k^{\perf'} \simeq \mathrm{AffSt}_k^{\perf} $$
 \end{corollary}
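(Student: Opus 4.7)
The strategy is to observe that both $\mathrm{AffSt}_k^{\perf'}$ and $\mathrm{AffSt}_k^{\perf}$ are canonically equivalent to $(\mathrm{DAlg}^{\perf}_k)^{\op}$, and then to verify that the restriction functor $u^*$ intertwines these two identifications.

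For the first identification, I will invoke the standard equivalence $\mathrm{AffSt}_k \simeq (\mathrm{DAlg}^{\mathrm{ccn}}_k)^{\op}$ of \cite{MR2244263}, combined with the observation (noted earlier in this subsection) that every perfect derived $k$-algebra is automatically coconnective, so that $\mathrm{DAlg}^{\perf}_k \subseteq \mathrm{DAlg}^{\mathrm{ccn}}_k$. Since being a perfect derived ring is a property, the full subcategory $\mathrm{AffSt}_k^{\perf'}$ corresponds under this equivalence precisely to $(\mathrm{DAlg}^{\perf}_k)^{\op}$, with the equivalence sending $X \mapsto R\Gamma(X, \mathcal{O})$ and its inverse sending $B \mapsto \mathrm{Spec}\, B$. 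For the second identification, \cref{embedingofperf} shows that $\mathrm{Spec}^{\mathrm{pf}}: (\mathrm{DAlg}^{\perf}_k)^{\op} \to \mathrm{PSt}^{\perf}_k$ is fully faithful, and by definition $\mathrm{AffSt}_k^{\perf}$ is its essential image; this gives the desired equivalence $(\mathrm{DAlg}^{\perf}_k)^{\op} \simeq \mathrm{AffSt}_k^{\perf}$.

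Next, I will check that for every $B \in \mathrm{DAlg}^{\perf}_k$ there is a natural isomorphism $u^*(\mathrm{Spec}\, B) \simeq \mathrm{Spec}^{\mathrm{pf}}(B)$ in $\mathrm{PSt}^{\perf}_k$. Indeed, evaluating at any $A \in \mathrm{Alg}^{\perf}_k$,
\[
(u^* \mathrm{Spec}\, B)(A) \simeq \mathrm{Map}_{\mathrm{DAlg}_k}(B, A) \simeq \mathrm{Map}_{\mathrm{DAlg}^{\perf}_k}(B, A) \simeq \mathrm{Spec}^{\mathrm{pf}}(B)(A),
\]
where the middle equivalence uses that $\mathrm{DAlg}^{\perf}_k \hookrightarrow \mathrm{DAlg}_k$ is a full subcategory and $A$ is perfect. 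This shows that $u^*$ carries $\mathrm{AffSt}_k^{\perf'}$ into $\mathrm{AffSt}_k^{\perf}$, and that the induced functor fits into a triangle identifying both sides with $(\mathrm{DAlg}^{\perf}_k)^{\op}$ up to natural isomorphism. Composing the two equivalences then yields the asserted $\mathrm{AffSt}_k^{\perf'} \simeq \mathrm{AffSt}_k^{\perf}$.

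There is no substantive obstacle here: the corollary is essentially a formal assembly of definitions together with the embedding \cref{embedingofperf}. The only minor point of care is the verification that evaluating $\mathrm{Spec}^{\mathrm{pf}}\, B$ at a perfect algebra $A$ agrees with evaluating $\mathrm{Spec}\, B$ there, which is immediate from the fullness of $\mathrm{DAlg}^{\perf}_k \hookrightarrow \mathrm{DAlg}_k$.
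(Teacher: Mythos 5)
Your proof is correct and follows essentially the same route as the paper: identify $\mathrm{AffSt}_k^{\perf'}$ with $(\mathrm{DAlg}_k^{\perf})^{\op}$ via global sections, and then use \cref{embedingofperf} together with the definition of $\mathrm{AffSt}_k^{\perf}$ as the essential image of $\mathrm{Spec}^{\mathrm{pf}}$. Your explicit verification that $u^*\,\mathrm{Spec}\,B \simeq \mathrm{Spec}^{\mathrm{pf}}B$ on perfect test algebras is exactly the compatibility the paper records as definitional in the remark preceding the corollary, so the two arguments coincide.
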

 \begin{proof}
  The above functor is the composition of 
  $$\mathrm{AffSt}_k^{\perf'} \simeq (\mathrm{DAlg}_k^\perf)^\mathrm{op} \to \mathrm{AffSt}_k^{\perf}. $$ By \cref{embedingofperf}, the latter functor is an equivalence, which finishes the proof.
 \end{proof}

\begin{corollary}\label{cor:perfect_conn_affine_stacks_from_perfect_homotopy_groups}
    Let $ X $ be a pointed connected stack over a field $ k $. 
    Then $ X $ is a perfect affine stack if and only if each $ \pi_i^{\mathrm{U}}(X,*) $ is represented by a perfect unipotent group scheme over $ k $. 
\end{corollary}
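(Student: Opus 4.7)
The plan is to prove the two implications separately; the backward direction is the substantive part and reduces via Postnikov towers to a key statement about Eilenberg--MacLane stacks.

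For the forward direction, the key observation is that $X$ being a perfect affine stack means $R\Gamma(X, \mathcal{O})$ is a perfect derived ring, which translates, under the $\mathrm{Spec}^{\mathrm{pf}} \dashv R\Gamma(-, \mathcal{O})$ adjunction, to the absolute Frobenius $F_X \colon X \to X$ being an equivalence. Functoriality of Frobenius in positive characteristic endows each $\pi_i^{\mathrm{U}}(X, \ast)$ with an induced Frobenius endomorphism which must agree with the absolute Frobenius of the group scheme; since $F_X$ is an equivalence, so is this induced map, so each $\pi_i^{\mathrm{U}}(X)$ is perfect.

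For the backward direction, suppose each $\pi_i := \pi_i^{\mathrm{U}}(X, \ast)$ is a perfect unipotent group scheme. I would work with the Postnikov tower $\mathbb{U}(X) \simeq \varprojlim_n \tau_{\le n} \mathbb{U}(X)$. First I would record that perfect affine stacks form a subcategory of affine stacks closed under limits: limits of affine stacks correspond to colimits of coconnective derived rings, and since the Frobenius is a natural endomorphism on all of $\mathrm{DAlg}_k$, being an isomorphism is stable under colimits. It therefore suffices to show inductively that each truncation $\tau_{\le n}\mathbb{U}(X)$ is a perfect affine stack. The base case $\tau_{\le 0}\mathbb{U}(X) \simeq \mathrm{Spec}\, k$ is immediate. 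The inductive step uses the standard $k$-invariant pullback square expressing $\tau_{\le n}\mathbb{U}(X)$ as the fiber product $\tau_{\le n-1}\mathbb{U}(X) \times_{K(\pi_n, n+1)} \ast$; once the right-hand corner $K(\pi_n, n+1)$ is known to be a perfect affine stack, closure under limits completes the induction.

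The proof thus reduces to the following key lemma: for any perfect commutative unipotent group scheme $G$ and any $n \ge 0$, the Eilenberg--MacLane stack $K(G, n)$ is a perfect affine stack. Affineness of $K(G, n)$ follows from earlier results in the paper since unipotence of $G$ passes to the classifying stacks. For perfectness, I would proceed by induction on $n$. The case $n = 0$ holds by hypothesis, since $\mathcal{O}(G)$ is a perfect ring. For $n \ge 1$, I would use the bar presentation $K(G, n) \simeq |BK(G, n-1)_\bullet|$ to compute $R\Gamma(K(G, n), \mathcal{O})$ as a totalization of a cosimplicial derived ring whose terms are iterated tensor powers of $R\Gamma(K(G, n-1), \mathcal{O})$. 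By induction these are perfect derived rings; since perfect derived rings are closed under both tensor products (Frobenius commutes with pushouts of derived rings) and totalizations (Frobenius commutes with limits in $\mathrm{DAlg}$), $R\Gamma(K(G, n), \mathcal{O})$ is perfect, completing the induction.

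The main technical obstacle I anticipate is ensuring perfectness of derived rings behaves correctly under the bar construction in the inductive step. This requires the Frobenius to be a natural, monoidal, and limit-compatible endomorphism of $\mathrm{DAlg}_k$, and that the cosimplicial totalization genuinely computes $R\Gamma(K(G, n), \mathcal{O})$ as a derived ring rather than merely as a spectrum. These are standard facts but need to be invoked carefully.
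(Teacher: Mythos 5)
Your forward direction is essentially the paper's. The backward direction, however, has a genuine gap: as written it proves (at best) that the affinization $\mathbb{U}(X)$ is a perfect affine stack, not that $X$ is. You run the Postnikov induction on $\mathbb{U}(X)$ and never identify $X$ with $\mathbb{U}(X)$; that identification --- a pointed connected stack whose homotopy sheaves are representable by unipotent affine group schemes is itself an affine stack --- is exactly To\"en's Th\'eor\`eme 2.4.1 of \cite{MR2244263}, which is the main input in the paper's proof and cannot be bypassed. Nor does running your induction on the Postnikov tower of $X$ itself repair this: in the fpqc $\infty$-topos $\mathrm{St}_k$ the convergence $X \simeq \varprojlim_n \tau_{\le n}X$ is not automatic (it holds once $X$ is known to be affine, hence hypercomplete --- which is circular). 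Note also that if one reads the hypothesis purely as a condition on the homotopy groups of $\mathbb{U}(X)$, the conclusion about $X$ would be false (e.g.\ $B\mathbb{G}_m$ has affinization $\mathrm{Spec}\, k$), so the substance of the statement is representability of the homotopy sheaves of $X$ itself, and that is precisely where To\"en's theorem must enter.

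A second, independent problem is the inductive step: the pullback square $\tau_{\le n}\mathbb{U}(X) \simeq \tau_{\le n-1}\mathbb{U}(X)\times_{K(\pi_n,\,n+1)} \ast$ exists only when the Postnikov fibration is principal, i.e.\ when $\pi_1$ acts trivially on $\pi_n$. For a general pointed connected affine stack, $\pi_1^{\mathrm{U}}(X)$ is a possibly noncommutative unipotent group scheme acting nontrivially on the higher homotopy group schemes, so the Postnikov stages are twisted and no global $k$-invariant map to $K(\pi_n, n+1)$ is available without a further (principal) refinement of the tower. By contrast, the paper's proof avoids both issues: it invokes To\"en's theorem to get affineness of $X$ from representability of the $\pi_n(X,\ast)$, and then deduces perfectness softly --- the Frobenius induces equivalences on all homotopy sheaves by hypothesis, and since $X$, being affine, is hypercomplete, the Frobenius of $X$ is an equivalence, i.e.\ $R\Gamma(X,\mathcal{O})$ is a perfect derived ring. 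Your auxiliary observations (closure of perfect affine stacks under limits, perfectness of $K(G,n)$ via K\"unneth and the bar construction) are correct and would be needed in a from-scratch argument, but they do not substitute for the representability input.
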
{}
\begin{proof}
    Suppose $ X $ is perfect and affine. 
    Affineness of $ X $ implies that each $ \pi_n(X,*) $ is represented by a unipotent group scheme over $ k $. 
    That the Frobenius $ F $ induces equivalences $ \pi_n(X,*) \xrightarrow{\sim} \pi_n(X,*) $ for each $ n \geq 1 $ follows from $ X $ being perfect. 
    It follows that $ \pi_n(X,*) $ is a perfect unipotent group scheme for all $ n \geq 1 $. 
    
    Conversely, suppose that $ X $ is a pointed connected stack so that $ \pi_n(X,*) $ is represented by a perfect unipotent affine group scheme for all $ n \geq 1 $. 
    By \cite[Théorème 2.4.1]{MR2244263}, $ X $ is an affine stack. 
    By assumption, the Frobenius induces equivalences on $ \pi_n(X,*) $ for all $ n \geq 1 $. 
    Since $ X $ is hypercomplete, it follows that $ X $ is a perfect affine stack. 
\end{proof}

\begin{definition}[Perfect unipotent spectra]\label{perfectunispec}Let $A$ be any perfect ring.
We define the stable $\infty$-category of perfect unipotent spectra to be the $\infty$-category  $\Sp^{\U, \perf}_{A}:=  \Sp({\on{AffSt}}^{\perf}_{A*})$ of spectrum objects; that is, it is the inverse limit 
\begin{equation*}
 \ldots \to \mathrm{AffSt}^\perf_{A*}\xrightarrow{\Omega} \mathrm{AffSt}^\perf_{A*}\to\ldots .
\end{equation*}
\end{definition}

\begin{remark}
 The natural fully faithful functor $\mathrm{AffSt}^\perf_A \to \mathrm{AffSt}_A$ produces a fully faithful functor $$\Sp^{\U, \perf}_{A} \to \Sp^{\U}_{A}.$$ Unwinding the definitions, one sees that a unipotent spectrum $E$ is a perfect unipotent spectrum if and only if $\Omega^{\infty -n}(E)$ is a perfect affine stack for each  $n \in \mathbb{Z}$. 
\end{remark}

Restricting to bounded below unipotent spectra,  which we denote by $\Sp^{\U, \perf -}_k $  we once again obtain a well-behaved $t$-structure as above.  
\begin{proposition}
Let $ k $ be a perfect field of characteristic $p>0$. 
Then there is a $t$-structure on $\Sp^{\U, \perf -}_k$ with heart given by the category of perfect unipotent commutative affine group schemes over $ k $. 
\end{proposition}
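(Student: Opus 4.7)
The plan is to define the $t$-structure on $\mathrm{Sp}^{\mathrm{U, perf-}}_k$ by restricting the $t$-structure on $\mathrm{Sp}^{\mathrm{U-}}_k$ from \cref{tstruc} along the fully faithful embedding $\mathrm{Sp}^{\mathrm{U, perf-}}_k \hookrightarrow \mathrm{Sp}^{\mathrm{U-}}_k$. Concretely, I declare $(\mathrm{Sp}^{\mathrm{U, perf-}}_k)_{\ge 0}$ (resp.\ $(\mathrm{Sp}^{\mathrm{U, perf-}}_k)_{\le 0}$) to be the full subcategory of perfect unipotent spectra $L$ satisfying $\pi_i(L) = 0$ for $i < 0$ (resp.\ of perfect unipotent spectra $K$ for which $\Omega^\infty(K[-1])$ is contractible).

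The first key step is to establish the perfect analogue of \cref{rep}: a bounded below unipotent spectrum $E$ is perfect if and only if each $\pi_i(E)$ is represented by a perfect unipotent commutative affine group scheme over $k$. The forward direction is immediate, since Frobenius acts as an equivalence on each $\Omega^{\infty-n}E$ and hence on its homotopy sheaves. For the converse, apply \cref{cor:perfect_conn_affine_stacks_from_perfect_homotopy_groups} to $\Omega^{\infty-n}E$ for $n$ sufficiently large (so that it is pointed and connected) to deduce that it is a perfect affine stack; then repeatedly loop down, exactly as in the second paragraph of the proof of \cref{rep}, to conclude that every $\Omega^{\infty-i}E$ is a perfect affine stack.

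The second step is to verify that the truncation functors $\tau_{\ge 0}$ and $\tau_{\le 0}$ of \cref{tstruc} preserve perfect unipotent spectra. Using the homotopy-group characterization from step one, this is straightforward: $\pi_i(\tau_{\ge 0} E)$ equals $\pi_i(E)$ for $i \ge 0$ and vanishes otherwise (and similarly for $\tau_{\le 0}$), so perfectness of the homotopy group schemes is preserved. Consequently, the $t$-structure axioms on $\mathrm{Sp}^{\mathrm{U-}}_k$ restrict to the corresponding axioms on the full subcategory $\mathrm{Sp}^{\mathrm{U, perf-}}_k$, with the canonical fiber sequence $\tau_{\ge 0}E \to E \to \tau_{\le -1}E$ lying entirely in $\mathrm{Sp}^{\mathrm{U, perf-}}_k$.

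Finally, the heart consists of perfect unipotent spectra $E$ with $\pi_i(E) = 0$ for $i \neq 0$; combining \cref{prc} with the perfect analogue of \cref{rep} from step one, this is precisely the category of perfect unipotent commutative affine group schemes over $k$. The principal subtlety is the first step, but since \cref{cor:perfect_conn_affine_stacks_from_perfect_homotopy_groups} is already available, it amounts to adjoining the adjective \emph{perfect} at every occurrence of ``unipotent group scheme'' in the argument for \cref{rep}.
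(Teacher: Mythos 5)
Your proposal is correct and takes essentially the same route as the paper, whose proof is exactly ``the same arguments as in \cref{tstruc}, using \cref{cor:perfect_conn_affine_stacks_from_perfect_homotopy_groups}'' — i.e.\ rerunning the $t$-structure argument once the perfect analogue of \cref{rep} is supplied by that corollary, which is what your first step does. The only additional fact you use implicitly in the looping-down step, that $\Omega$ preserves pointed perfect affine stacks, is already built into \cref{perfectunispec}, so there is no gap.
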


\begin{proof}
Follows from the same arguments as in \cref{tstruc}, using \cref{cor:perfect_conn_affine_stacks_from_perfect_homotopy_groups}. 
\end{proof}

\subsection{Recognition theorem for perfect unipotent spectra} \label{subsection:perfect_uni_spectra_recognition}
We will study some recognition theorems for various $\infty$-categories  of perfect unipotent spectra, similar to \cref{subsection:recognition_thm}. In addition to the techniques in \cref{subsection:recognition_thm}, we will crucially use the following result due to Breen. Before stating it, let us fix some notations for this subsection.

Let $S= \Spec R$, where $R$ is a perfect ring of characteristic $p>0$. Let $D_{fpqc} (S_\perf, \bZ/p^m)$ denote the $ \infty $-category of $D(\bZ/p^m)$-valued fpqc sheaves on $\Aff_S^\perf$. 
Let $W$ denote the $p$-typical Witt group scheme and $W_n$ denote its $n$-truncated variant. Let $W^\perf$ and $W_n^\perf$ denote their perfections. Let $\sigma$ denote the Witt vector Frobenius on $W(R)$ as well as $W_n(R).$ Let $W_n(R)_{\sigma}[F,F^{-1}]$ be the non-commutative Laurent polynomial ring subject to the relation $
Fa = \sigma(a) F.$

\begin{theorem} [{\cite[Theorem~0.1]{Breen}}]\label{breensthm}
Let $ S = \Spec R $ as above. There is a natural equivalence  
\[
R\on{Hom}_{D_{fpqc}(S_{\perf}, \bF_p)}\left(\mathbb{G}_a^\perf, \mathbb{G}_a^\perf\right) \simeq R_{\sigma}[F,F^{-1}].
\]

\end{theorem}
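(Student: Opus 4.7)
The plan is to split the statement into two independent claims: (i) the higher Ext groups in $D_{fpqc}(S_{\perf},\bF_p)$ vanish, i.e.\ $\mathrm{Ext}^i(\bG_a^\perf, \bG_a^\perf)=0$ for $i\geq 1$, and (ii) the degree zero piece is the twisted Laurent polynomial ring $R_\sigma[F,F^{-1}]$. Since $R_\sigma[F,F^{-1}]$ is a discrete $\mathbb{E}_1$-algebra, combining the two yields the asserted equivalence of mapping spectra.

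For (ii), I would construct the map $R_\sigma[F,F^{-1}] \to \mathrm{End}_{D_{fpqc}(S_{\perf},\bF_p)}(\bG_a^\perf)$ by hand: scalars in $R$ act through pointwise multiplication and $F$ acts as the usual Frobenius endomorphism of $\bG_a$, which is crucially \emph{invertible} on the perfect site (the inverse being given by taking $p$-th roots, well-defined because every test ring is perfect). The twisted relation $Fa=\sigma(a)F=a^pF$ is immediate from $F(ax)=a^px^p$. For bijectivity, $\bG_a^\perf$ restricted to $\mathrm{Aff}^{\perf}_S$ is corepresented by $R[x^{1/p^\infty}]$, so an $\bF_p$-linear endomorphism is specified by an additive element $f \in R[x^{1/p^\infty}]$ satisfying $f(x+y)=f(x)+f(y)$; such an $f$ must be a finite $R$-linear combination of monomials $x^{p^n}$ with $n \in \bZ$, which identifies uniquely with an element of $R_\sigma[F,F^{-1}]$.

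The main obstacle is (i), the vanishing of higher Ext groups, which is the genuine content of Breen's theorem and which we are quoting from \cite{breenperf}. The strategy there is to resolve $\bG_a^\perf$ by free $\bF_p$-module sheaves of the form $\bF_p[(\bG_a^\perf)^n]$ (a perfect-site analogue of the Breen--Deligne resolution) and thereby reduce the claim to a computation about the cohomology of products of copies of $\bG_a^\perf$ on the perfect site. The decisive simplification, which completely fails on the fppf site of all $S$-schemes, is that inverting Frobenius kills the obstructions responsible for nontrivial $\mathrm{Ext}^i(\bG_a,\bG_a)$ in the classical setting, leaving only the Laurent polynomial generators in degree zero. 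Putting (i) and (ii) together gives the stated equivalence, natural in $S$, of $\mathbb{E}_1$-algebras.
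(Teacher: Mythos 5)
The paper gives no proof of this statement at all---it is quoted directly from Breen---so your proposal, which likewise defers the essential content (the vanishing of the higher Ext groups of $\bG_a^{\perf}$ on the perfect site) to Breen and only adds the routine Yoneda/additive-polynomial identification of $\mathrm{End}\left(\bG_a^{\perf}\right)$ with $R_{\sigma}[F,F^{-1}]$ in degree zero, is consistent with the paper's treatment. Your degree-zero computation is correct, and since a connective endomorphism ring spectrum concentrated in degree zero is determined as an $\mathbb{E}_1$-algebra by its $\pi_0$, the promotion to an $\mathbb{E}_1$-equivalence is also fine (this is the same observation the paper invokes for \cref{apple}).
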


\begin{corollary}\label{apple}
Let $ S = \Spec R $ be as in \cref{breensthm}. 
There is a natural equivalence 
\[
R\on{Hom}_{D_{fpqc}(S_{\perf}, \bZ)}(\mathbb{G}_a^\perf, \mathbb{G}_a^\perf) \simeq R_{\sigma}[F,F^{-1}] \oplus R_{\sigma}[F,F^{-1}][-1] \,.
\] 
\end{corollary}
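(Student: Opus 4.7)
The plan is to deduce this directly from Breen's theorem (\cref{breensthm}) via a change-of-rings argument. The key observation is that $\mathbb{G}_a^\perf$ is annihilated by $p$, and hence is naturally an $\bF_p$-module object inside the ambient $\bZ$-linear category $D_{fpqc}(S_\perf, \bZ)$.

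First, I would tensor $\mathbb{G}_a^\perf$ with the cofiber sequence $\bZ \xrightarrow{p} \bZ \to \bF_p$. Since multiplication by $p$ acts as zero on $\mathbb{G}_a^\perf$, the resulting fiber sequence
\[ \mathbb{G}_a^\perf \xrightarrow{0} \mathbb{G}_a^\perf \to \mathbb{G}_a^\perf \otimes^L_\bZ \bF_p \]
splits, producing an equivalence $\mathbb{G}_a^\perf \otimes^L_\bZ \bF_p \simeq \mathbb{G}_a^\perf \oplus \mathbb{G}_a^\perf[1]$ in $D_{fpqc}(S_\perf, \bZ)$.

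Second, I would invoke the extension-of-scalars adjunction between $D_{fpqc}(S_\perf, \bZ)$ and $D_{fpqc}(S_\perf, \bF_p)$. Since $\mathbb{G}_a^\perf$ is an $\bF_p$-module, this adjunction gives
\[ R\on{Hom}_{D_{fpqc}(S_\perf, \bZ)}(\mathbb{G}_a^\perf, \mathbb{G}_a^\perf) \simeq R\on{Hom}_{D_{fpqc}(S_\perf, \bF_p)}(\mathbb{G}_a^\perf \otimes^L_\bZ \bF_p,\, \mathbb{G}_a^\perf). \]
Substituting the splitting from the first step yields a natural equivalence
\[ R\on{Hom}_{D_{fpqc}(S_\perf, \bZ)}(\mathbb{G}_a^\perf, \mathbb{G}_a^\perf) \simeq R\on{Hom}_{D_{fpqc}(S_\perf, \bF_p)}(\mathbb{G}_a^\perf, \mathbb{G}_a^\perf) \oplus R\on{Hom}_{D_{fpqc}(S_\perf, \bF_p)}(\mathbb{G}_a^\perf, \mathbb{G}_a^\perf)[-1]. \]
Applying Breen's identification \cref{breensthm} to each summand produces the claimed equivalence with $R_\sigma[F, F^{-1}] \oplus R_\sigma[F, F^{-1}][-1]$.

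I do not anticipate any serious obstacle: the substantive input, namely Breen's computation, is already in hand, and the remaining ingredients—splitting of the Bockstein fiber sequence for a $p$-torsion object and the change-of-rings adjunction—are formal. If one wanted the equivalence to respect further algebraic structure (for example a module structure over Witt vectors, or compatibility with the $\mathbb{E}_1$-ring structure on either side), some additional naturality bookkeeping would be needed, but the underlying equivalence of spectra follows immediately from this strategy.
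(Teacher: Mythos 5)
Your argument is correct and is essentially the paper's own proof: the same base-change adjunction from $\bZ$ to $\bF_p$, the same splitting $\mathbb{G}_a^\perf\otimes^L_\bZ\bF_p\simeq\mathbb{G}_a^\perf\oplus\mathbb{G}_a^\perf[1]$ coming from $p$ acting by zero, followed by Breen's theorem. The only point you defer (compatibility with the $\mathbb{E}_1$-structure) is handled in the paper by citing a uniqueness result for $\mathbb{E}_1$-algebras over $R_\sigma[F,F^{-1}]$ with the given homotopy groups, so nothing substantive is missing.
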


\begin{proof}
First, note that by adjunction, we have an equivalence 
$$R\Hom_{D_{fpqc}(S_{\perf}, \bZ)}\left(\bG_a^\perf, \bG_a^\perf\right) \simeq R\Hom_{D_{fpqc}(S_{\perf}, \bF_p)}\left(\bG_a ^\perf \otimes \bZ / p \bZ , \bG_a^\perf\right)$$ of $ R_\sigma [F,F^{-1}] $-modules. 
Thus, using the fiber sequence 
\[
0 \to \bZ \xrightarrow{ p} \bZ \to \bZ / p \bZ  \to 0  
\] and the fact that $\bG_a^\perf$ is a ring object of characteristic $p$, we have 
$$ R\Hom_{D_{fpqc}(S_{\perf}, \bF_p)}\left(\bG_a ^\perf \otimes \bZ / p \bZ  , \bG_a^\perf\right) \simeq R\Hom_{D_{fpqc}(S_{\perf}, \bF_p)}\left(\bG_a ^\perf \oplus \bG_a ^\perf[1]   , \bG_a^\perf\right).$$ 
Applying \cref{breensthm} now gives the desired computation. 
By \cite[Proposition 7.1]{bayindir}, there exists a unique $\mathbb{E}_1$-$ R_{\sigma} [F,F^{-1}] $-algebra with these homotopy groups, so this can be promoted to an equivalence of $\mathbb{E}_1$-algebras. 
\end{proof}

\begin{remark}
 Note that $\mathbb{G}_a^{\perf} \otimes_\bZ \bZ/p^m\bZ$ for $m \ge 2$ as a $\bZ/p^m \bZ $-module (induced from the right factor) is not isomorphic to $\mathbb{G}_a^\perf \oplus \mathbb{G}_a^\perf [1].$   
\end{remark}{}

The recognition theorem now takes the following form. Let $k$ be a perfect field. Note that for any $E \in \mathbb{F}_p-\mathrm{Mod}^\mathrm{\perf,U-}_k,$ the mapping spectrum denoted by $R\mathrm{Hom}\left(E, \mathbb{G}^\perf_a\right)$ can naturally be viewed as a right module over $\mathrm{End}\left(\bG_a^\perf\right) \simeq k_\sigma[F,F^{-1}] $ (see \cref{breensthm}). The assignment $E \mapsto R\mathrm{Hom}\left(E, \mathbb{G}^\perf_a\right)$ promotes to a functor 
$$M_{\bF_p}: \mathbb{F}_p-\mathrm{Mod}^\mathrm{\perf,U-}_k \to \on{RMod}_{k_{\sigma}[F,F^{-1}]}^{\op}.$$ Similarly, using \cref{apple}, we have a functor
$$M_{\bZ}: \mathbb{Z}-\mathrm{Mod}^\mathrm{\perf,U-}_k \to \on{RMod}_{k_{\sigma}[F,F^{-1}] \oplus k_{\sigma}[F,F^{-1}][-1]}^\op.$$

\begin{proposition}
The functors $M_{\bF_p}$ and $M_{\bZ}$ are fully faithful.
\end{proposition}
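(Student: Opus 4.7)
The proof parallels the strategy of \cref{recog}, replacing $\bG_a$ by $\bG_a^\perf$ and using the endomorphism computations of \cref{breensthm} and \cref{apple}. I focus on $M_{\bF_p}$; the case of $M_{\bZ}$ is analogous. Both source and target are bounded-below by definition, and the inclusion $(\bF_p\text{-}\Mod^{\perf, \U-}_k)_{\ge 0} \hookrightarrow \bF_p\text{-}\Mod^{\perf, \U-}_k$ is colimit-preserving, so it suffices to show that the restriction $M'_{\bF_p}$ of $M_{\bF_p}$ to connective objects is fully faithful. This is a colimit-preserving functor between presentable $\infty$-categories; by the adjoint functor theorem it admits a right adjoint $D$, and full faithfulness reduces to the assertion that the unit $\id \to D \circ M'_{\bF_p}$ is an equivalence.

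The main technical inputs are perfect analogs of \cref{genbylim} and \cref{cocompacthard}. For generation, the smallest full subcategory of $(\bF_p\text{-}\Mod^{\perf, \U-}_k)_{\ge 0}$ that is closed under small limits and contains the shifts $\{\bG_a^\perf[n]\}_{n \ge 0}$ is the whole category. Following the proof of \cref{genbylim}, write $E \simeq \varprojlim_n \tau_{\le n} E$ and devissage through the Postnikov tower to reduce to the heart. An object $G$ of the heart is a perfect unipotent commutative group scheme killed by $p$, so the Verschiebung $V_G = p F^{-1}$ vanishes; hence $G$ sits in an exact sequence
\begin{equation*}
0 \to G \to \prod_I \bG_a^\perf \to \prod_J \bG_a^\perf \to 0,
\end{equation*}
obtained by perfecting the classical two-step resolution used in the proof of \cref{useful} (for a non-finite-type $G$, this corresponds to a two-step free resolution of the associated module over the twisted Laurent polynomial ring $k_\sigma[F, F^{-1}]$, which is a non-commutative principal ideal domain and therefore has global dimension at most one). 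For cocompactness of $\bG_a^\perf[n]$, the argument of \cref{cocompacthard} transfers: the spectral Breen--Deligne resolution \cref{breendel} still applies, and the requisite cocompactness of the Eilenberg--MacLane perfect affine stacks $K(\bG_a^\perf, n)$ in $\mathrm{AffSt}_k^\perf$ follows from that of $K(\bG_a, n)$ in $\mathrm{AffSt}_k$ via the fully faithful embedding $\mathrm{AffSt}_k^\perf \hookrightarrow \mathrm{AffSt}_k$.

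Given these inputs, the proof concludes as in \cref{recog}: the perfect analog of \cref{qin} gives $\varinjlim_i R\mathrm{Hom}(E_i, \bG_a^\perf) \simeq R\mathrm{Hom}(\varprojlim_i E_i, \bG_a^\perf)$ for cofiltered limits, so combined with generation and the fact that $D$ preserves small limits, it suffices to check that the unit $\bG_a^\perf \to D(M'_{\bF_p}(\bG_a^\perf))$ is an equivalence. For any connective $Z$, the adjunction together with \cref{breensthm} yields
\begin{equation*}
\Map\bigl(Z,\, D(M'_{\bF_p}(\bG_a^\perf))\bigr) \simeq \Map_{k_\sigma[F,F^{-1}]}\bigl(k_\sigma[F,F^{-1}],\, M'_{\bF_p}(Z)\bigr) \simeq \Map(Z, \bG_a^\perf),
\end{equation*}
which is the desired equivalence. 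For $M_{\bZ}$, the argument is identical with \cref{apple} providing the analogous identification of the endomorphism ring; the only additional work occurs in the generation step, where a perfect unipotent commutative group scheme $G$ need not be $p$-torsion, but is always pro-$p$-torsion, so $V_G = pF^{-1}$ is pro-nilpotent, $G \simeq \varprojlim_n G/V_G^n$, and filtering each quotient $G/V_G^n$ by its finite Verschiebung tower reduces to the $\bF_p$-module case. The main obstacle is precisely this generation step: the cocompactness argument is lighter than in \cref{cocompacthard} thanks to the simpler perfect endomorphism ring, whereas producing the resolution $0 \to G \to \prod_I \bG_a^\perf \to \prod_J \bG_a^\perf \to 0$ for a general (not necessarily finite-type) $G$ with $V_G = 0$ requires the module-theoretic structure of $k_\sigma[F, F^{-1}]$.
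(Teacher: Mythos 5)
Your proposal is correct and follows exactly the route the paper intends: the paper's proof is literally a one-line citation to the argument of \cref{recog}, and your write-up is that same argument transported to the perfect setting (generation under limits by shifts of $\bG_a^\perf$, cocompactness via a perfect analogue of \cref{cocompacthard}, and the adjoint-functor/unit computation using \cref{breensthm} and \cref{apple}), in fact supplying more detail than the paper does. The only slightly terse point is the transfer of cocompactness of $K(\bG_a^\perf,n)$ through the embedding of perfect affine stacks, but this is easily justified since maps out of a cofiltered limit of perfect affine stacks into $K(\bG_a^\perf,n)$ identify, via the adjunction with $u^\sharp_!$ and the fact that perfect derived rings are closed under filtered colimits, with maps into $K(\bG_a,n)$, whose cocompactness the paper already uses.
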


\begin{proof}
The proof follows in the same way as that of \cref{recog}.
\end{proof}

\subsection{Duality for perfect unipotent spectra}\label{subsection:duality_perfect_unipotent_spectra}
In \cite{Milne111}, Milne established a duality on the category of perfect unipotent group schemes over a perfect field. He then applied this to study a duality in the context of flat cohomology of surfaces, which foreshadowed several duality phenomena in the syntomic cohomology of characteristic $p$ schemes. We show that Milne's duality for perfect unipotent group schemes extends to the $\infty$-category of perfect unipotent $ \mathbb{F}_p $-modules which are bounded with respect to the $t$-structure and which satisfy the condition of being \emph{quasi-finite}.  

\begin{definition}\label{perfunispec}
    A perfect unipotent spectrum $ E $ over $ k $ is said to be of \emph{quasi-finite type} if for all $ i \in \mathbb{Z} $,  $ \pi_i E $ is representable by a quasi-finite type perfect unipotent affine group scheme over $ k $ in the sense of \cref{qftgp}.  

    We let $\mathrm{Sp}_k^{\U, \mathrm{perf}, \mathrm{ft}}$ denote the full subcategory of $ \mathrm{Sp}_k^{\U, \mathrm{perf}} $ spanned by quasi-finite type perfect unipotent spectra. 
    It follows from \cref{rmk:quasi_ft_perf_gp_sch_are_abelian_cat} that $ \mathrm{Sp}_k^{\U, \mathrm{perf}, \mathrm{ft}} $ forms a stable subcategory of $ \mathrm{Sp}_k^{\U, \mathrm{perf}} $. 
\end{definition}

\begin{remark}
We can analogously define the $\infty$-category $\bZ- \Mod^{\U, \mathrm{perf}, \mathrm{ft}}_k$  to be the subcategory of $\bZ-\Mod^{\U, \mathrm{perf}}_k$ spanned by the perfect unipotent $\bZ$-modules $E$ for which $\pi_i E$ is representable by a quasi-finite type perfect unipotent affine group scheme. Similarly, one can define $\bF_p- \Mod^{\U, \mathrm{perf}, \mathrm{ft}}_k \subset \bF_p-\Mod^{\U, \mathrm{perf}, \mathrm{ft}}_k$ in this way. 
\end{remark} 
In this setting, we have the following extension of \cref{ts}. 
\begin{remark}\label{rmk:tstruct_on_modules_in_perfect_unipotent_spectra}
    Let $\bZ- \Mod^{\U, \mathrm{perf}, \mathrm{ft},-}_k$, $ \bZ/p^n- \Mod^{\U, \mathrm{perf}, \mathrm{ft},-}_k $ resp. denote the full subcategories of $\bZ- \Mod^{\U, \mathrm{perf}, \mathrm{ft}}_k$, $ \bZ/p^n- \Mod^{\U, \mathrm{perf}, \mathrm{ft}}_k $ resp. consisting of objects which are bounded below. 
    Then these $ \infty $-categories have t-structures so that an object is connective if and only if its underlying unipotent spectrum is connective. 
\end{remark}
\begin{proposition} \label{galdescent2}
Let $E$ be a bounded perfect unipotent $ \bF_p $-module spectrum over a field $ k $. 
Let $\overline{k}$ be an algebraic closure of $k$ and write $ i^*_{\bF_p} $ for the base change along a fixed embedding $ k \to \underline{k} $ of \cref{obs:basechange}. 
Suppose that $i^*_{\bF_p}(E)$ is of quasi-finite type. Then $E$ is itself of quasi-finite type.
\end{proposition}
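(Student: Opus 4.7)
The strategy is to reduce the claim to the Galois descent statement for perfect unipotent group schemes established in \cref{galdescent}. The reduction exploits the fact that being of quasi-finite type for a perfect unipotent spectrum is, by definition, a levelwise condition on its homotopy group schemes, combined with the boundedness of $E$ (which makes this condition a finite one).

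The main technical point is that the base change functor $i^*_{\bF_p}$ commutes with the formation of homotopy group schemes. Since base change on affine stacks preserves all limits (\cref{obs:basechange}), it induces a limit-preserving functor on spectrum objects, commuting with $\Omega^\infty$ and $\Omega$. Unwinding definitions, one obtains for each $n \in \bZ$ a natural isomorphism of perfect unipotent group schemes over $\overline{k}$
\[
\pi_n(i^*_{\bF_p} E) \simeq (\pi_n E)_{\overline{k}}.
\]
This compatibility is essentially formal once one expresses base change of stacks as restriction along the forgetful functor $\mathrm{Alg}_{\overline{k}} \to \mathrm{Alg}_{k}$ on test categories, which preserves all limits and colimits.

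With this identification in hand, the proof proceeds as follows. The hypothesis that $i^*_{\bF_p}(E)$ is of quasi-finite type means that each $\pi_n(i^*_{\bF_p} E)$ is a quasi-finite type perfect unipotent group scheme over $\overline{k}$; equivalently, $(\pi_n E)_{\overline{k}}$ is of quasi-finite type for every $n$. Applying \cref{galdescent} to the perfect unipotent group scheme $\pi_n E$ over $k$ (which is a perfect unipotent group scheme since $E$ is a perfect unipotent spectrum), we conclude that $\pi_n E$ itself is of quasi-finite type over $k$. Since $E$ is bounded, only finitely many $\pi_n E$ are nonzero, and we conclude that $E \in \bF_p-\mathrm{Mod}^{\U,\mathrm{perf},\mathrm{ft}}_k$ by definition.

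The only nontrivial ingredient is the compatibility of base change with the homotopy group schemes; beyond that, the proposition is a formal consequence of \cref{galdescent}. No induction on the Postnikov tower of $E$ is needed, since the quasi-finite type condition is checked separately on each $\pi_n$.
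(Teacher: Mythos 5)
Your proof is correct and follows essentially the same route as the paper: both arguments come down to the identification $\pi_n(i^*_{\bF_p}E) \simeq (\pi_n E)_{\overline{k}}$, after which one applies \cref{galdescent} to each homotopy group scheme and uses boundedness. The only difference is in how that identification is justified—the paper invokes $t$-exactness of $i^*_{\bF_p}$ on sheaves of $\bF_p$-module spectra (citing SAG), while you argue it via base change being restriction along $\mathrm{Alg}_{\overline{k}} \to \mathrm{Alg}_{k}$; this works, but you should note that since $\pi_n$ involves sheafification, you need the (true) fact that this restriction also commutes with sheafification, not merely with presheaf-level limits and colimits, which is precisely what the $t$-exactness statement packages.
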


\begin{proof}
For this we remark that the base-change functor 
\[
i^*_{\bF_p}: {\bF_p}-\Mod(\on{St}_k) \to {\bF_p}-\Mod(\on{St}_{\overline{k}})
\]
is $t$-exact. This follows, for example  from \cite[Remark 1.3.2.8]{SAG}. Note moreover, that this $t$-structure is by construction compatible with the natural $t$-structure induced on bounded  below unipotent $\bF_p$-modules introduced in \cref{ts}. 
Hence it follows that the induced functor on (perfect) unipotent $\bF_p$-modules is $t$-exact as well. In each degree we have the cofiber sequence 
\[
\tau_{\geq (n+1)}(E) \to \tau_{\geq n}(E) \to \pi_{n}E[n]
\] 
and $E$ will be obtained in finitely many stages in this way from its homotopy sheaves. Now, for each $n$ for which $\pi_n(E) \neq 0$, t-exactness implies that 
\[
\pi_n \left(i^*_{\bF_p}E\right)[n] \simeq i^*_{\bF_p}(\pi_n(E)[n])\simeq i^*_{\bF_p}(\pi_n(E))[n] \simeq (\pi_n(E) \times_{\Spec k} \Spec \overline{k})[n].
\]
Since we assumed that $i^*_{\bF_p}(E)$ is a quasi-finite type spectrum object, its  homotopy sheaves will be unipotent group schemes over $ \overline{k} $ of quasi-finite type. In particular, $ \pi_n(E) \times_{\Spec k} \Spec \overline{k}$ is quasi-finite over $ \overline{k} $. Hence, by \cref{galdescent}, we see that $\pi_n(E)$ is itself quasi-finite over $ k $. 
It follows that $E$ is a quasi-finite type perfect unipotent $ \bF_p $-module spectrum over $ k $. 
\end{proof}

It is only after restricting to perfect unipotent $\bF_p$-modules of quasi-finite type, that we obtain a duality. First we set up some preliminaries. 

\begin{remark}
The stable $\infty$-category $\Sp(\on{St}_k)$ acquires a closed symmetric monoidal structure. Indeed, this category can be written as a tensor product 
\[
\Sp(\on{St}_k) \simeq \Sp \otimes^{L} \on{St}_k
\]
in $\on{Pr}^{L}$, the symmetric monoidal $\infty$-category of presentable $\infty$-categories. 
From this description, we see that $\Sp(\on{St}_k)$ is an $\mathbb{E}_\infty$-algebra in this category. Now for any object $A \in  \Sp(\on{St}_k)$, the functor $A \otimes - $ commutes with $\mathbb{V}$-small colimits and thus admits a right adjoint $R\underline{\Hom}(A, -)$.  
\end{remark}

\begin{remark}
In the same way, by passing to $\bF_p$-module objects, we see that \\ $\bF_p-\Mod(\on{St}_k) $ inherits a closed symmetric monoidal structure. 
\end{remark}

We will see that the desired duality functor can be defined by restricting the linear duality on $\bF_p-\Mod(\on{St}_k) $, which we forthwith denote by  $R\underline{\Hom}( -, \bZ / p)$, to the relevant subcategory of unipotent $\bF_p$-modules. First we recall some basic Ext-computations:

\begin{lemma} \label{extcomp}
There are equivalences  
\[
R \underline{ \Hom}(\bG_a, \bZ /p) \simeq \bG_a[-1] , \, \, \, \, \, R  \underline{ \Hom}(\bZ / p, \bZ / p) \simeq \bZ / p
\]
in $\Mod_{\bF_p}^{\U,\perf}$. 
\end{lemma}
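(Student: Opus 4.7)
The plan is to deduce both equivalences from the Artin--Schreier short exact sequence
\[
0 \to \bZ/p \to \bG_a \xrightarrow{F-1} \bG_a \to 0
\]
on the perfect site (where the fact that $F-1$ is surjective as a sheaf uses that finite \'etale covers of a perfect ring are perfect), combined with Breen's computation (\cref{breensthm}), which identifies $R\underline{\Hom}(\bG_a, \bG_a)$ with the sheaf of twisted Laurent polynomials $\bG_a\{F,F^{-1}\}$ concentrated in degree $0$ (i.e., the sheaf whose value on $\Spec R$ is the ring $R_\sigma[F,F^{-1}]$).

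First I would apply $R\underline{\Hom}(\bG_a, -)$ to the Artin--Schreier sequence to obtain a fiber sequence of sheaves
\[
R\underline{\Hom}(\bG_a, \bZ/p) \to \bG_a\{F,F^{-1}\} \xrightarrow{(F-1)_*} \bG_a\{F,F^{-1}\} ,
\]
where $(F-1)_*$ denotes post-composition by $F-1$, which on sections corresponds to left multiplication by $F-1$ on $R_\sigma[F,F^{-1}]$. A direct computation on sections shows this left multiplication is injective on $R_\sigma[F,F^{-1}]$ (if $F\phi=\phi$ for $\phi=\sum a_n F^n$, the recursion $a_n = \sigma^n(a_0)$ forces all $a_n$ to vanish). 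Thus the fiber identifies with $\coker((F-1)_*)[-1]$, so the task reduces to showing $\coker((F-1)_*) \simeq \bG_a$ as sheaves.

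The key step is to exhibit an explicit $\bF_p$-linear sheaf map $\tau\colon \bG_a\{F,F^{-1}\} \to \bG_a$ given on sections by $\sum a_n F^n \mapsto \sum \sigma^{-n}(a_n)$, and to verify (a) that $\tau$ vanishes on the image of $(F-1)_*$ (immediate telescoping), (b) that $\tau$ is section-wise surjective (take $a_0 = b$ and other $a_n = 0$), and (c) that $\ker(\tau)$ equals the image of $(F-1)_*$ on sections. The content of (c) is a recursive solvability statement: given $(c_n)$ of finite support with $\sum\sigma^{-n}(c_n) = 0$, the recursion $a_n = \sigma(a_{n-1}) - c_n$ with initial condition $a_{n}=0$ for $n\ll 0$ produces a solution whose stabilization to zero at infinity is exactly governed by the vanishing condition $\sum\sigma^{-n}(c_n)=0$. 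This identifies $\coker((F-1)_*)$ with $\bG_a$ and gives the first equivalence $R\underline{\Hom}(\bG_a,\bZ/p) \simeq \bG_a[-1]$. I expect this bookkeeping with the twisted Laurent polynomial ring and Frobenius-semilinearity to be the main technical obstacle.

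For the second equivalence, I would rotate the Artin--Schreier sequence to the cofiber sequence $\bG_a \xrightarrow{F-1} \bG_a \to \bZ/p[1]$ and apply the contravariant $R\underline{\Hom}(-,\bZ/p)$, obtaining the fiber sequence
\[
R\underline{\Hom}(\bZ/p, \bZ/p)[-1] \to R\underline{\Hom}(\bG_a, \bZ/p) \xrightarrow{(F-1)^*} R\underline{\Hom}(\bG_a, \bZ/p) .
\]
Substituting the first equivalence $R\underline{\Hom}(\bG_a,\bZ/p) \simeq \bG_a[-1]$, I would compute the operator induced by pre-composition with $F-1$ via the map $\tau$ above: right multiplication by $F-1$ on $\bG_a\{F,F^{-1}\}$ descends under $\tau$ to the endomorphism $F^{-1} - 1$ of $\bG_a$. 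Since $F^{-1}-1 = -F^{-1}(F-1)$ and $F^{-1}$ is an automorphism on the perfect site, $\mathrm{fib}(F^{-1}-1) \simeq \mathrm{fib}(F-1) \simeq \bZ/p$. Desuspending gives $R\underline{\Hom}(\bZ/p,\bZ/p) \simeq \bZ/p$, completing the proof.
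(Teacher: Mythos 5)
Your proposal is correct and follows essentially the same route as the paper: apply $R\underline{\Hom}(\bG_a,-)$ to the Artin--Schreier sequence, use Breen's identification $R\underline{\Hom}(\bG_a,\bG_a)\simeq R_\sigma[F,F^{-1}]$ to compute the cokernel of left multiplication by $F-1$ via the map $\sum a_iF^i\mapsto\sum\sigma^{-i}(a_i)$, and then dualize the Artin--Schreier sequence against $\bZ/p$ for the second equivalence. Your treatment is simply more explicit than the paper's (verifying injectivity, surjectivity, and exactness on sections, and noting that precomposition with $F-1$ descends to $F^{-1}-1$, which agrees with the paper's $F-1$ up to the automorphism $-F^{-1}$), so there is nothing to fix.
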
 

\begin{proof}
We recall an argument given by Breen in \cite{breenperf}. Applying $R \underline{\Hom}( \bG_a , -)$ to the Artin--Schreier sequence 
\[
0 \to \bZ /p \to \bG_a \xrightarrow{F -1} \bG_a,
\]
together with the vanishing of higher Ext groups of $\bG_a$, we get the exact sequence 
\[
0 \to  R[F, F^{-1}] \to  R[F, F^{-1}]  \xrightarrow{\pi} R \to 0 
\]
for any perfect $ k $-algebra $ R $, where 
\[
\pi\left( \sum^{n}_{i = -m} a_i F^{i}\right) = \sum a_i^{-p^i}.
\]
Hence, $\on{Ext}^1(\bG_a, \bZ / p) = \pi_{-1} R \underline{\Hom}(\bG_a, \bZ / p)(R) \cong R$, with all other Ext terms vanishing. This globalizes to  $R \underline{ \Hom}(\bG_a, \bZ /p) \simeq \bG_a[-1] $. 

For the second identification we use the first equivalence, together with $R \underline{\Hom}( - , \bZ / p )$ applied to the Artin-Schreier sequence  to deduce the cofiber sequence 
\[
\bG_a[-1] \xrightarrow{F-1} \bG_a[-1] \to \bZ / p \simeq R \underline{\Hom}( \bZ / p, \bZ / p ),
\]  
in the category of perfect unipotent $\bF_p$-modules. 
\end{proof}

This gives the following dualizability statement in $\bF_p$-modules in stacks.

\begin{proposition} \label{dualizability}
Let $ E $ be a perfect unipotent spectrum of quasi-finite type which is moreover bounded with respect to the $t$-structure of Remark \ref{rmk:tstruct_on_modules_in_perfect_unipotent_spectra}. 
Then $ E $ is dualizable with respect to the symmetric monoidal structure on $\Mod_{\bF_p}\left(\Sp(\on{St}_k))\right) $.  
\end{proposition}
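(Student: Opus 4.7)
The plan is to exploit devissage from the bounded $t$-structure of \cref{rmk:tstruct_on_modules_in_perfect_unipotent_spectra} together with the structural theorems \cref{filtrationonuni} and \cref{filtrationuni1} in order to reduce to verifying dualizability of the two ``building blocks'' $\bG_a^{\mathrm{perf}}$ and $\bZ/p$. First I recall the formal fact that in any closed symmetric monoidal stable $\infty$-category, the full subcategory of dualizable objects is closed under shifts, retracts, and finite colimits; in particular, if in a cofiber sequence $X \to Y \to Z$ two of the three objects are dualizable, so is the third. Applied to our situation, this reduces the problem to showing that every object in the heart of $\bF_p-\Mod^{\U, \mathrm{perf}, \mathrm{ft},-}_k$, i.e., every perfect, quasi-finite type, unipotent commutative group scheme over $k$, is dualizable when viewed as an object concentrated in a single degree.

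For such a group scheme $G$, I would apply \cref{filtrationonuni} to write $G$ as a finite iterated extension of perfect, quasi-finite type, closed unipotent subgroup schemes of $\bG_a^{\mathrm{perf}}$. By the stability properties of dualizable objects noted above it then suffices to treat such subgroup schemes. To handle these, I would base change to an algebraic closure $\bar k$: dualizability descends along the faithfully flat cover $\Spec \bar k \to \Spec k$ since base change is symmetric monoidal, conservative on bounded quasi-finite type objects (in the spirit of \cref{galdescent2}), and compatible with the relevant internal Hom computations. Over $\bar k$, \cref{filtrationuni1} further reduces the question to the dualizability of $\bG_a^{\mathrm{perf}}$ and $\bZ/p$ individually. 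These last two cases follow from the explicit computations in \cref{extcomp}: the dual of $\bG_a$ is $\bG_a[-1]$ and the dual of $\bZ/p$ is $\bZ/p$, and in each case the required evaluation--coevaluation maps can be extracted from the Artin--Schreier sequence that underlies the proof of \cref{extcomp}.

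The main obstacle I anticipate is verifying carefully that the formation of internal Hom is compatible with the non-finite faithfully flat base change $k \to \bar k$ on the relevant subcategory, which is needed to descend dualizability. If this is inconvenient, an alternative is to bypass algebraic closure entirely: any perfect quasi-finite type closed subgroup scheme of $\bG_a^{\mathrm{perf}}$ over a perfect field $k$ is either $\bG_a^{\mathrm{perf}}$ itself or the perfection of a finite étale $\bF_p$-vector group scheme, and the latter becomes a finite direct sum of copies of $\bZ/p$ after a finite Galois extension. One can then complete the argument using finite Galois descent, which is considerably more transparent.
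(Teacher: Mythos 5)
Your proposal is correct and follows essentially the same route as the paper: devissage via the bounded $t$-structure and \cref{filtrationonuni} to closed subgroup schemes of $\bG_a^{\mathrm{perf}}$, then checking the (bi)duality map after pullback along $\Spec \overline{k} \to \Spec k$ using fpqc descent, where the only graded pieces are $\bG_a^{\mathrm{perf}}$ and $\bZ/p$ (as in \cref{filtrationuni1}), handled by \cref{extcomp}. The base-change subtlety you flag is treated in the paper at the same level of detail, by verifying the map of sheaves $G \to R\underline{\Hom}(R\underline{\Hom}(G,\bZ/p),\bZ/p)$ is an equivalence after pullback to $\overline{k}$.
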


\begin{proof}
Using the $t$-structure, any perfect unipotent spectrum of quasi-finite type satisfying the hypotheses above may be built in finitely many steps via extensions from perfect unipotent group schemes of quasi-finite type. Since dualizable objects are closed under extensions and shifts, it is enough therefore to show that a unipotent group scheme $G$ of quasi-finite type is dualizable.  
Furthermore, since any such $G$ has a finite filtration with associated graded pieces being closed subgroup schemes of $\bG_a^{\perf}$ by \cref{filtrationonuni}, we may without loss of generality assume that $G$ is such a group scheme. 

Let us form $R\underline{\Hom}(G, \bZ / p )$, the internal mapping object. We will show that the natural map
\begin{equation}\label{eq:map_to_double_dual}
G \to R\underline{\Hom}(R\underline{\Hom}(G, \bZ/ p ), \bZ / p)
\end{equation}
is an equivalence. 
Since objects of $ \Mod_{\bF_p}(\on{St}_k) $ are in particular sheaves of $\bF_p$-module spectra satisfying fpqc descent, it suffices to verify that the pullback of \cref{eq:map_to_double_dual} along the cover $\Spec \overline{k} \to \Spec k$ associated to a fixed embedding $ k \to \overline{k} $ is an equivalence. 
Since $ \overline{k} $ is algebraically closed, the only choices for $ G \times_{\Spec k} \Spec \overline{k} $ will be $\bG_a^{\perf}$ or $\bZ /p$, and these are clearly dualizable by the computation in \cref{extcomp}. 
\end{proof}

We will now use this to show that the $\infty$-category of perfect unipotent spectra of quasi-finite type which are moreover bounded with respect to the t-structure inherits a duality, which reduces to the duality of Milne mentioned in the beginning of the section.

\begin{theorem} \label{duality}
  Let $(\bF_p- \Mod^{\U, \mathrm{perf}, \mathrm{ft}}_{k})^{bd} $ denote the category of quasi-finite type perfect unipotent $\bF_p$-modules over $k$ which are bounded with respect to the $t$-structure on unipotent spectra. Then the functor 
\[
R\underline{\Hom}( - , \bZ / p): \left({\on{Mod}^{\U, bd}_{\bF_p}}\right)^{\op} \to \on{Mod}^{\U, bd}_{\bF_p}
\]
defines an autoduality of $\left(\bF_p- \Mod^{\U, \mathrm{perf}, \mathrm{ft}}_{k}\right)^{bd}$ 
\end{theorem}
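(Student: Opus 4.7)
The plan is to leverage the dualizability result (\cref{dualizability}) which already provides the biduality equivalence $ E \xrightarrow{\sim} R\underline{\Hom}(R\underline{\Hom}(E,\bZ/p),\bZ/p)$ in $\Mod_{\bF_p}(\on{St}_k)$ for any bounded, quasi-finite type perfect unipotent $\bF_p$-module. Thus the only thing left to verify is that the functor $R\underline{\Hom}(-,\bZ/p)$ actually preserves the subcategory $(\bF_p-\Mod^{\U,\mathrm{perf},\mathrm{ft}}_k)^{\mathrm{bd}}$: that is, if $E$ is bounded, quasi-finite type, perfect and unipotent, then so is $R\underline{\Hom}(E,\bZ/p)$.

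To show stability under the duality functor, I would first reduce to the case of a single perfect unipotent group scheme of quasi-finite type concentrated in a single degree. Indeed, any $E \in (\bF_p-\Mod^{\U,\mathrm{perf},\mathrm{ft}}_k)^{\mathrm{bd}}$ is obtained from its (finitely many nonzero) homotopy sheaves $\pi_n E$ by iterated extensions and shifts via the t-structure of \cref{rmk:tstruct_on_modules_in_perfect_unipotent_spectra}. Since $R\underline{\Hom}(-,\bZ/p)$ is exact and sends shifts to inverse shifts, and since the subcategory is stable under extensions and shifts (by \cref{rmk:uni_quasi_ft_perf_gp_sch_are_abelian_cat}), it suffices to treat the case $E = G$ for a perfect quasi-finite type unipotent commutative group scheme $G$.

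Next, I would apply \cref{filtrationonuni} to filter $G$ finitely by perfect quasi-finite type closed subgroup schemes of $\bG_a^{\perf}$, and then pass to an algebraic closure $\overline{k}$ of $k$: by base-change $t$-exactness and \cref{filtrationuni1}, the base change $G_{\overline{k}}$ has a finite filtration whose graded pieces are each $\bG_a^{\perf}$ or $\bZ/p$. By \cref{extcomp}, we have $R\underline{\Hom}(\bG_a^{\perf},\bZ/p) \simeq \bG_a^{\perf}[-1]$ and $R\underline{\Hom}(\bZ/p,\bZ/p) \simeq \bZ/p$, which are manifestly bounded perfect unipotent $\bF_p$-modules of quasi-finite type over $\overline{k}$. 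By the extension argument above applied over $\overline{k}$, we conclude that $R\underline{\Hom}(G,\bZ/p)_{\overline{k}} \simeq R\underline{\Hom}(G_{\overline{k}},\bZ/p)$ is bounded, perfect unipotent, and of quasi-finite type over $\overline{k}$. Finally, \cref{galdescent2} lets us descend the quasi-finite type condition back to $k$, while boundedness and perfectness descend tautologically.

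The main subtle step I anticipate is the commutation of $R\underline{\Hom}(-,\bZ/p)$ with base change along $\Spec \overline{k} \to \Spec k$: one needs that the pullback functor $i^*_{\bF_p}$ is symmetric monoidal on the relevant category of $\bF_p$-module sheaves and that $\bZ/p$ pulls back to $\bZ/p$ (both standard), together with dualizability of $G$ over $\overline{k}$ (given by \cref{dualizability}) which guarantees the compatibility $i^*_{\bF_p} R\underline{\Hom}(G,\bZ/p) \simeq R\underline{\Hom}(G_{\overline{k}},\bZ/p)$. Once this is in hand, the overall argument assembles cleanly, and combined with the biduality equivalence of \cref{dualizability}, yields the claimed autoduality.
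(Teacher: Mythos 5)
Your proposal is correct and follows essentially the same route as the paper's proof: biduality from \cref{dualizability}, d\'evissage via the $t$-structure to shifts of perfect quasi-finite type unipotent group schemes, reduction over $\overline{k}$ to $\bG_a^{\perf}$ and $\bZ/p$ via the composition series and the computations of \cref{extcomp}, base-change compatibility of $R\underline{\Hom}(-,\bZ/p)$ from dualizability, and descent of the quasi-finite type condition via \cref{galdescent2}. The only (immaterial) difference is that you reduce to a single group scheme over $k$ before base changing, whereas the paper runs the Postnikov/filtration argument over $\overline{k}$ and invokes base-change compatibility at the level of $E$ itself.
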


\begin{proof}
Let $E$ be a perfect unipotent $\bF_p$-module satisfying the hypotheses in the statement.  We have shown in \cref{dualizability} that $E$ is a dualizable object in $\Mod_{\bF_p}( \on{St}_k)$. Let $R \underline{\Hom}(E, \bZ / p)$ denote its dual. We need to show that this is also perfect unipotent of quasi-finite type.  Let us first  assume that we are working over an algebraically closed field $\overline{k}$.

By the hypotheses on $E$, there exist integers  $-N, M$ for which 
\[
 0 \to 0 \to \tau_{\geq M}(E) \to \tau_{\geq (M-1)}(E)  \cdots \to \tau_{\geq - N}(E) \simeq E = E = \cdots
\]
such that in each degree we have cofiber sequences 
\[
\tau_{\geq (n+1)}(E) \to \tau_{\geq n}(E) \to \pi_{n}E[n]. 
\]
Hence, $E \simeq \tau_{\geq -N}(E)$ is built inductively in finitely many steps out of shifts of perfect unipotent group schemes of quasi-finite type. Applying $R\underline{\Hom}( - , \bZ /p)$ to everything in sight, we obtain analogous cofiber sequences
\[
R\underline{\Hom}( \pi_{n}E[n] , \bZ /p) \to R\underline{\Hom}( \tau_{\geq n}(E) , \bZ /p)  \to R\underline{\Hom}( \pi_{n}E[n] , \bZ /p),
\]
so that $R\underline{\Hom}(E , \bZ /p)$ is itself built up in finitely many steps out of objects of the form $R \underline{\Hom}(G, \bZ / p)$, for $G$ a perfect unipotent group scheme of quasi-finite type. 
We claim now that for $G$ of this form, that $R\underline{\Hom}(G, \bZ /p )$ is itself a perfect unipotent $\bF_p$-module of quasi-finite type. For this, recall from \cref{filtrationuni1} that every perfect unipotent group $G$ of quasi-finite type has a  (finite) composition series 
\[
\cdots G_{i +1} \subset G_{i} \subset \cdots G_1 \subset G_0 = G
\] 
where the quotients $G_{i}/ G_{i+1}$ are either $\bG_a^{\perf}$ or $\bZ/p$. So it reduces to showing the  claim for $G$ being either one of these two groups, and this will be a consequence of the computations in \cref{extcomp}.

We now let $k$ be an arbitrary perfect field of characteristic $p$, and let $E$ be as in the statement. Then $E$ is dualizable when viewed as an object of the symmetric monoidal $\infty$-category $\Mod_{\bF_p}(\on{St}_k)$. Hence there is an equivalence 
\[
\iota^*_{\bF_p} R \underline{\Hom}(E, \bZ /p) \simeq  R \underline{\Hom}( \iota^*_{\bF_p}(E), \bZ /p)
\]
in $\Mod_{\bF_p}(\on{St}_{\overline{k}})$.  By the first part of the proof, since $\iota^*_{\bF_p}(E)$ is perfect unipotent of quasi-finite type, so will  $R \underline{\Hom}( \iota^*_{\bF_p}(E), \bZ /p)$. The result now follows
from \cref{galdescent2}. 
\end{proof}

\subsection{Representability and duality for  $p$-torsion syntomic cohomology}\label{subsection:syntomic_duality}
We now apply the above duality of perfect unipotent spectra to mod $ p $ syntomic cohomology. We first recall the following result originally due to Milne in \cite[Theorem~1.9]{Milne111} (cf.~\cite[Corollary~4.5.6]{fg}).

\begin{theorem}[Milne]\label{theorem:Poincare_duality_Fgauge}
Let  $k$ be a finite field and let $X / k$ be a smooth proper $k$-scheme of dimension $d$. For each integer $i$ there is a natural isomorphism
\[
R \Gamma_{\on{Syn}}(X, \bZ /p(i)) \simeq R \Gamma_{\on{Syn}}(X, \bZ /p(d-i))^{\vee}[-2d-1]
\]
in $\on{Perf}(\bF_p)$. 
\end{theorem}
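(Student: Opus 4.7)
The plan is to deduce Milne's theorem as the evaluation at $\Spec k$ of the representability-and-duality statement for perfect unipotent spectra provided by \cref{maintheoremsyntomic}. Concretely, the representability part of \cref{maintheoremsyntomic} furnishes, for every integer $j$, a natural equivalence
\[
R\Gamma_{\on{Syn}}(X, \bZ/p(j)) \simeq R\Gamma\bigl(\Spec k, \bZ/p(j)^{\uni}_X\bigr),
\]
while its duality part (using that $X$ is smooth proper of dimension $d$) yields
\[
\bZ/p(i)^{\uni}_X \simeq \bigl(\bZ/p(d-i)^{\uni}_X\bigr)^{\vee}[-2d] \quad \text{in } (\bF_p - \Mod^{\U,\perf,\mathrm{ft}}_k)^{\on{bd}}.
\]
Splicing these two facts together, the theorem reduces to a universal statement: for every bounded, quasi-finite type, perfect unipotent $\bF_p$-module $E$ over the finite field $k = \bF_q$, there is a natural equivalence
\[
R\Gamma(\Spec k, E^{\vee}) \simeq R\Hom_{\bF_p}\bigl(R\Gamma(\Spec k, E), \bF_p\bigr)[-1].
\]

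To establish this universal identity I would devissé along the $t$-structure of \cref{rmk:tstruct_on_modules_in_perfect_unipotent_spectra}. Both sides are exact contravariant functors of $E$, so it suffices to verify the equivalence when $E$ lies in the heart; by \cref{filtrationonuni} together with Galois descent (\cref{galdescent} and its spectrum-level analogue \cref{galdescent2}), every perfect unipotent quasi-finite type commutative group scheme over $\bF_q$ admits a finite filtration with associated graded pieces of the form $\bG_a^{\perf}$ or $\bZ/p$. It then remains to check these two atomic cases. For $E = \bZ/p$, the desired equivalence is Tate's local duality over $\bF_q$: the complex $R\Gamma(\Spec \bF_q, \bF_p)$ is concentrated in degrees $0$ and $1$ with each piece equal to $\bF_p$, and is therefore self-dual after a shift by $[-1]$. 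For $E = \bG_a^{\perf}$, \cref{extcomp} provides $(\bG_a^{\perf})^{\vee} \simeq \bG_a^{\perf}[-1]$, and applying $R\Gamma(\Spec \bF_q, -)$ to the Artin--Schreier sequence $0 \to \bZ/p \to \bG_a^{\perf} \xrightarrow{F-1} \bG_a^{\perf} \to 0$ reduces the $\bG_a^{\perf}$ case back to the $\bZ/p$ case just treated.

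The main obstacle is that devissage, by itself, produces an abstract isomorphism on individual objects rather than a coherent natural transformation; to land in the category $\on{Perf}(\bF_p)$ at the end, one needs the equivalence above to be realized by a single canonical map. The natural candidate is the mate of the evaluation pairing $E \otimes_{\bF_p} E^{\vee} \to \bZ/p$, composed with the Tate trace $R\Gamma(\Spec \bF_q, \bZ/p) \to \bF_p[-1]$ that is dual to the class of the Frobenius in Galois cohomology. Verifying that this map is an equivalence on the generators $\bZ/p$ and $\bG_a^{\perf}$ is essentially mechanical, but the compatibility of the Tate trace with the Artin--Schreier reduction, and with Galois descent along $\Spec \overline{\bF}_q \to \Spec \bF_q$, must be checked carefully. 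Once this map is shown to be an equivalence on all of $(\bF_p - \Mod^{\U,\perf,\mathrm{ft}}_k)^{\on{bd}}$, combining with \cref{maintheoremsyntomic} yields Milne's total shift $[-2d-1]$, where the additional $[-1]$ arises exactly from the étale cohomological dimension of $\Spec \bF_q$.
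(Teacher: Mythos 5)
First, a structural point: the paper does not prove this statement at all — it is recalled as a cited result (Milne's Theorem 1.9, cf.\ the reference to \cite{fg}), and the paper's own duality results later in the section (in particular \cref{thm duality syntomic mod p}) are themselves proved by invoking Milne's sheaf-level duality \cite[Theorem~2.4]{Milne111}. So your argument is not a reconstruction of a proof in the paper but a re-derivation of the finite-field statement from the paper's \cref{representability syntomic} and \cref{thm duality syntomic mod p}; this is not circular with respect to the statement being proved, but you should be aware that it ultimately rests on Milne's general duality anyway. As mathematics, your reduction is the right one: evaluating the representing perfect unipotent spectrum at $S=\Spec k$ and feeding in $\bZ/p(i)^{\uni}_X\simeq(\bZ/p(d-i)^{\uni}_X)^\vee[-2d]$ does reduce everything to the ``universal'' identity $R\Gamma(\Spec \bF_q,E^\vee)\simeq R\Hom_{\bF_p}(R\Gamma(\Spec \bF_q,E),\bF_p)[-1]$ for bounded quasi-finite type perfect unipotent $\bF_p$-modules $E$, and the shift bookkeeping giving $[-2d-1]$ is correct.

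The genuine gap is in your devissage of that universal identity. Over the finite field $\bF_q$ you claim, citing \cref{filtrationonuni} together with \cref{galdescent} and \cref{galdescent2}, a finite filtration with graded pieces exactly $\bG_a^{\perf}$ or $\bZ/p$. That conclusion is only available over an algebraically closed field (\cref{filtrationuni1}); over $\bF_q$, \cref{filtrationonuni} only gives graded pieces that are arbitrary quasi-finite type perfect unipotent closed subgroup schemes of $\bG_a^{\perf}$ (kernels of general additive polynomials, nonconstant Galois-twisted forms of $(\bZ/p)^n$, etc.), and the two Galois-descent propositions you cite go in the opposite direction (they detect the quasi-finite type property after base change) and do not rigidify the graded pieces. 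Note also that the base-change trick used in the paper's proof of \cref{duality} — checking an equivalence of sheaves after pullback to $\Spec\overline{k}$ — is not available to you here, because your statement involves $R\Gamma(\Spec\bF_q,-)$, which sees Galois cohomology and is not insensitive to passing to $\overline{\bF}_q$ (indeed the extra shift $[-1]$ comes precisely from $H^1(\bF_q,-)$). The fix is concrete: for a proper quasi-finite type perfect unipotent subgroup $G\subsetneq\bG_a^{\perf}$ over $\bF_q$, choose a presentation $0\to G\to\bG_a^{\perf}\xrightarrow{\tau(F)}\bG_a^{\perf}\to 0$ with $\tau(F)$ an additive ($p$-)polynomial (such a map is surjective as an fppf sheaf map), and use naturality and exactness of your candidate map — evaluation pairing followed by the trace $R\Gamma(\Spec\bF_q,\bZ/p)\to\bF_p[-1]$ — to reduce to the single case $E=\bG_a^{\perf}$, where the induced pairing on sections is the trace pairing $\bF_q\otimes_{\bF_p}\bF_q\to\bF_p$ via Artin--Schreier and is perfect. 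With that repair (and your already-stated care that the equivalence be induced by one natural map, so the class of $E$ for which it is an equivalence is thick), the argument goes through.
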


We emphasize that the above statement is specific to the case where $k$ is a finite field. If, for instance, $k= \overline{k}$ is algebraically closed, the above statement does not hold. 
Milne observed that in this case, one can obtain a more uniform duality statement, which needs to be interpreted at the level of sheaves of complexes over the étale site over the base, and not at the level of the derived category of $\bF_p$; see \cite[Theorem~2.4]{Milne111}. 

 In this section, we interpret the latter duality of Milne in its natural context of perfect unipotent spectra. 
We begin with the following proposition, concerning the relevant object to which the duality shall be applied.  

\begin{proposition} \label{representability syntomic}
Let $X$ be a smooth proper $k$-scheme of dimension $d$ and fix $ i \in \mathbb{Z}$ and $ \nu \geq 1 $. Then the functor determined by 
\[
\on{Sch}^{\on{perf}}_{k} \ni S \mapsto R \Gamma_{\on{Syn}}(X \times S, \bZ/p^\mathrm{\nu}(i)) 
\]
is represented by a quasi-finite type perfect unipotent spectrum over $k$, which we denote by $ \bZ/p^{\nu}(i)^{\uni}_{X}$.
\end{proposition}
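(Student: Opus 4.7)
The plan is to use the Illusie--Kato description of mod $p^\nu$ syntomic cohomology for smooth characteristic-$p$ schemes $Y$, under which $R\Gamma_{\mathrm{syn}}(Y, \bZ/p^\nu(i))$ identifies with $R\Gamma(Y, W_\nu\Omega^i_{Y,\mathrm{log}})[-i]$, together with the Artin--Schreier fiber sequence
\[
W_\nu\Omega^i_{Y,\mathrm{log}} \to W_\nu\Omega^i_Y \xrightarrow{F-1} W_\nu\Omega^i_Y/dV^{\nu-1}\Omega^{i-1}_Y.
\]
Applied to $Y = X \times S$ as $S$ varies over perfect $k$-schemes, it then suffices to represent the functors $S \mapsto R\Gamma(X \times S, W_\nu\Omega^j_{X \times S})$ (and the analogous quotient) by bounded quasi-finite type perfect unipotent $\bZ/p^\nu$-modules in a manner compatible with $F$.

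The key observation is that when $S$ is perfect, the de Rham--Witt complex $W_\nu\Omega^\bullet_{\mathcal{O}_S}$ is concentrated in degree $0$ with value $W_\nu(\mathcal{O}_S)$. Together with the Künneth formula for Hodge--Witt cohomology of smooth schemes, this yields
\[
R\Gamma(X \times S, W_\nu\Omega^j_{X \times S}) \simeq R\Gamma(X, W_\nu\Omega^j_X) \otimes^L_{W_\nu(k)} W_\nu(\mathcal{O}_S).
\]
Properness of $X$ forces $R\Gamma(X, W_\nu\Omega^j_X)$ to be a perfect complex over $W_\nu(k)$. The functor $S \mapsto W_\nu(\mathcal{O}_S)$ on perfect $k$-schemes is represented by the perfection $W_\nu^\perf$ of the Witt vector scheme, which is a bounded quasi-finite type perfect unipotent $\bZ/p^\nu$-module by \cref{uniqf}. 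Tensoring against a perfect $W_\nu(k)$-complex then produces finitely many iterated extensions of shifts of copies of $W_\nu^\perf$, and this class of objects is closed under such constructions by \cref{rmk:uni_quasi_ft_perf_gp_sch_are_abelian_cat}.

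Finally, since $F$ and the quotient map $W_\nu\Omega^i_Y \to W_\nu\Omega^i_Y/dV^{\nu-1}\Omega^{i-1}_Y$ are compatible with the tensor decomposition above, $\bZ/p^\nu(i)^{\uni}_X$ is realized—up to shift—as the fiber of a map between two bounded quasi-finite type perfect unipotent $\bZ/p^\nu$-modules; stability of this subcategory under cofibers completes the argument. The principal obstacle will be making the Künneth decomposition precise with a perfect factor and verifying that the Frobenius twist respects it. An alternative, more uniform route, which would also cover the lci version of \cref{maintheoremsyntomic}, proceeds via the Nygaard-filtered prismatic presentation of syntomic cohomology from \cite{BMS2}, exploiting that absolute prismatic cohomology base-changes particularly cleanly along the structure map $k \to \mathcal{O}_S$ into perfect rings.
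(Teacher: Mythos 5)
Your route is genuinely different from the paper's (which reduces to $\nu=1$ by devissage and then filters $\bZ/p(i)$ prismatically, using the Nygaard and conjugate filtrations and the vanishing $\mathbb{L}_{S/k}\simeq 0$ for perfect $S$ to identify every graded piece with $R\Gamma(X,\Omega^j_{X/k})\otimes_k S$, a finite product of shifts of $\mathbb{G}_a^{\mathrm{perf}}$), but as written it has essential gaps. The central one: you invoke the Illusie--Milne--Kato identification $R\Gamma_{\on{Syn}}(Y,\bZ/p^\nu(i))\simeq R\Gamma(Y,W_\nu\Omega^i_{Y,\log})[-i]$ and the Artin--Schreier sequence for $Y=X\times S$, but these are theorems about \emph{smooth} schemes over a perfect field, and $X\times S$ is neither smooth nor of finite type once $S$ is an arbitrary perfect $k$-scheme (perfect $k$-algebras are typically non-noetherian). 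To make this step legitimate you would need the extension of the syntomic--logarithmic de Rham--Witt comparison to Cartier-smooth (or $F$-smooth) schemes, together with the base-change isomorphism $W_\nu\Omega^j_{X\times S}\cong W_\nu\Omega^j_X\otimes_{W_\nu(k)}W_\nu(\mathcal{O}_S)$ and its analogue for the quotient sheaf $W_\nu\Omega^i/dV^{\nu-1}\Omega^{i-1}$; none of these is proved or cited, and you yourself flag the K\"unneth step as the principal obstacle. Since the whole strategy rests on these inputs, the argument is not yet a proof, whereas the paper's prismatic filtration argument needs only $\mathbb{L}_{S/k}\simeq 0$ and coherence of $R\Gamma(X,\Omega^j_{X/k})$.

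There is also a concrete false claim: properness of $X$ does \emph{not} make $R\Gamma(X,W_\nu\Omega^j_X)$ a perfect complex over $W_\nu(k)$ when $\nu\geq 2$. Its cohomology groups are finitely generated $W_\nu(k)$-modules, and any such module with nonzero $p^\mu$-torsion for $\mu<\nu$ has infinite projective dimension over $W_\nu(k)$, so it is not perfect. This particular step is repairable: $W_\nu(\mathcal{O}_S)$ is flat over $W_\nu(k)$ (because $W(\mathcal{O}_S)$ is $p$-torsion free for $S$ perfect), every finitely generated $W_\nu(k)$-module is a finite direct sum of modules $W_\mu(k)$ with $\mu\leq\nu$, and $W_\mu(k)\otimes_{W_\nu(k)}W_\nu(\mathcal{O}_S)\cong W_\mu(\mathcal{O}_S)$ is represented by $W_\mu^{\mathrm{perf}}$, which is still of quasi-finite type by \cref{uniqf}; closure under extensions and shifts (\cref{rmk:uni_quasi_ft_perf_gp_sch_are_abelian_cat}) then gives representability of each term. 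Better still, perform the devissage to $\nu=1$ at the outset, as the paper does, and this issue disappears entirely. But even with this repair, the comparison and base-change inputs of the previous paragraph remain unjustified, so the proposal needs substantive additional work before it establishes the proposition.
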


\begin{proof}
By devissage, we immediately reduce to the case of $\nu=1$. By the mod $ p $ reduction of \cite[Proposition 4.4.2]{fg} and \cref{rmk:uni_quasi_ft_perf_gp_sch_are_abelian_cat}, it suffices to show that the assignments $$ S \mapsto \mathcal{N}^{\geq i} \phi^* R\Gamma_{\Prism}(X \times S) / p\,\,  \mathrm{and} \,\,  S\mapsto R\Gamma_{\Prism}(X \times S) / p \ $$ where $ \mathcal{N}^{\geq i} $ is the Nygaard filtration are represented by quasi-finite type perfect unipotent spectra over $ k $. 
Recall that we have equivalences $$ \mathcal{N}^{\geq 0} \phi^* R\Gamma_{\Prism}(X \times S) / p = \phi^* R\Gamma_{\Prism}(X \times S) / p \overset{\phi^{-1}}{\simeq} R\Gamma_{\Prism}(X \times S) / p \simeq R\Gamma_{\mathrm{HT}}(X \times S) $$ where the latter denotes Hodge--Tate cohomology. 
Furthermore, for each $ j $ there are fiber sequences $$ \mathcal{N}^{\geq j+1 } R\Gamma_{\Prism}(X \times S) / p \to \mathcal{N}^{\geq j} R\Gamma_{\Prism}(X \times S) / p \to \mathrm{Fil}^j_{\mathrm{conj}}R\Gamma_{\mathrm{HT}}(X \times S)/p$$ where $ \mathrm{Fil}^j_{\mathrm{conj}}R\Gamma_{\mathrm{HT}}(X \times S)$ denotes the conjugate filtration on Hodge--Tate cohomology.
Thus it suffices to show that the assignments $$ S \mapsto R\Gamma_{\mathrm{HT}}(X \times S) \,\, \mathrm{ and} \,\,  S \mapsto \mathrm{Fil}^j_{\mathrm{conj}}R\Gamma_{\mathrm{HT}}(X \times S) $$ are represented by a quasi-finite type perfect unipotent spectrum over $ k $; the result for $ \mathcal{N}^{\geq i} \phi^* R\Gamma_{\Prism}(X \times S) / p $ follows by induction on $ j $. 
Since $ S $ is perfect, $ \mathbb{L}_{S/k} \simeq 0 $ \cite[Tag 0G60]{stacks-project}, therefore $ \mathbb{L}_{X\times S/k} \simeq p_S^*\mathbb{L}_{X/k} $ where $ p_S \colon X \times S \to S $ is the canonical projection. 
Now $ R\Gamma(X \times S, \mathbb{L}^{\wedge j}_{X\times S/k})  \simeq R\Gamma(X,\Omega^j_{X/k}) \otimes_k S$. 
Since $ X $ is smooth and proper, $ R\Gamma(X,\Omega^j_{X/k}) $ is a perfect complex of $ k $-vector spaces and the assignment $ S \mapsto R\Gamma(X \times S, \mathbb{L}^{\wedge j}_{X\times S/k}) $ is represented by a finite product of $ \mathbb{G}_a^{\mathrm{perf}}[m]$ for $m \in \mathbb{Z}$. 
Since $ \mathrm{Fil}^j_{\mathrm{conj}}R\Gamma_{\mathrm{HT}}(X \times S) $ has a finite filtration with associated graded $R\Gamma (X \times S, \wedge^r \mathbb{L}_{X \times S/k}[-r]) $ for $ 0 \leq r \leq j $ and $\mathrm{Fil}^j_{\mathrm{conj}}R\Gamma_{\mathrm{HT}}(X \times S) \simeq R\Gamma_{\mathrm{HT}}(X \times S)$ for large enough $j$, the desired result follows. 
\end{proof}

One has the following more general result when $X$ is not necessarily assumed to be smooth but only proper lci. However, the resulting unipotent spectrum is not necessarily quasi-finite in this generality.

\begin{proposition} 
Let $X$ be a proper lci $k$-scheme of dimension $d$ and $ i \in \mathbb{Z}$. Then the assignment 
\[
S \mapsto R \Gamma_{\on{Syn}}(X \times S, \bZ/p^\nu(i)) 
\]
ranging over $S \in \on{Sch}^{\on{perf}}_{k}$ is represented by a perfect unipotent spectrum over $k$, which we denote by $ \bZ/p^\nu(i)^{\uni}_{X}$.    
\end{proposition}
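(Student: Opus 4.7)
The plan is to follow the strategy of the proof of \cref{representability syntomic}, dropping only the quasi-finite type conclusion. By the usual devissage using the cofiber sequence $\bZ/p^{\nu-1} \to \bZ/p^{\nu} \to \bZ/p$ and induction on $\nu$, it suffices to handle the case $\nu=1$. Since mod-$p$ syntomic cohomology is defined as the fiber of $\phi_i - \mathrm{can} \colon \mathcal{N}^{\geq i}\phi^* R\Gamma_\Prism(X\times S)/p \to R\Gamma_\Prism(X\times S)/p$ and perfect unipotent spectra are closed under fibers, it suffices to represent each of $S \mapsto \mathcal{N}^{\geq i}\phi^* R\Gamma_\Prism(X\times S)/p$ and $S\mapsto R\Gamma_\Prism(X\times S)/p \simeq R\Gamma_{\mathrm{HT}}(X\times S)$ by perfect unipotent spectra over $k$.

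The algebraic input replacing smoothness is that lci-ness of $X\to \Spec k$ ensures $\mathbb{L}_{X/k}$ is a perfect complex of Tor-amplitude $[-1,0]$. Consequently each derived exterior power $\wedge^r \mathbb{L}_{X/k}$ is still perfect on $X$, and by properness $R\Gamma(X,\wedge^r\mathbb{L}_{X/k}[-r])$ is a perfect complex of $k$-vector spaces. Since $\mathbb{L}_{S/k}\simeq 0$ for perfect $S$, one has the base change equivalence $R\Gamma(X\times S,\wedge^r\mathbb{L}_{X\times S/k}[-r]) \simeq R\Gamma(X,\wedge^r\mathbb{L}_{X/k}[-r])\otimes_k S$, which is represented by a finite coproduct of shifts of $\bG_a^\mathrm{perf}$. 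Iterating the conjugate-filtration fiber sequences then shows that for each fixed $j$, the functor $S\mapsto \mathrm{Fil}^j_\mathrm{conj} R\Gamma_\mathrm{HT}(X\times S)/p$ is represented by a perfect unipotent spectrum $E_j$; iterating the Nygaard fiber sequences further reduces representability of $\mathcal{N}^{\geq i}\phi^* R\Gamma_\Prism(X\times S)/p$ to representability of $R\Gamma_\mathrm{HT}(X\times S)$ together with finitely many $E_j$.

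The main obstacle, compared to the smooth case, is that the conjugate filtration no longer terminates when $X$ is only lci: the conormal contribution $H^{-1}(\mathbb{L}_{X/k})$ forces $\wedge^r \mathbb{L}_{X/k}\not\simeq 0$ for all $r$, so one must realize $R\Gamma_\mathrm{HT}(X\times S) \simeq \colim_j \mathrm{Fil}^j_\mathrm{conj} R\Gamma_\mathrm{HT}(X\times S)/p$ as a genuine infinite filtered colimit of the $E_j$ inside the presentable $\infty$-category $\Sp^{\U,\mathrm{perf}}_k$. The principal technical task is to verify that this colimit still represents the desired functor on perfect affine test schemes; equivalently, one must check that the candidate object has each $\Omega^{\infty-n}$ a perfect affine stack (so that each homotopy group scheme is perfect unipotent affine) and that its evaluation on perfect affine $S$ recovers $R\Gamma_\mathrm{HT}(X\times S)$. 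This is the essential new point beyond the smooth case and accounts for the loss of quasi-finite type in the general lci setting.
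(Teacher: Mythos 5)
Your reductions (devissage to $\nu=1$, passing to the fiber defining mod $p$ syntomic cohomology, the conjugate-filtration analysis, and the observation that each graded piece $R\Gamma(X\times S,\wedge^r\mathbb{L}_{X\times S/k}[-r])\simeq R\Gamma(X,\wedge^r\mathbb{L}_{X/k}[-r])\otimes_k S$ is a finite product of shifts of $\bG_a^{\perf}$ because $\mathbb{L}_{X/k}$ is perfect of Tor-amplitude $[0,1]$ and $X$ is proper) all match the intended argument. But the step you defer --- realizing $R\Gamma_{\mathrm{HT}}(X\times S)$ as the filtered colimit $\colim_j \mathrm{Fil}^j_{\mathrm{conj}}$ \emph{inside} $\mathrm{Sp}^{\U,\perf}_k$ and checking it "still represents the desired functor" --- is not a technical verification left to the reader; it is the entire difficulty, and as stated it does not go through. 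The inclusion of (perfect) unipotent spectra into sheaves of spectra preserves limits, not colimits (cf.\ \cref{beach1}, and the cautionary phenomena of \cref{productissue}): if you form the colimit of the $E_j$ in $\mathrm{Sp}^{\U,\perf}_k$, its values on perfect test schemes are not computed termwise, so there is no reason its evaluation on $S$ recovers $R\Gamma_{\mathrm{HT}}(X\times S)$; if instead you form the colimit in sheaves of spectra, its homotopy sheaves are ind-(affine group schemes), which are typically not affine, so it need not be a unipotent spectrum at all. In the lci case the conjugate filtration genuinely does not terminate, so this is not a removable wrinkle.

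The paper's proof avoids colimits entirely by switching to the Nygaard-\emph{completed} theory, which is what syntomic cohomology is built from in any case: by \cite[Proposition 7.4.6]{BhaLur} it suffices to represent $S\mapsto \mathcal{N}^{\geq i}\phi^*\widehat{R\Gamma_{\Prism}(X\times S)}/p$, and Nygaard completeness gives $\mathcal{N}^{\geq i}\phi^*\widehat{R\Gamma_{\Prism}(X\times S)}\simeq \varprojlim_s \mathcal{N}^{\geq i}/\mathcal{N}^{\geq i+s}$, an \emph{inverse} limit of objects with finite filtrations whose graded pieces are exactly the terms you identified. Since unipotent spectra are closed under limits (\cref{beach1}), representability follows. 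So the fix for your argument is not to justify the colimit (which would also require knowing that the non-completed Hodge--Tate functor is representable, which is doubtful here), but to replace it with the limit description coming from Nygaard completeness; with that substitution your outline becomes the paper's proof.
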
{}
\begin{proof}
  The proof is similar to \cref{representability syntomic}, but requires some modifications. Once again, by devissage, we immediately reduce to the case of $\nu=1$. By \cite[Proposition 7.4.6]{BhaLur},   it will suffice to show that the assignment $$ S \mapsto \mathcal{N}^{\geq i} \phi^* \widehat{R\Gamma_{\Prism}(X \times S)} / p ,$$ is represented by a perfect unipotent spectrum, where the latter denotes Nygaard-completed variant of crystalline cohomology. By Nygaard completeness, we have
  $$ \mathcal{N}^{\geq i} \phi^* \widehat{R\Gamma_{\Prism}(X \times S)} \simeq \varprojlim_s \mathcal{N}^{\geq i} \phi^* \widehat{R\Gamma_{\Prism}(X \times S)}/ \mathcal{N}^{\geq i+s} \phi^* \widehat{R\Gamma_{\Prism}(X \times S)} .$$
  
  Since the category of unipotent spectra is stable under limits by \cref{beach1}, by considering the graded pieces of the Nygaard filtration, it suffices to show that the assignment $$ S \mapsto \mathrm{Fil}^j_{\mathrm{conj}}R\Gamma_{\mathrm{HT}}(X \times S) $$ is representable by a perfect unipotent spectrum over $ k $ for each $ j $. For this, by passing to the graded pieces of the conjugate filtration, it suffices to show that the assignment $$ S \mapsto R\Gamma(X \times S, \mathbb{L}^j_{X\times S/k}) \simeq R\Gamma (X, {\mathbb{L}^j_{X/k}} ) \otimes_k S $$ is representable by a perfect unipotent spectrum. However, since $\mathbb{L}_{X/k}$ is a perfect complex with Tor amplitude in homological degrees $[0,1]$ and $X$ is proper, the above functor is isomorphic to a finite product of $\mathbb{G}_a^{\mathrm{perf}}[m]$ for $m \in \mathbb{Z}$. This finishes the proof. 
\end{proof}{}

We extract the following corollary, recovering a result of Illusie-Raynaud, cf. the discussion after \cite[Lemme 3.2.2]{IR} and extending it to the lci case. 

\begin{corollary}
    Let $ k $ be a field and let $ X $ be a proper lci scheme over $ k $. 
    Let $\bZ /p^\nu(i)^{\uni}_{X}$ be as above. Then for each $n, i \geq 0$, the homotopy sheaves $\pi_{n}(\bZ /p^\nu(i)^{\uni}_{X})$, will be representable by unipotent group schemes over $k$; these will be of quasi-finite type if $X$ is smooth.     
\end{corollary}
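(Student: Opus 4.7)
The corollary essentially repackages the content of the two immediately preceding propositions in terms of the homotopy sheaves of $\bZ/p^\nu(i)^{\uni}_{X}$, so the proof is not really independent but extracts a formal consequence. My plan is to first invoke the preceding propositions to reduce to a statement purely about the structure of a given perfect unipotent spectrum, and then apply the representability results established in Section~\ref{unispec2.1} together with the definition of quasi-finite type from \cref{perfunispec}.

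First I would observe that by the preceding proposition, for any proper lci $ k $-scheme $ X $, the object $\bZ/p^\nu(i)^{\uni}_{X}$ is a perfect unipotent spectrum over $ k $. To pass from this to the representability of the $\pi_{n}$ by unipotent group schemes, I would note that the spectrum is bounded below: by devissage in $\nu$ one reduces to $\nu = 1$, and in that case the construction via the Nygaard filtration on $\phi^*R\Gamma_{\Prism}(X \times -)/p$ presents $\bZ/p(i)^{\uni}_X$ (after Nygaard completion) as a limit of finite extensions of shifts of $R\Gamma(X, \wedge^{j}\mathbb{L}_{X/k})\otimes_k(-)$, each of which is a finite product of shifts of $\mathbb{G}_a^{\mathrm{perf}}$ concentrated in a bounded range of degrees (using that $\mathbb{L}_{X/k}$ is a perfect complex of Tor amplitude $[0,1]$ on a proper scheme). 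With bounded-belowness in hand, \cref{rep} (applied in the perfect setting via \cref{cor:perfect_conn_affine_stacks_from_perfect_homotopy_groups}) directly yields that each $\pi_n$ is representable by a perfect unipotent commutative affine group scheme over $ k $.

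For the second assertion, assume now that $X$ is smooth. By the first representability proposition, $\bZ/p^\nu(i)^{\uni}_{X}$ is then of quasi-finite type. By \cref{perfunispec}, this is exactly the assertion that every homotopy sheaf $\pi_n$ is representable by a quasi-finite type perfect unipotent affine group scheme over $k$, which is the claim.

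The only step that could be called the main obstacle is verifying the boundedness implicit in passing to homotopy sheaves, but this is handled by unwinding the Nygaard construction as above. A slightly more delicate issue is that in the proper lci (non-smooth) case one needs \cref{beach1} --- stability of bounded below unipotent spectra under limits --- in order to conclude that the Nygaard completion remains a unipotent spectrum; this is the only point where \emph{perfect unipotent spectra} (as opposed to mere sheaves of spectra) truly enters, and the abelian structure provided by \cref{rmk:uni_quasi_ft_perf_gp_sch_are_abelian_cat} ensures closure under the extensions that appear in both the Nygaard and conjugate filtrations.
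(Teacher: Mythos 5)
Your overall route is the paper's: the corollary is extracted directly from the two preceding propositions, with the smooth case being nothing more than unwinding \cref{perfunispec} applied to \cref{representability syntomic}. That part of your argument is fine.

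The weak point is the detour through bounded-belowness in the lci case. First, it is unnecessary: for \emph{any} unipotent spectrum $E$ (bounded below or not), every delooping $\Omega^{\infty-m}E$ is by definition an affine stack, so \cref{beach} (applied to all shifts, or \cref{cor:perfect_conn_affine_stacks_from_perfect_homotopy_groups} in the perfect setting) already shows that every homotopy sheaf is representable by a commutative unipotent affine group scheme; there is no need to place $\bZ/p^{\nu}(i)^{\uni}_X$ in $\mathrm{Sp}^{\mathrm{U}-}_k$ before invoking representability, and \cref{rep} is not the right tool here. Second, the justification you give for boundedness does not work as stated in the non-smooth lci case: there the conjugate filtration on Hodge--Tate cohomology is infinite (all exterior powers $\wedge^{j}\mathbb{L}_{X/k}$ are nonzero since $\mathbb{L}_{X/k}$ only has Tor-amplitude in $[0,1]$), and the graded piece $R\Gamma(X,\wedge^{j}\mathbb{L}_{X/k}[-j])$ sits in cohomological degrees up to roughly $j+\dim X$, which grows without bound in $j$; so the Nygaard completion is \emph{not} presented as a limit of objects ``concentrated in a bounded range of degrees,'' and concluding boundedness (in the sense of \cref{notabelow}) from this presentation would require an additional convergence argument (e.g.\ increasing connectivity of the transition maps) that you do not supply. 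Since the corollary does not need boundedness at all, the clean fix is simply to drop that step: cite the lci proposition for membership in $\mathrm{Sp}^{\mathrm{U},\mathrm{perf}}_k$, apply \cref{beach} to the deloopings for the first assertion, and read off the second assertion from \cref{perfunispec} together with \cref{representability syntomic}.
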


Milne's duality statement  in the smooth case implies the following statement.

\begin{theorem}  \label{thm duality syntomic mod p}
Let $ X $ be a smooth proper $ k $-scheme of dimension $ d $ and $i \in \mathbb{Z}$. Then there is a natural equivalence 
\[
(\bZ /p(i)^{\uni}_{X})^{\vee} \simeq \left(\bZ /p(d-i)^{\uni}_{X} \right)[2d]
\]
of perfect unipotent $ \bF_p $-module spectra of quasi-finite type over $k$, where we regard $ \bZ /p(i)^{\uni}_{X} $ and $ \bZ /p(d-i)^{\uni}_{X} $ as perfect unipotent spectra by \cref{representability syntomic} and $ (-)^\vee $ denotes the linear duality of \cref{duality}. 
\end{theorem}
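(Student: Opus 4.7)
The strategy is to produce the equivalence as an instance of a cup product--trace pairing on syntomic cohomology, and then verify it is an equivalence by invoking Milne's classical duality at the level of homotopy sheaves.

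First, I will construct the duality morphism. The graded-commutative ring structure on syntomic cohomology yields, for each perfect affine $k$-scheme $S$, a cup product pairing
\[
R\Gamma_{\on{Syn}}(X \times S, \bZ/p(i)) \otimes^{L}_{\bZ/p} R\Gamma_{\on{Syn}}(X \times S, \bZ/p(d-i)) \to R\Gamma_{\on{Syn}}(X \times S, \bZ/p(d))
\]
natural in $S$. Composing with the trace map
\[
R\Gamma_{\on{Syn}}(X \times S, \bZ/p(d)) \to R\Gamma_{\on{Syn}}(S, \bZ/p)[-2d]
\]
attached to the smooth proper morphism $X \times S \to S$ of relative dimension $d$, and using representability from \cref{representability syntomic}, the Yoneda lemma produces a morphism
\[
\bZ/p(i)^{\uni}_X \otimes_{\bF_p} \bZ/p(d-i)^{\uni}_X \to \bZ/p[-2d]
\]
of perfect unipotent $\bF_p$-module spectra. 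Passing to the adjoint via the closed symmetric monoidal structure on $\bF_p- \Mod(\on{St}_k)$ yields the candidate comparison map
\[
\mu \colon \bZ/p(d-i)^{\uni}_X[2d] \longrightarrow \left(\bZ/p(i)^{\uni}_X\right)^\vee.
\]

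Second, I will verify that $\mu$ is an equivalence. Since $X$ is smooth proper of finite dimension $d$, both sides of $\mu$ have only finitely many nonzero homotopy sheaves and lie in the bounded category $\left(\bF_p- \Mod^{\U, \mathrm{perf}, \mathrm{ft}}_{k}\right)^{\on{bd}}$ of \cref{duality}. It therefore suffices to check that $\mu$ induces isomorphisms on all homotopy sheaves, regarded as perfect quasi-finite type unipotent group schemes. The homotopy sheaves of the source are shifts of the syntomic cohomology sheaves of $X$ on the perfect site, while those of the target are assembled via the local-to-global $\mathrm{Ext}$ spectral sequence computing $R\underline{\Hom}(\bZ/p(i)^{\uni}_X, \bZ/p)$. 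By Breen's theorem together with the Artin--Schreier resolution (as used in \cref{extcomp}), one has $\mathrm{Ext}^{\geq 2}(G, \bZ/p) = 0$ for perfect quasi-finite type unipotent group schemes $G$, so this spectral sequence collapses to two nonzero columns. The induced pairing on homotopy sheaves is then precisely Milne's Poincar\'e duality pairing between the syntomic cohomology sheaves \cite[Theorem~2.4]{Milne111}. Hence $\mu$ is an equivalence, and applying $R\underline{\Hom}(-,\bZ/p)$ and using the involutivity of the autoduality of \cref{duality} produces the stated equivalence.

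The principal technical obstacle is matching, at the level of perfect unipotent group schemes, the homotopy-sheaf pairing induced by cup product and trace with Milne's original duality pairing, and confirming that the resulting two-term $\mathrm{Ext}$ sequence corresponds precisely to Milne's. This amounts to a careful compatibility check between the syntomic trace map, the cup product, and Breen's explicit computation of $R\underline{\Hom}(-, \bZ/p)$ on perfect quasi-finite type unipotent group schemes. With this compatibility in hand, a diagram chase using boundedness and the $t$-exactness of the relevant functors forces $\mu$ to be an equivalence of perfect unipotent $\bF_p$-module spectra of quasi-finite type.
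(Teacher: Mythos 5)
Your proposal is correct and follows essentially the same route as the paper: the duality map is constructed from the cup product $\bZ/p(i)\otimes\bZ/p(d-i)\to\bZ/p(d)$ together with the trace map $\bZ/p(d)^{\uni}_{X}\to\bZ/p[-2d]$ on the representing perfect unipotent spectra, and perfectness of the resulting pairing is ultimately the content of \cite[Theorem~2.4]{Milne111}. The only cosmetic difference is that the paper cites Milne's theorem directly for perfectness (identifying $\bZ/p(i)^{\uni}_{X}$ with Milne's sheaves $\nu(i)$ up to shift), rather than passing through your homotopy-sheaf and $\mathrm{Ext}$-spectral-sequence reduction, which in any case amounts to the same input.
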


\begin{proof} Below we assume $i \ge 0$, since both sides above vanish for $i <0$.
Recall that there exist pairings 
\[ 
\bZ / p (m) \otimes \bZ / p (n) \to \bZ / p (m+n) 
\]
of sheaves on the quasi-syntomic site of $k$. This can  be seen  from the equivalence $\cO_{\on{Syn}} \{ m\} \otimes  \cO_{\on{Syn}} \{ n\} \simeq \cO_{\on{Syn}} \{ m +n \}$ of invertible  objects in $F$-gauges. This gives rise to a natural pairing 
\[
\bZ /p(m)^{\uni}_{X} \otimes  \bZ /p(n)^{\uni}_{X} \to \bZ /p(m+n)^{\uni}_{X}
\]   
of objects in $\Mod_{\bF_p}(\mathrm{St}^{\perf}_k)$.
Now if $\pi: X \to \Spec(k)$ is a proper smooth morphism of relative dimension $d$, there is a trace map\footnote{Milne's paper has a typo where the map is off by a shift.}
\[
\bZ /p(d)^{\uni}_{X} \to \bZ /p [-2d].
 \]
This is a consequence of \cite[Theorem 2.4]{Milne111}; it can be alternatively viewed as a consequence of Poincare duality (cf. \cite{MR4804676, fg}) for the $F$-gauge $\cH_{\on{Syn}}(X)$ and the resulting map 
\[
\cH_{\on{Syn}}(X) \to \cO_{\on{Syn}} \{-d\}[-2d]
\]
in the category of $F$-gauges over $k$. As a consequence of \cite[Theorem 2.4]{Milne111}, this gives rise to a perfect pairing, for each $i$,  
\[
\bZ / p^{\uni} _{X}(i) \otimes  \bZ / p^{\uni} _{X}(d-i) \to \bZ / p^{\uni} _{X}(d) \to \bZ/p[-2d],
\]
where the latter denotes the constant sheaf on $\St^{\perf}_k$ with value $\bZ/p[-2d]$. 
Hence, we obtain the desired equivalence 
\[
\bZ / p^{\uni} _{X}(d-i)  \simeq R \underline{\Hom}( \bZ / p^{\uni} _{X}(i), \bZ /p)[-2d] \,. \qedhere
\] 
\end{proof}

\begin{remark}
We remark that the objects $\bZ / p^{\uni} _{X}(i)$ agree with the objects $\pi_* \nu (i)[-i]$  in the notation Milne uses in \cite{Milne111}, when viewed as objects in the same category. These  correspond to certain étale sheaves $\nu(i)$ on the perfect site over $X$, pushed forward to the perfect site over $\Spec k$. By \cref{representability syntomic}, these are in fact fpqc sheaves on the perfect site, and are moreover unipotent $\bF_p$-modules in our terminology. 
\end{remark}

\subsection{Duality for  $p$-power torsion syntomic cohomology} \label{subsection duality in Zmod}
In this section, we exhibit a duality on $ \bZ $-modules in quasi-finite type perfect unipotent spectra; our construction is analogous to Pontryagin duality. 
Using this duality functor, we show that Poincaré duality for syntomic cohomology with mod $ p^n $ coefficients lifts to an equivalence in perfect unipotent $ \bZ $-modules. 

\begin{proposition} \label{prop Q/Z} 
Let $F$ be a $\bZ/ p^n$-module sheaf on the perfect site over $k$. Then there is an equivalence 
\[
R \Hom_{\mathbb{Z}}(F, \mathbb{Q}_p / \bZ_p) \simeq R \Hom_{\mathbb{Z}/p^n\mathbb{Z}}(F, \bZ /p^n)
\] 
of $\bZ$-modules. 
\end{proposition}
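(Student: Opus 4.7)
The plan is to deduce the stated equivalence from the base-change adjunction associated to the ring map $\bZ \to \bZ/p^n\bZ$ together with a direct computation of $R\Hom_{\bZ}(\bZ/p^n,\bQ_p/\bZ_p)$. For any sheaf of $\bZ$-modules $M$ on the perfect site and any sheaf of $\bZ/p^n$-modules $F$, restriction of scalars admits a right adjoint given by $R\Hom_{\bZ}(\bZ/p^n,-)$, producing a natural equivalence
\[
R\Hom_{\bZ}(F,M) \simeq R\Hom_{\bZ/p^n}\bigl(F, R\Hom_{\bZ}(\bZ/p^n, M)\bigr).
\]
Applying this to $M = \bQ_p/\bZ_p$ reduces the problem to identifying $R\Hom_{\bZ}(\bZ/p^n, \bQ_p/\bZ_p)$ with $\bZ/p^n$.

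For that identification, I would use the two-term free resolution $\bZ \xrightarrow{p^n} \bZ$ of $\bZ/p^n$, which presents $R\Hom_{\bZ}(\bZ/p^n, \bQ_p/\bZ_p)$ as the fiber of the multiplication-by-$p^n$ map on $\bQ_p/\bZ_p$. The short exact sequence
\[
0 \to \bZ/p^n \to \bQ_p/\bZ_p \xrightarrow{p^n} \bQ_p/\bZ_p \to 0
\]
(valid because multiplication by $p^n$ on $\bQ_p/\bZ_p$ is surjective with kernel $p^{-n}\bZ_p/\bZ_p \simeq \bZ/p^n$) then shows that this fiber is concentrated in degree zero and is canonically isomorphic to $\bZ/p^n$. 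Combining the two steps yields the asserted equivalence, which is manifestly natural in $F$.

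The argument works sheafwise on the perfect site because both the adjunction and the computation of $R\Hom_{\bZ}(\bZ/p^n, \bQ_p/\bZ_p)$ are purely formal manipulations that may be carried out section-by-section (or, equivalently, derived from the existence of the ring map of constant sheaves $\bZ \to \bZ/p^n$). There is no real obstacle here; the only mild point to check is that the above sequence presenting $\bQ_p/\bZ_p$ really does give a resolution in the derived category of sheaves, but this is automatic since $\bQ_p/\bZ_p$ is the filtered colimit of $\bZ/p^m$ along the inclusions induced by $p$, and filtered colimits are exact.
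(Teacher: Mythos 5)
Your proof is correct and takes essentially the same route as the paper: both reduce, via the coinduction adjunction for $\bZ \to \bZ/p^n$, to identifying $R\Hom_{\bZ}(\bZ/p^n, \bQ_p/\bZ_p) \simeq \bZ/p^n$. The only (minor) difference is in that final computation: you use the resolution $\bZ \xrightarrow{p^n} \bZ$ together with $p$-divisibility of $\bQ_p/\bZ_p$, whereas the paper writes $\bQ_p/\bZ_p \simeq \colim_m \bZ/p^m$ and invokes compactness of $\bZ/p^n$ as a $\bZ$-module, the colimit then stabilizing to $\bZ/p^n$.
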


\begin{proof}
Recall that there exists a right adjoint $u^!: \Mod_{\bZ/p^n} \to \Mod_{\bZ}$ to the forgetful functor given by $M \mapsto R \Hom_{\bZ}(\bZ/p^n, M)$.  Letting $M = \bQ_p/ \bZ_p$, and $F$ be as in the statement, and using the adjunction, we get that 
\[
R \Hom_{\bZ}(F,  \bQ_p / \bZ_p) \simeq R \Hom_{\bZ /p}(F,  u^!(\bQ_p / \bZ_p)).
\]
Now we identify $u^!(\bQ_p / \bZ_p)$ with $\bZ /p^n$. For this, note that. 
\[
u^!(\bQ_p / \bZ_p) \simeq  R \Hom_{\bZ}(\bZ/p^n, \bQ_p / \bZ_p) \simeq \colim_m (R \Hom_{\bZ}(\bZ/p^n, \bZ/p^m)) 
\] 
since $\bZ/p^n$ is compact as a $\bZ$-module. The above colimit stabilizes to $\bZ/p^n$, giving the desired identification. 
\end{proof}

Now, for any perfect  unipotent $\bZ$-module  of quasi-finite type $E$, we set  
\[
E^\vee =  R \underline{\Hom}(E, \bQ_p / \bZ_p)
\]
We have the following refinement of Milne's duality \cite{Milne111} in the general $\bZ$-linear setting. 

\begin{theorem} \label{duality Q/Z}
Let $(\bZ-\Mod_{k}^{\U, \on{perf}, \on{ft}})$ denote the $\infty$-category of quasi-finite type perfect unipotent $\bZ$-modules over $k$ which are bounded with respect to the induced $t$-structure. Then the functor 
\[
(-)^{\vee} = R \underline{\Hom}_{\bZ}(- , \bQ_p/ \bZ_p) : \Mod_{\bZ}(\Sp(\St_k)) \to  \Mod_{\bZ}(\Sp(\St_k))^{\op}
\]
restricts to an autoduality on $(\bZ-\Mod_{k}^{\U, \on{perf}, \on{ft}})$. 
\end{theorem}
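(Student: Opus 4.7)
The plan is to deduce this result from its $\bF_p$-analogue \cref{duality} via a two-step devissage. First, I would use the Postnikov $t$-structure of \cref{rmk:tstruct_on_modules_in_perfect_unipotent_spectra}. Since the functor $(-)^\vee = R\underline{\Hom}_\bZ(-, \bQ_p/\bZ_p)$ carries cofiber sequences to fiber sequences and the target subcategory is stable, one reduces both the well-definedness of the functor (that $E^\vee$ remains in the subcategory) and biduality ($E \to E^{\vee\vee}$ being an equivalence) to the case where $E$ is a shift of an object of the heart---that is, $E = G[n]$ for some perfect quasi-finite type unipotent commutative group scheme $G$ over $k$, at which point one may reduce further to $n=0$ by shift-invariance.

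Second, for such a $G$, I would invoke \cref{filtrationonuni} to obtain a finite filtration whose graded pieces are closed subgroup schemes of $\bG_a^{\mathrm{perf}}$, and are in particular $p$-torsion. Closure of the subcategory under extensions (which follows from \cref{rmk:uni_quasi_ft_perf_gp_sch_are_abelian_cat}) combined with the exactness of $(-)^\vee$ reduces both assertions further to the case where $G$ itself is $p$-torsion, i.e., a $\bZ/p$-module sheaf. In that case \cref{prop Q/Z} with $n=1$ furnishes a natural equivalence $G^\vee \simeq R\underline{\Hom}_{\bF_p}(G, \bF_p)$; since the right-hand side is again $p$-torsion, a second application of \cref{prop Q/Z} yields $G^{\vee\vee} \simeq R\underline{\Hom}_{\bF_p}(R\underline{\Hom}_{\bF_p}(G, \bF_p), \bF_p)$. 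Both the assertion that $G^\vee$ lies in the subcategory and the biduality $G \xrightarrow{\sim} G^{\vee\vee}$ then follow directly from \cref{duality}.

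The main obstacle I expect is organizational rather than computational: one must carefully track the naturality of the biduality unit $E \to E^{\vee\vee}$ through both stages of the devissage. Concretely, this will require verifying that $(-)^\vee$ and $(-)^{\vee\vee}$ each preserve the fiber sequences arising from the Postnikov tower and from the filtration of \cref{filtrationonuni}, and that the unit map is compatible with them, so that a routine five-lemma argument propagates the equivalence established in the $p$-torsion case back through all the reductions to an equivalence for arbitrary bounded $E$ in the subcategory.
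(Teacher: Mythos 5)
Your proposal is correct and follows essentially the same route as the paper's proof: reduce along the Postnikov tower to a quasi-finite type perfect unipotent group scheme, filter it via \cref{filtrationonuni} into $p$-torsion pieces, convert $R\underline{\Hom}_{\bZ}(-,\bQ_p/\bZ_p)$ into the mod $p$ duality via \cref{prop Q/Z}, and conclude from \cref{duality} by dévissage. The only cosmetic difference is that you phrase the final step as reducing to a genuinely $p$-torsion $G$ and apply \cref{prop Q/Z} twice, whereas the paper applies it once to the graded pieces and invokes \cref{duality} directly.
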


\begin{proof}
This will essentially follow formally from Theorem \ref{duality}, which holds over $\bF_p$. Let $E$ be an arbitrary perfect unipotent $\bZ$-module of quasi-finite type satisfying the conditions of the statement. We need to show that $E^{\vee}$ lands in this category, and that $(E^{\vee})^{\vee} \simeq E$.  
As in the proof of Theorem \ref{duality}, we take the Postnikov tower of $E$, which allows us to reduce to the case $G = \pi_n(E)$, for $G$ a perfect unipotent group scheme of finite type. Now, we use the fact, cf. Proposition \ref{filtrationonuni}, that $G$ has a finite filtration where the graded pieces are closed unipotent perfect subgroup schemes of $\bG_a^{\perf}$. In particular, these associated graded pieces are naturally $\bZ /p$-modules. Let $\on{gr}^i(G)$ denote one of these quotients. Then, via the previous proposition, we have equivalences 
\[
R \underline{\Hom}_{\bZ}(\on{gr}^i(G), \bQ_p / \bZ_p) \simeq  R \underline{\Hom}_{\bZ/p}(\on{gr}^i(G), \bZ / p) 
\]
of $\bZ$-module objects. In this case  $(\on{gr}^i(G))^\vee $ is itself perfect unipotent of quasi-finite type and 
\[
 ((\on{gr}^i(G))^\vee)^\vee  \simeq \on{gr}^i(G)
 \]
 by Theorem \ref{duality}. By dévissage,  we deduce an equivalence  $((G)^\vee)^\vee \simeq  G$ of unipotent spectra. 
\end{proof}

We now conclude by showing that Theorem \ref{thm duality syntomic mod p} holds more generally with mod $p^n$ coefficients. 

\begin{theorem} \label{duality syntomic p^n}

Let $ X $ be a smooth proper $ k $-scheme of dimension $ d $ and $i \in \mathbb{Z}$. Then there is a natural equivalence 
\[
(\bZ /p^n(i)^{\uni}_{X})^\vee \simeq (\bZ /p^n(d-i)^{\uni}_{X} )[2d]
\]
of perfect unipotent $\bZ$-module spectra of quasi-finite type over $k$, where we regard $ \bZ /p(i)^{\uni}_{X} $ and $ \bZ /p(d-i)^{\uni}_{X} $ as perfect unipotent spectra by \cref{representability syntomic} and $ (-)^\vee $ denotes $R \underline{\Hom}_{\bZ}(- , \bQ_p / \bZ_p)$ of \cref{duality Q/Z}. 

\end{theorem}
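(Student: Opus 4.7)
The plan is to mimic the proof of \cref{thm duality syntomic mod p}, first constructing a pairing
$$\bZ/p^n(i)^{\uni}_X \otimes_{\bZ/p^n} \bZ/p^n(d-i)^{\uni}_X \to \bZ/p^n[-2d]$$
from the multiplicative structure of syntomic cohomology and the syntomic trace map at mod $p^n$ level, and then translating the resulting $\bZ/p^n$-linear duality into the $\bZ$-linear duality with dualizing object $\bQ_p/\bZ_p$ via an internal version of \cref{prop Q/Z}.

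First I would verify that $\bZ/p^n(i)^{\uni}_X$ is a bounded quasi-finite type perfect unipotent $\bZ$-module, proceeding by induction on $n$. The short exact sequence of coefficients $0 \to \bZ/p \to \bZ/p^n \to \bZ/p^{n-1} \to 0$ yields a fiber sequence
$$\bZ/p(i)^{\uni}_X \to \bZ/p^n(i)^{\uni}_X \to \bZ/p^{n-1}(i)^{\uni}_X,$$
and the desired claim follows from \cref{representability syntomic} together with the closure of quasi-finite type perfect unipotent group schemes under extensions (\cref{rmk:quasi_ft_perf_gp_sch_are_abelian_cat}). The multiplicative structure $\cO_{\on{Syn}}\{i\} \otimes \cO_{\on{Syn}}\{d-i\} \simeq \cO_{\on{Syn}}\{d\}$ of invertible $F$-gauges gives pairings $\bZ/p^n(i) \otimes \bZ/p^n(d-i) \to \bZ/p^n(d)$ of syntomic sheaves for each $n$. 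Composing with a syntomic trace map $\bZ/p^n(d)^{\uni}_X \to \bZ/p^n[-2d]$ obtained at the $F$-gauge level from Poincar\'e duality (cf.~\cite{MR4804676,fg}) produces the desired pairing, and adjunction yields a map
$$\alpha_n \colon \bZ/p^n(d-i)^{\uni}_X \to R\underline{\Hom}_{\bZ/p^n}(\bZ/p^n(i)^{\uni}_X, \bZ/p^n)[-2d].$$
An internal version of \cref{prop Q/Z} (which follows from the same $u^!$-adjunction argument applied sectionwise) identifies the target with $(\bZ/p^n(i)^{\uni}_X)^\vee[-2d]$.

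It remains to show $\alpha_n$ is an equivalence, which I would do by induction on $n$: the base case $n=1$ is precisely \cref{thm duality syntomic mod p}. For the inductive step, naturality of the pairings in the coefficient sequence yields a commutative diagram of fiber sequences
\begin{center}
\begin{tikzcd}[column sep=small]
\bZ/p^{n-1}(d-i)^{\uni}_X \ar[r] \ar[d,"\alpha_{n-1}"'] & \bZ/p^n(d-i)^{\uni}_X \ar[r] \ar[d,"\alpha_n"] & \bZ/p(d-i)^{\uni}_X \ar[d,"\alpha_1"] \\
(\bZ/p^{n-1}(i)^{\uni}_X)^\vee[-2d] \ar[r] & (\bZ/p^n(i)^{\uni}_X)^\vee[-2d] \ar[r] & (\bZ/p(i)^{\uni}_X)^\vee[-2d]
\end{tikzcd}
\end{center}
so that the five lemma completes the induction. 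The principal technical point will be establishing coherent compatibility of both the trace map and the multiplicative pairings with the coefficient short exact sequence: this should follow from their $F$-gauge theoretic origin, but will require some care in tracking the identifications through \cite[Proposition 7.4.6]{BhaLur}, since any asymmetry would cause the five lemma step to fail.
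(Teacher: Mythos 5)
Your construction of the pairing coincides with the paper's (pull back the $F$-gauge Poincar\'e pairing along $(\phi^{\on{syn}})^*$, reduce mod $p^n$, use lax monoidality of $R\Gamma(S^{\on{syn}},-)$, then compose with $\bZ/p^n \to \bQ_p/\bZ_p$ and invoke \cref{prop Q/Z}), and your d\'evissage establishing that $\bZ/p^n(i)^{\uni}_X$ is bounded of quasi-finite type is exactly how \cref{representability syntomic} already treats general $\nu$, so that part is redundant but correct. Where you genuinely diverge is in proving that the adjoint map $\alpha_n$ is an equivalence: you induct on $n$ through the coefficient fiber sequences and a five-lemma argument, whereas the paper reduces modulo $p$ and applies derived Nakayama, using a $\bZ/p^n$-linear variant of \cref{dualizability} to identify $R\underline{\Hom}_{\bZ/p^n}(\bZ/p^n(i)^{\uni}_X,\bZ/p^n)\otimes_{\bZ/p^n}\bF_p$ with $R\underline{\Hom}_{\bF_p}(\bZ/p(i)^{\uni}_X,\bF_p)$, matching the mod-$p$ reduction of $\alpha_n$ with the equivalence of \cref{thm duality syntomic mod p}, and concluding since a map of $\bZ/p^n$-modules is a map of $p$-complete objects. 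The trade-off is real: the paper's route needs only one compatibility (of the mod $p^n$ pairing and trace with mod-$p$ reduction) plus the dualizability input, while yours needs compatibility of the pairing and trace with both coefficient maps ($\bZ/p^{n-1}\xrightarrow{p}\bZ/p^n$ and $\bZ/p^n\to\bZ/p$, matched with the standard inclusions into $\bQ_p/\bZ_p$), and, as you yourself flag, you must produce an actual map of fiber sequences (compatible with the connecting maps, not merely two homotopy-commutative squares) before the stable five lemma applies; otherwise you only get some filler on the third term rather than $\alpha_1$ itself. These compatibilities are plausible from the $F$-gauge origin of the trace, so your argument should go through, but the paper's Nakayama argument sidesteps precisely the coherence bookkeeping that you identify as the principal technical point.
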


\begin{proof}
We will again take $ i \geq 0 $, since both sides vanish for $ i < 0 $. 
Let $\phi: S \to \Spec k$ be an arbitrary perfect affine scheme over $\Spec k$. Since $X$ is smooth and proper over $\Spec k $, there is a perfect pairing  
\[
\cH_{\on{syn}}(X)\{i\} \otimes \cH_{\on{syn}}(X)\{d-i\}[2d] \to   \cO_{k^{\on{syn}}}.
\]
 of $F$-Gauges over $k$ \cite{MR4804676}. Applying the symmetric monoidal functor 
 \[
 (\phi^{\on{syn}})^* : F-\on{Gauge}(k) \to F-\on{Gauge}(S)
 \]
 gives a perfect pairing in $F$-Gauge$(S)$. We set $\cH_{\on{syn}}(X \times S) :=   (\phi^{\on{syn}})^*(\cH_{\on{syn}}(X))$. Reducing modulo $p^n$ for each $n$ gives a perfect pairing 
\[
\cH_{\on{syn}}(X \times S)\{i\} /p^n\otimes \cH_{\on{syn}}/ p^n (X \times S)\{d-i\}[2d] \to   \cO_{S^{\on{syn}}}/ p^n.
\]

Finally, consider the cohomology functor 
\[
R \Gamma( S^{\on{syn}}, -): F-\on{Gauge}(S) \to \Mod_{\bZ/p^n}.
\] 
Since the cohomology functor is lax monoidal, the aforementioned perfect pairing induces a map
\begin{equation}\label{eq:syntomic_pairing_after_basechange}
R\Gamma_{\on{syn}}(X \times S , \bZ/ p^n(i)) \otimes R\Gamma_{\on{syn}}(X \times S , \bZ/ p^n(d-i))[2d] \to R \Gamma_{\on{syn}}(S, \bZ/p^n) \,.
\end{equation}
Note that there is an equivalence $R \Gamma_{\on{syn}}(S, \bZ/p^n)  \simeq R \Gamma_{\on{\acute{e}t}}(S, \bZ/p^n)$ with étale cohomology (which also agrees with fppf cohomology as well).

Observe that for a fixed perfect affine scheme $S$, we have described a functor $X \mapsto R \Gamma_{\on{syn}}(X \times S, \bZ /p^n(i))$ for each $ i \geq 0 $. 
Letting $S$ vary over perfect affine schemes over $ k $, we regard $ R \Gamma_{\on{syn}}(X \times (-), \bZ /p^n(i)) $ for $ i \geq 0 $ as objects of $ \mathrm{PSh}\left(\Aff^\perf_k\right) $.  
These are moreover representable by perfect unipotent $\bZ/p^n$-modules via Proposition \ref{representability syntomic}.  In particular, functoriality of \cref{eq:syntomic_pairing_after_basechange} in $ S $ induces maps  
\[
\bZ /p^{n}(i)_{X}^{\uni} \otimes   \bZ /p^{n}(d-i)_{X}^{\uni}[2d] \to  \bZ/p^n
\]
of unipotent $\bZ/p^n$-modules for every $n \geq 1$ and $ i \geq 0 $, which we compose with $\bZ/ p^n \to \bQ_p / \bZ_p$ to obtain the pairing 
\[ 
\bZ /p^{n}(i)_{X}^{\uni} \otimes   \bZ /p^{n}(d-i)_{X}^{\uni}[2d] \to  \bQ_p / \bZ_p \,.
\]
We show that this pairing is perfect, namely that the adjoint 
\[
\alpha: \bZ /p^{n}(d-i)_{X}^{\uni}[2d] \to R\underline{\Hom}_{\bZ}(\bZ /p^{n}(i)_{X}^{\uni}, \bQ_p / \bZ_p) 
\]
is an equivalence. For this, recall first that 
\[
R\underline{\Hom}_{\bZ}\left( \bZ /p^{n}(i)_{X}^{\uni}, \bQ_p / \bZ_p\right) \simeq  R\underline{\Hom}_{\bZ/p^n}\left( \bZ /p^{n}(i)_{X}^{\uni},  \bZ / p^n\right),
\]
by Proposition \ref{prop Q/Z}.  By a variant of Proposition \ref{dualizability},  $\bZ /p^{n}(i)_{X}^{\uni}$ will be a dualizable object in fpqc sheaves of $\bZ/p^n$-modules on the perfect site. Hence there will be an equivalence 
\[
R\underline{\Hom}_{\bZ/p^n}\left( \bZ /p^{n}(i)_{X}^{\uni},  \bZ / p^n\right) \otimes_{\bZ / p^n} \bF_p \simeq R\underline{\Hom}_{\bF_p}\left( \bZ /p(i)_{X}^{\uni},  \bF_p\right) \,.
\] 
Via these equivalences, we may identify the mod $p$ reduction of $\alpha$ with the map 
\[
\bF_p(d-i)_{X}^{\uni}[2d] \to R \underline{\Hom}\left(\bF_p(i)_{X}^{\uni}, \bF_p\right)    
\] 
of \cref{thm duality syntomic mod p}; in particular, it is an equivalence. Hence the map $\alpha$ is an equivalence as well by the derived Nakayama's lemma, since $\alpha$ is a map between $\bZ/ p^n$-modules, and thus a map of $p$-complete objects.
\end{proof}

\subsection{Duality for $p$-complete syntomic cohomology} \label{finalsection}
In this final section, we describe how to extend the $\bZ/p^n$-linear dualities described above to $p$-complete perfect unipotent spectra. 
We then show that in the context of \cref{representability syntomic}, Poincaré duality for syntomic cohomology promotes to an equivalence of perfect unipotent spectra. 
For this we need a notion of $p$-complete unipotent $\bZ_p$-module, which we define below. 

\begin{definition}
Let $\mathcal{D}(\bZ)^{\wedge}_{p}$ denote the stable $\infty$-category of $p$-complete $\bZ_p$-modules. For a $\mathcal{D}(\bZ)$-module $\mathcal{C}$ in $\on{Pr}^{\on{L}}$, its $p$-completion is defined as  $\mathcal{C}^{\wedge}_{p} := \mathcal{C} \otimes_{\mathcal{D}(\bZ)}  \mathcal{D}(\bZ)^{\wedge}_{p} $
\end{definition}

\begin{proposition}
There is an equivalence 
\[
(\mathbb{Z}\mathrm{-Mod}^{\mathrm{U}}_k)^{\wedge}_p \simeq  \lim (\mathbb{Z}/p \mathrm{-Mod}^{\mathrm{U}}_k \leftarrow \mathbb{Z} / p^2 \mathrm{-Mod}^{\mathrm{U}}_k  \leftarrow \cdots) 
\]
\end{proposition}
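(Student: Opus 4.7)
The plan is to reduce the claim to the classical equivalence $\mathcal{D}(\bZ)^{\wedge}_p \simeq \lim_n \mathcal{D}(\bZ/p^n)$ in $\on{Pr}^{\on{L}}$, combined with the identification of base-change categories as $\bZ/p^n$-module categories.
By definition,
\[
(\mathbb{Z}\mathrm{-Mod}^{\mathrm{U}}_k)^{\wedge}_p \;=\; \mathbb{Z}\mathrm{-Mod}^{\mathrm{U}}_k \otimes_{\mathcal{D}(\bZ)} \mathcal{D}(\bZ)^{\wedge}_p.
\]
For each $n$, standard results on base-change of module categories (\cite[\S 4.5]{luriehigher}) give
\[
\mathbb{Z}\mathrm{-Mod}^{\mathrm{U}}_k \otimes_{\mathcal{D}(\bZ)} \mathcal{D}(\bZ/p^n) \;\simeq\; \bZ/p^n\mathrm{-Mod}(\Sp^{\U}_k) \;=\; \bZ/p^n\mathrm{-Mod}^{\mathrm{U}}_k,
\]
compatibly with the base-change transition maps $\bZ/p^{n+1}\mathrm{-Mod}^{\mathrm{U}}_k \to \bZ/p^n\mathrm{-Mod}^{\mathrm{U}}_k$.

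Rather than attempting to commute a general tensor product with a limit in $\on{Pr}^{\on{L}}$, my plan is to argue directly at the level of objects. The key observation is that $(\mathbb{Z}\mathrm{-Mod}^{\mathrm{U}}_k)^{\wedge}_p$ sits as a reflective subcategory of $\mathbb{Z}\mathrm{-Mod}^{\mathrm{U}}_k$ consisting of the derived $p$-complete objects, i.e.~those $M$ with $M \simeq \lim_n M/p^n$. I would then define two candidate quasi-inverse functors:
\[
\Phi: (\mathbb{Z}\mathrm{-Mod}^{\mathrm{U}}_k)^{\wedge}_p \longrightarrow \lim_n \bZ/p^n\mathrm{-Mod}^{\mathrm{U}}_k, \qquad M \mapsto (M/p^n)_n,
\]
and $\Psi$ sending a compatible system $(M_n)_n$ to $\lim_n M_n$, where the $M_n$ are regarded in $\mathbb{Z}\mathrm{-Mod}^{\mathrm{U}}_k$ by restriction of scalars and the inverse limit is taken there.

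The main technical point will be verifying $\Phi \circ \Psi \simeq \mathrm{id}$, i.e.~that for a compatible system $(M_n)$, the natural map $(\lim_m M_m)/p^n \to M_n$ is an equivalence. Since $\mathbb{Z}\mathrm{-Mod}^{\mathrm{U}}_k$ is stable, the cofiber of a map commutes with inverse limits (via $\mathrm{cofib} \simeq \mathrm{fib}[1]$), so $(\lim_m M_m)/p^n \simeq \lim_m (M_m/p^n)$. For $m \geq n$, the compatibility datum yields $M_m/p^n \simeq M_m \otimes_{\bZ/p^m} \bZ/p^n \simeq M_n$, so the tower $(M_m/p^n)_{m \geq n}$ is essentially constant with value $M_n$, and its limit is $M_n$. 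The converse identity $\Psi \circ \Phi \simeq \mathrm{id}$ is immediate from the definition of $p$-completeness. Combined with the identifications of the first paragraph, this yields the equivalence stated in the proposition.
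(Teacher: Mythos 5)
Your route is genuinely different from the paper's, but as written it has two gaps. The first is the unproved ``key observation'' that $(\mathbb{Z}\mathrm{-Mod}^{\mathrm{U}}_k)^{\wedge}_p$ --- which the paper \emph{defines} as the Lurie tensor product $\mathbb{Z}\mathrm{-Mod}^{\mathrm{U}}_k \otimes_{\mathcal{D}(\bZ)} \mathcal{D}(\bZ)^{\wedge}_{p}$ --- is the reflective subcategory of derived $p$-complete objects of $\mathbb{Z}\mathrm{-Mod}^{\mathrm{U}}_k$. Relative to that definition this is not free: one must know that the Bousfield localization $\mathcal{D}(\bZ) \to \mathcal{D}(\bZ)^{\wedge}_{p}$ is compatible with the tensor structure, hence exhibits $\mathcal{D}(\bZ)^{\wedge}_{p}$ as an idempotent algebra in $\Mod_{\mathcal{D}(\bZ)}(\on{Pr}^{\on{L}})$, and that tensoring a presentable $\mathcal{D}(\bZ)$-linear category with it implements the corresponding localization, whose local objects are exactly the $M$ with $M \simeq \lim_n M/p^n$. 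Since this identification is essentially where the content of the proposition sits, asserting it transfers rather than removes the burden. The paper sidesteps exactly this point: it uses that $\Fun(\Aff_k^{\op},-)$ preserves limits of $\infty$-categories together with $\mathcal{D}(\bZ)^{\wedge}_p \simeq \lim_n \mathcal{D}(\bZ/p^n)$ to get the statement for presheaf categories, and then exhibits the comparison functor for unipotent $\bZ$-modules as a retract of that equivalence via the adjunction of \cref{unicomofspectra}.

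The second gap is in your check of $\Phi\circ\Psi\simeq\id$: the step ``$M_m/p^n \simeq M_m\otimes_{\bZ/p^m}\bZ/p^n$'' is false for $m>n$. In $\bZ/p^m$-modules one has a fiber sequence $\bZ/p^n[1] \to \bZ/p^m\otimes_{\bZ}\bZ/p^n \to \bZ/p^n$ (the cofiber of $p^n$ on $\bZ/p^m$ has $p^n$-torsion in degree $1$), so tensoring with $M_m$ over $\bZ/p^m$ gives a fiber sequence $M_n[1] \to M_m/p^n \to M_n$; the tower $(M_m/p^n)_{m\ge n}$ is therefore \emph{not} termwise equivalent to $M_n$, and ``essentially constant'' fails as stated. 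The conclusion $\lim_m (M_m/p^n)\simeq M_n$ is nonetheless correct, but for a different reason that you must supply: the transition maps on the error terms $M_n[1]$ are multiplication by $p$, so the tower of errors is pro-zero (any $n$-fold composite is null on a $\bZ/p^n$-module), whence its limit and $\lim^1$ vanish. With the identification of the first paragraph justified and this step repaired, your explicit object-level argument does go through and yields a proof that is more hands-on than, and independent of, the retract argument in the paper.
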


\begin{proof}
For this we note that there is an obvious equivalence of presheaf categories given by 
\[
\Fun(\on{Aff}^{\op}_{k}, \mathcal{D}(\bZ)^{\wedge}_{p}) \simeq \lim( \Fun(\on{Aff}^{\op}_{k}, \mathcal{D}(\bZ / p)) \leftarrow \Fun(\on{Aff}^{\op}_{k}, \mathcal{D}(\bZ / p^2)) \leftarrow \cdots) 
\]
Now, note that we may view the natural map  
\[
(\mathbb{Z}\mathrm{-Mod}^{\mathrm{U}}_k)^{\wedge}_p \to  \lim( \mathbb{Z}/p \mathrm{-Mod}^{\mathrm{U}}_k \leftarrow \mathbb{Z} / p^2 \mathrm{-Mod}^{\mathrm{U}}_k  \leftarrow \cdots) 
\]
as a retract of this equivalence, via \cref{unicomofspectra}. 
We now conclude the equivalence in the statement,  using the fact that equivalences are stable under retracts. 
\end{proof}

\begin{construction}\label{cons:duality_and_reduction}
 For each $n$, let $\cC_{n}^{\ft}$ denote the full subcategory of $\bZ/p^n$-modules in perfect unipotent spectra  spanned by the quasi-finite type objects which are bounded with respect to the t-structure of \cref{rmk:tstruct_on_modules_in_perfect_unipotent_spectra}. By Theorem \ref{duality Q/Z}, the functors 
 \[
 \mathbb{D}_n(-) = R \underline{\Hom}_{\bZ /p^n}(-, \bZ/p^n)
 : (\Mod_{\bZ / p^n}(\Sp(\on{St}_k)))^{\op} \to \Mod_{\bZ / p^n}(\Sp(\on{St}_k))
 \]
 restrict to an equivalence $\mathbb{D}_n: \cC_n^{\ft} \simeq (\cC^{\ft})^{\op}$.  Note that if $E \in \cC_{n}^{\ft}$, then $E \otimes \bZ  /p^{n-1} \in \cC_{n-1}^{\ft}$. In other words, we have a commutative diagram 
\begin{equation}\label{diagram:mod_p_n-1}
 \begin{tikzcd}
\cC_{n}^{\ft} \arrow[rr, ""]   \arrow[d, ""]                                       &  &  \cC_{n-1}^{\ft}   \arrow[d, ""']                                             \\
\Mod_{\bZ / p^n}(\Sp(\on{St}_k)) \arrow[rr, "\otimes_{\bZ /p^n} \bZ / p^{n-1}"'] &  & \Mod_{\bZ / p^{n-1}}(\Sp(\on{St}_k)),
\end{tikzcd} 
\end{equation}
where the vertical arrows are the inclusions of full subcategories.    
\end{construction}

\begin{definition}
  We set  
\[
 \cC^{\on{pro-}\ft} := \lim  \cC_n^{\ft}
 \]
  to be the limit along the horizontal maps in the diagram (\ref{diagram:mod_p_n-1}). Alternatively, $\cC^{\ft}$ can be described as the full subcategory of perfect unipotent $p$-complete modules for $E$ for which $E \otimes \bZ /p^n \in \cC_n^{\ft}$ for every $n > 0$.    

  We remark that $\bZ-\Mod^{\U, \ft}$, the $\infty$-category of quasi-finite type perfect unipotent spectra sits as a full subcategory of $\cC^{\on{pro-}\ft} := \lim  \cC_n^{\ft}$ as defined here. 
\end{definition}

Taking the limit  of the following diagram 
\begin{equation}
 \begin{tikzcd}
\cdots \to \cC_{n+1}^{\ft} \arrow[rr, ""]   \arrow[d, " \simeq"]         &  &   \cC_{n}^{\ft} \arrow[rr, ""]   \arrow[d, "\simeq"]                                       &  &  \cC_{n-1}^{\ft}   \arrow[d, "\simeq"']        \to \cdots                                     \\
\cdots \to (\cC_{n+1}^{\ft})^\op \arrow[rr, "\otimes_{\bZ /p^n} \bZ / p^{n-1}"'] &  & (\cC_{n}^{\ft})^{\op}  \arrow[rr, ""]     &  &  (\cC_{n-1}^{\ft})^{\op}  \to \cdots ,
\end{tikzcd} 
\end{equation}
where the vertical arrows are the dualities over $\bZ/p^n$ of \cref{cons:duality_and_reduction}, we obtain an  equivalence which we denote by 
\[
\mathbb{D}: \cC^{\on{pro-}\ft} \to (\cC^{\on{pro-}\ft})^{\op}.
\]
The following proposition summarizes the above discussion.

\begin{proposition}
Let $\cC^{\on{pro-}\ft}$ denote the full subcategory of $p$-complete perfect unipotent $\bZ$-modules spanned by those objects $E$ for which $E \otimes \bZ /p^n$ is a perfect unipotent $\bZ/p^n$-module of quasi-finite type for each $ n \geq 1 $. Then there exists an involutive equivalence  
\[
\mathbb{D}: \cC^{\on{pro-}\ft} \to (\cC^{\on{pro-}\ft})^{\op},
\] 
which is compatible with the dualities of Theorem \ref{duality Q/Z}. 

\end{proposition}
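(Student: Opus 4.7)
The plan is to prove the proposition by taking an inverse limit of the mod-$p^n$ dualities $\mathbb{D}_n$ from \cref{cons:duality_and_reduction}, once their compatibility with reduction has been established. First I would identify $\cC^{\on{pro-}\ft}$ with the limit $\lim_n \cC_n^{\ft}$ along the transition maps $-\otimes_{\bZ/p^n}\bZ/p^{n-1}$: this uses the equivalence $(\bZ-\Mod^{\U}_k)^\wedge_p \simeq \lim_n \bZ/p^n-\Mod^{\U}_k$ established in the proposition just above, together with the fact that being of quasi-finite type and being bounded with respect to the $t$-structure are both preserved under reduction mod $p^{n-1}$, which follows because these properties can be read off the underlying perfect unipotent spectrum and the reduction functor is $t$-exact in the relevant bounded range.

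For each $n \geq 1$, \cref{cons:duality_and_reduction} produces an involutive equivalence $\mathbb{D}_n = R\underline{\Hom}_{\bZ/p^n}(-,\bZ/p^n)\colon \cC_n^{\ft} \xrightarrow{\sim} (\cC_n^{\ft})^{\op}$; via \cref{prop Q/Z}, each $\mathbb{D}_n$ is naturally identified with the restriction of the $\bQ_p/\bZ_p$-duality from \cref{duality Q/Z}. The main obstacle—and the real content of the proof—is verifying that the family $(\mathbb{D}_n)_n$ assembles into a morphism of towers; concretely, I must exhibit a natural equivalence
\[
\mathbb{D}_n(E) \otimes_{\bZ/p^n} \bZ/p^{n-1} \simeq \mathbb{D}_{n-1}\bigl(E \otimes_{\bZ/p^n} \bZ/p^{n-1}\bigr)
\]
for each $E \in \cC_n^{\ft}$, functorial in $E$. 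This is a base-change property of the internal $\Hom$, which holds because the proof of \cref{duality Q/Z} shows that every object of $\cC_n^{\ft}$ is in fact dualizable in the symmetric monoidal $\infty$-category $\Mod_{\bZ/p^n}(\Sp(\St_k))$; internal $\Hom$ out of a dualizable object commutes with arbitrary base change. Naturality and coherence in $n$ follow from the symmetric monoidal functoriality of the reduction maps.

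With this compatibility in place, taking the limit of the tower of equivalences $(\mathbb{D}_n)_n$ produces the desired involutive equivalence $\mathbb{D}\colon \cC^{\on{pro-}\ft} \xrightarrow{\sim} (\cC^{\on{pro-}\ft})^{\op}$. Involutivity is inherited termwise from each $\mathbb{D}_n$. Compatibility with the duality of \cref{duality Q/Z} on the full subcategory $\bZ-\Mod^{\U,\perf,\ft}_k \subseteq \cC^{\on{pro-}\ft}$ follows from \cref{prop Q/Z} applied at each finite level, since any object of $\bZ-\Mod^{\U,\perf,\ft}_k$ is $p$-power torsion (being bounded with perfect quasi-finite type unipotent homotopy groups) and is therefore recovered from its mod $p^n$ reductions for $n \gg 0$.
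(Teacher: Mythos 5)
Your overall architecture---identify $\cC^{\on{pro-}\ft}$ with $\lim_n \cC_n^{\ft}$ and take the limit of the level-wise dualities $\mathbb{D}_n$ of \cref{cons:duality_and_reduction}---is the same as the paper's, and you are right that the whole content lies in the compatibility of the $\mathbb{D}_n$ with the transition functors. The gap is in how you certify that compatibility. You assert that every object of $\cC_n^{\ft}$ is dualizable in $\Mod_{\bZ/p^n}(\Sp(\on{St}_k))$, citing the proof of \cref{duality Q/Z}; but that proof only establishes reflexivity for $R\underline{\Hom}(-,\bQ_p/\bZ_p)$ by d\'evissage to graded pieces that are $\bF_p$-modules, where the genuinely monoidal statement \cref{dualizability} applies $\bF_p$-linearly. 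Dualizability over $\bZ/p^n$ actually fails for $n \geq 2$: the constant sheaf $\bZ/p$ and $\bG_a^{\perf}$, regarded as $\bZ/p^n$-modules, lie in $\cC_n^{\ft}$, yet $\bZ/p \otimes_{\bZ/p^n} \bZ/p$ has nonvanishing homotopy sheaves in every non-negative degree (the Tor groups of $\bF_p$ over $\bZ/p^n$ are $\bF_p$ in all degrees), so no coevaluation can exist; sheafifying does not cure the fact that $\bF_p$ is not a perfect $\bZ/p^n$-complex.

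Consequently the base-change formula you rely on, $\mathbb{D}_n(E)\otimes_{\bZ/p^n}\bZ/p^{n-1} \simeq \mathbb{D}_{n-1}\bigl(E\otimes_{\bZ/p^n}\bZ/p^{n-1}\bigr)$, is false on all of $\cC_n^{\ft}$: for $E = \bZ/p$ and $n=2$ the left-hand side is connective and unbounded above, while the right-hand side is coconnective. (For the same reason, your auxiliary claim that $-\otimes_{\bZ/p^n}\bZ/p^{n-1}$ preserves boundedness and quasi-finite type needs qualification.) What duality against $\bQ_p/\bZ_p$---equivalently against $\bZ/p^n$ on $p^n$-torsion objects, by \cref{prop Q/Z}---actually intertwines is reduction with torsion: it carries $\mathrm{cofib}(p^{n-1})$ to $\mathrm{fib}(p^{n-1})$. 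So the tower compatibility has to be organized around this exchange, or one must restrict to objects arising as reductions of $p$-torsion-free $p$-complete objects (as is the case for the syntomic objects $\bZ/p^n(i)^{\uni}_X$ of \cref{representability syntomic}, where base change along $\bZ/p^n \to \bZ/p^{n-1}$ does behave as you claim). The paper is admittedly terse at exactly this point---it simply asserts the commuting ladder of dualities---so you located the crux correctly, but the dualizability argument you propose cannot fill it.
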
 

We conclude with the following description of the behavior of syntomic cohomology as a $p$-complete unipotent $\bZ$-module.  Let $k$ be a perfect field of characteristic $p$, and let 
$X$ be a smooth and proper scheme over $\Spec k$. We let $\bZ_p(i)^{\uni}_{X}$   be the presheaf on the perfect site which, for every perfect scheme $S$, sends  
\[
S \mapsto R \Gamma_{\on{Syn}}(X \times S, \bZ_p(i)) \in D(\bZ)^{\wedge}_p
\]
By \cref{representability syntomic} the above functor is representable by a $p$-complete perfect unipotent $\mathbb{Z}$-module, and is an object of $\cC^{\on{pro-}\ft}$.
\begin{theorem}
Let $\bZ_p(i)^{\uni}_{X}$ be as above. Then there is an equivalence 
\[
\mathbb{D}(\bZ_p(i)^{\uni}_{X}) \simeq  \bZ_p(d-i)_{X}^{\uni}[2d]  
\]
of $p$-complete unipotent $\bZ$-modules.
\end{theorem}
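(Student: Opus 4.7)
My plan is to reduce the $p$-complete duality statement to the mod $p^n$ duality already established in Theorem \ref{duality syntomic p^n}, and then pass to the limit using the description of $\mathbb{D}$ as the limit of the mod $p^n$ dualities $\mathbb{D}_n$. The key observation is that an object of $\cC^{\on{pro-}\ft}$ is determined by its compatible system of mod $p^n$ reductions, and likewise the duality $\mathbb{D}$ is prescribed levelwise by the $\mathbb{D}_n$. Thus an equivalence in $\cC^{\on{pro-}\ft}$ can be checked by producing compatible equivalences after reducing mod $p^n$ for every $n \ge 1$.

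First, I would identify $\bZ_p(i)^{\uni}_X \otimes_{\bZ} \bZ/p^n$ with $\bZ/p^n(i)^{\uni}_X$ as objects of $\cC_n^{\ft}$. This is essentially tautological from the definition of $p$-adic syntomic cohomology: one has the fiber sequence $\bZ_p(i) \xrightarrow{p^n} \bZ_p(i) \to \bZ/p^n(i)$ of syntomic sheaves on $X \times S$ for every perfect $k$-algebra $S$, and representability of $\bZ/p^n(i)^{\uni}_X$ (\cref{representability syntomic}) together with representability of the $p$-complete object identifies the cofiber of multiplication by $p^n$ on $\bZ_p(i)^{\uni}_X$ with $\bZ/p^n(i)^{\uni}_X$ in perfect unipotent spectra. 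Under the equivalence $\cC^{\on{pro-}\ft} \simeq \lim \cC_n^{\ft}$, $\bZ_p(i)^{\uni}_X$ thus corresponds to the compatible system $\{\bZ/p^n(i)^{\uni}_X\}_n$ with transition maps given by the reduction maps $\bZ/p^{n+1}(i) \to \bZ/p^n(i)$.

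Next, I would invoke Theorem \ref{duality syntomic p^n} to get, for each $n$, an equivalence $\mathbb{D}_n(\bZ/p^n(i)^{\uni}_X) \simeq \bZ/p^n(d-i)^{\uni}_X[2d]$. The crux is to verify that these equivalences are compatible with reduction mod $p^{n-1}$, i.e., that the square
\begin{equation*}
\begin{tikzcd}
\mathbb{D}_n(\bZ/p^n(i)^{\uni}_X) \otimes_{\bZ/p^n} \bZ/p^{n-1} \ar[r,"\sim"] \ar[d,"\sim"] & \bZ/p^n(d-i)^{\uni}_X[2d] \otimes_{\bZ/p^n} \bZ/p^{n-1} \ar[d,"\sim"] \\
\mathbb{D}_{n-1}(\bZ/p^{n-1}(i)^{\uni}_X) \ar[r,"\sim"] & \bZ/p^{n-1}(d-i)^{\uni}_X[2d]
\end{tikzcd}
\end{equation*}
commutes, where the left vertical equivalence comes from the compatibility of the duality with reduction (built into the definition of $\mathbb{D}$) and the right vertical equivalence is the reduction map for syntomic cohomology. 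Assuming this compatibility, the induced map on limits gives the desired equivalence $\mathbb{D}(\bZ_p(i)^{\uni}_X) \simeq \bZ_p(d-i)^{\uni}_X[2d]$.

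The main obstacle will be this compatibility check. However, it should follow from the naturality of the construction in the proof of Theorem \ref{duality syntomic p^n}: the equivalences there are induced by adjoint to a pairing $\bZ/p^n(i)^{\uni}_X \otimes \bZ/p^n(d-i)^{\uni}_X[2d] \to \bZ/p^n$ arising from the multiplicative structure on $F$-gauges (via $\cO_{\on{Syn}}\{i\} \otimes \cO_{\on{Syn}}\{d-i\} \simeq \cO_{\on{Syn}}\{d\}$) composed with the trace map. Both the multiplication on $F$-gauges and the trace map are compatible with reduction mod $p^{n-1}$, so the induced pairings fit into a commutative square with the reduction maps, and taking adjoints yields the required compatibility of $\mathbb{D}_n$ with $\mathbb{D}_{n-1}$ on these objects. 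Passing to the inverse limit then concludes the proof.
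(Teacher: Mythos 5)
Your proposal is correct and follows essentially the same route as the paper: identify the mod $p^n$ reductions of $\bZ_p(i)^{\uni}_X$ with $\bZ/p^n(i)^{\uni}_X$, invoke the mod $p^n$ duality of \cref{duality syntomic p^n}, observe that these equivalences are compatible with extension of scalars along $\bZ/p^n \to \bZ/p^{n-1}$ (which the paper dispatches with "by construction" and you justify, reasonably, via the naturality of the $F$-gauge pairing and trace map), and pass to the limit to obtain the equivalence under $\mathbb{D}$.
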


\begin{proof}
The proof will be a consequence of Theorems \ref{thm duality syntomic mod p} and \ref{duality syntomic p^n}. Indeed, for each $n$, we have an equivalence 
\[
\bZ_p(d-i)_{X}^{\uni}[2d]  \otimes \bZ/p^n \simeq \bZ /{p^n} (d-i)_{X}^{\uni}[2d]  \simeq (\bZ/p^n(i)_{X}^{\uni})^\vee.
\]
Here, the last term on the right denotes the $\bZ/p^n$-linear dual of $\bZ/p^n(i)_{X}^{\uni}$. By construction, these equivalences are all compatible with extension along scalars $\bZ/p^n \to \bZ/p^{n-1}$. Taking the limit of these equivalences ranging over all $n$ produces an equivalence 
\[
\bZ_p(d-i)_{X}^{\uni}[2d] \simeq \mathbb{D}(\bZ_p(i)^{\uni}_{X}),
\]
as desired. 
\end{proof}
\newpage

\bibliographystyle{amsalpha}
\bibliography{main}

\providecommand{\bysame}{\leavevmode\hbox to3em{\hrulefill}\thinspace}
\providecommand{\MR}{\relax\ifhmode\unskip\space\fi MR }
\providecommand{\MRhref}[2]{%
  \href{http://www.ams.org/mathscinet-getitem?mr=#1}{#2}
}
\providecommand{\href}[2]{#2}
\begin{thebibliography}{CMM21}

\bibitem[AM77]{MR457458}
Michael Artin and Barry Mazur, \emph{Formal groups arising from algebraic
  varieties}, Ann. Sci. \'{E}cole Norm. Sup. (4) \textbf{10} (1977), no.~1,
  87--131. \MR{457458}

\bibitem[Ant19]{MR4043467}
Benjamin Antieau, \emph{Periodic cyclic homology and derived de {R}ham
  cohomology}, Ann. K-Theory \textbf{4} (2019), no.~3, 505--519. \MR{4043467}

\bibitem[Art74]{Artin-supersingular}
Michael Artin, \emph{Supersingular {$K3$} surfaces}, Ann. Sci. \'{E}cole Norm.
  Sup. (4) \textbf{7} (1974), 543--567. \MR{371899}

\bibitem[Bha23]{fg}
Bhargav Bhatt, \emph{{Prismatic $F$-gauges}}, 2023,
  \url{https://www.math.ias.edu/~bhatt/teaching/mat549f22/lectures.pdf}.

\bibitem[Bi87]{MR923133}
A.~A. Be\u~ilinson, \emph{On the derived category of perverse sheaves},
  {$K$}-theory, arithmetic and geometry ({M}oscow, 1984--1986), Lecture Notes
  in Math., vol. 1289, Springer, Berlin, 1987, pp.~27--41. \MR{923133}

\bibitem[BL22]{BhaLur}
Bhargav Bhatt and Jacob Lurie, \emph{Absolute prismatic cohomology}, 2022.

\bibitem[BM22]{bayindir}
Haldun~{\"O}zg{\"u}r Bay{\i}nd{\i}r and Tasos Moulinos, \emph{Algebraic
  {K}-theory of {THH}($\mathbb{F}_p$)}, Transactions of the American
  Mathematical Society \textbf{375} (2022), no.~6, 4177--4207.

\bibitem[BMS19]{BMS2}
Bhargav Bhatt, Matthew Morrow, and Peter Scholze, \emph{Topological
  {H}ochschild homology and integral {$p$}-adic {H}odge theory}, Publ. Math.
  Inst. Hautes \'{E}tudes Sci. \textbf{129} (2019), 199--310. \MR{3949030}

\bibitem[BO74]{bloch-ogus}
Spencer Bloch and Arthur Ogus, \emph{Gersten's conjecture and the homology of
  schemes}, Ann. Sci. \'Ecole Norm. Sup. (4) \textbf{7} (1974), 181--201.
  \MR{412191}

\bibitem[BO21]{BOL}
Daniel Bragg and Martin Olsson, \emph{{Representability of cohomology of finite
  flat abelian group schemes}}, \url{https://arxiv.org/abs/2107.11492v1}, 2021,
  version 1.

\bibitem[Bre78]{Breen}
Lawrence Breen, \emph{Extensions du groupe additif}, Inst. Hautes \'{E}tudes
  Sci. Publ. Math. (1978), no.~48, 39--125. \MR{516914}

\bibitem[Bre06]{breenperf}
\bysame, \emph{Extensions du groupe additif sur le site parfait}, Surfaces
  Alg{\'e}briques: S{\'e}minaire de G{\'e}om{\'e}trie Alg{\'e}brique d'Orsay
  1976--78, Springer, 2006, pp.~238--262.

\bibitem[CMM21]{MR4280864}
Dustin Clausen, Akhil Mathew, and Matthew Morrow, \emph{{$K$}-theory and
  topological cyclic homology of henselian pairs}, J. Amer. Math. Soc.
  \textbf{34} (2021), no.~2, 411--473. \MR{4280864}

\bibitem[DG70]{Demazure:1970}
Michel Demazure and Pierre Gabriel, \emph{Groupes alg{\'e}briques},
  North-Holland, 1970.

\bibitem[Eke85]{MR800174}
Torsten Ekedahl, \emph{On the multiplicative properties of the de {R}ham-{W}itt
  complex. {II}}, Ark. Mat. \textbf{23} (1985), no.~1, 53--102. \MR{800174}

\bibitem[Hol23]{holeman2023}
Adam Holeman, \emph{Derived $\delta$-rings and relative prismatic cohomology},
  2023, Available at \url{https://arxiv.org/abs/2303.17447}.

\bibitem[IR83]{IR}
Luc Illusie and Michel Raynaud, \emph{Les suites spectrales associ\'{e}es au
  complexe de de~{R}ham-{W}itt}, Inst. Hautes \'{E}tudes Sci. Publ. Math.
  (1983), no.~57, 73--212. \MR{699058}

\bibitem[Lur17]{luriehigher}
Jacob Lurie, \emph{Higher {A}lgebra}, available at
  \url{https://www.math.ias.edu/~lurie/papers/HA.pdf}, 2017.

\bibitem[Lur18]{SAG}
\bysame, \emph{Spectral algebraic geometry}, February 2018.

\bibitem[Mil76]{Milne111}
James~S. Milne, \emph{Duality in the flat cohomology of a surface}, Ann. Sci.
  \'{E}cole Norm. Sup. (4) \textbf{9} (1976), no.~2, 171--201. \MR{460331}

\bibitem[MM24]{soon1}
Akhil Mathew and Shubhodip Mondal, \emph{Affine stacks and derived rings},
  available at
  \url{https://www.math.purdue.edu/~mondalsh/papers/affinestacks.pdf}.

\bibitem[MR23]{soon}
Shubhodip Mondal and Emanuel Reinecke, \emph{Unipotent homotopy theory of
  schemes}, 2023, arXiv:2302.10703.

\bibitem[MRT22]{moulinos2019universal}
Tasos Moulinos, Marco Robalo, and Bertrand To\"{e}n, \emph{A universal
  {H}ochschild--{K}ostant--{R}osenberg theorem}, Geom. Topol. \textbf{26}
  (2022), no.~2, 777--874. \MR{4444269}

\bibitem[Ray79]{MR563468}
Michel Raynaud, \emph{``{$p$}-torsion''\ du sch\'ema de {P}icard}, Journ\'ees
  de {G}\'eom\'etrie {A}lg\'ebrique de {R}ennes ({R}ennes, 1978), {V}ol. {II},
  Ast\'erisque, vol.~64, Soc. Math. France, Paris, 1979, pp.~87--148.
  \MR{563468}

\bibitem[Ser60]{MR118722}
Jean-Pierre Serre, \emph{Groupes proalg\'ebriques}, Inst. Hautes \'Etudes Sci.
  Publ. Math. (1960), no.~7, 67. \MR{118722}

\bibitem[{Sta}25]{stacks-project}
The {Stacks project authors}, \emph{The {Stacks} project},
  \url{https://stacks.math.columbia.edu}, 2025.

\bibitem[Tan24]{MR4804676}
Longke Tang, \emph{Syntomic cycle classes and prismatic {P}oincar\'e{}
  duality}, Compos. Math. \textbf{160} (2024), no.~10, 2322--2365. \MR{4804676}

\bibitem[To{\"{e}}06]{MR2244263}
Bertrand To{\"{e}}n, \emph{Champs affines}, Selecta Math. (N.S.) \textbf{12}
  (2006), no.~1, 39--135. \MR{2244263}

\bibitem[To{\"{e}}23]{Toee23}
Bertrand To{\"{e}}n, \emph{Generalized local {J}acobians and commutative group
  stacks}, 2023, available at \url{https://arxiv.org/abs/2308.01670}.

\end{thebibliography}
\end{document}